\let\ams@starttoc\@starttoc
\let\@starttoc\ams@starttoc
\patchcmd{\@starttoc}{\makeatletter}{\makeatletter\parskip\z@}{}{}
\newcommand{\CP}{\mathbb{C}P}
\newcommand{\RP}{\mathbb{R}P}
\newcommand{\T}{\mathbb{T}}
\newcommand{\id}{\mathrm{Id}}
\newcommand{\OP}{\operatorname}
\newcommand{\indx}[1]{\operatorname{index}(#1)}
\newcommand{\pt}{\operatorname{pt}}
\newsavebox{\textvisiblespacebox}
\savebox{\textvisiblespacebox}{\texttt{aa}}
\newcommand\vartextvisiblespace[1][\wd\textvisiblespacebox]{%
  \makebox[#1]{\kern.1em\rule{.4pt}{.3ex}%
  \hrulefill%
  \rule{.4pt}{.3ex}\kern.1em}%
}
\numberwithin{equation}{section}
\newtheorem{thm}{Theorem}[section]
\newtheorem{lma}[thm]{Lemma}
\newtheorem{prp}[thm]{Proposition}
\newtheorem{cor}[thm]{Corollary}
\newtheorem{mainthm}{Theorem}
\newtheoremstyle{TheoremNum}
    {\topsep}{\topsep}              
    {\itshape}                      
    {}                              
    {\bfseries}                     
    {.}                             
    { }                             
    {\thmname{#1}\thmnote{ \bfseries #3}}
\theoremstyle{TheoremNum}
\theoremstyle{definition}
\theoremstyle{remark}
\newtheorem{rmk}[thm]{Remark}
\theoremstyle:=definition,remark,plain,TheoremNum\do{%
\expandafter\g@addto@macro\csname th@\theoremstyle\endcsname{%
\addtolength\thm@preskip\parskip 
}%
} 
\title[Lagrangians nearby the Whitney immersion]{The classification of Lagrangians nearby the Whitney immersion}
\author{Georgios Dimitroglou Rizell}
\address{Department of Mathematics\\
Uppsala University\\
Box 480\\
SE-751 06 UPPSALA\\
SWEDEN}
\email{georgios.dimitroglou@math.uu.se}
\begin{document}

\begin{abstract}
The Whitney immersion is a Lagrangian sphere inside the four-dimensional symplectic vector space which has a single transverse double point of Whitney self-intersection number $+1.$ This Lagrangian also arises as the Weinstein skeleton of the complement of a binodal cubic curve inside the projective plane, and the latter Weinstein manifold is thus the `standard' neighbourhood of Lagrangian immersions of this type. We classify the Lagrangians inside such a neighbourhood which are homologically essential, and which either are embedded or immersed with a single double point; they are shown to be Hamiltonian isotopic to either product tori, Chekanov tori, or rescalings of the Whitney immersion.
\end{abstract}

\maketitle
\setcounter{tocdepth}{1}
\tableofcontents

\section{Introduction}
In the following $(\CP^2,\omega_{\OP{FS}})$ is taken to denote the complex projective plane endowed with the Fubini--Study symplectic form, where the latter has been normalised so that a line is of symplectic area equal to $\int_\ell \omega_{\OP{FS}}=\pi.$ Our main result concerns classification up to Hamiltonian isotopy of embedded Lagrangian tori and immersed Lagrangian spheres inside the open symplectic manifold
$$V \coloneqq \CP^2 \setminus (\ell_\infty \cup C) \subset (\CP^2,\omega_{\OP{FS}}),$$
where $\ell_\infty \subset \CP^2$ denotes the line at infinity and where $C \coloneqq \overline{\{z_1z_2=1\}} \subset \CP^2$ is a smooth conic. In other words, $V$ is the complement of the binodal cubic curve $\ell_\infty \cup C.$ The fact that $(V,\omega_{\OP{FS}})$ is a Liouville manifold for a family of inequivalent Liouville forms $\lambda_r,$ $d\lambda_r=\omega_{\OP{FS}},$ parametrised by $r \in (0,3/2)$ will play an important role in our proof; see Section \ref{sec:cp2liouville} for their construction. (In fact, it is well-known that $V$ even admits the structure of a Weinstein manifold, but this will not be needed.)

In Section \ref{sec:LagrangianFibrationsIntro} below we give an explicit description of a one-parameter family $\Pi_s \colon V \to (-1,1) \times (0,+\infty),$ $s \in (0,\pi/2),$ of Lagrangian fibrations, the fibres of which project to simple closed curves in $\C$ that encircle the value $1 \in \C$ under the standard Lefschetz fibration $(z_1,z_2)\mapsto z_1z_2.$ All fibres of $\Pi_s$ are embedded Lagrangian tori except $\Pi_s^{-1}(0,1)$ which is singular; it consists of a Lagrangian sphere having one transverse double point of Whitney self-intersection number equal to $+1,$ and for which the symplectic action class
$$\int_{[\cdot]} \omega_{\OP{FS}} \colon H_2(\CP^2 \setminus \ell_\infty,\Pi_s^{-1}(0,1)) \to \R$$
assumes precisely the values $ns,$ $n \in \Z.$ This singular fibre is a Lagrangian incarnation of the so-called Whitney immersion, which becomes exact inside $V$ for the aforementioned Liouville form $\lambda_{3s/\pi}$; see Part (1) of Lemma \ref{lma:LiouvilleCP2}.

All Lagrangian fibres of $\Pi_s$ are well studied objects, going back to work by Y. Chekanov \cite{Chekanov:LagrangianTori}, as well as Y. Eliashberg and L. Polterovich \cite{Eliashberg:ProblemLagrangian}; for that reason we call them {\bf standard}. The embedded Lagrangians tori are of two different types: product tori, including the monotone Clifford torus, as well as monotone Chekanov tori. (Monotonicity here refers to the tori when considered inside $(\CP^2 \setminus \ell_\infty, \omega_{\OP{FS}}).$) Our main result can be roughly states as follows: any Lagrangian inside $(V,\omega_{\OP{FS}})$ with the same classical properties as those of a fibre $\Pi_s^{-1}(u_1,u_2)$ is actually Hamiltonian isotopic to a fibre.

The Lagrangian fibrations $\Pi_s$ can be well understood by using the theory of almost toric systems as developed in the work by M. Symington \cite{Symington:FourDimensions}; the unique singular fibre corresponds to the unique node of the base diagram (corresponding to a singularity of ``focus-focus'' type), while varying the parameter $s$ is equivalent to performing a so-called ``nodal slide''. The almost toric systems whose singularity consists of one single such node constitute the simplest examples of nontrivial Lagrangian torus fibrations, and they have therefore been an important example when studying the SYZ conjecture in mirror symmetry; see e.g.~work \cite{Auroux:SpecialLagrangian} by D. Auroux.

The Hamiltonian classification results for Lagrangian submanifold are scarce (except, of course, when the symplectic manifold is of dimension two and the Lagrangian thus is a curve). The only known results for closed Lagrangians exist in the present setting of four-dimensional symplectic manifolds, where strong results have been obtained for embedded discs \cite{Eliashberg:LocalLagrangian} due to Y.~Eliashberg and L.~Polterovich, spheres \cite{Hind:LagrangianSpheres} due to R.~Hind, and tori \cite{Dimitroglou:Isotopy} due to the author together with E.~Goodman and A.~Ivrii. All of these these three works utilise the technique of positivity of intersection in different ways; recall that positivity of intersection for pseudoholomorphic curves is a purely four-dimensional phenomenon.

In higher dimensions the only currently known Hamiltonian isotopy classifications hold for Lagrangians on the \emph{flexible side} of symplectic topology, by the work \cite{LagCaps} due to Y.~Eliashberg and E.~Murphy. These result apply to Lagrangians with conical singularities over \emph{loose} Legendrians. Without going into the details concerning these flexible Lagrangians, we would just like to point out that their singularities are more complicated than a transverse double point, which is the only type of singularity that we consider here.

One of the results proven in \cite{Dimitroglou:Isotopy} was the nearby Lagrangian conjecture for $T^*\T^2$; this is a Weinstein manifold with skeleton being an embedded Lagrangian torus. Since $(V,\omega_{\OP{FS}})$ can be endowed with a Weinstein structure for which the Lagrangian Whitney immersion is the skeleton, our result can be interpreted as a result in line with the nearby Lagrangian conjecture for a generically immersed Lagrangian sphere. Namely, our result in particular provides the following Hamiltonian classification of e.g.~the Lagrangian tori which are homologically essential in some small neighbourhood of such an immersed Lagrangian.

\subsection{Preliminaries}
We begin by swiftly covering the notions needed to formulate our results. The experienced reader can safely skip this subsection.

For convenience, we will often utilise the standard symplectic identification
\begin{gather*}
\varphi \colon (\CP^2 \setminus \ell_\infty,\omega_{\OP{FS}}) \xrightarrow{\cong} (B^4,\omega_0)\\
[z_1:z_2:1] \mapsto \frac{1}{\sqrt{1+\|z_1\|^2+\|z_2\|^2}}(z_1,z_2),
\end{gather*}
with inverse 
$$ (\widetilde{z}_1,\widetilde{z}_2) \mapsto \frac{1}{\sqrt{1-\|\widetilde{z}_1\|^2-\|\widetilde{z}_2\|^2}}(\widetilde{z}_1,\widetilde{z}_2),$$
in order to realise $V$ as an embedding
$$V \cong \widetilde{V} \coloneqq B^4 \setminus \varphi(C) \subset (B^4,\omega_0)$$
into the standard symplectic unit ball. The linear symplectic form $\omega_0=dx_1\wedge dy_1 +dx_2 \wedge dy_2$ is exact with primitive $\lambda_{\OP{std}}=x_1dy_1+x_2dy_2,$ which thus is a Liouville form for the symplectic form on $V$ as well. (This Liouville form does not, however, make $V$ into a Liouville domain.) We will often switch between these two realisations of $(V,\omega_{\OP{FS}})$ in order to work with the description which is most suitable for our different needs.

Recall that a two-dimensional immersion $\iota \colon L \hookrightarrow V$ is {\bf Lagrangian} if $\iota^*\omega_{\OP{FS}}\equiv 0.$ A Lagrangian immersion is {\bf weakly exact} if $\int_\alpha \omega_{\OP{FS}}=0$ for all $\alpha \in \pi_2(V,L).$ Given a choice of Liouville form $\lambda$ for $\omega_{\OP{FS}}=d\lambda,$ we say that the Lagrangian is {\bf exact} in the case when $\iota^*\lambda$ is an exact one-form, and {\bf strongly exact} if the primitive moreover can be chosen to be constant when restricted to each preimage set $\iota^{-1}(\pt),$ $\pt \in \iota(L).$ Note that exact Lagrangian embeddings, as well as \emph{strongly} exact Lagrangian immersions, necessarily also are weakly exact. More generally, the {\bf symplectic action class} is given by $[\lambda|_{TL}] \in H^1(L,\R);$ this class also depends on the choice of Liouville form.

Another important class associated to a Lagrangian submanifold is the {\bf Maslov class}
$$ \mu_L \colon H_2(V,L) \to \Z $$
which takes values in the even integers for an oriented Lagrangian; see e.g.~\cite{McDuff:Introduction} for more details. For general closed curves on $L$ there is also a notion of Maslov class induced by the trivialisation of $\C^2 \supset B^4 \supset \widetilde{V};$ this Maslov class will be denoted by $\mu_L^{\C^2} \colon H^1(L) \to \Z.$ Note that the equality $\mu_L^{\C^2} \circ \partial=\mu_L$ holds, where $\partial \colon H_2(V,L) \to H_1(L)$ is the connecting homomorphism.

The classification that we are pursuing is that of Lagrangians up to {\bf Hamiltonian isotopy} $\phi^t_{H_t},$ i.e.~a smooth isotopy whose infinitesimal generator satisfies $\iota_{X_t}\omega_{\OP{FS}}=-dH_t$ for a smooth family of functions $H_t \colon V \to \R;$ this function is called the generating {\bf Hamiltonian.} A standard result shows that a smooth path of Lagrangian embeddings $L_t \subset (V,\omega_{\OP{FS}}=d\lambda),$ also called a {\bf Lagrangian isotopy}, is generated by a global Hamiltonian isotopy if and only if the symplectic action is \emph{constant} for the path relative an arbitrary choice of Liouville form $\lambda.$ The difference
$$\OP{Flux}(\{L_s\}_{s \in [a,b]}) \coloneqq [\lambda|_{TL_b}]-[\lambda|_{TL_a}] \in H^1(\T^2)$$
of symplectic action classes (suitably identified) is called the {\bf symplectic flux} of the Lagrangian isotopy $\{L_t\}_{t \in [a,b]}.$ A Lagrangian isotopy $L_t,$ $t \in [0,1]$ is thus generated by a Hamiltonian if and only if the corresponding {\bf symplectic flux-path} defined as
$$t \mapsto \OP{Flux}(\{L_s\}_{s \in [0,t]}) \in H^1(\T^2)$$
vanishes equivalently for all $t \in [0,1].$ 

\subsection{Result}

The result that we show here is a classification of the Lagrangians inside $(V,\omega_{\OP{FS}})$ up to Hamiltonian isotopy under the assumption that they satisfy properties similar to those of the fibres of $\Pi_s.$ Our main result is as follows.
\begin{mainthm}
\label{thm:main}
Let $L \subset (V,\omega_{\OP{FS}})$ be either an embedded Lagrangian torus or an immersed Lagrangian sphere with a single transverse double point. Assume that \emph{at least} one of the following two conditions are satisfied:
\begin{enumerate}
\item the class $[L] \in H_2(V) \cong \Z$ is nonzero in homology; {\bf or}
\item
\begin{enumerate}
\item {\em In the case when $L$ is an embedded torus:} for any homotopy class $\alpha \in \pi_2(V,L)$ the implication
$$\mu_L(\alpha)=2 \Rightarrow \int_\alpha \omega_0 \le 0$$
holds,
\item {\em In the case when $L$ is an immersed sphere:} property (a) holds for any Lagrangian torus resulting from a Lagrange surgery applied to its double point.
\end{enumerate}
\end{enumerate}
Then $L$ is Hamiltonian isotopic inside $(V,\omega_{\OP{FS}})$ to a standard Lagrangian. In other words, there exists a nonempty subset of values $s \in (0,\pi/2)$ (possibly the entire interval) such that $L$ is Hamiltonian isotopic to a unique fibre of the Lagrangian fibration $\Pi_s \colon V \to (-1,1) \times (0,+\infty).$
\end{mainthm}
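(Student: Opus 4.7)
My strategy is to produce a Lagrangian isotopy from $L$ to a standard fibre of some $\Pi_s$ via pseudoholomorphic curve techniques, and then to upgrade this isotopy to a Hamiltonian one by controlling the symplectic action class along the way.

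I would first reduce the immersed sphere case to the embedded torus case. The two possible Lagrange surgeries at the single transverse double point of an immersed Lagrangian sphere $L$ each produce an embedded Lagrangian torus; condition (2b) is engineered precisely so that at least one of the resulting tori satisfies (2a), while under condition (1) at least one of the surgered tori inherits a nonzero homology class in $H_2(V)$. Once such a surgered torus $L'$ is shown to be Hamiltonian isotopic to a standard torus fibre of some $\Pi_s$, a matching observation — that the Whitney-type singular fibre of the same $\Pi_s$ is related to its nearby smooth torus fibres by the very same Lagrange surgery operation — concludes the sphere case.

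For the embedded Lagrangian torus case, the plan is the standard pseudoholomorphic foliation approach used in \cite{Hind:LagrangianSpheres} and \cite{Dimitroglou:Isotopy}. Equip $(\CP^2, \omega_{\OP{FS}})$ with a compatible almost complex structure $J$ making the compactification divisor $\ell_\infty \cup C$ holomorphic and cylindrical near the boundary of a Weinstein neighbourhood of $L$. The Lefschetz fibration $(z_1, z_2) \mapsto z_1 z_2$ furnishes a one-parameter family of $J$-holomorphic conics in $\CP^2 \setminus \ell_\infty$, of which $C$ itself is the fibre over $1$. Stretching the neck along the contact hypersurface enclosing $L$ produces an SFT limit of this family whose top level consists of punctured holomorphic curves in $V$ avoiding $L$ and whose bottom level consists of holomorphic discs in $T^*L$. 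Positivity of intersection with $\ell_\infty \cup C$, combined with the Maslov and area bounds coming from hypothesis (1) or (2a), pins down the bottom-level discs, excludes multiple covers and rogue sphere bubbles, and forces the limiting building to match the combinatorial type of $\Pi_s$ in a neighbourhood of one of its fibres. The dichotomy between product tori and Chekanov tori is then encoded by the count of Maslov-$2$ disc classes with boundary on $L$, in the spirit of Chekanov's original distinction.

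Finally, from the local fibration model produced in the previous step I would read off the value of $s \in (0, \pi/2)$ and the unique fibre of $\Pi_s$ whose symplectic action class $[\lambda_r|_{TL}] \in H^1(L;\R)$ matches that of $L$ for an appropriate Liouville form $\lambda_r$ as in Section \ref{sec:cp2liouville}. Interpolating $L$ to the target fibre through the model yields a Lagrangian isotopy whose flux-path vanishes by construction, and which is therefore generated by a global Hamiltonian isotopy. The hard step I anticipate is the SFT compactness analysis of the preceding paragraph: one has to exclude every pathological limit configuration of the holomorphic pencil after the neck stretch, and hypotheses (1) and (2a) should be viewed as engineered to provide exactly the positive area lower bounds that — combined with four-dimensional positivity of intersection against the divisor — preclude such degenerations.
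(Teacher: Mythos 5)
Your overall strategy (neck-stretch the conic pencil, use positivity of intersection against $\ell_\infty\cup C$, then correct the flux) is the right general framework, but two of your reductions hide genuine gaps that constitute most of the actual work.

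First, the sphere case does not reduce to the torus case as painlessly as you claim. Knowing that a surgered torus $L'$ is Hamiltonian isotopic to a standard fibre tells you nothing about $L$ itself unless that Hamiltonian isotopy can be chosen to avoid the Lagrangian surgery region, i.e.\ to be supported away from the Lagrangian thimble/disc attached at the double point; only then can you undo the surgery at both ends and transport the isotopy back to the immersed sphere. This is exactly why the paper needs the sharpened version of the nearby Lagrangian theorem for $\T^2$ (Parts (2) and (3) of Theorem \ref{thm:nearby}, with support away from $\dot D_{\mathbf{p}^0}(\theta)$), and why Section \ref{sec:proof-whitney} first spends several normalisation steps making $\widetilde L$ literally coincide with a standard Whitney sphere near the nodal conic before resolving the double point. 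Your ``matching observation'' is not available without this support control. (The paper also does not stretch around a single surgered torus: it stretches simultaneously around two non-monotone tori $T_\pm$ flanking the sphere, because a single surgery destroys the symmetry needed to locate the node of the broken conic.)

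Second, the claim that the interpolation ``yields a Lagrangian isotopy whose flux-path vanishes by construction'' is false as stated and is where the paper invests Sections 6 and 7. Matching the action class of $L$ with that of a target fibre fixes only the endpoints of the flux-path, not the path itself; worse, the paper explicitly notes that two Lagrangian isotopies with the \emph{same} flux-path from the same torus can end at non-Hamiltonian-isotopic tori (Clifford vs.\ Chekanov), so flux alone never suffices. The actual mechanism is: (i) for tori in Clifford position, identify $V\setminus f^{-1}[0,1]$ with $\T^2\times U\subset T^*\T^2$ (Lemma \ref{lma:ConvexSubsets}) and invoke Theorem \ref{thm:nearby}; (ii) for Chekanov position, pass through the involution $I$ of the plumbing $(\widehat W,d\lambda)$ to reduce to the Clifford case, obtaining only a Lagrangian isotopy, and then correct the flux by inflation along the divisors $\ell_\infty,\ell_1,\ell_2,C$ using the Liouville flows $\phi^{-t}_{\lambda_{\mathbf r}}$ together with Lemma \ref{lma:LiouvilleFlow} (flux-paths realised by complete negative Liouville flows do determine the Hamiltonian class) and the non-displaceability result Proposition \ref{prp:nondisp}. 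None of this is ``by construction,'' and your appeal to counts of Maslov-two disc classes addresses the wrong question: the Clifford/Chekanov dichotomy is read off from the winding number of $f_J(L)$ around $0$, and the difficulty is building the isotopy within the correct class, not distinguishing the classes.
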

It is not difficult to see that either of the conditions in the above Theorem \ref{thm:main} are satisfied for standard Lagrangians, i.e.~the fibres of the fibrations $\Pi_s;$ we refer to Section \ref{sec:LagrangianFibrationsIntro} below for more details.
\begin{rmk}
We point out the following, with more details given below in Proposition \ref{prp:mainprp}.
\begin{enumerate}
\item Condition (2) of Theorem \ref{thm:main} is automatically satisfied in the case when the Lagrangian (embedded or immersed) is weakly exact, or if the Lagrangian is embedded and has vanishing Maslov class. Further, note that all fibres $\Pi_s^{-1}(u_1,u_2)$ have vanishing Maslov class when considered inside $(V,\omega_{\OP{FS}}),$ while they are weakly exact if and only if $u_1=0$;
\item The Hamiltonian isotopy classes of the strongly exact immersed spheres $\Pi_{s}^{-1}(0,1)$ are all different, uniquely determined by the parameter $s \in (0,\pi/2).$ Contrary to this, every fixed non-weakly exact torus fibre $\Pi_{s_0}^{-1}(u_1,u_2)$ (i.e.~$u_1 \neq 0$) is Hamiltonian isotopic to a unique fibre $\Pi_s^{-1}(u_1,u_2')$ for any other choice of $s \in (0,\pi/2)$ as well. For the weakly exact tori the situation is more complicated. A given weakly exact torus fibre of Clifford type is Hamiltonian isotopic to a fibre of the fibrations $\Pi_s$ only for a certain strict sub-interval of parameters $s \in (0,s_0) \subsetneq (0,\pi/2),$ while for a torus of Chekanov type the corresponding sub-interval is of the form $(s_0,\pi/2).$
\end{enumerate}
\end{rmk}

Theorem \ref{thm:main} gives conditions for when a Lagrangian torus inside $(\CP^2,\omega_{\OP{FS}})$ is Hamiltonian isotopic to a torus of either Clifford or Chekanov type in terms of the linking properties with a binodal cubic. In particular, Theorem \ref{thm:main} shows there are precisely two different monotone Lagrangian tori which are exact in the complement of the binodal cubic up to Hamiltonian isotopy. It has been shown by R. Vianna \cite{Vianna:second} that there are \emph{infinitely} many different Hamiltonian isotopy classes of monotone Lagrangian tori inside $(\CP^2,\omega_{\OP{FS}})$ which, moreover, can be realised as exact Lagrangians inside the complement of the \emph{smooth} cubic curve.

The central technique used in the proof of Theorem \ref{thm:main} is to consider the limit of pseudoholomorphic foliations by conics when stretching the neck around the Lagrangian torus. Note that there is a natural holomorphic conic fibration on $V$ being the restriction of the Lefschetz fibration $z_1z_2$ on $\CP^2 \setminus \ell_\infty$ to the complement of the smooth fibre $C$ above $1 \in \C.$ Here we study the foliations given by such conics that satisfy an additional tangency to $C$ at $\ell_\infty$ for arbitrary almost complex structures (which still required to be standard near $\ell_\infty$).

The idea to use pseudoholomorphic foliations and neck stretching to classify Lagrangian tori goes back to H.~Hofer and K.~M.~Luttinger. This program was carried out in the recent work \cite{Dimitroglou:Isotopy} by the author together with E.~Goodman, and A.~Ivrii, where foliations by degree one spheres were considered; see \cite[Section 1.2]{Dimitroglou:Isotopy} for more details concerning the history of the problem. One notable result obtained using these techniques was the positive answer to the so called nearby Lagrangian conjecture for $\T^2$; see \cite[Theorem B]{Dimitroglou:Isotopy}. In the course of proving Theorem \ref{thm:main} we also need to provide a sharpening of this result:
\begin{mainthm}
\label{thm:nearby}
Suppose that $L \subset (T^*\T^2,d\lambda_{\T^2})$ is a Lagrangian embedding which is either weakly exact, homologically essential, or a Lagrangian torus of vanishing Maslov class. In all these cases, $L$ is Hamiltonian isotopic to the graph of a closed one-form in $\Omega^1(\T^2).$ Moreover, for any convex subset $U \subset \R^2$ it is the case that:
\begin{enumerate}
\item If $L \subset \T^2 \times U \subset T^*\T^2,$ then the Hamiltonian isotopy can be taken to be supported inside $\T^2 \times U;$ and
\item For any $ \theta \in S^1$ consider the properly embedded Lagrangian disc
\begin{gather*}
 \dot{D}_{\mathbf{p}^0}(\theta) \coloneqq (S^1 \times \{\theta\}) \times (\{p_1^0\} \times (-\infty,p_2^0]) \subset \T^2 \times \R^2 =T^*\T^2, \:\:\mathbf{p}^0:=(p_1^0,p_2^0),
\end{gather*}
with one interior point removed. If it is the case that
$$ L \cap \dot{D}_{\mathbf{p}^0}(e^{is})=\partial \dot{D}_{\mathbf{p}^0}(e^{is}) = S^1 \times \{ e^{is} \} \times \{\mathbf{p}^0\}$$
holds for all $|s|<\epsilon,$ then the Hamiltonian isotopy can be assumed to be supported outside of the subset
$$\bigcup_{|s|<\delta} \dot{D}_{\mathbf{p}^0}( e^{is} ) = S^1 \times e^{i[-\delta,\delta]} \times \{p_1^0\} \times (-\infty,p_2^0 ],$$
for some $0<\delta<\epsilon$ sufficiently small (note that for symplectic action reasons, we may not be able to Hamiltonian isotope the Lagrangian to the \emph{constant} section $\T^2 \times \{\mathbf{p}^0\}$); and
\item If $L \subset \T^2 \times U$ holds in addition to the assumptions of (2), then the Hamiltonian isotopy produced there can moreover be taken to have support contained inside $\T^2 \times U.$
\end{enumerate}
\end{mainthm}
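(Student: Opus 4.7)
The proof starts from the previously established version of the statement in \cite{Dimitroglou:Isotopy}, which covers the weakly-exact case without any support condition by neck-stretching around $L$ and producing, in the SFT limit, a foliation of $T^*\T^2 \setminus L$ by pseudoholomorphic cylinders asymptotic to Reeb orbits on the unit conormal of $L$. To extend this to the other two hypothesis cases, the essential task is to rule out Maslov-$2$ disc bubbles forming in the limit. When $L$ has vanishing Maslov class there are simply no classes of Maslov index $2$ in $\pi_2(T^*\T^2,L)$, so the original argument applies verbatim. When $L$ is homologically essential, the intersection number $[L]\cdot[F]\neq 0$ with a cotangent fibre class $F$ yields a uniform positive lower bound on the $\omega$-area of any nontrivial disc class bounded by $L$, and the area estimates provided by neck-stretching again preclude bubbling. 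Once the limit foliation exists, the argument of the cited paper produces a Lagrangian isotopy of $L$ to the graph of a closed one-form $\alpha$ whose cohomology class is $[\lambda_{\T^2}|_L]$ and whose flux vanishes by construction, so that the isotopy is generated by a Hamiltonian.

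For claim (1), one performs the neck-stretch using a family of $\omega_0$-compatible almost complex structures equal to the standard integrable one outside of $\T^2\times U$. Convexity of $U$ provides a plurisubharmonic exhaustion on $\T^2 \times (\R^2 \setminus U)$, and the maximum principle then forces every cylinder of the SFT-limit foliation to remain inside $\T^2 \times U$. Transporting $L$ along these cylinders produces a Hamiltonian isotopy of support contained in $\T^2 \times U$, as required.

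For claim (2), the family of properly embedded Lagrangian half-cylinders $\dot{D}_{\mathbf{p}^0}(e^{is})$, $|s|<\epsilon$, plays the role of Lagrangian walls; by the hypothesis each of their boundary circles lies inside $L$. Choose the neck-stretched almost complex structure to be standard in a neighborhood of these walls. Since every cylinder of the limit foliation is homologous to a cotangent fibre which is disjoint from all the walls, its topological intersection number with each wall is zero; combined with positivity of intersection against a totally real submanifold this forces the cylinders of the foliation to be geometrically disjoint from a tubular neighborhood of $\bigcup_{|s|\le\delta}\dot{D}_{\mathbf{p}^0}(e^{is})$ for some $0<\delta<\epsilon$. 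Transporting $L$ along these cylinders yields a Hamiltonian isotopy whose support avoids the desired slab. Claim (3) then follows by combining the strategies, for instance by choosing $J$ that is simultaneously standard outside $\T^2\times U$ and near the walls.

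The main obstacle will be claim (2): the cylinders of the limit foliation approach $L$ precisely along the boundary circles of the walls they are meant to avoid, so positivity of intersection with the walls has to be implemented carefully at the asymptotic end, in a way compatible with the SFT-limit structure. Verifying that suitable local models near the shared boundary circles allow one to push the cylinders off the walls without destroying the foliation property is the technical heart of the argument.
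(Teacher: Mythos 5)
There is a genuine gap, and it sits exactly where you locate the ``technical heart'': your mechanism for Part (2) does not exist. You argue that the limit cylinders have zero algebraic intersection with the Lagrangian walls $\dot{D}_{\mathbf{p}^0}(e^{is})$ and that ``positivity of intersection against a totally real submanifold'' then forces geometric disjointness. But positivity of intersection is a statement about two $J$-holomorphic curves; a $J$-holomorphic curve can meet a Lagrangian (totally real) wall with either sign, and such intersections are not homologically protected, so vanishing of the algebraic count gives you nothing. The paper's route is structurally different: one first applies a preparatory Hamiltonian isotopy (Lemma \ref{lma:cleanup}) clearing $L$ out of the half-space below the walls, then exploits the fact that the slab around the walls is swept out by \emph{explicit holomorphic} cylinders $C(e^{is})$ and $C(p_2,e^{is})$ for $J_{\OP{cyl}}$; positivity of intersection of the foliation leaves with \emph{these holomorphic curves} (Lemma \ref{lma:cleanup2}) is what keeps the solid torus $\dot{\mathcal{T}}$ away from the forbidden region, and the leaves over $\theta\in e^{i(-3\delta,3\delta)}$ are forced to \emph{be} the standard cylinders rather than merely avoid them. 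On top of that you omit the monodromy/flux correction entirely: sliding $L$ along a foliation by discs produces only a Lagrangian isotopy, and making it Hamiltonian while respecting the support constraint requires the symplectic suspension of Lemma \ref{lma:HamIso} together with the translation by the one-forms $\alpha_t$ of Lemma \ref{lma:HamFromLag}. This is not a detail one can wave at.

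Two further points. First, your reduction of the three hypotheses is off: the paper does not extend the analytic argument case by case, but proves that all three conditions are \emph{equivalent} to $\iota_L$ being a homotopy equivalence (via the adjunction formula, Audin's theorem, Arnol'd--Giroux, and --- for weak exactness --- Lemma \ref{lma:exact} plus Abouzaid--Kragh), after which \cite[Theorem 7.1]{Dimitroglou:Isotopy} applies. Your claimed uniform lower area bound for disc classes coming from $[L]\cdot[F]\neq 0$ does not follow, and in any case one needs the homotopy equivalence to know the action class is pulled back from $\T^2$ so that the flux can be corrected by a closed one-form. Second, for Part (1) the paper avoids pseudoconvexity arguments altogether: it notes the endpoint section must lie in $\T^2\times U$ by nondisplaceability and then contracts the whole Hamiltonian isotopy into $\T^2\times U$ using the conjugated fibrewise rescaling $\tau_{\mathbf{p}_0}\circ\sigma_s\circ\tau_{\mathbf{p}_0}^{-1}$, which preserves $\T^2\times U$ by convexity; this also handles the support of the flux correction, which your maximum-principle sketch does not.
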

We prove this result in Section \ref{sec:proofnearby}. Note that Part (1) is a fairly straight forward consequence of \cite[Theorem 7.1]{Dimitroglou:Isotopy}, while Part (2) requires a more careful study of its proof. Part (3) finally follows without too much additional work by simply combining Parts (1) and (2).

In Section \ref{sec:SelfPlumbing} we show that $(V,\omega_{\OP{FS}})$ is a Liouville domain with completion $(\widehat{W},d\lambda),$ which is a Liouville manifold whose skeleton is the Whitney sphere itself. A version of the Weinstein neighbourhood theorem shows that $\widehat{W}$ serves as a standard neighbourhood for any immersed Lagrangian sphere having a single self-intersection point of positive sign. The classification given by Theorem \ref{thm:main} can thus be interpreted as a result in line with the nearby Lagrangian conjecture for such an \emph{immersed} Lagrangian sphere. In addition, note that $(V,\omega_{\OP{FS}})$ is a Weinstein manifold that, in some sense, is not too distant from the cotangent bundle of a torus, since it can obtained from $(DT^*\T^2,d\lambda_{\T^2})$ by attaching a single Weinstein two-handle along the conormal lift of a simple closed geodesic.

The complete Liouville manifold $(\widehat{W},d\lambda)$ admits a surjective Lagrangian \emph{almost toric} fibration $\hat{\pi} \colon \widehat{W} \to \R^2$ with a single nodal fibre in the sense of \cite{Symington:FourDimensions}. The nodal fibre is an immersed Lagrangian sphere, while all other fibres are embedded tori. The process of a \emph{nodal slide} introduced in the aforementioned work can be applied to the node, thereby producing a one-parameter family $\hat{\pi}_s$ of almost toric fibrations of the same type. The Hamiltonian isotopy class of the nodal fibres for different values of $s \in \R$ live in distinct Hamiltonian isotopy classes. We fix our convention so that $\hat{\pi}_0^{-1}(0)$ is a strongly exact immersed Lagrangian sphere for the Liouville form $\lambda.$ Recall that, since the fibration is assumed to be almost toric, the map $\hat{\pi}_0$ is determined up to an affine transformation by the corresponding Lagrangian foliation. 
\begin{cor}
A Lagrangian $L \subset (\widehat{W},d\lambda)$ that satisfies either of the assumptions in Theorem \ref{thm:main} is Hamiltonian isotopic to a fibre of $\hat{\pi}_s$ for some $s \in \R.$ If $L$ moreover is strongly exact with respect to the Liouville form $\lambda,$ and if $ \hat{\pi}_0(L)$ is contained inside a subset $O \subset \R^2$ which is star-shaped with respect to the origin (in the above affine coordinates), then the Hamiltonian isotopy may be taken to have support inside the preimage $\hat{\pi}_0^{-1}(O).$
\end{cor}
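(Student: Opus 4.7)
The strategy is to reduce the Corollary to Theorem \ref{thm:main} using the identification of $(\widehat W, d\lambda)$ with the open Liouville manifold $(V, \omega_{\OP{FS}})$ established in Section \ref{sec:SelfPlumbing}. Under this identification, the almost toric fibrations $\hat{\pi}_s$ of $\widehat W$ correspond to the Lagrangian fibrations $\Pi_s$ of $V$ (after a bijective reparametrisation of the nodal-slide parameter $s$), and their singular fibres correspond to the Whitney spheres $\Pi_s^{-1}(0,1)$. The hypotheses of Theorem \ref{thm:main} -- homological essentiality, respectively the Maslov/symplectic-area condition -- transfer directly under this identification, as they are topological or cohomological in nature. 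Applying Theorem \ref{thm:main} then furnishes the Hamiltonian isotopy of $L$ to a fibre of $\hat{\pi}_s$, proving the first assertion.

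For the support statement, suppose that $L$ is strongly exact for $\lambda$ and that $\hat{\pi}_0(L)\subset O$ with $O$ star-shaped from the origin. Strong exactness pins down the target fibre: $L$ must be Hamiltonian isotopic to the unique strongly exact nodal fibre $\hat{\pi}_0^{-1}(0)$. To confine the Hamiltonian isotopy within $\hat{\pi}_0^{-1}(O)$, I would conjugate by the Liouville flow $\phi^T_Z$ of $\widehat W$. Recall that if $\phi^t_{H_t}$ is Hamiltonian, then $\phi^{-T}_Z\circ \phi^t_{H_t}\circ \phi^T_Z$ is again Hamiltonian, generated by $e^{-T}H_t\circ \phi^T_Z$, with support $\phi^{-T}_Z(\OP{supp} H_t)$. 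For $T\gg 0$ this support contracts into an arbitrarily small neighbourhood of the singular fibre, and the star-shaped assumption on $O$ ensures that $\phi^{-T}_Z$ preserves $\hat{\pi}_0^{-1}(O)$ setwise, because in the affine base coordinates the Liouville flow acts as radial scaling toward the origin. Thus the conjugated isotopy has support contained in $\hat{\pi}_0^{-1}(O)$.

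The technical subtlety is that the conjugated isotopy a priori connects $\phi^{-T}_Z(L)$ to $\phi^{-T}_Z(\hat{\pi}_0^{-1}(0))$, not $L$ to $\hat{\pi}_0^{-1}(0)$. To fix this one uses that $L$ and $\hat{\pi}_0^{-1}(0)$ are both strongly exact: writing the Liouville flow in a Weinstein neighbourhood of a strongly exact Lagrangian, the restriction of the Liouville-flow isotopy to this neighbourhood is realised by a compactly supported Hamiltonian isotopy of $\widehat W$ whose support can be made to lie arbitrarily close to the Lagrangian, hence inside $\hat{\pi}_0^{-1}(O)$ by star-shapedness. Concatenating these ``rescaling'' isotopies on either end with the conjugated isotopy gives the required Hamiltonian isotopy from $L$ to $\hat{\pi}_0^{-1}(0)$, supported in $\hat{\pi}_0^{-1}(O)$. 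The main obstacle is the careful verification that these Weinstein-neighbourhood rescalings can be carried out through strongly exact Lagrangians and fit together compatibly with the conjugated isotopy; once this is done, the result follows.
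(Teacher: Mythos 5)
Your strategy coincides with the paper's: reduce the first claim to Theorem \ref{thm:main} via the relation between $(V,\omega_{\OP{FS}})$ and $(\widehat{W},d\lambda)$, and obtain the support statement by pushing the isotopy towards the skeleton with the negative Liouville flow, using exactness to keep everything Hamiltonian. One correction to the first step: $(\widehat{W},d\lambda)$ is \emph{not} symplectomorphic to $(V,\omega_{\OP{FS}})$ --- it is the (infinite-volume) completion, while $V$ has finite volume --- so there is no ``identification''. What Section \ref{sec:SelfPlumbing} actually provides (Proposition \ref{prp:embedding}) is, for every $c>0$ and every $s$, a symplectic embedding $\iota_s \colon (V,c\omega_{\OP{FS}}) \hookrightarrow (\widehat{W},d\lambda)$ taking fibres to fibres away from the node; the paper first pinpoints the fibre of $\hat{\pi}_s$ having the same classical invariants as $L$, and then chooses $c \gg 0$ so that both $L$ and that fibre lie in $\iota_s(V)$, after which Theorem \ref{thm:main} applies inside the image. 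Without selecting a sufficiently large copy of $V$ the reduction fails for a Lagrangian sitting far out in the completion, so this choice is a necessary ingredient rather than a formality. Your treatment of the support statement matches the paper's, which simply applies $\phi^{-T}_\lambda$ to the whole isotopy; your explicit endpoint correction via the path $\{\phi^{-t}_\lambda(L)\}_{t\in[0,T]}$ --- Hamiltonian because $L$ is exact and the Liouville flow preserves exact Lagrangians, and confined to $\hat{\pi}_0^{-1}(O)$ by star-shapedness --- is precisely the point the paper leaves implicit, and the other endpoint needs no correction at all since the skeleton $\hat{\pi}_0^{-1}(0)$ is invariant under the flow.
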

\begin{rmk}
 
We do not know whether it is possible to confine the above Hamiltonian isotopy to the preimage of the star-shaped subset $O \subset \R^2$ without the assumption of $L$ being strongly exact. A direct consequence of Theorem \ref{thm:main} tells us that this stronger result is true at least in case when the preimage $\pi^{-1}_0(O)$ is symplectomorphic to $(V,c\omega_{\OP{FS}})$ for some $c>0.$
\end{rmk}
\begin{proof}
Consider the fibre of $\hat{\pi}_s$ that has the same classical invariants as those of $L,$ and observe that:
\begin{itemize}
\item in the case when $L$ is a sphere, this is the nodal fibre for a uniquely determined value of $s \in \R,$
\item in the case when $L$ is an embedded torus which is not weakly exact, there is a unique such representative up to Hamiltonian isotopy, which moreover appears as fibres in the fibrations $\hat{\pi}_s$ for any choice of $s \in \R,$ and
\item in the case when $L$ is a weakly exact embedded torus, there are precisely two such Hamiltonian isotopy classes, and both can be assumed to be obtained by appropriate action-preserving Lagrange surgeries on the immersed sphere $\hat{\pi}_s^{-1}(0)$ for a uniquely determined value of $s \in \R.$
\end{itemize}

Using Proposition \ref{prp:embedding} one constructs a symplectic embedding $\iota_s \colon (V,c\omega_{\OP{FS}}) \hookrightarrow (\widehat{W},d\lambda)$ for some $c \gg 0$ satisfying the properties that both $L$ and the the Lagrangian fibre of $\hat{\pi}_s$ pinpointed above are contained inside the image of $\iota_s.$ Theorem \ref{thm:main} finally implies that $L$ is Hamiltonian isotopic inside $V$ to a fibre of $\Pi_{s'}$ for some $s' \in (0,\pi/2).$ Since the same is true also for the aforementioned fibre of $\hat{\pi}_s,$ we have thereby managed to produce our Hamiltonian isotopy contained entirely inside $\iota_s(V).$

For the last point, it is sufficient to apply the negative Liouville flow of $\lambda$ to the Hamiltonian isotopy, to make sure that it stays inside the required subset. To that end, note that the Liouville form preserves exact Lagrangian submanifolds, and that it retracts the subset $\hat{\pi}_0^{-1}(O)$ onto the immersed Lagrangian sphere $\hat{\pi}_0^{-1}(0).$ See Section \ref{sec:Liouville} for more details.
\end{proof}

\subsection{A family of Lagrangian fibrations}
\label{sec:LagrangianFibrationsIntro}

Since the work of Y.~Chekanov \cite{Chekanov:LagrangianTori} it has been known that $(B^4,\omega_0)$ admits two types of monotone Lagrangian tori that live in different Hamiltonian isotopy classes but whose classical invariants agree; these are the so-called Clifford and Chekanov tori. These two types of Lagrangian tori can also be realised as weakly exact Lagrangian tori inside $(V,\omega_{\OP{FS}});$ see \cite{Eliashberg:ProblemLagrangian} by Y.~Eliashberg L.~Polterovich, as well as \cite{Auroux:SpecialLagrangian} by D.~Auroux. Furthermore, as shown in the latter article, these tori arise as the leaves of Lagrangian torus fibrations on $\CP^2,$ which also were studied in \cite{Pascaleff:FloerCohomology} by J.~Pascaleff as well as \cite{Vianna:first} by R. Vianna. In particular, the latter article makes use of the convenient language of almost toric fibrations and their deformations as introduced by M.~Symington \cite{Symington:FourDimensions}.

In this subsection we recall an explicit description of a one-parameter family of such Lagrangian fibrations on $(V,\omega_{\OP{FS}}).$ Alternatively, this fibration can be constructed by using the language of almost-toric base diagrams. More precisely, one can deform the standard moment polytope of $B^4$ (i.e.~the standard fibration by product tori) by a so-called nodal trade, followed by a nodal slide; the parameter of the slide induces the parameter in our family of fibrations. The reason for instead choosing the more explicit approach is as follows. First, we want the Lagrangian fibrations to be {\bf compatible} with the standard Lefschetz fibration
\begin{gather*}
f \colon (\CP^2 \setminus \ell_\infty,\omega_{\OP{FS}}) \to \C,\\
(z_1,z_2) \mapsto z_1\cdot z_2,
\end{gather*}
by holomorphic conics in the following sense: the restriction of $f$ to a smooth fibre $L$ of the Lagrangian fibration is a smooth $S^1$-bundle over the simple closed curve $f(L) \subset \C.$ Second, we want the fibration to restrict to a fibration on $V = \CP^2 \setminus (\ell_\infty \cup C),$ i.e.~we want the smooth Lagrangian fibres to be disjoint from the smooth conic $C.$

Before starting, we say a few more words about the Lefschetz fibration $f.$ Note that it has a unique singular fibre
$$ C_{\OP{nodal}} \coloneqq f^{-1}(0)=\ell_1 \cup \ell_2$$
consisting of a union
$$ \ell_1\coloneqq \overline{\C \times \{0\}} \:\:\: \text{and} \:\:\: \ell_2\coloneqq \overline{\{0\} \times \C}$$
of two lines. All conic fibres of $f$ pass through the two points $q_i \coloneqq \ell_i \cap \ell_\infty$ while being tangent to $v_i \coloneqq T_{q_i}\ell_i.$ We will call $f$ a {\bf symplectic Lefschetz fibration}, even if we make no claims concerning the symplectic triviality outside of a compact subset.

Now we are ready to commence with the construction of the Lagrangian fibrations $\Pi_s.$ For convenience we will work with the identification $\widetilde{V} \subset (B^4,\omega_0)$ of $(V,\omega_{\OP{FS}}),$ where the fibration takes the form
\begin{gather*}
\widetilde{f} \coloneqq f \circ \varphi^{-1} \colon B^4 \to \C,\\
(\widetilde{z}_1,\widetilde{z}_2) \mapsto \frac{\widetilde{z}_1\widetilde{z}_2}{1-\|\widetilde{z}_1\|^2-\|\widetilde{z}_2\|^2},
\end{gather*}
where $(\widetilde{z}_1,\widetilde{z}_2)$ are the standard complex coordinates on $B^4 \subset \C^2.$ (This notation is useful for distinguishing the two different complex coordinates $\widetilde{z}_i,z_i \colon V \to \C.$)

Begin by constructing a smooth one-parameter family of diffeomorphisms
$$\Psi_s \colon \C \to \C, \:\: s \in (0,\pi/2)$$
that satisfy
\begin{align*}
& \Psi_s(t)=t, \:\: t \in [0,1],\\
& \overline{\Psi_s(z)}=\Psi_s(\overline{z}),
\end{align*}
and which all are equal to the identity outside of some compact subset (that necessarily depends on the parameter $s$). We moreover demand that they satisfy the following. Let $\eta_c\coloneqq\{\|z-1\|=c\} \subset \C$ be the foliation of $\C \setminus \{1\}$ by concentric circles centered at $1 \in \C,$ where $\eta_1$ is the leaf passing through the origin. Then:
\begin{enumerate}
\item The curve
$$(\Psi_s \circ \widetilde{f})^{-1}(\eta_1) \cap \{\widetilde{z}_1=\widetilde{z}_2\} \subset \widetilde{V} \subset B^4 $$
is an immersion of the form $(\alpha_s,\alpha_s),$ for an immersed figure-8 curve $\alpha_s \subset D_{1/\sqrt{2}}^2$ satisfying $-\alpha_s=\alpha_s.$ We require that the symplectic area of either of the two bounded components of $\alpha_s \subset (D_{1/\sqrt{2}}^2,dx \wedge dy)$ is equal to $s/2 \in (0,\pi/4)$ (i.e.~the symplectic area inside $B^4$ bounded by $(\alpha_s,\alpha_s)$ is equal to $s \in (0,\pi/2)$); and
\item There is a smooth extension of the family to include also diffeomorphisms
\begin{align*}
& \Psi_0 \colon \C \setminus [0,1] \xrightarrow{\cong} \C \setminus D^2(1),\\
& \Psi_{\pi/2} \colon \C \setminus (-\infty,0] \xrightarrow{\cong} B^2(1) \subset \C,
\end{align*}
where $\Psi_0$ is the identity outside of some compact subset, and $\Psi_{\pi/2}$ is the identity near $1 \in \C.$
\end{enumerate}
The dependence of the curves $\gamma_c\coloneqq\Psi_s^{-1}(\eta_c)$ on the parameter $s \in (0,\pi/2)$ is exhibited in Figure \ref{fig:foliation}.
\begin{figure}[htp]
\begin{center}
\vspace{6mm}
\labellist
\pinlabel $0$ at 82 98
\pinlabel $0$ at 277 95
\pinlabel $\gamma_{1+\delta}$ at 259 29
\pinlabel $0$ at 493 95
\pinlabel $1$ at 525 95
\pinlabel $s=\epsilon$ at 90 -14
\pinlabel $\color{blue}\gamma_1$ at 124 80
\pinlabel $1$ at 141 109
\pinlabel $\color{blue}\gamma_1$ at 300 55
\pinlabel $\color{blue}\gamma_1$ at 486 35
\pinlabel $\gamma_{1+\delta}$ at 110 50
\pinlabel $\gamma_{1+\delta}$ at 425 95
\pinlabel $\gamma_{\delta}$ at 311 95
\pinlabel $1$ at 342 95
\pinlabel $\gamma_{\delta}$ at 512 62
\pinlabel $\color{gray}\infty$ at 90 200
\pinlabel $\color{gray}\infty$ at 285 200
\pinlabel $\color{gray}\infty$ at 485 200
\pinlabel $s=1$ at 290 -14
\pinlabel $s=\pi/2-\epsilon$ at 485 -15
\endlabellist
\includegraphics[scale=0.5]{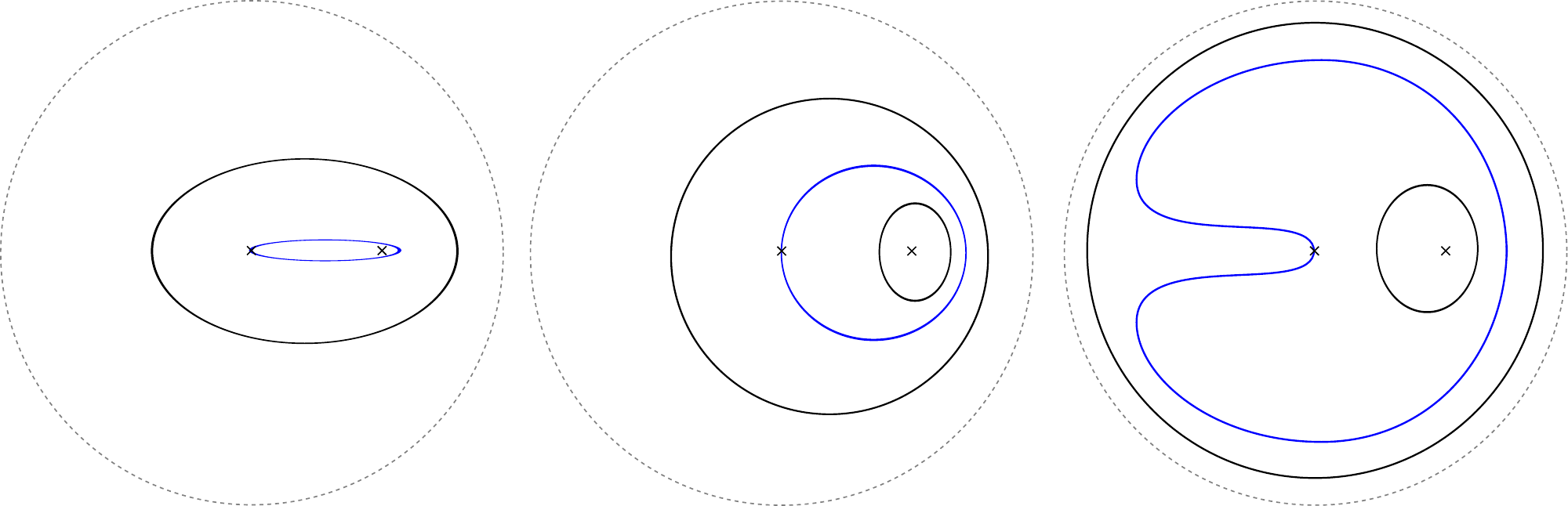}
\vspace{6mm}
\caption{The image $\gamma_c\coloneqq\Psi_s^{-1}(\eta_c)$ of the leaves in the foliation of $\C$ by the concentric circles $\eta_c=\{\|z-1\|=c\}$ for different values of the parameter $s \in (0,\pi/2).$ }
\label{fig:foliation}
\end{center}
\end{figure}

We can now finally define the smooth one-parameter family of Lagrangian fibrations to be
\begin{center}
\boxed{\parbox[t]{13cm}{
\begin{align*}
& \widetilde{\Pi}_s \colon \widetilde{V} \to (-1,1) \times (0,+\infty),\\
& (\widetilde{z}_1,\widetilde{z}_2) \mapsto (\|\widetilde{z}_1\|^2-\|\widetilde{z}_2\|^2,\|\Psi_s(\widetilde{f}(\widetilde{z}_1,\widetilde{z}_2))-1\|^2).
\end{align*}}}
\end{center}
We also write $\Pi_s \coloneqq \widetilde{\Pi}_s \circ \varphi$ for the corresponding fibration on $V.$

It is immediate by the construction that these fibrations are compatible with the standard symplectic Lefschetz fibration $f;$ See Figures \ref{fig:fibration-clifford}, \ref{fig:fibration-chekanov}, and \ref{fig:fibration-sphere}, for a schematic depiction of this. The fibres $\Pi_s^{-1}(u_1,u_2)$ with $u_2 > 1$ and $u_2 < 1$ will be called tori of {\bf Clifford type} and {\bf Chekanov type}, respectively. Note that a torus of Clifford type is fibred over a curve whose winding number around $0\in\C$ is equal to one, while a torus of Chekanov type is fibred over such a curve with zero winding. 

Later we will also make use of the limit case when $s=0.$ In this case, by Property (2) above, we obtain an induced Lagrangian fibration
$$ \Pi_0 \colon V \setminus f^{-1}[0,1] \to (-1,1) \times (1,+\infty),$$
all whose fibres will turn out to be embedded Lagrangian tori.

We summarise the important properties of the Lagrangian fibres of $\Pi_s$ in the following proposition, the proof of which we postpone to Section \ref{sec:mainprpproof}.
\begin{prp}
\label{prp:mainprp}
All fibres of $\Pi_s$ are compact Lagrangian immersions contained inside $(V,\omega_{\OP{FS}}).$ Furthermore, it is the case that:
\begin{enumerate}
\item The fibres $L_{\OP{Wh}}(s)\coloneqq\Pi_s^{-1}(0,1),$ $s\in(0,\pi/2),$ constitute a one-parameter family of weakly exact Lagrangian immersions of the sphere, each having a single transverse double-point and Whitney self-intersection number equal to $+1.$ It is the case that $[L_{\OP{Wh}}(s)] \in H_2(V) \cong \Z$ is a generator. Moreover, the primitive of the pull-back $\lambda_{\OP{std}}|_{TL_{\OP{Wh}}(s)}$ has potential difference equal to $\Delta(L_{\OP{Wh}}(s))=s$ at the double point; two different such spheres are hence not Hamiltonian isotopic.
\item A fibre $\Pi_s^{-1}(u_1,u_2)$ for $(u_1,u_2) \neq (0,1)$ is an embedded Lagrangian torus which is homologous to the generator $[L_{\OP{Wh}}(s)] \in H_2(V),$ and its Maslov class evaluates to zero on any element in $H_2(V,L).$ Such a fibre is weakly exact (and hence also monotone) if and only if $ u_1 = 0.$ Moreover:
\begin{enumerate}
\item A weakly exact torus of Clifford type (resp. Chekanov type) is Hamiltonian isotopic inside $B^4 \supset \widetilde{V}$ to a Clifford torus (resp. Chekanov torus); while
\item Any non weakly exact fibre $\Pi^{-1}_s(u_1,u_2),$ $u_1 \neq 0,$ is Hamiltonian isotopic inside $V$ to a unique fibre of the form $(\Pi_1)^{-1}(u_1,u_2')$ for some appropriate $u_1' \in (0,+\infty).$
\end{enumerate}
\end{enumerate}
\end{prp}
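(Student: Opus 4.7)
The plan is to verify each statement fibre-by-fibre by using the compatibility of $\widetilde{\Pi}_s$ with the Lefschetz fibration $f$, and then to reduce the Hamiltonian-isotopy claims to known identifications of the standard tori. The Lagrangian property of every fibre follows once we observe that the two components of $\widetilde{\Pi}_s$ Poisson-commute: $u_1 = \|\widetilde{z}_1\|^2 - \|\widetilde{z}_2\|^2$ is the moment map for the $S^1$-action $(\widetilde{z}_1, \widetilde{z}_2) \mapsto (e^{i\theta}\widetilde{z}_1, e^{-i\theta}\widetilde{z}_2)$, while $u_2 = \|\Psi_s(\widetilde{f}) - 1\|^2$ is a function of $\widetilde{f}$, which is itself invariant under this action. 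Since a smooth fibre is a $2$-torus fibred over a simple closed curve in $\mathbb{C}$, it is automatically Lagrangian.

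For Part (1), the fibre $L_{\OP{Wh}}(s) = \widetilde{\Pi}_s^{-1}(0,1)$ sits over the curve $\gamma_1 = \Psi_s^{-1}(\eta_1)$, which passes through the critical value $0 \in \mathbb{C}$ of $f$; hence $L_{\OP{Wh}}(s)$ must contain a point of the nodal fibre $\ell_1 \cup \ell_2$, and the balance condition $u_1 = 0$ pins this point down to the origin. The two branches of the immersion at the origin are tangent to $\ell_1$ and $\ell_2$ respectively, and these meet with the canonical complex orientation, whence the Whitney invariant is $+1$. The homology class $[L_{\OP{Wh}}(s)]$ generates $H_2(V) \cong \mathbb{Z}$ because its algebraic intersection with the conic $C$ equals $\pm 1$. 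The potential difference at the double point is computed by integrating $\lambda_{\OP{std}}$ along a path in $L_{\OP{Wh}}(s)$ joining the two preimages of the origin; restricting this path to the slice $\{\widetilde{z}_1 = \widetilde{z}_2\}$ it becomes an arc in the figure-$8$ curve $\alpha_s$, and Stokes together with the area normalisation built into the construction yields precisely $s$. The Hamiltonian non-isotopy of different $L_{\OP{Wh}}(s)$ then follows because this potential difference is a Hamiltonian invariant of a strongly exact Lagrangian immersion with a single transverse double point.

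For Part (2), when $(u_1, u_2) \neq (0,1)$ the curve $\gamma_{\sqrt{u_2}}$ avoids the critical value $0$, so $\widetilde{f}^{-1}(\gamma_{\sqrt{u_2}})$ is a smooth $S^1$-bundle over it inside $\widetilde{V}$, and slicing further by $u_1 = \mathrm{const}$ produces an embedded torus. Continuous variation through the fibration establishes that every fibre is homologous to $[L_{\OP{Wh}}(s)]$. The vanishing of the Maslov class on $H_2(V, L)$ is a standard consequence of the Lagrangian-fibration structure: the framing provided by $d\widetilde{\Pi}_s$ extends across any relative disc in $V$, as one checks on the two generators of $H_1(L)$. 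Weak exactness for $u_1 = 0$ follows from a period computation exploiting the anti-holomorphic involution $(\widetilde{z}_1, \widetilde{z}_2) \mapsto (\overline{\widetilde{z}}_2, \overline{\widetilde{z}}_1)$, which preserves each such fibre while acting on $\pi_2(V, L)$ in a way that forces the $\omega_{\OP{FS}}$-periods to vanish. Conversely, when $u_1 \neq 0$ the symplectic period against a cylinder traced by the anti-diagonal orbit is non-zero.

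The Hamiltonian classification in (2a) is the main obstacle: one must show that the weakly-exact fibres land in the known Hamiltonian isotopy classes of the monotone Clifford torus (for $u_2 > 1$) and the monotone Chekanov torus (for $u_2 < 1$) inside $B^4$. I intend to invoke the explicit almost-toric identifications due to Auroux and Eliashberg--Polterovich, together with the uniqueness of monotone tori of each type under the prescribed topological constraints; varying $s$ within the weakly-exact sub-family additionally produces a zero-flux Lagrangian isotopy, hence Hamiltonian. For (2b), one reparameterises $s \mapsto \widetilde{\Pi}_s^{-1}(u_1, u_2(s))$ so that $u_2(s)$ cancels the flux of the induced Lagrangian isotopy, and the standard flux-to-Hamiltonian correspondence concludes the argument.
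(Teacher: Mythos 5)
Your overall route coincides with the paper's: the Lagrangian property via the Poisson-commutation of $u_1$ and $u_2$ (the paper phrases this as the characteristic foliation of the hypersurfaces $\{\|\widetilde z_1\|^2-\|\widetilde z_2\|^2=u_1\}$, Lemma \ref{lma:CharDist}), the action difference $s$ via the areas of the lobes of the figure-eight $(\alpha_s,\alpha_s)$, an appeal to the literature for (2.a) (the paper cites Gadbled rather than Auroux and Eliashberg--Polterovich, but the content is the same), and a flux-cancelling reparametrisation inside the solid torus $\Sigma_{u_1}$ for (2.b). There are, however, some concrete gaps. First, compactness of the fibres is part of the statement and is not automatic, since the fibres live in the open ball: the paper proves it by showing that $\widetilde f$ is proper on each hypersurface $\Sigma_{u_1},$ $u_1\in(-1,1)$ (a sequence approaching $\partial B^4$ inside $\Sigma_{u_1}$ has both $\|\widetilde z_i\|$ bounded from below, so the numerator of $\widetilde f$ stays bounded away from $0$ while the denominator tends to $0$). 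Your proposal never rules out a fibre escaping to the boundary of the ball.

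Second, your computation of the Whitney self-intersection number rests on a false premise: the two branches of $L_{\OP{Wh}}(s)$ at the origin cannot be tangent to $\ell_1$ and $\ell_2$, because those are complex (hence symplectic) lines while the branches are Lagrangian planes. In the local model $\widetilde f\approx \widetilde z_1\widetilde z_2$ restricted to $\{\|\widetilde z_1\|=\|\widetilde z_2\|\}$, the two branches are the Lagrangian planes $\{\widetilde z_2=e^{ic}\overline{\widetilde z}_1\}$ and $\{\widetilde z_2=-e^{ic}\overline{\widetilde z}_1\}$, where $e^{ic}$ is determined by the tangent direction of $\gamma_1$ at $0$; the sign $+1$ must be read off from the orientation of the direct sum of these two Lagrangian planes, not from any complex orientation. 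Third, Part (2.b) asserts Hamiltonian isotopy to a \emph{unique} fibre of $\Pi_1$, and uniqueness does not follow from your flux argument alone: one needs that a Hamiltonian isotopy preserves the distinguished generator $\mathbf e_0$ of $\ker(H_1(L)\to H_1(V))$, and then the strict monotonicity $\partial_{u_2}A_s>0$ of the action function of Lemma \ref{lma:action} to pin down $u_2'$. A smaller point: the Lagrangian sub-bundle determined by $d\widetilde\Pi_s$ does not extend over the node, so the vanishing of the Maslov class on $H_2(V,L)$ is better extracted from the explicit classes $\mathbf e_0,\mathbf e_1$ together with their intersection numbers with $C$, as in Lemmas \ref{lma:basis0} and \ref{lma:homologybasis}.
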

We point out that the nonzero homology groups for this space are $H_i(V)=\Z$ for $i=0,1,2.$ By Part (2.a) of the above proposition combined with Y.~Chekanov's classical result \cite{Chekanov:LagrangianTori} we deduce the following: Weakly exact (or monotone) tori of Clifford and Chekanov type are never Hamiltonian isotopic, while the non weakly-exact (or non-monotone) such tori all are Hamiltonian isotopic to product tori.

\begin{figure}[htp]
\begin{center}
\vspace{6mm}
\labellist
\pinlabel $f^{-1}(2+\epsilon)$ at 120 116
\pinlabel $f^{-1}(0)$ at 70 116
\pinlabel $iy$ at 107 45
\pinlabel $1$ at 85 22
\pinlabel $2$ at 105 32
\pinlabel $x$ at 145 27
\pinlabel $\color{blue}f(L_{\OP{Cl}})$ at 88 8
\endlabellist
\includegraphics{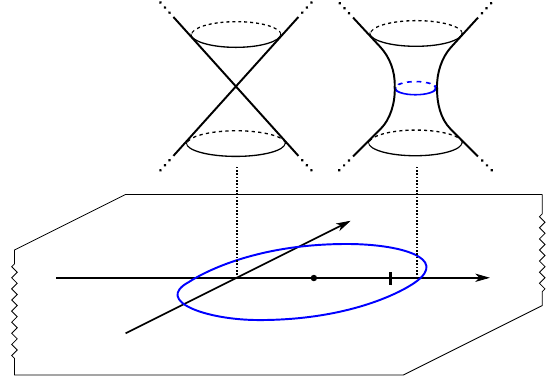}
\caption{A Clifford torus $L_{\OP{Cl}}\coloneqq\Pi_s^{-1}(0,u_2)$ with $u_2 > 1$ is fibred over a curve in the base of the Lefschetz fibration which encircles the critical value.}
\label{fig:fibration-clifford}
\end{center}
\end{figure}

\begin{figure}[htp]
\begin{center}
\vspace{6mm}
\labellist
\pinlabel $f^{-1}(2-\epsilon)$ at 120 116
\pinlabel $f^{-1}(0)$ at 70 116
\pinlabel $iy$ at 107 45
\pinlabel $1$ at 93 22
\pinlabel $2$ at 123 20
\pinlabel $x$ at 145 27
\pinlabel $\color{blue}f(L_{\OP{Ch}})$ at 90 7
\endlabellist
\includegraphics{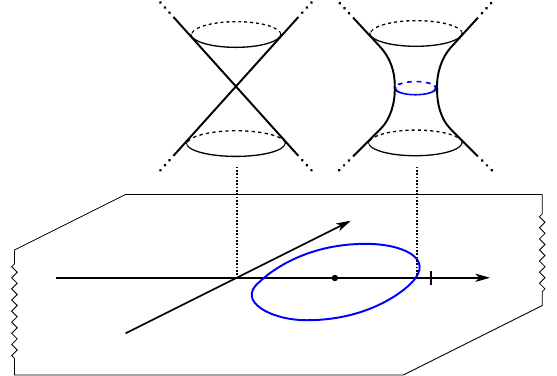}
\caption{A Chekanov torus $L_{\OP{Ch}}\coloneqq\Pi_s^{-1}(0,u_2)$ with $u_2 < 1$ is fibred over a curve in the base of the Lefschetz fibration which does not encircle the critical value.}
\label{fig:fibration-chekanov}
\end{center}
\end{figure}

\begin{figure}[htp]
\begin{center}
\vspace{6mm}
\labellist
\pinlabel $f^{-1}(2)$ at 120 116
\pinlabel $f^{-1}(0)$ at 70 116
\pinlabel $iy$ at 107 45
\pinlabel $1$ at 91 22
\pinlabel $2$ at 123 22
\pinlabel $x$ at 145 27
\pinlabel $\color{blue}f(L_{\OP{Wh}})$ at 100 7
\endlabellist
\includegraphics{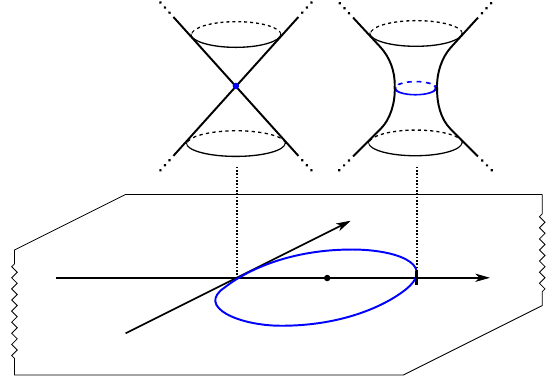}
\caption{A Whitney immersion $L_{\OP{Wh}}\coloneqq\Pi_s^{-1}(0,1)$ is fibred over a curve in the base of the Lefschetz fibration which passes though the critical value.}
\label{fig:fibration-sphere}
\end{center}
\end{figure}
Finally, we make the following comment regarding the apparent non-symmetry that holds between the tori of Clifford and Chekanov type when considered in conjunction with the Lefschetz fibration $f;$ this non-symmetry disappears when one considers the Liouville completion of $V$ as constructed in Section \ref{sec:Liouville}.
\begin{prp}[Propositions \ref{prp:embedding} and \ref{prp:W}]
\label{prp:involution}
 The Liouville completion $(\widehat{W},d\lambda)$ of $(V,\omega_{\OP{FS}})$ admits a global symplectomorphism $I \colon (\widehat{W},d\lambda) \to (\widehat{W},d\lambda)$ which is of order two, fixes the Whitney immersion set-wise, while it interchanges the two sheets at the double-point. Moreover, the involution $I$ interchanges the Hamiltonian isotopy classes of the Clifford and Chekanov tori.
\end{prp}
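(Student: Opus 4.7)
The plan is to construct the involution $I$ from the self-plumbing description of $\widehat{W}$ supplied by Proposition \ref{prp:W}, and then to analyse its action on the Lagrangian torus fibres by means of the almost toric fibration and the embedding of Proposition \ref{prp:embedding}. In that description, $\widehat{W}$ is built from $T^*S^2$ by attaching a single Weinstein handle that glues cotangent neighbourhoods of two antipodal points $p_\pm \in S^2$; the image of the zero section in the quotient is the Whitney immersion $L_{\OP{Wh}}$, and its unique double point is the common image of $p_+$ and $p_-$.

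The key observation is that the antipodal map $\sigma \colon S^2 \to S^2$ satisfies $\sigma(p_+) = p_-$, so that its cotangent lift $T^*\sigma$ is a Liouville involution of $T^*S^2$ which fixes the zero section set-wise while swapping the two plumbing loci. I would perform the handle attachment in a $T^*\sigma$-equivariant manner, which is possible by the essential uniqueness of the model, and let $T^*\sigma$ extend across the handle by the obvious orientation-reversing swap of its two ends. The extension $I \colon (\widehat{W},d\lambda) \to (\widehat{W},d\lambda)$ is then manifestly a global Liouville symplectomorphism of order two. Its restriction to the skeleton agrees with $\sigma$ on the $S^2$ factor, which immediately yields the second assertion: $I$ preserves the Whitney immersion set-wise and, since $\sigma$ swaps $p_\pm$, it interchanges the two sheets meeting at the double point.

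For the exchange of Clifford and Chekanov tori I would analyse the induced action of $I$ on the almost toric fibration $\hat{\pi}_0 \colon \widehat{W} \to \R^2$. Because $I$ is a Liouville symplectomorphism preserving the nodal fibre, it must descend to an integral-affine involution $\bar{I} \colon \R^2 \to \R^2$ that fixes the node at the origin, preserves the invariant line of the monodromy (the branch cut), and reflects across it. Pulling back via the embedding $\iota_s \colon (V, c\omega_{\OP{FS}}) \hookrightarrow \widehat{W}$ of Proposition \ref{prp:embedding} (for $c \gg 0$), the Lagrangian fibration $\Pi_s$ embeds as a sub-fibration of $\hat{\pi}_0$. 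The Clifford type fibres, i.e.~$\Pi_s^{-1}(u_1,u_2)$ with $u_2>1$ which fibre over curves encircling the critical value of the Lefschetz fibration $f$, lie on one side of the branch cut, whereas the Chekanov type fibres ($u_2<1$) lie on the other. Consequently $\bar{I}$, and so $I$, sends a torus of Clifford type to a torus of Chekanov type; combining with Proposition \ref{prp:mainprp}(2.a) and Chekanov's theorem on the distinctness of their Hamiltonian isotopy classes, this gives the interchange of Hamiltonian isotopy classes claimed.

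The principal obstacle is justifying the equivariant handle attachment rigorously: one needs a $\mathbb{Z}/2$-equivariant version of the uniqueness of Weinstein neighbourhoods of the Whitney immersion, which I expect to follow from a relative Moser argument applied to a $\sigma$-symmetric local model in a neighbourhood of $p_\pm$. A secondary point is identifying which half-plane on either side of the monodromy branch cut corresponds to Clifford versus Chekanov fibres; this can be verified by tracking the winding number of the base curves $\gamma_c = \Psi_s^{-1}(\eta_c)$ around the critical value of $f$ under the embedding $\iota_s$, comparing with Figures \ref{fig:fibration-clifford} and \ref{fig:fibration-chekanov}.
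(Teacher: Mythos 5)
Your strategy is the same as the paper's: realise $\widehat{W}$ as the self-plumbing of $T^*S^2$, build the involution from a symmetry of that model, extend it over the Liouville completion, and read off the Clifford/Chekanov exchange from the induced map on the base of the almost toric fibration. The difference is that the paper makes every step explicit, and precisely at the two points where you remain schematic there are gaps. First, the equivariant-plumbing issue you flag as the ``principal obstacle'' is genuine, but the paper sidesteps it entirely: instead of lifting the antipodal map and invoking an equivariant Weinstein theorem, it writes down $I_1((q_1,q_2),(p_1,p_2))=((-p_2,p_1),(q_2,-q_1))$ on $DT^*D^2$ and $I_2((\theta,q),(p_\theta,p_q))=((\theta,-q),(p_\theta,-p_q))$ on $DT^*(S^1\times[-1,1])$, checks $I_2\circ\phi_V=\phi_H\circ I_1$ and $I_2\circ\phi_H=\phi_V\circ I_1$ against the explicit gluing maps, and verifies $I_i^*\lambda=\lambda$ so that the extension to $\widehat{W}$ is forced by commuting with the Liouville flow. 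Your route would work, but the relative Moser argument you defer is exactly the content you would still have to supply; note also that the identification is a plumbing, not a Weinstein handle attachment, and the involution induced on $S^2$ need not literally be the antipodal map (the paper's is $(\theta,q)\mapsto(\theta,-q)$ on the complementary cylinder).

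The more serious gap is in the assertion that $I$ ``must descend to an integral-affine involution $\bar I$ of $\R^2$ that \ldots reflects across'' the branch cut. Two things are unproved here. (i) It is not automatic that an abstractly constructed Liouville involution maps fibres of $\hat\pi$ to fibres of $\hat\pi$ at all; the paper gets this because both $I$ and the fibration $\pi(\mathbf{q},\mathbf{p})=(q_1p_2-q_2p_1,\,q_1p_1+q_2p_2)$ are explicit on the plumbing, giving the identity $\pi\circ I=R\circ\pi$ with $R(u_1,u_2)=(u_1,-u_2)$ by direct computation (Lemma \ref{lma:involution}), after which $\hat\pi$ is extended equivariantly via the Liouville flow. (ii) Even granting fibre-preservation, a fibre-preserving involution fixing the nodal fibre and swapping its sheets could a priori act \emph{trivially} on the base, in which case it would preserve, not exchange, the two sides; ruling this out requires either the explicit computation above or an argument via the orientation reversal of $L_0$ and the mutually exclusive homological bases of Clifford and Chekanov type (Remark \ref{rmk:bases}). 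Your final step --- identifying which half-plane carries which type by tracking winding numbers under $\iota_s$ --- is the right check, but it only becomes meaningful once (i) and (ii) are settled.
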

As shown by Chekanov \cite{Chekanov:LagrangianTori}, we cannot find a similar symplectomorphism defined on on all of $(\CP^2,\omega_{\OP{FS}})$ or $(B^4,\omega_0);$ the Clifford and Chekanov tori cannot be interchanged by a globally defined symplectomorphism there.

\subsection*{Acknowledgements}
I am grateful for having a very stimulating environment at the University of Cambridge where the main ideas leading to this paper were born; thanks to Ivan Smith, Jack Smith, Dmitry Tonkonog, and Renato Vianna for useful discussions and a shared interest in Lagrangian tori. I would also like to thank my former Ph.~D.~advisor Tobias Ekholm for introducing these questions to me during my thesis, and for stressing the importance of the technique of neck stretching in symplectic topology; it is obvious that the latter tool has played a very crucial role here. The author is supported by the grant KAW 2016.0198 from the Knut and Alice Wallenberg Foundation

\section{Properties of the Lagrangian fibration}
\label{sec:LagrangianFibrations}

Here we investigate crucial properties of the family
$$ \Pi_s \colon \widetilde{V} \to (-1,1) \times (0,+\infty), \:\: s \in (0,\pi/2), $$
of fibrations. We will see that the unique singular fibre is the immersed Lagrangian sphere $L_{\OP{Wh}}(s) \coloneqq \Pi_s^{-1}(0,1),$ while all other fibres are smoothly embedded Lagrangian tori.

We start by treating the Lagrangian condition of the fibres, which will turn out to be a direct consequence of the following statement.
\begin{lma}
\label{lma:CharDist}
For any embedded path $\gamma \subset \C$ and $u_1 \in (-1,1),$ the intersection
$$\widetilde{f}^{-1}(\gamma) \cap \{ \|\widetilde{z}_1\|^2-\|\widetilde{z}_2\|^2=u_1\} \subset (B^4,\omega_0)$$
is a Lagrangian submanifold outside of the origin. (The origin is in general a singular point.)
\end{lma}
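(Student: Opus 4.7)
The cleanest approach is to exploit that $H(\widetilde{z}_1,\widetilde{z}_2) \coloneqq \|\widetilde{z}_1\|^2 - \|\widetilde{z}_2\|^2$ generates a Hamiltonian $S^1$-action $(\widetilde{z}_1,\widetilde{z}_2) \mapsto (e^{it}\widetilde{z}_1, e^{-it}\widetilde{z}_2)$ on $(B^4, \omega_0)$ under which $\widetilde{f}$ is manifestly invariant: both the numerator $\widetilde{z}_1 \widetilde{z}_2$ and the denominator $1 - \|\widetilde{z}_1\|^2 - \|\widetilde{z}_2\|^2$ appearing in the formula for $\widetilde{f}$ are $S^1$-invariant, whence $\{H, \widetilde{f}\} = 0$.

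The key consequence is that the Hamiltonian vector field $X_H$ is tangent to both hypersurfaces $H^{-1}(u_1)$ and $\widetilde{f}^{-1}(\gamma)$, and hence to their intersection $N \coloneqq H^{-1}(u_1) \cap \widetilde{f}^{-1}(\gamma)$. Outside the origin---simultaneously the unique zero of $X_H$, the unique critical point of $H$, and the unique fixed point of the $S^1$-action---the vector field $X_H$ is non-vanishing. Assuming the two hypersurfaces meet transversely, so that $N$ is a smooth two-dimensional submanifold, I extend $X_H$ to a basis $(X_H, v)$ of $T_p N$ at each such point; then, using the defining relation $\iota_{X_H}\omega_0 = -dH$ together with $v \in T_p H^{-1}(u_1) \subset \ker dH$, one obtains
\[ \omega_0(X_H, v) = -dH(v) = 0. \]
Since a two-form on a two-dimensional vector space is determined by its value on a single basis pair, this forces $\omega_0|_{T_p N} \equiv 0$, so $N$ is isotropic; being of half the ambient dimension, it is Lagrangian.

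The step requiring the most care is the transversality, which I would establish via the same symplectic-reduction picture. Away from the origin, the $S^1$-action is free on $H^{-1}(u_1)$, making the symplectic quotient $H^{-1}(u_1)/S^1$ a smooth two-dimensional symplectic manifold, and the $S^1$-invariance lets $\widetilde{f}$ descend to a smooth map $\overline{f}$ there. A direct computation in the $S^1$-invariants $r = \|\widetilde{z}_1\|^2 + \|\widetilde{z}_2\|^2$ and $\theta = \arg(\widetilde{z}_1 \widetilde{z}_2)$ shows that $|\overline{f}|$ is strictly monotonic in $r$, so $\overline{f}$ is a local diffeomorphism onto its image. Consequently $\overline{f}^{-1}(\gamma)$ is a smooth one-submanifold of the reduced surface, and its $S^1$-saturation $N$ is therefore a smooth two-submanifold of $B^4$ with transverse defining equations as required. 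The origin, which lies in $H^{-1}(u_1)$ only for $u_1 = 0$ and is the sole point where the reduction itself develops a singularity, is precisely what the statement allows to be excluded.
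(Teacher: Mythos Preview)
Your proof is correct and takes essentially the same approach as the paper: both identify the vector field generating the $S^1$-action $(e^{it}\widetilde{z}_1,e^{-it}\widetilde{z}_2)$ as spanning the characteristic distribution of the level set $\{H=u_1\}$, observe it is tangent to the intersection (the paper phrases this as ``foliated by closed integral curves,'' you as $S^1$-invariance of $\widetilde{f}$), and conclude isotropy from $\omega_0(X_H,v)=-dH(v)=0$. Your symplectic-reduction argument for transversality is additional rigor the paper does not supply; the paper's proof simply asserts the foliation statement without addressing smoothness.
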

\begin{proof}
The characteristic distribution of the hypersurface $\{ \|\widetilde{z}_1\|^2-\|\widetilde{z}_2\|^2=u_1\},$ i.e.~the line field being the kernel of the restriction of $\omega_0,$ is generated by the vector field $\frac{d}{dt}(e^{it}\widetilde{z}_1,e^{-it}\widetilde{z}_2).$ (This can be checked e.g.~by computing the Euclidean gradient of the function $\|\widetilde{z}_1\|^2-\|\widetilde{z}_2\|^2,$ which is equal to $-2i\frac{d}{dt}(e^{it}\widetilde{z}_1,e^{-it}\widetilde{z}_2).$) Since $\widetilde{f}^{-1}(\gamma) \cap \{ \|z_1\|^2-\|z_2\|^2=u_1\}$ is foliated by closed integral curves of the aforementioned characteristic distribution away from the origin, the claim now follows.
\end{proof}
Below we will see that the fibres are compact subsets of $\CP^2 \setminus \ell_\infty.$ The topology of the fibres can then be investigated by hand without too much difficulty; $L_{\OP{Wh}}(s)$ is an immersed sphere having a transverse self-intersection whose Whitney self-intersection number is equal to $+1,$ while the smooth fibres all are tori. (Recall that any smooth Lagrangian foliation by closed surfaces must consist of torus leaves by a version of the Arnol'd--Liouville theorem.)
\begin{lma}
\label{lma:compactfibres}
\begin{itemize}
\item The fibre $\Pi_s^{-1}(u_1,u_2),$ $(u_1,u_2) \neq (0,1),$ is a smooth and compact Lagrangian torus living inside $\widetilde{V} \subset (B^4,\omega_0).$ In fact these tori with fixed $u_1=u_1^0 \neq 0$ provide a smooth foliation of the noncompact hypersurface $\Sigma_{u_1^0} \cap \widetilde{V},$ where
$$ \Sigma_{u_1^0} \coloneqq \{\|\widetilde{z}_1\|^2-\|\widetilde{z}_2\|^2= u_1^0\} \cap B^4$$
is an open solid torus.
\item The fibre $L_{\OP{Wh}}(s)=\Pi_s^{-1}(0,1)$ is a Lagrangian sphere with a single transverse double point of Whitney self-intersection number equal to $+1.$
\end{itemize}
\end{lma}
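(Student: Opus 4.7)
The plan is to write each fibre explicitly as
\[
F_{u_1,u_2} \;\coloneqq\; \widetilde{f}^{-1}(\gamma_{u_2})\cap\Sigma_{u_1}\cap B^4, \qquad \gamma_{u_2}\coloneqq\Psi_s^{-1}(\eta_{u_2}),\ \Sigma_{u_1}\coloneqq\{\|\widetilde{z}_1\|^2-\|\widetilde{z}_2\|^2=u_1\},
\]
and to analyse it using the diagonal $S^1$-action $(e^{it}\widetilde{z}_1,e^{-it}\widetilde{z}_2)$, whose orbits are the characteristic leaves of $\Sigma_{u_1}$ from Lemma~\ref{lma:CharDist} and which preserves $\widetilde{f}$ fibrewise. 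Lagrangianity of $F_{u_1,u_2}$ at all smooth points is then immediate. Since $\Psi_s$ fixes $[0,1]$ pointwise, the curve $\gamma_{u_2}$ is always a compact simple closed curve in $\C$ avoiding $1$ and containing $0$ precisely when $u_2=1$; moreover $|\widetilde{f}|\to\infty$ as one approaches $\partial B^4$, so $F_{u_1,u_2}$ lies in a compact subset of $\widetilde{V}\subset B^4$.

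For the first bullet, I first observe that the origin is the unique critical point of the pair $(g_1,\widetilde{f})$ with $g_1\coloneqq\|\widetilde{z}_1\|^2-\|\widetilde{z}_2\|^2$, and that it lies on $F_{u_1,u_2}$ if and only if $(u_1,u_2)=(0,1)$. When $(u_1,u_2)\neq(0,1)$ the fibre $F_{u_1,u_2}$ is smoothly cut out, the $S^1$-action is free on it, and it acquires the structure of a smooth principal $S^1$-bundle over $\gamma_{u_2}\cong S^1$; the orbit field together with a horizontal lift of the tangent field to $\gamma_{u_2}$ parallelises $F_{u_1,u_2}$, so $F_{u_1,u_2}\cong\T^2$. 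For the foliation statement one checks directly that $\Sigma_{u_1^0}\cap B^4\cong S^1\times D^2$ is an open solid torus (say for $u_1^0>0$, via $\widetilde{z}_1=e^{i\theta}\sqrt{u_1^0+|\widetilde{z}_2|^2}$ with $|\widetilde{z}_2|<\sqrt{(1-u_1^0)/2}$), and that the circles $\eta_{u_2}$, $u_2\in(0,+\infty)$, foliate $\C\setminus\{1\}$; since $\Psi_s$ is a diffeomorphism of $\C$, the curves $\gamma_{u_2}$ also foliate $\C\setminus\{1\}$, and pulling back via $\widetilde{f}|_{\Sigma_{u_1^0}\cap\widetilde{V}}$ (which is a submersion since $\widetilde{f}$ is so away from the origin) produces the desired smooth foliation of $\Sigma_{u_1^0}\cap\widetilde{V}$ by the tori $F_{u_1^0,u_2}$.

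For the second bullet, the fibre $L_{\OP{Wh}}(s)=\Pi_s^{-1}(0,1)$ contains the origin. Away from it, the same $S^1$-bundle analysis yields a smooth Lagrangian cylinder over $\gamma_1\setminus\{0\}\cong(0,1)$. The relations $r_1=r_2$ (as $u_1=0$) and $r_1r_2=|w|(1-r_1^2-r_2^2)$ force $r_1=r_2=\sqrt{|w|/(1+2|w|)}\to 0$ as $w\to 0$ along either end of $\gamma_1\setminus\{0\}$, so both ends of the cylinder collapse to the origin; topologically this is a $2$-sphere with two points identified. To see that the double point is transverse, I would introduce for each branch of $\gamma_1$ at $0$ a local sheet parametrisation of the form
\[
\phi_\theta(a,b)\;=\;(a+ib,\;e^{i\theta}(a-ib)),\qquad (a,b)\text{ small,}
\]
where $\theta$ records the argument of the respective branch tangent in $\C$; this is a linear embedding that extends the orbit picture smoothly across the origin, and the two sheet tangent planes (corresponding to the two values of $\theta$ that differ by $\pi$) are manifestly distinct Lagrangian planes in $\C^2$, whose transversality is verified by an elementary $4\times 4$ determinant.

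The chief obstacle is to pin down the sign of the Whitney self-intersection number at the double point as $+1$. The subtlety is that the smooth sphere structure on $L_{\OP{Wh}}(s)$ is obtained by collapsing the two ends of the cylinder, and the two resulting cap neighbourhoods inherit $(a,b)$-disk orientations that are \emph{opposite} with respect to the global sphere orientation, because the two boundary circles of the cylinder are oppositely oriented as boundaries. Taking this reversal into account and comparing the resulting oriented Lagrangian frames at the origin with the standard orientation of $\C^2$ yields a positive determinant by a direct calculation, so the sign of the double point is $+1$. Alternatively, one constructs an explicit Lagrangian isotopy (e.g.\ by rescaling $|w|$ along $\gamma_1$ towards a rotationally symmetric model) from $L_{\OP{Wh}}(s)$ to a small dilate of the standard Lagrangian Whitney sphere in $\C^2$, whose Whitney self-intersection number is classically $+1$ and is invariant along such isotopies.
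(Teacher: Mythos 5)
Your route is essentially the paper's: Lagrangianity via the characteristic foliation of $\Sigma_{u_1}$ (Lemma \ref{lma:CharDist}), compactness via properness of $\widetilde{f}$, and the topology worked out by hand using the diagonal $S^1$-action. Your $S^1$-bundle argument, the collapsing-cylinder picture, and the explicit pair of transverse Lagrangian planes $\{\widetilde{z}_2=\pm e^{i\theta}\overline{\widetilde{z}}_1\}$ at the double point are precisely the ``investigation by hand'' that the paper leaves to the reader.

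One intermediate claim is false as stated: $|\widetilde{f}|$ does \emph{not} tend to $\infty$ at $\partial B^4$ --- for instance $\widetilde{f}\equiv 0$ along $\{\widetilde{z}_2=0\}$, which reaches the boundary, so $\widetilde{f}^{-1}(\gamma_{u_2})$ by itself is never compact. The correct (and needed) statement, which is exactly the paper's argument, is that $\widetilde{f}$ is proper when restricted to $\Sigma_{u_1}$ with $u_1\in(-1,1)$: along any sequence in $\Sigma_{u_1}$ approaching $\partial B^4$ one has $\|\widetilde{z}_1\|^2\to(1+u_1)/2$ and $\|\widetilde{z}_2\|^2\to(1-u_1)/2$, so the numerator of $\widetilde{f}$ is bounded below while the denominator tends to $0$. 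Since $F_{u_1,u_2}\subset\Sigma_{u_1}$ by definition, this repairs the step with no other change. Two smaller remarks. First, the inference ``$\widetilde{f}$ is a submersion away from $0$, hence so is $\widetilde{f}|_{\Sigma_{u_1^0}}$'' is not automatic for the restriction to a hypersurface; one should check that the conic fibres are nowhere tangent to $\Sigma_{u_1^0}$, which holds because $\|\widetilde{z}_1\|^2-\|\widetilde{z}_2\|^2$ has no critical points along any conic fibre away from the origin. Second, the sign of the double point, which you single out as the chief obstacle, is forced without any orientation chase: the normal bundle of a Lagrangian immersion of $S^2$ is isomorphic to $T^*S^2$, so Whitney's formula relating the normal Euler number to the signed count of double points determines that count a priori, and a single transverse double point must therefore carry the same sign as that of the standard Whitney immersion.
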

\begin{proof}
The Lagrangian condition was proven in Lemma \ref{lma:CharDist}.

The fibres are compact subsets of $B^4,$ since the Lefschetz fibration $\widetilde{f}(\widetilde{z}_1,\widetilde{z}_2)=\frac{\widetilde{z}_1\widetilde{z}_2}{1-\|\widetilde{z}_1\|^2-\|\widetilde{z}_2\|^2}$ is proper when restricted to any subset $\Sigma_{u_1^0}.$ Indeed, for any sequence $\mathbf{z}(k) \coloneqq (\widetilde{z}_1(k),\widetilde{z}_2(k)) \in B^4$ inside $\Sigma_{u_1^0}$ which converges to a point in $\partial D^4$ must satisfy the positive lower bounds
$$\epsilon< \|\widetilde{z}_1(k)\|^2,\|\widetilde{z}_2(k)\|^2 < 1, \:\: k \gg 0,$$
for some $\epsilon>0$ (here we use $u_1^0 \in (-1,1)$). However, such a sequence is now seen to map to an unbounded sequence $\widetilde{f}(\mathbf{z}(k))\in\C,$ since the numerators of $\widetilde{f}(\mathbf{z}(k))$ are positive and bounded from below while the denominators tend to $0.$
\end{proof}

\subsection{Action properties}
\label{sec:ActionProperties}

Recall that $\lambda_{\OP{std}}=x_1dy_1+x_2dy_2$ is the standard Liouville form defined on $(B^4,\omega_0=d\lambda_{\OP{std}}) \supset \widetilde{V}.$ We start with the following action computation for the immersed sphere.
\begin{lma}
\label{lma:whitneyaction}
The Lagrangian sphere $L_{\OP{Wh}}(s)=\Pi_s^{-1}(0,1)$ has an action difference equal to $\Delta=s \in (0,\pi/2)$ at the two preimages of its double point, where the action is computed by taking a primitive of the pullback of the Liouville form $\lambda_{\OP{std}}.$
\end{lma}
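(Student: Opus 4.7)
The plan is to construct an explicit path on $L_{\OP{Wh}}(s)$ connecting the two preimages of the double point, and to compute its symplectic action using a two-dimensional symplectic slice together with Stokes' theorem.

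First I would identify the two preimages $p_1, p_2$ of the double point. On the hypersurface $\Sigma_0 \coloneqq \{\|\widetilde{z}_1\|=\|\widetilde{z}_2\|\} \cap B^4$, write $\widetilde{z}_j = r e^{i\alpha_j}$ and change coordinates to $\theta \coloneqq \alpha_1 + \alpha_2$, $\phi \coloneqq \alpha_1 - \alpha_2$. Then $\widetilde{f}|_{\Sigma_0} = (r^2/(1-2r^2))\, e^{i\theta}$ is independent of $\phi$, while the characteristic $S^1$-action from Lemma \ref{lma:CharDist} shifts $\phi$ and fixes $(r,\theta)$. Consequently $L_{\OP{Wh}}(s)$ is described as an $S^1$-bundle in $\phi$ over the figure-8 curve $\Psi_s^{-1}(\eta_1) \subset \C$, whose $\phi$-fibre degenerates to a single point precisely over the two parameter preimages of the node $0 \in \C$. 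These two collapsed circles are the preimages $p_1, p_2$ of the double point, both mapping to the origin in $B^4$.

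Next I would reduce the action computation to the two-dimensional slice $\Sigma \coloneqq \{\widetilde{z}_1 = \widetilde{z}_2\} \subset B^4$, which corresponds to the $\phi = 0$ section of the above $S^1$-bundle. By Property (1) in the construction of the diffeomorphisms $\Psi_s$, the intersection $\Sigma \cap L_{\OP{Wh}}(s)$ is precisely the diagonal image of the figure-8 immersion $\alpha_s \subset D_{1/\sqrt{2}}^2$. Choose a path $\gamma$ on $L_{\OP{Wh}}(s)$ that traces one of the two bounded loops of $\alpha_s$ inside $\Sigma$; since the loop begins and ends at the nodal point $0$, and the $\phi$-circles over its two figure-8 preimages have been collapsed to $p_1$ and $p_2$, $\gamma$ is a path on the sphere $L_{\OP{Wh}}(s)$ connecting $p_1$ to $p_2$.

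The action difference is then computed by integrating $\lambda_{\OP{std}}$ along $\gamma$. Writing $w = x + iy$ for the complex coordinate on $\Sigma$, one finds $\lambda_{\OP{std}}|_\Sigma = 2 x\,dy$. Since $\gamma$, viewed as a curve in $B^4$, is a closed loop based at the origin, Stokes' theorem yields
\[
\int_\gamma \lambda_{\OP{std}} \;=\; \int_\gamma 2x\,dy \;=\; 2\cdot\bigl(\text{$dx\wedge dy$-area bounded by $\gamma$ in $\Sigma$}\bigr) \;=\; 2 \cdot \tfrac{s}{2} \;=\; s,
\]
where the value $s/2$ is the symplectic-area normalisation stipulated in Property (1). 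The main conceptual step is the first: pinning down the collapsed-$S^1$-bundle picture of $L_{\OP{Wh}}(s)$ and identifying the two preimages of the double point with the two nodal $\phi$-circles. Once this is clear, Stokes' theorem applied in the slice delivers the answer in one line.
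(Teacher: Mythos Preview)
Your argument is correct and is exactly the computation the paper has in mind. The paper does not write out a proof of this lemma; it simply states the result and moves on, relying on the normalisation built into Property~(1) of the construction of $\Psi_s$ (note the parenthetical there: ``i.e.~the symplectic area inside $B^4$ bounded by $(\alpha_s,\alpha_s)$ is equal to $s$''). Your proof unpacks precisely this: the diagonal slice $\{\widetilde z_1=\widetilde z_2\}$ meets $L_{\OP{Wh}}(s)$ in the figure-eight $(\alpha_s,\alpha_s)$, a lobe of which is a closed loop in $B^4$ but an open arc between the two sheets on the abstract sphere, and Stokes' theorem together with $\lambda_{\OP{std}}|_\Sigma = 2x\,dy$ turns the $s/2$ area normalisation into the action difference $s$.
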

We proceed to investigate the symplectic action of the torus fibres. But first, we need to fix the choice of a basis of each such fibre.
\begin{lma}
\label{lma:basis0}
\begin{enumerate}
\item For any Lagrangian torus fibre $L=\Pi_s^{-1}(u_1,u_2)$ there is a canonical choice of generator $\mathbf{e}_0 \in H_1(L)$ of $\ker(H_1(L) \to H_1(V)) \cong \Z$ induced by the inclusion of a fibre, which is determined uniquely by the requirement that
$$\int_{\mathbf{e}_0}\lambda_{\OP{std}} = \pi \cdot u_1$$
is satisfied. Moreover, the Maslov class evaluates to $\mu_L^{\C^2}(\mathbf{e}_0)=0$ on this element.
\item In the case when $L \subset V \setminus C_{\OP{nodal}}$ moreover is satisfied, i.e.~when $u_2 \neq 1,$ then any relative cycle $(D,\partial D) \to (V,L)$ with $[\partial D]=\mathbf{e}_0$ has the property that $D \bullet \ell_i=(-1)^i$ for each of the two lines $f^{-1}(0)=\ell_1 \cup \ell_2$ in the nodal conic.
\end{enumerate}
\end{lma}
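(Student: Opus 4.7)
The plan is to exhibit $\mathbf{e}_0$ explicitly via the Hamiltonian $S^1$-action $\phi_t(z_1,z_2)=(e^{it}z_1,e^{-it}z_2)$, whose orbits are precisely the leaves of the characteristic foliation on $\Sigma_{u_1}$ appearing in Lemma \ref{lma:CharDist}. Since $\phi_t$ preserves both $\|z_1\|^2-\|z_2\|^2$ and $\widetilde f$, it preserves $L$, and for any $p=(\widetilde z_1,\widetilde z_2)\in L$ the class $\mathbf{e}_0\coloneqq[t\mapsto\phi_t(p)]$ has symplectic action
\[
\int_{\mathbf{e}_0}\lambda_{\OP{std}}=\pi\bigl(\|\widetilde z_1\|^2-\|\widetilde z_2\|^2\bigr)=\pi u_1
\]
by a direct computation in polar real coordinates. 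Moreover, since $d\phi_t=\OP{diag}(e^{it},e^{-it})\in U(2)$ has constant determinant $1$, the squared-determinant map on the tangent Lagrangian $T_{\phi_t(p)}L=d\phi_t(T_pL)$ is constant in $t$, yielding $\mu_L^{\mathbb{C}^2}(\mathbf{e}_0)=0$.

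Next I would show that $\mathbf{e}_0$ is primitive in $H_1(L)$ and lies in $\ker(H_1(L)\to H_1(V))$. Primitivity is immediate from $\mathbf{e}_0$ being represented by an embedded simple closed curve (a generic $S^1$-orbit). For the kernel property, connect $p$ inside $V$ to the origin $(0,0)\in\ell_1\cap\ell_2$ along a path projecting via $\widetilde f$ from $\widetilde f(p)$ to $0\in\mathbb{C}$ (the nodal fibre $\ell_1\cup\ell_2$ is contained in $V$), and sweep this path by the $S^1$-action to obtain a cylinder in $V$ whose other boundary is an $S^1$-orbit in $\ell_1$ (or $\ell_2$). The latter is a circle in the affine plane $\ell_i\subset V$ that bounds a disc there, and concatenation produces a disc in $V$ bounding $\mathbf{e}_0$.

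To show the kernel has rank exactly one, hence is generated by $\mathbf{e}_0$ up to sign, I choose a section $\sigma$ of the circle bundle $\widetilde f|_L:L\to\gamma$, where $\gamma\coloneqq(\Psi_s\circ\widetilde f)(L)\subset\mathbb{C}\setminus\{1\}$ is a simple closed curve encircling $1\in\mathbb{C}$ once (since $\Psi_s$ fixes $1$ and $\eta_{\sqrt{u_2}}$ does so). Then $\widetilde f_*(\sigma)=[\gamma]$ is a meridian of $C=\widetilde f^{-1}(1)$, which generates $H_1(V)\cong\mathbb{Z}$ (the standard presentation for the complement of the binodal cubic $\ell_\infty\cup C$ reads $\mathbb{Z}^2/(m_\infty+2m_C=0)=\mathbb{Z}\cdot m_C$). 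Thus $H_1(L)\to H_1(V)$ is surjective and its kernel has rank one, so $\mathbf{e}_0$ generates it up to sign; the sign is pinned down by the action requirement when $u_1\neq 0$, and extended across $u_1=0$ by continuity.

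For Part (2), consider the real-analytic capping
\[
\Delta\colon\overline{\mathbb{D}}\to\mathbb{C}^2,\quad w\mapsto(w\widetilde z_1,\bar w\widetilde z_2),
\]
with $(\widetilde z_1,\widetilde z_2)\in L$ (both coordinates are nonzero on $L$ when $u_2\neq 1$). Its boundary at $w=e^{it}$ reparameterizes $\mathbf{e}_0$ with matching orientation, and it meets $\ell_1=\{z_2=0\}$ and $\ell_2=\{z_1=0\}$ only at the interior point $w=0$. Comparing $T_0\Delta$, spanned by $\partial_u\Delta=(\widetilde z_1,\widetilde z_2)$ and $\partial_v\Delta=(i\widetilde z_1,-i\widetilde z_2)$, against $T_0\ell_i$ and reading off the sign of the $4\times4$ determinant gives $\Delta\bullet\ell_1=-1$ and $\Delta\bullet\ell_2=+1$; the conjugate $\bar w$ in the second coordinate is precisely what produces the sign flip. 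Finally, since $H_2(\mathbb{C}^2,L)\cong H_1(L)$ via $\partial$ (from $H_*(\mathbb{C}^2)=0$ in positive degrees), and since $\ell_i\cap L=\emptyset$, the intersection with $\ell_i$ descends to a well-defined map on $H_1(L)$; hence any relative cycle $(D,\partial D)\subset(V,L)$ with $[\partial D]=\mathbf{e}_0$ satisfies $D\bullet\ell_i=\Delta\bullet\ell_i=(-1)^i$. I expect the main bookkeeping obstacle to be the sign conventions and verifying that the orientation on $\mathbf{e}_0$ fixed by the action computation matches the one induced on $\partial\Delta$.
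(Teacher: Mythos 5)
Your proposal is correct and takes essentially the same route as the paper, whose entire proof consists of the observation that $\mathbf{e}_0$ is represented by the $S^1$-orbit $\theta \mapsto (e^{i\theta}a,e^{-i\theta}b)$ with $(a,b) \in L$, leaving the remaining verifications to the reader. Your write-up supplies exactly those omitted checks (the Maslov computation via $\det(d\phi_t)=1$, the kernel identification, and the explicit capping $(w\widetilde z_1,\bar w\widetilde z_2)$ through the node for the intersection numbers with $\ell_i$), and the sign bookkeeping comes out right.
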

\begin{proof}
The statements are straight forward to check. We simply note that $\int_{\mathbf{e}_0} \lambda_{\OP{std}}=\pi\cdot u_1$ holds if we represent the class $\mathbf{e}_0$ by a closed curve of the form $\theta \mapsto (e^{i\theta}a,e^{-i\theta}b) \in B^4$ for suitable $a,b, \in \C$ satisfying $(a,b) \in L,$ and thus $\|a\|^2-\|b\|^2=u_1.$
\end{proof}
The choice of the homology class $\mathbf{e}_0$ can be seen to vary continuously with the fibres. In the following manner, we then proceed to extend $\mathbf{e}_0$ to (a not globally defined) basis $\langle \mathbf{e}_0,\mathbf{e}_1 \rangle = H_1(L)$ for the fibres, where $\mu_L^{\C^2}(\mathbf{e}_1)=2.$ Recall that, as shown in \cite[Section 4.3]{Symington:FourDimensions}, the bundle of standard tori is nontrivial due to the presence of the fibre with a nodal singularity. For that reason, no global and continuous choice of basis exists; also see Remark \ref{rmk:monodromy} below.
\begin{lma}
\label{lma:homologybasis}
In the following manner we can uniquely specify classes $\mathbf{e}_1 \in H_1(L)$ of the torus fibres such that $\langle \mathbf{e}_0,\mathbf{e}_1 \rangle$ is a basis: $\mathbf{e}_1$ satisfies the property that, for any relative cycle $(D,\partial D) \to (B^4,L)$ with $[\partial D]=\mathbf{e}_1,$ we have:
\begin{enumerate}
\item $D \bullet C=1;$ and
\item \begin{enumerate}
\item In the case $u_2<1$: we have $D \bullet \ell_i=0$ for each of the two lines $\ell_i \subset f^{-1}(0),$ $i=1,2,$ in the nodal conic,
\item in the case $u_2>1$ and $u_1 \ge 0$: we have $D \bullet \ell_1=1$ and $D \bullet \ell_2=0;$ while in the case $u_2>1$ and $u_1 < 0$: we have $D \bullet \ell_2=1$ and $D \bullet \ell_1=0.$
\end{enumerate}
\end{enumerate}
The Maslov class satisfies $\mu_L^{\C^2}(\mathbf{e}_1)=2$ in all cases above.
\end{lma}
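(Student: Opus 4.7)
The plan is to realise the torus fibre $L=\Pi_s^{-1}(u_1,u_2)$ with $(u_1,u_2)\neq(0,1)$ as the total space of a principal $S^1$-bundle
\[
\widetilde{f}\vert_L \colon L \to \gamma_{\sqrt{u_2}}\subset \C\setminus\{1\},
\]
whose structure group is generated by the Hamiltonian $S^1$-action $(e^{i\theta}\widetilde{z}_1,e^{-i\theta}\widetilde{z}_2)$ of Lemma \ref{lma:CharDist}. Since the base is a circle, this bundle is trivial; I take $\mathbf{e}_1 \in H_1(L)$ to be the class of a smooth section $\sigma,$ whose orientation will be fixed below. Any two sections differ by an $S^1$-action, so $\mathbf{e}_1$ is ambiguous up to an additive multiple of $\mathbf{e}_0.$ To see that the stated intersection numbers kill this ambiguity, I will compute
\[
\mathbf{e}_0\bullet C=0,\qquad \mathbf{e}_0\bullet\ell_1=-1,\qquad \mathbf{e}_0\bullet\ell_2=+1:
\]
the first follows because $\mathbf{e}_0$ vanishes in $H_1(V)$ by Lemma \ref{lma:basis0}, while the latter two are read off as winding numbers from the parametrisation $\theta\mapsto(e^{i\theta}a,e^{-i\theta}b)$ of $\mathbf{e}_0$ inside $(\C^*)^2.$ A linear-algebra check then confirms that a class $\mathbf{e}_1$ satisfying the given intersection relations is uniquely determined.

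For existence I will construct an explicit cap $D\subset B^4.$ Let $D'\subset\C$ be the bounded component of $\C\setminus \gamma_{\sqrt{u_2}};$ since $\Psi_s$ is isotopic to the identity, $D'$ always contains $1\in\C$ and contains $0\in\C$ precisely when $u_2>1.$ I first handle $u_1^0>0$ (the $u_1^0<0$ case is symmetric and $u_1=0$ follows by continuity). The hypersurface $\Sigma_{u_1^0}\cap B^4$ meets $\ell_1$ in the smooth circle $\{|\widetilde{z}_1|=\sqrt{u_1^0},\ \widetilde{z}_2=0\}$ and misses $\ell_2$ entirely, and the restriction $\widetilde{f}\vert_{\Sigma_{u_1^0}\cap B^4}\to\C$ is a smooth principal $S^1$-bundle whose fibres over $0$ and $1$ are the single $S^1$-orbits $\Sigma_{u_1^0}\cap\ell_1$ and $\Sigma_{u_1^0}\cap C.$ Hence $\widetilde{f}^{-1}(D')\cap \Sigma_{u_1^0}$ is a smoothly embedded solid torus, in which the section $\sigma$ bounds a smoothly embedded compressing disc $D;$ I orient $\mathbf{e}_1$ so that $\widetilde{f}_*D$ carries the standard orientation of $D'.$ Then $D\bullet C = D\bullet \widetilde{f}^{-1}(1) = +1$ from the positive transverse intersection of the local meridian disc over $1$ with the holomorphic curve $C,$ proving (1). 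When $u_2<1,$ the disc $D$ avoids $\widetilde{f}^{-1}(0)$ entirely since $D'$ does, giving (2.a); when $u_2>1,$ $D$ meets $\widetilde{f}^{-1}(0)=\ell_1\cup\ell_2$ transversally once, and that intersection must lie on $\ell_1$ because $D\subset\Sigma_{u_1^0}$ misses $\ell_2,$ giving (2.b).

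For the Maslov index I will compute the Lagrangian Gauss map directly. With the frame $v_1=(i\widetilde{z}_1,-i\widetilde{z}_2)$ (tangent to $\mathbf{e}_0$) and $v_2=\partial_\theta \sigma$ (tangent to the section) at each point $\sigma(\theta),$ one finds
\[
\det{}_\C(v_1,v_2)=i\widetilde{z}_1\partial_\theta \widetilde{z}_2+i\widetilde{z}_2\partial_\theta \widetilde{z}_1=i\,\partial_\theta(\widetilde{z}_1\widetilde{z}_2),
\]
which is nowhere zero because $T_{\sigma(\theta)}L$ is Lagrangian (hence transverse to $iT_{\sigma(\theta)}L$). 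The Maslov index $\mu_L^{\C^2}(\mathbf{e}_1)$ is therefore twice the degree of $\partial_\theta(\widetilde{z}_1\widetilde{z}_2)\colon S^1\to\C^*,$ i.e.~twice the turning number of the smoothly immersed closed curve $g(\theta)=\widetilde{z}_1\widetilde{z}_2(\theta)=\widetilde{f}(\sigma(\theta))\cdot(1-\|\sigma(\theta)\|^2)$ in $\C^*.$ Since $(1-\|\sigma(\theta)\|^2)>0,$ the curve $g$ is a positive real rescaling of the simple closed curve $\gamma_{\sqrt{u_2}}$ traversed — by the orientation choice giving $D\bullet C=+1$ — counterclockwise. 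Invoking regular-homotopy invariance of the turning number (which is integer-valued and locally constant in the parameters), I evaluate in the Chekanov chamber by letting $u_2\to 0^+$: there $\sigma$ shrinks to a tiny circle near a point of $C,$ $g$ degenerates to a small convex loop of turning number $+1,$ whence $\mu_L^{\C^2}(\mathbf{e}_1)=2.$ The Clifford chamber is handled by the same argument after reducing, via a deformation through smooth torus fibres inside the Clifford chamber, to a convenient representative.

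The main delicate step I anticipate is confirming that the turning number of $g$ equals $+1$ in the Clifford chamber: there the base $\gamma_{\sqrt{u_2}}$ encircles $0\in\C,$ so one must either verify directly that $g$ remains an embedded simple closed curve (permitting a direct appeal to Umlaufsatz) or invoke continuity/regular-homotopy invariance across the chamber to reduce to a computable model — for instance the limit approaching a fibre lying near $\ell_1$ (when $u_1>0$), in which $\sigma$ is a small loop around the circle $\Sigma_{u_1^0}\cap \ell_1$ and the turning number is again immediately $+1.$
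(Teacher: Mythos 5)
The paper offers no argument for this lemma (the verification is ``left to the reader''), so there is nothing to compare against except the surrounding setup; your proposal is a legitimate and essentially complete such verification. The reduction to the $S^1$-bundle picture, the computation of $\mathbf{e}_0\bullet C,$ $\mathbf{e}_0\bullet\ell_i$ via linking numbers, the explicit compressing disc inside the solid torus $\widetilde{f}^{-1}(D')\cap\Sigma_{u_1^0},$ and the identity $\det_{\C}(v_1,v_2)=i\,\partial_\theta(\widetilde{z}_1\widetilde{z}_2)$ together with the nonvanishing coming from total reality are all correct, and the sign conventions check out (the intersection signs are $+1$ because $f$ is a holomorphic submersion near the relevant points and $\widetilde{f}\vert_D$ is orientation-preserving onto $D'$).

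The one step you flag as delicate closes cleanly along your first suggested route, and it is worth recording why: the function $g=\widetilde{z}_1\widetilde{z}_2$ is invariant under the $S^1$-action $(e^{i\theta}\widetilde{z}_1,e^{-i\theta}\widetilde{z}_2)$ and \emph{separates orbits} on $\Sigma_{u_1}$ (from $|g|^2=(u_1+|\widetilde{z}_2|^2)|\widetilde{z}_2|^2$ one recovers $|\widetilde{z}_2|,$ hence the orbit), so the induced map $\Sigma_{u_1}/S^1\to\C$ is a diffeomorphism onto its image, equal to $\bar f$ post-composed with $w\mapsto w\cdot\rho(w)$ for a positive function $\rho=1-\|\cdot\|^2.$ Consequently $g\circ\sigma$ is automatically an embedded simple closed curve in \emph{both} chambers, the Umlaufsatz gives turning number $\pm1,$ and the sign is $+1$ because $w\mapsto w\rho(w)$ preserves rays through the origin and is therefore isotopic to the identity, so it does not reverse the counterclockwise orientation fixed by your normalisation $D\bullet C=+1.$ Two small caveats on your second route: within the family $\Pi_s$ there is no fibre lying over a small circle around $0\in\C$ (all base curves $\gamma_c$ encircle $1$), so to ``reduce to a computable model'' one must leave the family and deform the base curve through arbitrary simple closed curves, which is permissible since Lemma \ref{lma:CharDist} makes every such preimage in $\Sigma_{u_1}$ Lagrangian, hence totally real, hence keeps $g'\neq 0$ throughout the deformation; and the case $u_1=0$ needs either this continuity argument or the remark that $\Sigma_0\setminus\{0\}$ still carries a free $S^1$-action on which $g$ separates orbits.
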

\begin{proof}
The proof consists of an explicit verification, and is left to the reader.
\end{proof}
An immediate consequence of the above two lemmas is that a Lagrangian torus fibre $\Pi_s^{-1}(u_1,u_2)$ is monotone if and only if $u_1 = 0.$

\begin{rmk}
\label{rmk:monodromy}
In the complement of the ray $\Pi_s^{-1}(0,u_2),$ $u_2>1,$ of torus fibres, the basis determined by the above lemmas can be seen to coincide and vary continuously. However, the basis vector $\mathbf{e}_1$ is not unambiguously defined over the monotone tori $\Pi_s^{-1}(0,u_2),$ $u_2>1,$ of Clifford type; for these tori there are the two choices induced by the continuous extension for the tori $u_1 > 0$, and another one being the continuous extension of the basis for the tori with $u_1<0.$ Nevertheless, since these are monotone tori, and since the two different choices of basis differ by a cycle of Maslov index zero, this ambiguity is irrelevant as far as computations of the symplectic action are concerned.
\end{rmk}

Using the basis constructed above we are now ready to describe the global behaviour of the symplectic actions of the different fibres.
\begin{lma}
\label{lma:action}
The symplectic action of a torus $\Pi_s^{-1}(u_1,u_2)$ satisfies
$$ \int_{\mathbf{e}_0} \lambda_{\OP{std}}=\pi\cdot u_1 \:\:\: \text{and} \:\:\: \int_{\mathbf{e}_1} \lambda_{\OP{std}}=A_s(u_1,u_2)$$
for a function
$$A_s \colon (-1,1) \times (0,+\infty) \to (0,\pi/2),$$
which is smooth in all variables $(s,u_1,u_2)$ and which satisfies:
\begin{enumerate}
\item For each fixed $s$ and $u_1,$ we have
$$A_s(\{u_1\} \times (0,+\infty))=(0,\pi(1-|u_1|)/2) \subset \R$$
\item $\partial_{u_2}A_s >0$ away from $(u_1,u_2)=(0,1),$ 
\item $A_s$ has a continuous extension to the compactification $[-1,1] \times [0,+\infty]$ for which the properties
\begin{enumerate}
\item $A_s(u_1,+\infty)=\pi(1-|u_1|)/2,$
\item $A_s(u_1,0)=0,$
\item $A_s(\pm 1,u_2)=0,$ and
\end{enumerate}
\item $\lim_{s \to 0} A_s(u_1,1)=0$ while $\lim_{s \to \pi/2}A_s(u_1,1)=\pi(1-|u_1|)/2.$
\end{enumerate}
See Figure \ref{fig:action}.
\end{lma}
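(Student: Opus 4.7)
The formula $\int_{\mathbf{e}_0}\lambda_{\OP{std}}=\pi u_1$ was established in Lemma \ref{lma:basis0}, so it remains to define $A_s(u_1,u_2)\coloneqq\int_{\mathbf{e}_1}\lambda_{\OP{std}}$ and verify the listed properties. Smoothness of $A_s$ on its entire domain is immediate from Stokes' theorem $A_s(u_1,u_2)=\int_D\omega_0$, combined with the smoothness of the fibration $\Pi_s$ away from the nodal parameter $(0,1)$ and the possibility of choosing a smoothly varying family of representing loops for $\mathbf{e}_1$ in the torus fibres.

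The key analytic input is the monotonicity property (2). The plan is to invoke the flux formula for Lagrangian isotopies: for a smooth family $\{L_t\}$ with infinitesimal generator $X_t$, one has $\partial_t[\lambda_{\OP{std}}|_{TL_t}] = [\iota_{X_t}\omega_0|_{TL_t}]$ as classes in $H^1(L_t;\R)$. Applied with $t=u_2$ and paired with $\mathbf{e}_1$, this expresses $\partial_{u_2}A_s$ as the symplectic area of the infinitesimal cylinder swept out by $\mathbf{e}_1$ as $u_2$ varies. Since the tori fibre under $\widetilde{f}$ over the curves $\gamma_c=\Psi_s^{-1}(\eta_c)$ (with $c=\sqrt{u_2}$) and $\mathbf{e}_1$ realises a section of this $S^1$-bundle, the cylinder projects under $\widetilde{f}$ to the infinitesimal outward annulus swept by $\gamma_c$ as $c$ grows. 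Positivity of $\partial_{u_2}A_s$ then follows from positivity of the reduced symplectic form on the quotient $(\Sigma_{u_1}\setminus\{0\})/S^1\cong \C\setminus\{1\}$ arising from symplectic reduction by the $S^1$-action with moment map $(|\widetilde{z}_1|^2-|\widetilde{z}_2|^2)/2$.

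The remaining properties follow by inspection. For Part (3): $A_s(\pm 1,u_2)=0$ because $\Sigma_{\pm 1}\cap B^4=\emptyset$; $A_s(u_1,0)=0$ because $\gamma_c$ contracts to the point $1\in\C$ as $u_2\to 0$, shrinking the $\mathbf{e}_1$-loop to a point; and $A_s(u_1,+\infty)=\pi(1-|u_1|)/2$ follows by identifying $\mathbf{e}_1$ in this limit as a loop bounding the explicit disc $\{\theta_1=\OP{const}\}$ parametrised by $\widetilde{z}_2\in D^2_{\sqrt{(1-|u_1|)/2}}$ inside $\Sigma_{u_1}\cap B^4$, whose $\omega_0$-area is $\pi(1-|u_1|)/2$ by direct computation. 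Part (1) is then obtained by combining monotonicity with these boundary values. Finally, Part (4) is a consequence of Property (2) in the construction of $\{\Psi_s\}$: as $s\to 0$ the curve $\gamma_1$ collapses into $[0,1]\subset\C$, forcing $A_s(u_1,1)\to 0$; as $s\to\pi/2$ the curve $\gamma_1$ expands to approach the slit $(-\infty,0]\subset\C$, so that the torus approaches the same limit as in the case $u_2\to+\infty$, yielding $A_s(u_1,1)\to\pi(1-|u_1|)/2$. The main technical obstacle is the careful justification of the strict sign of the flux in Step 2, which amounts to carrying out the symplectic reduction on $\Sigma_{u_1}$ and keeping track of orientations between the base curve $\gamma_c$, the outward sweep direction, and the $S^1$-orbits in order to conclude that the resulting reduced form is positive on the annulus in question.
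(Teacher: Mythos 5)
Your plan is correct and follows essentially the same route as the paper: there, too, everything is reduced to a two-dimensional area count in the solid torus $\Sigma_{u_1}$, except that instead of invoking symplectic reduction and the flux formula abstractly, the paper exhibits an explicit global slice — the symplectic discs $D_\theta(u_1)=\{(e^{i\theta}\sqrt{u_1+\|z\|^2},z)\}$ of total area $\pi(1-u_1)/2$, each meeting $\widetilde{C}$ in a single point — in which the torus fibres cut out nested simple closed curves, so that $A_s$ is literally the enclosed area and properties (1)–(3) are read off at once. That explicit slice is precisely what disposes of the one point you defer (the sign of the reduced form and the orientation bookkeeping) and simultaneously produces the limiting value $\pi(1-|u_1|)/2$ in (3a), so you may as well adopt it.
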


\begin{figure}[htp]
\begin{center}
\vspace{6mm}
\labellist
\pinlabel $A_s$ at 84 66
\pinlabel $\pi/2$ at 100 45
\pinlabel $\pi u_1$ at 182 6
\pinlabel $-\pi$ at 9 -2
\pinlabel $\pi$ at 156 -2
\endlabellist
\includegraphics{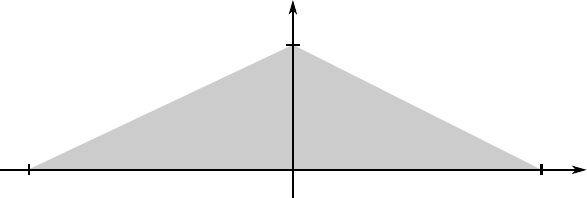}
\vspace{3mm}
\caption{ The image of the values $(\pi u_1,A_s)$ of the symplectic action class evaluated on the pair $(\mathbf{e}_0,\mathbf{e}_1)$ of basis vectors of $H_1(\Pi_s^{-1}(u_1,u_2)).$}
\label{fig:action}
\end{center}
\end{figure}

\begin{proof}
By continuity, it suffices to establish the claims whenever $u_1 \neq 0.$ In fact, the case $u_1=0$ is easy to check by hand since the fibration is sufficiently explicit for those parameters. Furthermore, one can argue by symmetry and restrict to the case $u_1>0.$ We continue to exhibit these torus fibres with additional care.

Recall that the one-parameter family of tori for fixed $s \in (0,\pi/2)$ and $u_1 \neq 0$ provides a foliation of the hypersurface
$$ \Sigma_{u_1} \coloneqq \{\|\widetilde{z}_1\|^2-\|\widetilde{z}_2\|^2=u_1\} \cap \widetilde{V}$$
by Lemma \ref{lma:compactfibres}, where $\Sigma_{u_1}$ is a solid torus. Further, we exhibit an explicit foliation of this solid torus by symplectic discs
$$ D_\theta(u_1)\coloneqq\{(e^{i\theta}\sqrt{u_1+\|z\|^2},z); \:\: 2\|z\|^2+u_1<1 \} \subset B^4$$
of total symplectic area equal to $(\pi/2)(1-u_1).$ (Recall that we here are considering the case $u_1>0$.) Also, note that the smooth conic $\widetilde{C} \coloneqq \widetilde{f}^{-1}(1)$ intersects each symplectic disc $D_\theta(u_1)$ transversely in a single point $(e^{i\theta}\sqrt{u_1+t^2},e^{-i\theta}t)$ for a unique value of $t \in (0,\sqrt{(1-u_1)/2}).$

The one-parameter family of Lagrangian torus fibres $\Pi_s^{-1}(u_1,\cdot)$ for fixed $s$ and $u_1$ can be seen to foliate the solid torus, and also to intersect each symplectic disc $D_\theta(u_1)$ in a foliation by simple closed curves encircling the unique intersection point $D_\theta(u_1) \cap \widetilde{C} = \{p_\theta(u_1)\}\subset D_\theta(u_1).$ (A Lagrangian obviously cannot have a full tangency to a symplectic leaf, and hence it is automatically transverse to each such symplectic disc when considered inside the solid torus.) Since $A_s(u_1,u_2)$ simply is the area inside the symplectic disc bounded by the curve $D_\theta(u_1) \cap \Pi_s^{-1}(u_1,u_2),$ Properties (1), (2) and (3) can now be seen to follow.

(4): This is a consequence of Property (2) satisfied by the family $\Psi_s$ of diffeomorphisms in the construction of $\Pi_s$ given in Section \ref{sec:LagrangianFibrationsIntro}. We argue as above by considering the symplectic area inside the discs foliating the solid torus $\Sigma_{u_1}.$ To that end we note that, for any $\epsilon>0,$ there exists a sufficiently small neighbourhood $U \subset \C$ of either of the subsets $[0,1]$ (the case $s=0$) or $(-\infty,0]$ (the case $s=\pi/2$), such that the discs $D_\theta(u_1)$ intersected with $f^{-1}U$ may be assumed to be of symplectic area bounded from above by $\epsilon>0$ independently of $u_1 \in (-1,1).$
\end{proof}

\subsection{Proof of Proposition \ref{prp:mainprp}}
\label{sec:mainprpproof}
The Lagrangian property for the fibres was shown in Lemma \ref{lma:compactfibres} in the beginning of this section. The claims concerning the action in (1) is a consequence of Lemma \ref{lma:whitneyaction}. The topological considerations can be investigated by hand.

For Property (2.a) we refer to the work of A. Gadbled \cite{Gadbled:OnExotic} concerning the different guises of the Chekanov torus.

Property (2.b) is finally shown with the below lemma, which finishes the proof.\qed

\begin{lma}
Consider any Lagrangian torus fibre $\Pi_s^{-1}(u_1,u_2)$ which is \emph{not} weakly exact, i.e.~with $u_1 \neq 0.$ For any $s' \in (0,\pi/2)$ this Lagrangian torus is Hamiltonian isotopic inside $(V,\omega_{\OP{FS}})$ to a unique torus fibre of the form $\Pi_{s'}^{-1}(u_1,u_2').$
\end{lma}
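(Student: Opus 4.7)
The strategy is to convert the statement into a statement about symplectic action, and then invoke the standard flux-based criterion for a Lagrangian isotopy to be Hamiltonian. Since $u_1 \neq 0$, all the relevant fibres are embedded Lagrangian tori by Lemma \ref{lma:compactfibres}. The basis $(\mathbf{e}_0,\mathbf{e}_1)$ of $H_1(\Pi_s^{-1}(u_1,u_2))$ constructed in Lemmas \ref{lma:basis0} and \ref{lma:homologybasis} is characterised by topological intersection conditions with the fixed cycles $\ell_1,\ell_2,C$, so it varies continuously along any path of embedded torus fibres that remains in $V\setminus C_{\OP{nodal}}$ (which is automatic for $u_2\neq 1$; and when crossing $u_2=1$ with $u_1\neq 0$ there is still no issue since the tori do not meet $C_{\OP{nodal}}$). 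Thus $(\mathbf{e}_0,\mathbf{e}_1)$ identifies $H^1$ of all fibres with $\R^2$ canonically, and the symplectic action class becomes the pair $(\pi u_1, A_s(u_1,u_2))\in\R^2$ by Lemma \ref{lma:action}.

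\textbf{Step 1: choosing the target.} For fixed $u_1\neq 0$ and fixed $s'\in(0,\pi/2)$, Lemma \ref{lma:action}(1)-(3) implies that $A_{s'}(u_1,\cdot)\colon(0,+\infty)\to(0,\pi(1-|u_1|)/2)$ is a smooth bijection with $\partial_{u_2}A_{s'}>0$. Since $A_s(u_1,u_2)\in(0,\pi(1-|u_1|)/2)$, there exists a unique $u_2'\in(0,+\infty)$ with
$$A_{s'}(u_1,u_2')=A_s(u_1,u_2).$$
Uniqueness of $u_2'$ in the statement follows immediately from this strict monotonicity together with the fact that any Hamiltonian isotopy preserves the symplectic action class relative to the (canonically identified) basis.

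\textbf{Step 2: interpolation with constant action.} Choose any smooth path $\sigma\colon[0,1]\to(0,\pi/2)$ with $\sigma(0)=s$ and $\sigma(1)=s'$. The condition $\partial_{u_2}A_\sigma>0$ combined with the implicit function theorem allows one to solve
$$A_{\sigma(t)}(u_1,u_2(t))=A_s(u_1,u_2)$$
smoothly in $t$, producing a smooth path $u_2(t)$ with $u_2(0)=u_2$ and $u_2(1)=u_2'$. The resulting family
$$L_t\coloneqq\Pi_{\sigma(t)}^{-1}(u_1,u_2(t))\subset V$$
is a smooth isotopy of embedded Lagrangian tori, since $u_1\neq 0$ forces $(u_1,u_2(t))\neq(0,1)$ throughout.

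\textbf{Step 3: Hamiltonian generation via flux.} By construction the symplectic action class satisfies
$$[\lambda_{\OP{std}}|_{TL_t}]=(\pi u_1,A_s(u_1,u_2))\in H^1(L_t)\cong\R^2$$
for every $t$, hence the flux-path of $\{L_t\}$ vanishes identically. By the standard criterion recalled in the preliminaries, the isotopy $L_t$ is generated by a compactly supported Hamiltonian on $V$, which is what we wanted.

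The only genuinely non-routine point is the continuity of the basis identification $(\mathbf{e}_0,\mathbf{e}_1)$ along $\{L_t\}$; by Remark \ref{rmk:monodromy} the potentially problematic monodromy is located on the monotone locus $\{u_1=0\}$, which the path $L_t$ avoids, so no ambiguity arises. The remaining ingredients are the monotonicity and surjectivity already established in Lemma \ref{lma:action} and the flux-equals-Hamiltonian principle, so no further work is required.
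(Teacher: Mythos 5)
Your argument is correct and takes essentially the same route as the paper: the paper also produces the isotopy as a family of standard fibres inside the solid torus $\Sigma_{u_1}$ chosen so that the symplectic action stays constant (your Steps 1--2 make the paper's ``suitable family'' explicit via the smoothness and monotonicity of $A_s$ from Lemma \ref{lma:action}), and then concludes by the vanishing-flux criterion, with uniqueness read off from the same action computation. Your write-up is in fact more detailed than the paper's one-paragraph sketch.
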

\begin{proof}
The Hamiltonian isotopy is constructed by hand, by considering a suitable family of Lagrangian tori contained inside the solid torus
$$\Sigma_{u_1} = \{ \|\widetilde{z}_1\|^2-\|\widetilde{z}_2\|^2=u_1\} \cap \widetilde{V}, \:\: u_1 \neq 0,$$
all which can be taken to be standard fibres for different values of $s \in (0,\pi/2).$

For the uniqueness, it is sufficient to note that any Hamiltonian isotopy must preserve the basis element $\mathbf{e}_0$ as constructed in Lemma \ref{lma:basis0}; recall that this is a preferred generator of $\ker(H_1(\T^2) \to H_1(V)) \cong \Z$ induced by the inclusion of the torus. The action computation in Lemma \ref{lma:action} thus implies that any Hamiltonian isotopy connecting two fibres $\Pi_s^{-1}(u_1,u_2)$ and $\Pi_{s'}^{-1}(u_1',u_2')$ must satisfy $u_1=u_1'.$ The uniqueness of the parameter $u_2'$ is also a consequence of Lemma \ref{lma:action}.
\end{proof}

\section{Standard neighbourhood of a sphere with self-intersection number $+1$}
\label{sec:SelfPlumbing}

In this section we give a careful description of the symplectic geometry of the standard neighbourhood of a Lagrangian sphere with a single transverse double point. In dimension $2k$ there are two different such Lagrangian spheres up to diffeomorphism; the two cases are determined by their Whitney self-intersection number which can be either $I=\pm 1.$ Note that the situation is different in odd dimensions, where the local model for a Lagrangian sphere with a single transverse self-intersection is unique. Here we are only interested in the case when $k=1$ and the self-intersection number is equal to $+1.$ This is the only sphere which appears as an isolated singular fibre in a Lagrangian \emph{torus} fibration.

We present the neighbourhood of the immersed sphere as a self-plumbing of the cotangent bundle of an embedded sphere. We moreover show that it naturally carries the structure of a Liouville domain, which thus can be completed to a Liouville manifold. We also construct a Lagrangian torus fibrations on this space, together with an induced symplectic embedding of $(V,\omega_{\OP{FS}})$ that preserves the torus fibres.

\subsection{The self-plumbing of the cotangent bundle of a sphere}
We commence with construction of the symplectic manifold of interest. First, consider the standard symplectic unit cotangent bundle
$$ DT^*D^2 \coloneqq \{\|\mathbf{q}\| \le 1, \: \|\mathbf{p}\| \le 1\} \subset (T^*\R^2,d\lambda_{\R^2})$$
of the unit disc. This is a smooth manifold with boundary with corners, where
\begin{align*}
& \partial DT^*D^2 = \partial_VDT^*D^2 \: \cup \: \partial_H DT^*D^2,\\
& \partial_V DT^*D^2 \coloneqq DT_{\partial D^2}^*D^2 = \{ \|\mathbf{q}\| = 1, \| \mathbf{p}\| \le 1 \},\\
& \partial_H DT^*D^2 \coloneqq ST^*D^2 = \{ \|\mathbf{q}\|\le 1, \| \mathbf{p}\|=1 \}.
\end{align*}
It is also convenient to use the canonical identification with the corresponding subset
\begin{gather*}
DT^*D^2 \hookrightarrow \C^2,\\
(\mathbf{q},\mathbf{p}) \mapsto i\mathbf{q}+\mathbf{p},
\end{gather*}
of the complex plane.

We also need the unit cotangent bundle
$$DT^*(S^1 \times [-1,1]) \coloneqq \{ \| \mathbf{p}_{\theta,q} \| \le 1 \} \subset (T^*(S^1 \times [-1,1]),d\lambda_{S^1 \times [-1,1]})$$
of the cylinder, with the coordinate $\mathbf{p}_{\theta,q}=(p_\theta,p_q)$ on the cotangent fibres induced by the standard coordinates $(\theta,q) \in S^1 \times [-1,1].$ Note that we here have made implicit use of the flat product metric on $S^1 \times [-1,1]$ when speaking about the unit co-disc bundle.

For some small $\epsilon>0$ we consider the open neighbourhoods
\begin{align*}
& \{\mathbf{q}=(q_1,q_2)=r(\cos \theta,\sin \theta), \: r \in [1-\epsilon,1+\epsilon]\} \subset DT^*D_{1+\epsilon}^2,\\
& \{\mathbf{p}=(p_1,p_2)=r(\cos \theta,\sin \theta), \: r \in [1-\epsilon,1+\epsilon]\} \subset D_{1+\epsilon}T^*D^2,
\end{align*}
of the pieces $\partial_V DT^*D^2$ and $\partial_H DT^*D^2$ of the boundary, respectively. There are symplectic inclusions
$$\phi_V \colon \{\mathbf{q}=r(\cos \theta,\sin \theta), \: r \in [1-\epsilon,1+\epsilon]\} \hookrightarrow D_2T^*(S^1 \times [-1-\epsilon,-1+\epsilon])$$
defined by
\begin{align*}
& (\mathbf{q},\mathbf{p}) \mapsto \\
& ((\theta,-1+(r-1)),(p_\theta=r(-\sin \theta,\cos\theta)\bullet \mathbf{p},p_q=(\cos \theta,\sin\theta)\bullet \mathbf{p})),
\end{align*}
as well as
$$\phi_H \colon \{\mathbf{p}=r(\cos \theta,\sin \theta), \: r \in [1-\epsilon,1+\epsilon]\} \hookrightarrow D_2T^*(S^1 \times [1-\epsilon,1+\epsilon])$$
defined by
\begin{align*}
& (\mathbf{q},\mathbf{p}) \mapsto \\
& ((\theta+\pi/2,1-(r-1)),(p_\theta=-r(-\sin \theta,\cos\theta)\bullet \mathbf{q},p_q=(\cos \theta,\sin\theta)\bullet \mathbf{q})).
\end{align*}
Note that the restrictions
\begin{align*}
& \phi_V|_{\partial_V DT^*D^2} \xrightarrow{\cong } DT_{\{q=-1\}}^*(S^1 \times [-1,1]),\\
& \phi_H|_{\partial_H DT^*D^2} \xrightarrow{\cong } DT_{\{q=1\}}^*(S^1 \times [-1,1]),
\end{align*}
are diffeomorphisms

Using the above symplectomorphisms it is possible to perform the gluing
$$W=\frac{DT^*D^2 \: \sqcup \: D_\epsilon T^*(S^1 \times [-1,1])}{x \sim \phi_V(x), \:\: x \sim \phi_H(x)}, \:\: 0<\epsilon\ll 1,$$
producing a symplectic manifold $(W,\omega).$ This symplectic manifold is naturally identified with the self-plumbing of a two-sphere. In Section \ref{sec:Liouville} below we will exhibit a natural primitive of the symplectic form, giving it the structure of a Liouville domain.

Observe that
$$L_0 \coloneqq 0_{D^2} \: \cup \: DT^*_{(0,0)} D^2 \: \cup \: 0_{S^1 \times [-1,1]} \subset (W,\omega)$$
is the Lagrangian immersion of a sphere with one transverse double point
$$\{ \mathbf{q}=0=\mathbf{p} \} = 0_{D^2} \cap DT^*_{(0,0)}D^2.$$
The Whitney self-intersection number of this sphere can be computed to be equal to $+1$. 

There is a symplectomorphism
\begin{gather*}
I_1 \colon (DT^*D^2,d\lambda_{\R^2}) \xrightarrow{\cong} (DT^*D^2,d\lambda_{\R^2}),\\
I_1((q_1,q_2),(p_1,p_2))=((-p_2,p_1),(q_2,-q_1)),
\end{gather*}
that satisfies $I_1^2=\id_{DT^*D^2}$. Likewise, the symplectomorphism
\begin{gather*}
I_2 \colon (DT^*(S^1 \times [-1,1]),d\lambda_{S^1 \times [-1,1]}) \xrightarrow{\cong} (DT^*(S^1 \times [-1,1]),d\lambda_{S^1 \times [-1,1]}),\\
 I_2((\theta,q),(p_\theta,p_q))=((\theta,-q),(p_\theta,-p_q))
\end{gather*}
is of order two, i.e.~$I_2^2=\id_{DT^*(S^1 \times [-1,1])}$. Since one can check that
$$I_2 \circ \phi_V=\phi_H \circ I_1 \:\:\:\:\text{and}\:\:\:\: I_2 \circ \phi_H=\phi_V \circ I_1.$$
is satisfied, we obtain an induced symplectomorphism $I$ of $(W,\omega)$ whose unique fixpoint is the origin $(0,0) \in DT^*D^2.$ To summarise:
\begin{prp}
\label{prp:involution2}
The induced symplectomorphism
$$ I \colon (W,\omega) \to (W,\omega) $$
is of order two, fixes the immersed Lagrangian sphere $L_0$ setwise, i.e.~$ I(L_0) = L_0,$ while reversing its orientation as well as the two sheets at its unique double point.
\end{prp}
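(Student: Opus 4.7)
The plan is to verify each of the four assertions by direct computation on the three pieces of the plumbing decomposition of $W$, using the explicit formulas for $I_1, I_2$ and the compatibility identities $I_2 \circ \phi_V = \phi_H \circ I_1$ and $I_2 \circ \phi_H = \phi_V \circ I_1$ stated just before the proposition.

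First I would observe that well-definedness of $I$ on the quotient space $W$ is an immediate consequence of these two compatibility identities: they are precisely the statement that the gluing relations $x \sim \phi_V(x)$ and $x \sim \phi_H(x)$ are respected by the pair $(I_1, I_2)$, hence descend to a single map $I$ on $W$. The order-two property $I^2 = \id_W$ then reduces to $I_1^2 = \id_{DT^*D^2}$ and $I_2^2 = \id_{DT^*(S^1 \times [-1,1])}$, both of which are transparent from the explicit formulas.

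Next I would verify the setwise invariance $I(L_0) = L_0$ piece by piece. On $DT^*D^2$, the formula for $I_1$ immediately gives $I_1(0_{D^2}) = DT^*_{(0,0)}D^2$ and, symmetrically, $I_1(DT^*_{(0,0)}D^2) = 0_{D^2}$, so the union of these two local sheets is preserved. On the cylinder piece, the zero section $0_{S^1 \times [-1,1]}$ is clearly preserved by $I_2$, since the restriction is $(\theta,q) \mapsto (\theta,-q)$. Compatibility of these two actions across the gluing is automatic from the identities already invoked. As a byproduct of this analysis, the origin of $DT^*D^2$, which is the unique double point of $L_0$, is a fixed point of $I$, and $I_1$ visibly interchanges the two local sheets $0_{D^2}$ and $DT^*_{(0,0)}D^2$ meeting there, establishing the claim about the double point.

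For orientation reversal on the underlying smooth sphere $\tilde{L}_0 \cong S^2$ it is enough to check the local behaviour on any connected open subset. The open cylindrical piece coming from $0_{S^1 \times [-1,1]}$ is the most convenient: there the restriction $(\theta, q) \mapsto (\theta, -q)$ is manifestly orientation-reversing on the annulus, and by connectedness the global action on $\tilde{L}_0$ is then orientation-reversing as well. The main obstacle is really just the bookkeeping across the three pieces of the plumbing decomposition and the two gluing maps; I do not anticipate any essential difficulty beyond this careful but elementary verification.
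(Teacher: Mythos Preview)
Your proposal is correct and follows exactly the approach the paper intends: the proposition is stated in the paper without a separate proof, prefaced by ``To summarise:'', so it is meant to be read as an immediate consequence of the explicit formulas for $I_1,I_2$ and the compatibility identities $I_2\circ\phi_V=\phi_H\circ I_1$, $I_2\circ\phi_H=\phi_V\circ I_1$ just established. Your piece-by-piece verification of each assertion is precisely the content the paper leaves to the reader.
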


Weinstein's symplectic neighbourhood theorem \cite{Weinstein:Lagrangian} (also, see \cite{McDuff:Introduction}) shows that any Lagrangian submanifold $L$ has a neighbourhood symplectomorphic to a neighbourhood of the zero-section of $(T^*L,d\lambda_L)$ of the cotangent bundle equipped with the standard symplectic form, while moreover identifying the Lagrangian with the zero-section. It is well-known that the result generalises to establish a standard symplectic neighbourhood also of a Lagrangian immersion with transverse double-points; see e.g.~\cite[Proposition 4.8]{Symington:FourDimensions}. In particular, it is the case that
\begin{prp}
\label{prp:Weinstein}
For any two-dimensional Lagrangian immersion $L \subset (W',\omega')$ of a sphere with a single transverse double-point and whose Whitney self-intersection number equal to $+1$, there is a symplectic embedding
$$ \phi \colon (U,\omega') \hookrightarrow (W,\omega)$$
of a neighbourhood $U \subset W'$ of $L$ which moreover satisfies $\phi(L)=L_0.$
\end{prp}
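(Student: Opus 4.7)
The plan is to split $L$ into a neighbourhood of its double point and the complementary embedded annulus, apply a normal form for each piece, and then glue the two identifications using a relative Weinstein/Moser argument.

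\textbf{Step 1 (Normal form near the double point).} Let $p \in L$ denote the transverse double point, and let $\Lambda_1, \Lambda_2 \subset L$ be the two local Lagrangian sheets meeting transversely at $p$. Choose a Darboux chart $\Psi \colon U_p \to (\C^2,\omega_0)$ centred at $p.$ The symplectic linear group $\Sp(4,\R)$ acts transitively on ordered pairs of transverse Lagrangian planes in $(\R^4,\omega_0);$ after post-composing $\Psi$ with a suitable element of $\Sp(4,\R)$ we may therefore assume that $d\Psi(T_p\Lambda_1)$ and $d\Psi(T_p\Lambda_2)$ coincide with the two coordinate Lagrangians $\{\mathbf{p}=0\}$ and $\{\mathbf{q}=0\}$ inside $DT^*D^2 \subset \C^2$. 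A further application of the standard Weinstein theorem applied to each sheet separately upgrades this tangential identification to an actual symplectic identification of a neighbourhood of $p$ with a neighbourhood of the origin in $DT^*D^2$, sending $\Lambda_1 \cup \Lambda_2$ to a neighbourhood of the origin in the union $0_{D^2} \cup DT^*_{(0,0)}D^2 \subset L_0$. The assumption that the Whitney self-intersection number is $+1$ fixes the orientations of the sheets so as to match those of the corresponding pieces of $L_0$ (up to relabelling the sheets, which we permit, or applying the involution $I$ from Proposition~\ref{prp:involution2}).

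\textbf{Step 2 (Weinstein for the complementary annulus).} Choose a yet smaller concentric Darboux ball $V_p \subset U_p$ such that $V_p \cap L$ is a pair of small embedded Lagrangian discs. The complement $C \coloneqq L \setminus V_p$ is an embedded Lagrangian annulus diffeomorphic to $S^1 \times [-1,1]$. The classical Weinstein neighbourhood theorem provides a symplectic embedding of a tubular neighbourhood $N(C) \subset W'$ into $(DT^*(S^1 \times [-1,1]),d\lambda_{S^1 \times [-1,1]})$ that maps $C$ onto the zero section. By composing this embedding with an appropriate fibrewise linear symplectomorphism we can moreover arrange that on each of the two boundary circles of $C$ the identification matches the corresponding boundary trivialisations coming from $\phi_V$ and $\phi_H$ in Step~1.

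\textbf{Step 3 (Gluing).} The embeddings produced in Steps~1 and~2 agree on $L$ itself over the two overlap annuli $U_p \cap N(C)$, since both send $L$ to $L_0$, and by construction they induce the same isomorphism of symplectic normal bundles there. The discrepancy between the two local symplectic embeddings on the overlap is therefore a symplectomorphism of a neighbourhood of an annulus in $DT^*(S^1 \times [-1,1])$ that is the identity on the annulus itself and induces the identity on the symplectic normal bundle. A standard Moser/relative-Weinstein argument (as in the proof of the ordinary Lagrangian neighbourhood theorem, carried out in a neighbourhood of each overlap annulus rel the zero section) allows us to absorb this discrepancy: we isotope the embedding of Step~2 through symplectic embeddings, supported in a slightly larger neighbourhood of the overlap annuli, so that the two embeddings agree on an open overlap. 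The two then glue to a single symplectic embedding $\phi \colon U \hookrightarrow W$ sending $L$ to $L_0$.

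\textbf{Main obstacle.} The only nontrivial point is Step~3: ensuring that, after the local normalisations of Steps~1 and~2, the two symplectic embeddings can actually be matched along the overlap annuli. This is a purely local question about extending a symplectomorphism of a neighbourhood of a Lagrangian annulus that is the identity on the annulus, and is handled by the parametric Moser trick; however one must be careful to maintain the image in $W$ during the interpolation, which is why $V_p$ needs to be chosen strictly smaller than $U_p$ so that genuine collar neighbourhoods of the overlap exist on both sides.
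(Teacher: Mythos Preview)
Your proof is correct and is exactly the standard route to the generalized Weinstein neighbourhood theorem for immersions. The paper itself gives no proof: it simply states the proposition as an immediate consequence of the well-known generalization of Weinstein's theorem to Lagrangian immersions with transverse double points, citing \cite[Proposition~4.8]{Symington:FourDimensions}, so there is nothing further to compare.

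One small remark on where the $+1$ hypothesis actually bites: in Step~1 it is not needed to standardise the local picture at the double point, since $\Sp(4,\R)$ already acts transitively on ordered pairs of transverse Lagrangian planes. The sign genuinely enters in the gluing of Steps~2--3: it determines the relative framing with which the complementary annulus attaches to the two boundary circles of the discs from Step~1, and only for self-intersection $+1$ does this framing match that of $L_0$ (for $-1$ the same argument would identify a neighbourhood with the other self-plumbing of $T^*S^2$). Your phrasing in terms of orientations is compatible with this, but it may be clearer to locate the obstruction at the gluing stage.
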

Later in Proposition \ref{prp:embedding} this result is extended to also preserve locally defined Lagrangian torus fibrations.

\subsection{Extending the neighbourhood to a complete Liouville manifold}
\label{sec:Liouville}
Here we construct a suitable Liouville form defined on all of $(W,\omega)$ by interpolating between natural Liouville forms on the pieces $DT^*D^2$ and $DT^*(S^1 \times [-1,1]).$ First, consider the Liouville form
$$\lambda_0\coloneqq\frac{1}{2}\sum_{i=1}^2(p_i\,dq_i-q_i\,dp_i)=\lambda_{\R^2}-\frac{1}{2}d\left(\sum_{i=1}^2q_ip_i\right)$$
defined on $T^*\R^2$ and which clearly satisfies $I_1^*\lambda_0=\lambda_0.$ 

Fix a smooth function $h \colon [-1,1] \to \R$ that satisfies
\begin{itemize}
\item $h(-q)=-h(q),$
\item $h(q)=2+q$ near $q=-1$ and $h(q)=-2+q$ near $q=1,$
\item $h'(q)<2$ for $q \in [-1,1],$ $h'(q) \ge 0$ for $q \le -2/3,$ $h'(q)=0$ for $q \in [-2/3,-1/3],$ and $h'(q) \le 0$ for $q \in [-1/3,0].$
\end{itemize}
In particular, we note that $h(q)$ is necessarily non-vanishing outside the subset $[-1/3,1/3],$ while it is constant on $\{ q;\: 1/3 \le |q| \le 2/3\}.$

Then, using the function $h,$ we construct the Liouville form
$$\lambda\coloneqq\lambda_{S^1 \times [-1,1]}-\frac{1}{2} d(h(q)p_q)=p_qdq+p_\theta d\theta-\frac{1}{2}d(h(q)p_q)$$
defined on $T^*(S^1 \times [-1,1]),$ which thus also clearly satisfies $I_2^*\lambda_{S^1 \times [-1,1]}=\lambda_{S^1 \times [-1,1]}.$ The Liouville vectorfield induced by $\lambda$ can be seen to be given by
\begin{equation}
\label{eq:liouville}
 \zeta_\lambda \coloneqq p_\theta \partial_{p_\theta}+p_q\left(1-\frac{1}{2}h'(q)\right)\partial_{p_q}+\frac{1}{2}h(q)\partial_q,
\end{equation}
and we use $\phi^t_{\lambda}$ to denote the corresponding Liouville flow. (This flow is well-defined at least for negative times $t \le 0.$)

\begin{lma}
We have
\begin{gather*}
\phi_V^*\lambda=\lambda_0,\\
\phi_H^*\lambda=\lambda_0,
\end{gather*}
and these Liouville form thus combines to a smooth Liouville form $\lambda$ on all of $(W,\omega=d\lambda)$ invariant under the symplectomorphism $I.$
\end{lma}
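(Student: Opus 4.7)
My plan is to verify the three claims by direct computation in adapted coordinates, exploiting the key identity
\[\lambda_0 \;=\; \lambda_{\R^2} - \tfrac{1}{2}\,d\!\left(\textstyle\sum_i q_i p_i\right) \;=\; -\sum_i q_i\, dp_i + \tfrac{1}{2}\,d\!\left(\textstyle\sum_i q_i p_i\right)\]
which expresses $\lambda_0$ equally well as $\lambda_{\R^2}$ minus an exact form, or as ``$-\lambda_{\R^2}$-with-roles-swapped'' plus the same exact form. Each of the two gluing maps is tailor-made for one of these two expressions.

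First I will compute $\phi_V^*\lambda$. On the neighbourhood of $\partial_V DT^*D^2$, introduce polar coordinates $\mathbf{q}=r(\cos\theta,\sin\theta)$; then $\phi_V$ sends these to the coordinates $(\theta,q=r-2,p_\theta,p_q)$ with $p_\theta$ and $p_q$ the tangential and radial components of $\mathbf{p}$, as defined in the excerpt. The standard polar identity $p_1\,dq_1 + p_2\,dq_2 = p_q\,dr + p_\theta\,d\theta$ immediately shows $\phi_V^*(p_q\,dq+p_\theta\,d\theta)=\lambda_{\R^2}$. Next, because $\phi_V$ lands near $q=-1$ where $h(q)=2+q$, we get $\phi_V^*(h(q)p_q) = r\cdot p_q = q_1p_1+q_2p_2$, so the extra exact piece matches perfectly and $\phi_V^*\lambda=\lambda_0$.

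Second, I will compute $\phi_H^*\lambda$. This time polar coordinates should be introduced on the $\mathbf{p}$-plane: $\mathbf{p}=r(\cos\theta,\sin\theta)$. Now $\phi_H$ lands near $q=1$ where $h(q)=-2+q$, and the same polar-coordinate computation (with the roles of $\mathbf{q}$ and $\mathbf{p}$ interchanged, and attention to the minus signs in $q=2-r$ and in the formulas for $p_\theta,p_q$) gives $\phi_H^*(p_q\,dq+p_\theta\,d\theta)=-\sum_i q_i\,dp_i$, while $\phi_H^*(h(q)p_q) = -r\cdot p_q = -(q_1p_1+q_2p_2)$, so that $-\tfrac{1}{2}d(h(q)p_q)$ pulls back to $+\tfrac{1}{2}d(\sum q_ip_i)$. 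Using the second presentation of $\lambda_0$ above, this sums to $\lambda_0$. The sign flip between the two cases, which at first looks dangerous, is precisely what makes both sides agree with $\lambda_0$; this is the one computation I expect to be slightly delicate and it is where the specific normalisation $\lambda = \lambda_{S^1\times[-1,1]}-\tfrac12 d(h(q)p_q)$ (as opposed to the bare $\lambda_{S^1\times[-1,1]}$, which would \emph{not} extend) is essential.

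Having checked both gluing identities, $\lambda_0$ on $DT^*D^2$ and $\lambda$ on $D_\epsilon T^*(S^1\times[-1,1])$ combine to a well-defined smooth one-form on the quotient $W$, still denoted $\lambda$, with $d\lambda=\omega$. For $I$-invariance it suffices to check separately that $I_1^*\lambda_0=\lambda_0$ and $I_2^*\lambda=\lambda$, since $I$ is defined on $W$ by gluing $I_1$ and $I_2$ via the relations $I_2\circ\phi_V=\phi_H\circ I_1$ and $I_2\circ\phi_H=\phi_V\circ I_1$ stated in the excerpt. The identity $I_1^*\lambda_0=\lambda_0$ follows at once from the antisymmetric form $\lambda_0=\tfrac12\sum(p_i\,dq_i-q_i\,dp_i)$ together with the fact that $I_1$ is a real-linear symplectomorphism swapping $\mathbf{q}$ and $\mathbf{p}$ up to signs that leave the sum invariant. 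The identity $I_2^*\lambda=\lambda$ is immediate because $I_2$ fixes $\lambda_{S^1\times[-1,1]}$ pointwise (the simultaneous sign flips in $q$ and $p_q$ cancel in $p_q\,dq$), while the oddness of $h$ guarantees that $h(q)p_q$ is itself $I_2$-invariant, hence so is its differential. This concludes the verification.
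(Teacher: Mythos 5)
Your proof is correct. For the identity $\phi_V^*\lambda=\lambda_0$ you follow exactly the paper's route: the polar-coordinate identity $p_1\,dq_1+p_2\,dq_2=p_q\,dr+p_\theta\,d\theta$ gives $\phi_V^*\lambda_{S^1\times[-1,1]}=\lambda_{\R^2}$, and $h(r-2)=r$ gives $\phi_V^*(h(q)p_q)=\sum_i q_ip_i$. Where you diverge is the second identity: the paper deduces $\phi_H^*\lambda=\lambda_0$ in one line from the commutation relation $I_2\circ\phi_H=\phi_V\circ I_1$ together with $I_1^*\lambda_0=\lambda_0$ and $I_2^*\lambda=\lambda$ (writing $\phi_H^*\lambda=\phi_H^*I_2^*\lambda=I_1^*\phi_V^*\lambda=I_1^*\lambda_0=\lambda_0$), whereas you compute $\phi_H^*\lambda$ directly, obtaining $\phi_H^*(p_q\,dq+p_\theta\,d\theta)=-\sum_i q_i\,dp_i$ and $\phi_H^*(h(q)p_q)=-\sum_i q_ip_i$ and then matching against the alternative presentation $\lambda_0=-\sum_i q_i\,dp_i+\tfrac12 d(\sum_i q_ip_i)$; I checked the signs and they work out. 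The paper's symmetry argument is shorter but leans on the relation $I_2\circ\phi_H=\phi_V\circ I_1$, which is only asserted ("one can check") earlier in the text; your direct computation is self-contained and doubles as a verification of that sign bookkeeping. Your treatment of the $I$-invariance (checking $I_1^*\lambda_0=\lambda_0$ by linearity and $I_2^*\lambda=\lambda$ via the oddness of $h$) matches what the paper states just before the lemma.
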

\begin{proof}
Observe that
\begin{align*}
& dq=dr=\frac{q_1\,dq_1+q_2\,dq_2}{\sqrt{q_1^2+q_2^2}}=\cos{\theta}\,dq_1+\sin{\theta}\,dq_2,\\
& d\theta=\frac{-q_2\,dq_1+q_1\,dq_2}{q_1^2+q_2^2}=r^{-1}(-\sin{\theta}\,dq_1+\cos{\theta}\,dq_2),
\end{align*}
from which we deduce that
$$\phi_V^*\lambda_{S^1 \times [-1,1]}=\sum_{i=1}^2p_i\,dq_i=\lambda_{\R^2}.$$
Combining this with the relation $h(q)p_q \circ \phi_V=\sum_{i=1}^2q_ip_i$ we deduce that
$$\phi_V^*\lambda=\lambda_{\R^2}-\frac{1}{2}d\left(\sum_{i=1}^2q_ip_i\right)=\lambda_0$$
as sought.

Since both $\lambda_0$ and $\lambda$ are invariant under $I_1$ and $I_2,$ respectively, and since $I_2 \circ \phi_H=\phi_V \circ I_1$ the claim
$$\phi_H^*\lambda=\lambda_0$$
is now a direct consequence as well.
\end{proof}

Next we construct the smooth function
$$\rho \colon W \to \R_{\ge 0}$$
specified by the following:
\begin{itemize}
\item Inside $DT^*D^2$ it is given by
$$\rho(\mathbf{q},\mathbf{p})=(p_1q_1+p_2q_2)^2+(q_1p_2-q_2p_1)^2;$$
and
\item Inside $DT^*(S^1 \times [-1,1])$ it is given by
$$\rho((\theta,q),(p_\theta,p_q))=g(q)^2p_q^2+p_\theta^2$$
for a smooth function $g(|q|)>0$ defined as follows: $g(q)=|h(q)|$ outside of $[-1/3,1/3],$ while $g(q) \equiv h(-2/3)$ is constant for $q \in [-2/3,2/3].$ Observe that $g(q)$ thus in particular is non-vanishing.
\end{itemize}
The function $\rho$ is smooth on $W.$ Furthermore, it is the case that
\begin{lma}
\label{lma:ContactBoundary}
The Liouville vector field $\zeta_\lambda$ on $(W,\omega=d\lambda)$ corresponding to the primitive $\lambda$ satisfies the following properties in a neighbourhood of $L_0$:
\begin{enumerate}
\item The Liouville form $\lambda$ vanishes on $TL_0,$ and its backwards flow satisfies
$$ \bigcap_{n \in \Z_{\ge 0}} \phi_\lambda^{-n}(W) = L_0;$$ 
\item The form $\lambda,$ and hence $\zeta_\lambda,$ as well as the function $\rho$ are all preserved by the symplectomorphism $I;$ and
\item $\rho^{-1}(0)=L_0$, and the vector field $\zeta_\lambda$ satisfies $d\rho(\zeta_\lambda)=f\cdot\rho$ for some function $f \colon W \to \R_{>0}$ which is constantly equal to $f \equiv 2$ outside of $\{(\theta,q,p_\theta,p_q)\; \: q \in [-1/3,1/3]\} \subset DT^*(S^1 \times [-1,1]).$
\end{enumerate}
In particular, for all $a>0$ sufficiently small, the level-set $Y_a\coloneqq\rho^{-1}(a) \subset (W,d\lambda)$ is a smooth contact-type hypersurface with induced contact form $\alpha_a\coloneqq\lambda|_{TY_a},$ and the restriction
$$ I|_{Y_a} \colon (Y_a,\alpha_a) \xrightarrow{\cong} (Y_a,\alpha_a) $$
is a strict contactomorphism.
\end{lma}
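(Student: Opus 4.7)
The plan is to verify properties (1), (2), and (3) separately by direct computation in each of the two gluing pieces $DT^*D^2$ and $DT^*(S^1\times[-1,1])$, then to deduce the contact-type conclusion from (3) as a standard consequence. For property (1), on the disk piece $\lambda_0 = \tfrac{1}{2}(\mathbf{p}\,d\mathbf{q}-\mathbf{q}\,d\mathbf{p})$ visibly annihilates both the zero section $\{\mathbf{p}=0\}$ and the cotangent fibre $\{\mathbf{q}=0\}$, and on the cylinder piece $\lambda$ restricts on the zero section $\{p_\theta=p_q=0\}$ to $-\tfrac{1}{2}h(q)\,dp_q$, which kills any tangent vector of the form $a\partial_\theta + b\partial_q$. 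For the retraction, the Liouville vector field on the disk piece is the radial dilation $\zeta_\lambda = \tfrac{1}{2}(\mathbf{q}\partial_\mathbf{q}+\mathbf{p}\partial_\mathbf{p})$, whose backward flow contracts everything to the origin; on the cylinder piece, equation (\ref{eq:liouville}) together with $h'<2$ shows that backward flow decays $p_\theta,p_q$ exponentially while transporting $q$ at nonzero speed $|h(q)|/2$ (away from $q=0$) toward the boundary $\{q=\pm 1\}$, crossing into the disk piece in finite backward time. Iterating, every point enters arbitrarily small neighbourhoods of $L_0$, yielding $\bigcap_{n\ge 0}\phi_\lambda^{-n}(W) = L_0$.

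Property (2) is immediate for $\lambda$: $I_1^*\lambda_0 = \lambda_0$ because the substitution $(\mathbf{q},\mathbf{p})\mapsto((-p_2,p_1),(q_2,-q_1))$ preserves the symmetric expression $\tfrac{1}{2}(\mathbf{p}\,d\mathbf{q}-\mathbf{q}\,d\mathbf{p})$, and $I_2^*\lambda = \lambda$ because $(q,p_q)\mapsto(-q,-p_q)$ preserves $p_q\,dq$ and, by oddness of $h$, the function $h(q)p_q$; invariance of $\zeta_\lambda$ is then automatic from $\iota_{\zeta_\lambda}d\lambda = \lambda$. For $\rho$, the disk expression $(\mathbf{p}\cdot\mathbf{q})^2+(q_1p_2-q_2p_1)^2$ is $I_1$-invariant since each of the two factors is fixed or sign-reversed by $I_1$, while the cylinder expression $g(q)^2 p_q^2 + p_\theta^2$ is $I_2$-invariant by evenness of $g$. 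To check that the two local expressions for $\rho$ glue to a smooth global function, one computes the disk expression in cylinder coordinates via $\phi_V$: with $q_1 = r\cos\theta$, $q_2 = r\sin\theta$ one finds $\mathbf{p}\cdot\mathbf{q} = r p_q$ and $q_1 p_2 - q_2 p_1 = p_\theta$, so the disk expression becomes $r^2 p_q^2 + p_\theta^2 = h(q)^2 p_q^2 + p_\theta^2$ near $q=-1$, matching $g(q)^2 p_q^2 + p_\theta^2$ since $g = |h| = h$ there; symmetric reasoning handles $\phi_H$.

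For property (3), $\rho^{-1}(0)=L_0$ is elementary (on the disk, simultaneous orthogonality and parallelism of $\mathbf{p}$ and $\mathbf{q}$ forces one of them to vanish; on the cylinder, $g>0$ forces $p_\theta=p_q=0$). For the relation $d\rho(\zeta_\lambda)=f\rho$, the two factors $\mathbf{p}\cdot\mathbf{q}$ and $q_1p_2-q_2p_1$ on the disk are jointly homogeneous of degree two in $(\mathbf{q},\mathbf{p})$, so the radial dilation yields $d\rho(\zeta_\lambda) = 2\rho$, i.e.~$f\equiv 2$. On the cylinder, a direct computation using (\ref{eq:liouville}) gives
\begin{equation*}
d\rho(\zeta_\lambda) = 2p_\theta^2 + g^2 p_q^2(2-h'(q)) + g(q)g'(q)h(q)p_q^2.
\end{equation*}
Outside $[-1/3,1/3]$, where $g = |h|$, the identity $gg'h = g^2 h'$ holds in both subregions $h>0$ and $h<0$, so the last two terms combine to $2g^2 p_q^2$ and once again $f\equiv 2$. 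Inside $[-1/3,1/3]$, constancy of $g$ gives $g'=0$, and the hypotheses on $h$ give $h'\le 0$; hence $d\rho(\zeta_\lambda) = 2p_\theta^2 + g^2 p_q^2(2-h')\ge 2\rho$, yielding a positive function $f = d\rho(\zeta_\lambda)/\rho\ge 2$ on $\{\rho > 0\}$ (the relation is trivially satisfied on $L_0$). The contact-type conclusion is then a standard consequence: a direct check shows $d\rho$ vanishes precisely where $\rho$ does, so each level set $Y_a$ for small $a>0$ is smooth; the identity $d\rho(\zeta_\lambda) = f\rho$ with $f>0$ on $Y_a$ establishes transversality of $\zeta_\lambda$ to $Y_a$, which is the contact-type condition, and the strict contactomorphism property of $I|_{Y_a}$ follows from the $I$-invariance of $\lambda$ and $\rho$. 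The main obstacle will be the cylinder-piece verification in (3): one must carefully handle the three regimes for $q$ (the outer regions where $g = \pm h$, the middle $[-1/3,1/3]$ where $g$ is constant, and the transition behaviour at $q=\pm 1/3$) and track signs through the case analysis on $\operatorname{sgn}(h)$.
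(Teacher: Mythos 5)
Your proposal is correct and follows essentially the same route as the paper, which simply leaves the verifications of (2) and of the identity $d\rho(\zeta_\lambda)=f\rho$ to the reader using the displayed expressions for $\zeta_\lambda$ on the two gluing pieces; your case analysis on $g=|h|$ versus $g$ constant, giving $f\equiv 2$ outside $\{|q|\le 1/3\}$ and $f\ge 2$ inside, is exactly the intended computation. The only (cosmetic) difference is that the paper deduces (1) from (3) via the uniform decay $\rho\circ\phi_\lambda^{-t}\le e^{-2t}\rho$ rather than by your direct flow analysis, which is slightly cleaner than tracking trajectories through the two pieces but amounts to the same thing given that you establish $f\ge2$ everywhere.
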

\begin{proof}
(1): This follows from Property (3) (to be proven below).

(2): We leave this to the reader to check.

(3): First we establish the claim $\rho^{-1}(0)=L_0.$ Inside $DT^*(S^1 \times [-1,1])$ it is clear that $\rho$ vanishes precisely along the zero section. Further, by the definition of $\rho,$ inside $DT^*D^2$ we have $\rho=0$ if and only if the two vectors $(q_1,q_2),(p_1,p_2) \in \R^2$ are simultaneously orthogonal and collinear. In other words $\rho=0$ if and only if either $(q_1,p_2)=0$ or $(p_1,p_2)=0$ in that subset. This shows the claim.

The computation of $d\rho(\zeta_\lambda)$ is left to the reader. Inside the subset $DT^*D^2$ we use the expression
$$\zeta_\lambda=\frac{1}{2}\sum_{i=1}^2(q_i\partial_{q_i}+p_i\partial_{p_i})$$
while in $DT^*(S^1 \times [-1,1])$ we use Equation \eqref{eq:liouville}.
\end{proof}

Finally, we use the Liouville flow $\phi^t_{\lambda}$ generated by $\zeta_\lambda$ in order to produce the following completion of the symplectic manifold $(W,d\lambda).$ Consider the sub-level set $W_a \coloneqq \rho^{-1}[0,a]$, which is a Liouville manifold with a contact boundary by Lemma \ref{lma:ContactBoundary}. Attaching half of the corresponding symplectisation, i.e.
$$ ([0,+\infty) \times Y_a,d(e^t\alpha_a)),$$
along its boundary, there is a smooth extension of the Liouville form $\lambda$ on $W_a$ by $e^t\alpha_a$ on this cylindrical end. This produces a smooth Liouville form with a complete Liouville flow, and we denote by
$$ (\widehat{W},d\lambda)=(W_a,\lambda) \cup ([0,+\infty) \times Y_a,d(e^t\alpha_a))$$
the resulting complete Liouville manifold which contains $(W_a,\omega)$ as a subdomain. Recollecting the previously established results, we can conclude that
\begin{prp}
\label{prp:W}
\begin{enumerate}
\item The Liouville form $\lambda$ vanishes along $TL_0$ of the immersed sphere $L_0 \subset (\widehat{W},d\lambda)$, which thus is exact, and its backwards flow satisfies
$$ \bigcap_{n \in \Z_{\le 0}} \phi_\lambda ^{-n} (\widehat{W} \cap \{ t \le N \}) = L_0$$
for any $N \ge 0.$
\item There is a smooth function $\hat{\rho} \colon \widehat{W} \setminus L_0 \to \R_{>0}$ uniquely defined by the property that $\hat{\rho}^{-1}(a)=Y_a$ together with $\hat{\rho}\circ\phi^t_\lambda=e^t\cdot \hat{\rho}$ for all $t \in \R$ (in particular, the level-sets $(\hat{\rho}^{-1}(a),\lambda|_{T(\hat{\rho}^{-1}(a))})$ are hypersurfaces being of contact type for $\lambda$); and
\item The symplectomorphism $I \colon (W_a,d\lambda) \to (W_a,d\lambda)$ extends to an exact symplectomorphism of $(\widehat{W},d\lambda)$ of order two which fixes $L_0$ set-wise, preserves the Liouville form $\lambda,$ and which preserves each level set $\hat{\rho}^{-1}(s),$ $s \in [a,+\infty),$ (where it consequently acts by contactomorphism preserving the contact form $\lambda|_{T(\hat{\rho}^{-1}(s))}$).
\end{enumerate}
\end{prp}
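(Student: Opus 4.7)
The plan is to derive all three claims as consequences of the structural results for $(W,d\lambda)$ collected in Lemma \ref{lma:ContactBoundary}, extended across the symplectisation end $([0,+\infty)\times Y_a, d(e^t\alpha_a))$ that has been attached along $Y_a$ to produce $\widehat W$. For Part (1), the vanishing of $\lambda$ on $TL_0$ is already contained in Lemma \ref{lma:ContactBoundary}(1), and $L_0\subset\widehat W$ is therefore exact. For the intersection claim, I would note that on the symplectisation end the Liouville flow acts by translation $\phi_\lambda^{-s}(t,y)=(t-s,y)$, so any point with $t\le N$ is pushed into $W_a$ by $\phi_\lambda^{-n}$ for all $n>N$. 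It therefore suffices to verify the analogous shrinking statement inside $W_a$, which is exactly Lemma \ref{lma:ContactBoundary}(1): the identity $d\rho(\zeta_\lambda)=f\cdot\rho$ with $f>0$ from part (3) of that lemma makes $\rho$ strictly decreasing under the backwards Liouville flow off $L_0$, forcing the nested intersection to collapse onto $\rho^{-1}(0)=L_0$.

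For Part (2), the key observation is that $\zeta_\lambda$ is nowhere vanishing on $\widehat W\setminus L_0$: inside $W_a\setminus L_0$ this is guaranteed by the contact-type property of each $Y_a$ for small $a>0$, and on the symplectisation end $\zeta_\lambda=\partial_t$. In particular, $Y_a$ is a smooth transverse cross-section of the complete Liouville flow on $\widehat W\setminus L_0$, so for each $x\in\widehat W\setminus L_0$ there is a unique time $\tau(x)\in\R$ with $\phi_\lambda^{\tau(x)}(x)\in Y_a$. Setting $\hat\rho(x)\coloneqq a\cdot e^{-\tau(x)}$ produces a smooth positive function satisfying $\hat\rho^{-1}(a)=Y_a$ and $\hat\rho\circ\phi_\lambda^t=e^t\hat\rho$; moreover each level set $\hat\rho^{-1}(s)=\phi_\lambda^{\log(s/a)}(Y_a)$ inherits the contact-type property from $Y_a$ via the Liouville flow.

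For Part (3), the symplectomorphism $I$ of $W$ restricts to a strict contactomorphism of $(Y_a,\alpha_a)$ by Lemma \ref{lma:ContactBoundary}(2), and since it preserves $\lambda$ it commutes with the Liouville flow $\phi_\lambda^t$. One can therefore extend $I$ to $\widehat W$ by setting $\widehat I(t,y)\coloneqq(t,\,I|_{Y_a}(y))$ on the symplectisation end $[0,+\infty)\times Y_a$, leaving the definition on $W_a$ unchanged. The two pieces agree on the overlap $\{t=0\}\times Y_a\cong Y_a\subset W_a$ and glue smoothly there, the extension is manifestly of order two, it preserves the extended Liouville form $\lambda$ and, by construction, the function $\hat\rho$ together with each level set $\hat\rho^{-1}(s)$, and it preserves $L_0$ set-wise as $L_0\subset W_a$.

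The step that would require the most care is the smoothness of $\hat\rho$ and of $\widehat I$ across the interface $Y_a$ between $W_a$ and the attached symplectisation end; both reduce to the standard fact that the Liouville flow yields a smooth identification of a two-sided collar of $Y_a$ with an interval times $Y_a$, which is immediate from the transversality of $\zeta_\lambda$ to $Y_a$ supplied by Lemma \ref{lma:ContactBoundary}(3). Beyond this, the proof of Proposition \ref{prp:W} amounts to a clean repackaging of Lemma \ref{lma:ContactBoundary} once the symplectisation has been attached.
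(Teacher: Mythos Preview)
Your proposal is correct and follows essentially the same approach as the paper's proof, which is very brief: it simply states that Parts (1) and (2) ``follow more or less directly from Lemma \ref{lma:ContactBoundary}, and by construction,'' and for Part (3) extends $I$ by requiring $I\circ\phi^t_\lambda=\phi^t_\lambda\circ I$. Your extension $\widehat I(t,y)=(t,I|_{Y_a}(y))$ on the cylindrical end is exactly this commutation requirement written out explicitly, and your arguments for Parts (1) and (2) supply the details the paper leaves implicit.
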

\begin{proof}
The properties follow more or less directly from Lemma \ref{lma:ContactBoundary}, and by construction. For Property (3) we have to use that the Liouville flow of $\lambda$ is invariant under $I$ on $W_a \subset \widehat{W}$ by Part (2) of Lemma \ref{lma:ContactBoundary}, and that $\hat{\rho}^{-1}(a)=\rho^{-1}(a)=\partial W_{a}$ is fixed by $I.$ Hence we can smoothly extend $I$ to all of $(\widehat{W},d\lambda)$ by requiring that $I$ commutes with the Liouville flow of $\lambda,$ i.e.~that
$$I \circ \phi^t_{\lambda}= \phi^t_{\lambda} \circ I$$
is satisfied.
\end{proof}

\subsection{A singular Lagrangian torus fibration}
\label{sec:TorusFibration}

Following Symington's construction in \cite[Section 4.2]{Symington:FourDimensions} we consider the map
$$\pi=(\pi_1,\pi_2) \colon W_a \to \R^2$$
which is defined by
\begin{align*}
& \pi(\mathbf{q},\mathbf{p})=(q_1p_2-q_2p_1,q_1p_1+q_2p_2),\\
& \pi((\theta,q),\mathbf{p}_{(\theta,q)})=(p_\theta,g(|q|)p_q),
\end{align*}
for $(\mathbf{q},\mathbf{p})\in DT^*D^2$ and $((\theta,q),\mathbf{p}_{(\theta,q)})\in DT^*(S^1 \times [-1,1]),$ respectively. Here we have used the previously defined smooth function $g(q)>0$ that satisfies $g(q)=|\mp 2+q|$ near $q=\pm 1,$ and we further assume that $a>0$ is sufficiently small.
\begin{lma}
\label{lma:involution}
The map $\pi$ is smooth, inducing a singular Lagrangian torus fibration, the unique singular fibre of which is given by our previously constructed Lagrangian immersion
$$\pi^{-1}(0,0)=L_0$$
of a sphere. Moreover, denoting the reflection of the second coordinate in $\R^2$ by $R(u_1,u_2)=R(u_1,-u_2),$ we have
\begin{gather*}
\pi \circ I=R\circ \pi
\end{gather*}
and in particular $I$ preserves the fibres of $\pi$ setwise.
\end{lma}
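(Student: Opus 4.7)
The proof proceeds in four steps: (i) $\pi$ is a well-defined smooth map, i.e.\ the two chart-wise formulas agree across the gluings by $\phi_V$ and $\phi_H$; (ii) its components Poisson-commute, so that generic fibres are Lagrangian; (iii) the critical set consists of the single point $(\mathbf{q}, \mathbf{p}) = (0, 0) \in DT^*D^2$, whose fibre is exactly $L_0$; and (iv) the intertwining $\pi \circ I = R \circ \pi$. The main obstacle is step (i), since this is where the specific choice of the function $g$ enters decisively.

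For (i), take a point in the image of $\phi_V$ and write $\mathbf{q} = r(\cos\theta, \sin\theta)$ with $r \in [1-\epsilon, 1+\epsilon]$. The explicit formula for $\phi_V$ gives $p_\theta = r(p_2\cos\theta - p_1\sin\theta) = q_1 p_2 - q_2 p_1 = \pi_1(\mathbf{q}, \mathbf{p})$ and $p_q = p_1\cos\theta + p_2\sin\theta$. The prescription $g(|q|) = |h(|q|)|$ near $q = -1$ gives $g(|q|) = 2 + q = r$ (using $q = r - 2$ from $\phi_V$), so $g(|q|)p_q = r(p_1\cos\theta + p_2\sin\theta) = q_1 p_1 + q_2 p_2 = \pi_2(\mathbf{q}, \mathbf{p})$. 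The analogous computation across $\phi_H$ near $q = 1$ (where $q = 2 - r$ and $g(|q|) = 2 - q = r$), combined with the $\pi/2$-shift in the angular coordinate, again matches the two formulas. Hence $\pi$ is globally smooth on $W_a$.

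For (ii), on $DT^*D^2$ the component $\pi_1 = q_1 p_2 - q_2 p_1$ generates the diagonal rotation of $(\mathbf{q}, \mathbf{p})$, under which $\pi_2 = \mathbf{q} \cdot \mathbf{p}$ is manifestly invariant; hence $\{\pi_1, \pi_2\} = 0$. On the cylinder piece, $\{p_\theta, g(|q|) p_q\}$ vanishes trivially. For (iii), on the cylinder the $(p_\theta, p_q)$-block of the Jacobian of $\pi$ equals $\mathrm{diag}(1, g(|q|))$, which is invertible since $g > 0$; so $\pi$ is a submersion there. On $DT^*D^2$ the Jacobian is $\begin{pmatrix} p_2 & -p_1 & -q_2 & q_1 \\ p_1 & p_2 & q_1 & q_2 \end{pmatrix}$, whose $(q_1, q_2)$-column minor equals $p_1^2 + p_2^2$ and whose $(p_1, p_2)$-column minor equals $-(q_1^2 + q_2^2)$; all $2 \times 2$ minors vanish only at the origin. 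The fibre over $(0, 0)$ inside $DT^*D^2$ is the locus where $\mathbf{q}, \mathbf{p}$ are both parallel and orthogonal in $\R^2$, forcing $\mathbf{q} = 0$ or $\mathbf{p} = 0$, i.e.\ $0_{D^2} \cup DT^*_{(0,0)}D^2$; on the cylinder piece it is the zero section $0_{S^1 \times [-1,1]}$. These assemble under the gluing to produce precisely $L_0$.

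For (iv), using $I_1((q_1,q_2),(p_1,p_2)) = ((-p_2,p_1),(q_2,-q_1))$, direct substitution gives $\pi_1 \circ I_1 = (-p_2)(-q_1) - p_1 q_2 = q_1 p_2 - q_2 p_1 = \pi_1$ and $\pi_2 \circ I_1 = (-p_2)(q_2) + p_1(-q_1) = -(q_1 p_1 + q_2 p_2) = -\pi_2$. Likewise $I_2$ fixes $p_\theta$, preserves $g(|q|) = g(|{-q}|)$, and negates $p_q$, giving $\pi_1 \circ I_2 = \pi_1$ and $\pi_2 \circ I_2 = -\pi_2$. Combined these imply $\pi \circ I = R \circ \pi$; in particular, since $(0, 0)$ is a fixed point of $R$, the singular fibre $L_0 = \pi^{-1}(0, 0)$ is preserved setwise by $I$.
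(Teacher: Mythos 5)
Your proof is correct, but it takes a genuinely different route from the paper on the one point the paper actually argues. The paper's proof establishes only the Lagrangian condition, and does so by a complex-geometric trick: setting $z_1=q_1-iq_2$, $z_2=p_1+ip_2$ turns $\pi|_{DT^*D^2}$ into the holomorphic Lefschetz fibration $z_1z_2$, and complex curves are automatically Lagrangian for $\Re(dz_1\wedge dz_2)=dq_1\wedge dp_1+dq_2\wedge dp_2$; everything else (smoothness across the gluing, identification of the critical set and singular fibre, the intertwining with $I$) is declared ``straight forward to check'' and left to the reader. You instead verify the Lagrangian condition via Poisson commutativity of the two components (on $DT^*D^2$ because $\pi_1$ generates the diagonal rotation under which $\mathbf{q}\bullet\mathbf{p}$ is invariant, trivially on the cylinder piece), and you carry out explicitly the checks the paper omits -- in particular the chart compatibility across $\phi_V$ and $\phi_H$, which is exactly where the normalisations $h(q)=\pm 2+q$ near $q=\mp 1$, hence $g=r$, enter, and the minor computation locating the unique critical point at the origin. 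The paper's viewpoint is more conceptual (it exhibits the node as a standard focus-focus singularity, which is reused elsewhere, e.g.\ in Proposition \ref{prp:embedding}), whereas your computation has the merit of actually substantiating the well-definedness of $\pi$ on the plumbing, the only step with real content in the choice of $g$. One cosmetic remark: your derivation gives $\pi\circ I=R\circ\pi$ with $R(u_1,u_2)=(u_1,-u_2)$, so $I$ maps the fibre over $(u_1,u_2)$ to the fibre over $(u_1,-u_2)$; the phrase ``preserves the fibres setwise'' in the statement should be read as ``maps fibres to fibres'' (only the fibres over the fixed locus $\{u_2=0\}$ of $R$, in particular $L_0$, are individually preserved), which is consistent with Proposition \ref{prp:involution}, where $I$ interchanges the Clifford and Chekanov tori.
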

\begin{proof}
We show that the fibres of $(q_1p_2-q_2p_1,q_1p_1+q_2p_2) \in \R^2$ are Lagrangian inside $T^*\R^2.$ The remaining claims are straight forward to check.

The Lagrangian condition is most easily seen by using the fact that any complex curve inside $\C^2$ becomes Lagrangian for the symplectic form
$$ \mathfrak{Re}(dz_1 \wedge dz_2)=dx_1 \wedge dx_2-dy_1 \wedge dy_2.$$
Indeed, we can set $z_1=q_1-iq_2$ and $z_2=p_1+ip_2,$ thus turning
$$\pi(\mathbf{q},\mathbf{p})=z_1z_2$$
into a holomorphic Lefschetz fibration.
\end{proof}

\begin{prp}
\label{prp:ExtendedFibration}
There exists a Lagrangian torus fibration $\hat{\pi} \colon \widehat{W} \to \R^2,$ where $\hat{\pi}$ is onto $\R^2$ and submersive outside of the origin, and for which $\hat{\pi}^{-1}(0)=L_0$ is the unique singular fibre. Furthermore, the fibration can be taken to satisfy
\begin{enumerate}
\item $\hat{\pi}|_{Y_a}=\pi,$ and for any $\mathbf{v} \in \pi(Y_a)=\{u_1^2+u_2^2=a\}$ we have
$$\hat{\pi}(\phi^t_\lambda(\hat{\pi}^{-1}(\mathbf{v})))=e^{t/2}\cdot \mathbf{v},\:\:t \ge 0,$$
and in particular $\hat{\rho}=\|\hat{\pi}\|^2$ holds inside $\hat{\rho}^{-1}(a,+\infty)=\widehat{W} \setminus W_a,$
\item $\hat{\pi}=\pi$ in some neighbourhood of $L_0,$ and
\item the Liouville flow $\phi^t_\lambda$ applied to any fibre of $\hat{\pi}$ is again Hamiltonian isotopic to a fibre of $\hat{\pi}.$
\end{enumerate}
\end{prp}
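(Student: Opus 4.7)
The strategy is to extend the given fibration $\pi$ on $W_a$ to all of $\widehat{W}$ by using the Liouville flow on the cylindrical end, with a careful interpolation in $W_a$ to preserve $\pi$ in a neighbourhood of $L_0$.

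On the cylindrical end $\widehat{W}\setminus W_a \cong [0,+\infty)\times Y_a$, I would define $\hat\pi(s,y) := e^{s/2}\pi(y)$. Since the Liouville flow on the end is translation in the $s$-coordinate, this directly satisfies $\hat\pi\circ\phi^t_\lambda = e^{t/2}\hat\pi$, so Part~(1) holds on the cylindrical end; the equality $\hat{\rho} = \|\hat{\pi}\|^2$ also follows there from $\hat\rho(s,y) = e^s a$ and $\|\pi(y)\|^2 = a$ for $y\in Y_a$. Its fibres are Liouville translates of the Lagrangian tori $\pi^{-1}(\mathbf{v}) \subset Y_a$, hence Lagrangian tori themselves.

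To extend the definition inward while respecting Part~(2), I would introduce the Liouville-time-to-$Y_a$ function $\tau\colon \widehat{W}\setminus L_0 \to \R$, characterised by $\phi_\lambda^{-\tau(x)}(x)\in Y_a$, so that $\tau|_{Y_a}\equiv 0$, $\tau<0$ inside $W_a$, and $\tau\to -\infty$ as $x\to L_0$. Fix a smooth non-decreasing $\psi\colon \R\to \R_{\le 0}$ with $\psi(s) = s$ for $s$ near $0$ and $\psi(s) \equiv 0$ for $s\le -M$, some large $M>0$. Set
\[
\hat\pi(x) \;:=\; e^{\psi(\tau(x))/2}\,\pi\bigl(\phi_\lambda^{-\psi(\tau(x))}(x)\bigr)
\]
on $W_a\setminus L_0$, extended by $\hat\pi = \pi$ on the neighbourhood $\{\tau\le -M\}$ of $L_0$ (where $\psi\circ\tau\equiv 0$, so the two formulas coincide and $\hat\pi$ extends smoothly through $L_0$). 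Then $\hat\pi = \pi$ in a neighbourhood of $L_0$, giving Part~(2), while in the collar of $Y_a$ inside $W_a$ where $\psi = \mathrm{id}$ the formula reduces to the cylindrical end one, delivering smooth matching across $\partial W_a$.

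For Part~(3) and the Lagrangian-fibration condition, I would observe that in each region the fibres of $\hat\pi$ are images of $\pi$-fibres under a power of the Liouville flow, hence Lagrangian tori by preservation of the Lagrangian condition under symplectic flows. Part~(3) holds on the region $\{\psi\circ\tau = \tau\}$ directly from the rescaling identity; near $L_0$ where $\hat\pi = \pi$, the Liouville flow of a small fibre is a nearby Lagrangian torus which, belonging to the same germ of fibration and sharing the relevant homology data of Lemma~\ref{lma:involution}, can be brought back to a $\pi$-fibre by a Hamiltonian supported in a small neighbourhood by a standard flux computation. The main obstacle I foresee is verifying smoothness and the Lagrangian-submersion property in the transition region $\{-M<\tau<-\epsilon\}$; this ultimately reduces to showing that the auxiliary map $x\mapsto (\psi(\tau(x)), \phi_\lambda^{-\psi(\tau(x))}(x))$ is a local diffeomorphism into $\R\times\widehat{W}$, a chain-rule computation exploiting the non-vanishing derivative of $\tau$ along the Liouville flow together with the transversality of $\zeta_\lambda$ to the level sets of $\tau$.
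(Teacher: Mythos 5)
Your construction on the cylindrical end, and your reduction of Part (2) to forcing $\hat\pi=\pi$ on $\{\tau\le -M\}$, both match what the paper does. The gap is in the transition region, and it is not the smoothness issue you flag at the end but the Lagrangian condition itself. Your key assertion --- that ``in each region the fibres of $\hat\pi$ are images of $\pi$-fibres under a power of the Liouville flow'' --- fails wherever $\psi\circ\tau$ is neither $0$ nor the identity. On the level set $\{\tau=c\}$ your map restricts to $e^{\psi(c)/2}\,\pi\circ\phi_\lambda^{-\psi(c)}$, so a fibre of your $\hat\pi$ is the union over $c$ of the slices $\{\tau=c\}\cap\phi_\lambda^{\psi(c)}\bigl(\pi^{-1}(e^{-\psi(c)/2}\mathbf{w})\bigr)$. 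Each slice is generically only a \emph{curve} on a Lagrangian torus, and a different torus for each $c$; the resulting surface sweeps transversally through a one-parameter family of Lagrangian tori and is not Lagrangian. Equivalently, the variable-time flow $x\mapsto\phi_\lambda^{-\psi(\tau(x))}(x)$ is not a conformal symplectomorphism. Your formula would only produce Lagrangian fibres if the Liouville flow preserved the fibres of $\pi$ throughout $W_a$ (then $\tau$ would be constant on fibres and each fibre of $\hat\pi$ would be an entire flowed torus), but it does not: in $DT^*(S^1\times[-1,1])$ one computes from Equation \eqref{eq:liouville} that $d\pi_2(\zeta_\lambda)=(1-\tfrac12 h'(q))\pi_2$ on $[-1/3,1/3]$, where $g$ is constant while $h'$ is not identically zero, so the flow moves points of a single $\pi$-fibre onto different $\pi$-fibres.

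This is precisely why the paper does not interpolate pointwise in the flow time. Instead it treats the flow-induced fibration $f$ (your cylindrical-end extension, pushed inward) and $\pi$ as two genuinely different Lagrangian fibrations agreeing along $Y_a$, and interpolates at the level of symplectomorphisms: the Arnol'd--Liouville theorem gives a fibre-preserving symplectomorphism $\phi_0$ of a collar $W_{a+\epsilon}\setminus W_{a-\epsilon}$ with $\pi\circ\phi_0=f$ and $\phi_0|_{Y_a}=\mathrm{id}$; since a symplectomorphism fixing $Y_a$ pointwise has $D\phi_0=\mathrm{Id}$ along $Y_a$, one can write $\phi_0=\phi^1_{H_t}$ for a Hamiltonian isotopy fixing $Y_a$ and then cut off $H_t$ to splice the two fibrations together through Lagrangian fibrations. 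You would need to replace your cutoff-in-flow-time formula with this (or an equivalent) symplectomorphism-level interpolation; your treatment of Part (3), which is otherwise in the spirit of the paper's appeal to Lemma \ref{lma:LiouvilleFlow}, would then go through.
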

\begin{proof}
Recall that $a \equiv \rho|_{Y_a}\equiv \|\pi\|^2|_{Y_a}$ is satisfied by construction. In view of Part (2) of Proposition \ref{prp:W}, the Liouville flow applied to the family of tori $\pi^{-1}(\mathbf{v}) \subset Y_a$ for $\mathbf{v} \in \{ \|(u_1,u_2)\|^2 \equiv a\}$ produces a smooth torus fibration $f \colon \widehat{W} \setminus L_0 \to \R^2 \setminus \{0\}$ which coincides with $\pi$ when restricted to the hypersurface $Y_a.$ This torus fibration (defined only in the complement of $L_0$) can thus be made to satisfy Property (1) by construction.

What suffices is then to perform a suitable interpolation between these two fibrations. To that end, we argue as follows. First, using the fact that the two fibrations coincide along $Y_a$ by construction, we can use the classical Arnol'd--Liouville Theorem \cite[Theorem 2.3]{Symington:FourDimensions} in order to find a symplectomorphism $\phi_0$ defined inside $W_{a+\epsilon} \setminus W_{a-\epsilon}$ such that
\begin{itemize}
\item $\phi_0$ is the identity along $Y_a,$ and
\item $\pi \circ \phi_0=f.$
\end{itemize}
The construction of $\phi_0$ is standard; see the proof of Proposition \ref{prp:embedding} below for more details. 

The fact that $\phi_0$ is a symplectomorphism that restricts to the identity along $Y_a$ implies that the differential must satisfy $D\phi_0 = \id_{T\widehat{W}}$ along $T_{Y_a}\widehat{W}.$ Hence, after shrinking $\epsilon>0,$ a standard argument shows that $\phi_0=\phi^1_{H_t}$ holds for a Hamiltonian isotopy that again can be taken to satisfy $\phi^t_{H_t}|_{Y_a} =\id.$

After an appropriate cutoff of this Hamiltonian isotopy, we construct a symplectomorphism $\phi_1$ that satisfies
\begin{itemize}
\item $\phi_1=\id$ inside $W_{a+\epsilon} \setminus W_a$ while
\item $\phi_1=\phi_0$ inside $W_{a-\epsilon/2} \setminus W_{a-\epsilon}.$
\end{itemize}
This provides us with the sought interpolation of the two torus fibrations.

(3): One can readily find a path of Lagrangian fibres of $\hat{\pi}$ realising the same symplectic flux-paths as that induced by the Liouville flow applied to the given fibre. (In fact, above the subset $\{u_1^2+u_2^2\ge a \},$ the positive Liouville flow maps fibres to fibres by construction.) The result is then a consequence of Lemma \ref{lma:LiouvilleFlow}.

\end{proof}
Recall that Lagrangian fibrations with a unique singular fibre was constructed for the symplectic manifold $(V,\omega_{\OP{FS}})$ in Section \ref{sec:LagrangianFibrationsIntro}. The fact that the fibration constructed here has similar properties gives us a convenient way to construct a symplectic embedding of $(V,\omega_{\OP{FS}}) \cong (\CP^2 \setminus (\ell_\infty \cup C),\omega_{\OP{FS}})$ by utilising the Arnol'd--Liouville theorem.
\begin{prp}
\label{prp:embedding}
Given any $s \in (0,\pi/2)$ and $c>0,$ there exists a symplectic embedding
\begin{gather*}
\iota_s \colon(V,c\omega_0) \hookrightarrow (\widehat{W},d\lambda),\\
\iota_s(\Pi_s^{-1}(0,1))=L_0,
\end{gather*}
for which the following is satisfied:
\begin{enumerate}
\item Given an arbitrarily small neighbourhood $U \subset (-1,1) \times (0,+\infty)$ of $(0,1) \in (-1,1) \times (0,+\infty)$ (i.e.~the unique critical value of $\Pi_s$), we may moreover assume that $\iota_s$ maps fibres $\Pi_s^{-1}(p)$ for $p \notin U\setminus \{(0,1)\}$ to fibres of $\hat{\pi};$
\item For $c \gg 0$ sufficiently large, the image $\iota_s(V)$ projects to a starshaped subset $\hat{\pi}(\iota(V_s)) \subset \R^2.$
\end{enumerate}
\end{prp}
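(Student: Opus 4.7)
The plan is to construct $\iota_s$ in three steps: a local symplectic embedding of a neighborhood of the Whitney sphere, a fibrewise extension using the Arnol'd--Liouville theorem to preserve the torus fibrations, and a final verification that the base image is star-shaped.

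First I would invoke Proposition \ref{prp:Weinstein} to produce a symplectic embedding $\psi$ of some open neighborhood of $L_{\OP{Wh}}(s) = \Pi_s^{-1}(0,1) \subset (V, c\omega_0)$ into $(\widehat{W}, d\lambda)$ sending $L_{\OP{Wh}}(s)$ onto $L_0$. Near both singular fibres, $\Pi_s$ and $\hat{\pi}$ are Lagrangian torus fibrations with a single focus-focus singularity; by the focus-focus version of the Arnol'd--Liouville theorem (see \cite{Symington:FourDimensions}), such germs are classified up to fibre-preserving symplectomorphism by their integral affine structure on the punctured base. A Hamiltonian isotopy supported near $L_0$ then deforms $\psi$ into a fibre-preserving embedding on a neighborhood of the singular fibre, away from any prescribed open set $U \ni (0,1);$ the action coordinates are normalised so that the image of the singular value is $0 \in \R^2.$

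Next, I would extend the embedding to all of $V$ fibrewise. For each smooth torus $\Pi_s^{-1}(p),$ $p \notin U,$ the Arnol'd--Liouville theorem provides a Weinstein-type neighborhood determined by the action coordinates (computed explicitly in Lemma \ref{lma:action}); matching each such fibre with the unique fibre of $\hat{\pi}$ having the same action coordinates produces a global fibre-preserving symplectic embedding compatible with the local embedding near $L_0.$ Since the affine monodromies of both fibrations around their singular values coincide (both being standard focus-focus), the fibrewise match is globally consistent. This yields Property (1).

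For Property (2), Lemma \ref{lma:action} shows that the action coordinates of the fibres of $\Pi_s$ in $(V, c\omega_0)$ sweep out (after rescaling by $c$) the open triangle with vertices $(\pm c\pi, 0)$ and $(0, c\pi/2)$ in $\R^2;$ the singular value $(0,1)$ has action coordinates $(0, c A_s(0,1))$ with $A_s(0,1) \in (0, \pi/2)$ by Lemma \ref{lma:action}(1), placing it in the interior of this triangle. After the affine translation placing the singular value at the origin, $\hat{\pi}(\iota_s(V)) \subset \R^2$ is a triangle containing $0$ in its interior, hence star-shaped. The main technical obstacle will be step two: matching integral affine structures across the focus-focus singularity to produce a consistent global fibre-preserving extension.
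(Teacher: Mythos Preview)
Your first two steps are essentially the paper's own argument: use the nodal version of Arnol'd--Liouville (Symington, Proposition 4.8) to get a fibre-preserving symplectic embedding of a neighbourhood of the Whitney sphere, then patch via classical Arnol'd--Liouville over the smooth locus. The paper is slightly more explicit about why the patching is consistent (the local identifications are canonical up to fibre-wise translation and a \emph{discrete} $\GL(2,\Z)$-action), but your monodromy remark points in the same direction.

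The gap is in your argument for Property~(2). You assert that $\hat{\pi}(\iota_s(V))$ is an affine translate of the action triangle from Figure~\ref{fig:action}. This conflates two different things. First, the coordinates $(\pi u_1, A_s)$ in Lemma~\ref{lma:action} are computed with respect to the basis $\langle\mathbf{e}_0,\mathbf{e}_1\rangle$, and by Remark~\ref{rmk:monodromy} that basis is \emph{not} globally defined: it jumps across the ray $u_1=0,\ u_2>1$. So the triangle in Figure~\ref{fig:action} is a branch-cut presentation, not a global affine chart on the base. Second, the map $\hat{\pi}$ is not given by action coordinates at all; it is built in Proposition~\ref{prp:ExtendedFibration} by extending $\pi|_{Y_a}$ radially via the Liouville flow, with an ad hoc interpolation inside $W_a$. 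There is no reason the image of $\iota_s(V)$ under this specific map should be a triangle. Finally, near the singular fibre the embedding $\iota_s$ is \emph{not} fibre-preserving (as the paper notes explicitly), so near $0\in\R^2$ the image is not even determined by matching actions. A tell-tale sign: your argument never uses the hypothesis $c\gg 0$, whereas the statement genuinely needs it.

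The paper's approach to (2) is different and is what you are missing. One shows directly that $\hat{\pi}(\iota_s(V))$ is closed under multiplication by $e^{-t}$, $t\ge 0$, outside the disc $\hat{\pi}(W_a)$. By Proposition~\ref{prp:ExtendedFibration}(1), in $\widehat{W}\setminus W_a$ the Liouville flow $\phi^t_\lambda$ acts on $\hat{\pi}$-values by radial scaling $e^{t/2}$. For $c\gg 0$, all fibres of $\Pi_s$ outside a fixed small neighbourhood of $(0,1)$ land in $\widehat{W}\setminus W_a$; so it suffices to check that the pulled-back backward Liouville flow $\phi^{-t}_{\iota_s^*\lambda}$ takes fibres of $\Pi_s$ to Lagrangians Hamiltonian isotopic to fibres of $\Pi_s$. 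That is where the convexity of the action image (Figure~\ref{fig:action}) is actually used, in combination with Lemma~\ref{lma:LiouvilleFlow}: the flux of the Liouville flow on a fibre can be realised by a path of standard fibres, hence the image under the flow stays in the Hamiltonian class of some fibre. This yields the star-shaped conclusion.
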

\begin{proof}
The Arnol'd--Liouville Theorem \cite[Theorem 2.3]{Symington:FourDimensions} together with its generalisation \cite[Proposition 4.8]{Symington:FourDimensions} to Lagrangian fibrations with nodal singular fibres, shows the existence of the embeddings.

More precisely, the generalised version of the Arnol'd--Liouville theorem provides us with a symplectomorphism $\iota_s \colon (O,c\omega_0) \hookrightarrow (\widehat{W},d\lambda)$ from a neighbourhood $O \subset V$ of the singular Lagrangian fibre $\Pi_s^{-1}(0,1) \subset V$ to a neighbourhood $\iota(O) \ni \pi^{-1}(0,0),$ which moreover
\begin{itemize}
\item maps the singular fibre $\Pi_s^{-1}(0,1)$ to the singular fibre $\pi^{-1}(0,0),$ and
\item maps all fibres of $\Pi_s$ to fibres of $\pi$ outside of some neighbourhood $\Pi_s^{-1}(U \setminus \{(0,1)\})$ as required.
\end{itemize}
To that end, we use the fact that the unique singular fibres $\Pi_s^{-1}(0,0)$ and $\pi^{-1}(0,0)$ of the two fibrations are `nodes' as defined in \cite[(4.3)]{Symington:FourDimensions}. (Observe that it is not possible to find a symplectomorphism that preserves also the fibres near the singular fibre in general; see \cite[Remark 3.9]{Symington:FourDimensions}.)

Next we must extend the map $\iota_s$ from $O \subset V$ to all of $V.$ This is a simple matter of applying the classical Arnol'd--Liouville theorem. Namely, for sufficiently small and simply connected $U_1 \subset (-1,1) \times (+\infty) \setminus \{0,1\}$ and $U_2 \subset \R^2 \setminus \{(0,0)\},$ it provides us with symplectic identifications of neighbourhoods $\Pi_s^{-1}(U_1)$ and $\hat{\pi}^{-1}(U_2)$ with neighbourhoods of the form $\T^2 \times A_i \subset T^*\T^2.$ The extension is then created by patching together these identifications. Recall that the identification supplied by the Arnol'd--Liouville theorem is canonical up to fibre-wise translations in $T^*\T^2$ and the \emph{discrete} action of $\OP{Gl}(2,\Z)$ by pull-backs of the corresponding linear diffeomorphisms of $\T^2.$ (The discreteness is crucial for this argument.)

We are left with showing Property (2). We show that if $\mathbf{v} \in \R^2$ is in the image of $\hat{\pi} \circ \iota_s,$ then necessarily $e^{-t}\mathbf{v},$ $t \ge 0,$ is in the image as well, from which the sought stare-shaped property follows.

For $c \gg 0$ the torus fibres above the complement of some given compact neighbourhood of $(0,1) \in (-1,1) \times (0,+\infty)$ may be assumed to map to torus fibres of $\hat{\pi}$ contained inside $\widehat{W} \setminus W_a.$ By Part (1) of Proposition \ref{prp:ExtendedFibration}, the forwards Liouville flow $\phi^t_{\lambda}$ preserves the fibres in the same subset. Furthermore, whenever $L \coloneqq \hat{\pi}^{-1}(\mathbf{v}) \subset \widehat{W} \setminus W_a,$ the image $\hat{\pi}(\phi^t_\lambda(L))$ is simply the radial rescaling $e^{t/2}\cdot\mathbf{v}$ by the same result.

 For this reason it now suffices to show that the Liouville flow preserves also the torus fibres of $\Pi_s,$ at least up to Hamiltonian isotopy. Indeed, the convexity properties satisfied by the values of the symplectic action on these fibres, as shown in Figure \ref{fig:action}, combined with Lemma \ref{lma:LiouvilleFlow} implies that all tori $\phi^{-t}_{\iota_s^*\lambda}(\Pi_s^{-1}(u_1,u_2))$ for small $t \ge 0$ again are Hamiltonian isotopic to fibres of $\Pi_s.$ From this it then follows that the image of $\hat{\pi}(\iota_s(V))$ is invariant under multiplication by $e^{-t}$ outside of the subset $\hat{\pi}(W_a)=\{\|u_1\|^2+\|u_2\|^2 \le a\} \subset \R^2$ as sought.
\end{proof}

\section{Pencils of pseudoholomorphic conics}
\label{sec:pencils}

In this section we assume that we are given a tame almost complex structure on $(\CP^2,\omega_{\OP{FS}})$ which coincides with the standard integrable complex structure $i$ near the divisor $\ell_\infty \subset \CP^2$ at infinity. A pseudoholomorphic {\bf line} inside $\CP^2$ is a pseudoholomorphic curve of degree one. One of the first examples of the power of the technique of pseudoholomorphic curves in symplectic topology was Gromov's result from \cite{Gromov:Pseudo} which shows that $\CP^2$ is foliated by pseudoholomorphic lines for any choice of tame almost complex structure.
\begin{thm}[Gromov \cite{Gromov:Pseudo}]
\label{thm:gromov}
For any tame almost complex structure, the pseudoholomorphic lines that pass through a given point $\pt \in \CP^2$ are embedded symplectic spheres which form the leaves of a smooth foliation of the complement $\CP^2 \setminus \{\pt\}$ of that point.
\end{thm}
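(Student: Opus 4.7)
\medskip

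The plan is to combine the three standard pillars of pseudoholomorphic curve theory in dimension four: automatic regularity, positivity of intersection, and Gromov compactness. First I would set up the moduli space. Let $\mathcal{M}_p(J)$ denote the space of (unparametrised) $J$-holomorphic spheres in the class $[\ell] \in H_2(\CP^2)$ whose image contains $p$. A Riemann--Roch count gives expected real dimension $2(n + c_1([\ell])) - 6 + 2 - 2n = 2 c_1([\ell]) - 4 = 2$, so a two-parameter family is expected, matching the picture for $J = i$ where the lines through $p$ are parametrised by $\mathbb{P}(T_p \CP^2) \cong S^2$. Because $c_1([\ell]) = 3 > 0$ and $\dim \CP^2 = 4$, the automatic regularity result of Hofer--Lizan--Sikorav applies to any embedded $J$-holomorphic sphere in class $[\ell]$, so $\mathcal{M}_p(J)$ is cut out transversely and is a smooth $2$-manifold near every embedded representative. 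The adjunction formula
\[
2g - 2 = [\ell]\cdot[\ell] - c_1([\ell]) + 2\delta = 1 - 3 + 2\delta
\]
forces $\delta = 0$, so every such curve is automatically embedded.

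Next I would handle compactness. A Gromov-convergent limit of a sequence in $\mathcal{M}_p(J)$ is a nodal $J$-holomorphic curve in class $[\ell]$. Since $H_2(\CP^2) = \Z\cdot[\ell]$ is generated by $[\ell]$ and every $J$-holomorphic component has positive $\omega_{\OP{FS}}$-area, the class $[\ell]$ admits no splitting into two effective classes; similarly, it is not a multiple cover, being primitive. Hence no bubbling or multiple covering occurs, and $\mathcal{M}_p(J)$ is compact. Combined with automatic regularity, this yields a smooth closed $2$-manifold.

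Now I would identify this manifold and realise the foliation via the tangent-direction map
\[
\tau \colon \mathcal{M}_p(J) \longrightarrow \mathbb{P}(T_p\CP^2) \cong S^2, \qquad L \mapsto T_p L.
\]
Positivity of intersection in dimension four shows that two distinct $J$-lines $L_1, L_2 \in \mathcal{M}_p(J)$ satisfy $L_1 \cdot L_2 = [\ell]\cdot [\ell] = 1$ with each intersection contributing a strictly positive integer, weighted by at least the order of tangency. Since both curves pass through $p$, this already uses up the full intersection number, so $L_1$ and $L_2$ meet transversely and only at $p$; in particular $\tau(L_1) \neq \tau(L_2)$. Thus $\tau$ is injective. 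Being a continuous map between closed $2$-manifolds, invariance of domain makes $\tau$ open, hence its image is both open and closed, so $\tau$ is a homeomorphism $\mathcal{M}_p(J) \cong S^2$ (and in fact a diffeomorphism, since positivity of intersection also shows the differential of $\tau$ is injective).

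Finally, the foliation of $\CP^2 \setminus \{p\}$ is constructed from the universal curve $\mathcal{U} \to \mathcal{M}_p(J)$ via the evaluation map $\ev \colon \mathcal{U} \setminus \ev^{-1}(p) \to \CP^2 \setminus \{p\}$. Injectivity of $\ev$ is a second application of positivity of intersection: any $q \neq p$ lying on two distinct curves $L_1, L_2$ would force $L_1 \cdot L_2 \geq 2$, contradicting $[\ell]\cdot[\ell] = 1$. Surjectivity of $\ev$ is the step I expect to be the main obstacle and is best handled by a homotopy argument: since the space of tame almost complex structures is connected, and for $J_0 = i$ the evaluation is manifestly surjective (a standard line through two points), the unobstructed, compact cobordism provided by the parametric moduli space yields surjectivity for any $J$; equivalently one invokes a mod-$2$ degree computation for $\ev$, which equals $1$ at $J_0$. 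Combined with the nonsingular differential of $\ev$ (again from positivity of intersection, applied to the tangent directions of $L$ at $q$), it follows that $\ev$ is a diffeomorphism, and the fibres of the projection $\mathcal{U} \setminus \ev^{-1}(p) \to \mathcal{M}_p(J)$ descend to a smooth foliation of $\CP^2 \setminus \{p\}$ by embedded symplectic spheres, as claimed.
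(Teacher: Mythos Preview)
The paper does not give its own proof of this theorem: it is stated with attribution to Gromov \cite{Gromov:Pseudo} and used as a black box throughout (e.g.\ in the proof of Theorem \ref{thm:Lefschetz} and Lemma \ref{lma:DisjoinNodal}). Your proposal is the standard argument and is correct; the index computation, adjunction, primitivity of $[\ell]$ ruling out bubbling, and the positivity-of-intersection analysis are all fine, and the cobordism/degree argument for surjectivity of the evaluation is exactly how this is usually closed. There is nothing to compare against in the paper itself, so your write-up stands on its own as a faithful reconstruction of Gromov's proof in the language the paper later adopts (automatic transversality \`a la Hofer--Lizan--Sikorav, positivity of intersection \`a la McDuff \cite{McDuff:LocalBehaviour}).
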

A pseudoholomorphic {\bf conic} inside $\CP^2$ is a pseudoholomorphic curve of degree two. The adjunction formula, together with positivity of intersection \cite{McDuff:LocalBehaviour}, allows us to conclude that
\begin{lma}
\label{lma:conics}
A pseudoholomorphic conic is either: a smoothly embedded sphere; a nodal sphere consisting of the union of two different pseudoholomorphic lines; or a two-fold branched cover of a pseudoholomorphic line.
\end{lma}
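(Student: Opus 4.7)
The proof proceeds by a case analysis of a pseudoholomorphic conic $u\colon \Sigma \to \CP^2$ (i.e.~a $J$-holomorphic map representing twice the hyperplane class) organised according to whether the domain $\Sigma$ is irreducible and whether the map $u$ is somewhere injective. Each of the three alternatives in the statement will correspond to one such case, and in each case the conclusion will be forced either by the adjunction formula or by positivity of intersection.

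First I would treat the case of a somewhere injective $u$ defined on an irreducible domain. The adjunction formula in a symplectic four-manifold reads
\begin{equation*}
2\delta(u) \;=\; 2 - 2g(\Sigma) + [u]\cdot [u] - c_1(T\CP^2)\cdot [u],
\end{equation*}
where $\delta(u)\ge 0$ counts double points with multiplicities, with equality $\delta(u)=0$ exactly when $u$ is an embedding. For a degree-two class in $\CP^2$ one has $[u]\cdot[u] = 4$ and $c_1(T\CP^2)\cdot [u] = 6,$ so the right hand side collapses to $-2g(\Sigma).$ Nonnegativity of both sides then forces $g(\Sigma)=0$ and $\delta(u)=0,$ yielding the first (embedded sphere) alternative in the statement.

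Next, suppose $\Sigma$ is still irreducible but $u$ fails to be somewhere injective. The standard factorisation result for $J$-holomorphic curves produces a decomposition $u = \widetilde u \circ \phi$ where $\phi$ is a branched cover of some degree $k\ge 2$ and $\widetilde u$ is a somewhere injective $J$-holomorphic curve of degree $2/k.$ The only possibility is $k=2$ and $\deg(\widetilde u)=1,$ so $\widetilde u$ is a pseudoholomorphic line and $u$ is a two-fold branched cover of it, matching the third alternative. Finally, if $\Sigma$ is reducible, then its irreducible nonconstant components carry positive degrees summing to two, so there are exactly two such components, each of degree one (hence an embedded line by Gromov's Theorem~\ref{thm:gromov}). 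Should the two lines coincide as sets, $u$ once again realises a two-fold cover of a line; if they are distinct, positivity of intersection combined with the computation $[\ell]\cdot[\ell'] = 1$ forces them to meet in a unique transverse point, which is then the nodal singularity of the image. This is the second alternative.

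The whole argument is short because the degree $2$ is so small: once the adjunction identity $2\delta = -2g$ in the simple case and the factorisation of non-simple curves are in hand, the trichotomy is dictated entirely by divisibility of $2$ and the elementary intersection-number $1$ between two distinct pseudoholomorphic lines. The only conceptual care needed is in the precise interpretation of the word \emph{conic}: one must permit $\Sigma$ to be a possibly reducible nodal Riemann surface, so as to account for all images of degree-two stable maps; once admissible domain structures are specified, the three cases above are exhaustive.
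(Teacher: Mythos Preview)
Your proof is correct and uses the same two ingredients that the paper invokes, namely the adjunction formula and positivity of intersection, but you organise the argument differently. The paper does not perform your trichotomy on the domain; instead it argues directly that any singular point of the conic forces a line to be contained in it: one passes a pseudoholomorphic line through the singular point and through a generic smooth point, observes that the local intersection multiplicities would sum to at least $3>2=[\ell]\cdot[2\ell]$ unless the line is a component, and then lets the adjunction formula and degree-counting finish. Your route via the simple/multiply-covered and irreducible/reducible dichotomies is the standard textbook decomposition and has the advantage of making explicit why the domain has genus zero in the embedded case (the paper leaves this implicit in ``elementary applications of the adjunction formula''); the paper's route is a one-step geometric shortcut that bypasses the factorisation theorem for non-simple curves entirely.
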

\begin{proof}
We show that the only singularities are nodes and branch points, the rest follows from elementary applications of the adjunction formula and positivity of intersection.

Consider a line passing through a singular point as well as a smooth point on the conic (its existence follows from Gromov's result Theorem \ref{thm:gromov} concerning the classification of pseudoholomorphic lines). Unless the line is contained inside the conic, the singular point contributes with at least $+2$ to the algebraic intersection number (see \cite{McDuff:LocalBehaviour}), while the intersection at the smooth point contributes with $+1.$ Since a line and a conic intersect with algebraic intersection number $+2,$ positivity of intersection implies that the line must be contained inside the conic.
\end{proof}
Now we fix the two points $q_1=[1:0:0], q_2=[0:1:0] \in \ell_\infty$ at the line at infinity, together with the complex tangent vectors $v_i \subset T_{q_i}\CP^2,$ $i=1,2,$ to the two lines
\begin{gather*}
\ell_1\coloneqq\{ [Z_1:0:Z_3] \in \CP^2 \} \:\:\: \text{and} \:\:\: \ell_2\coloneqq\{ [0:Z_2:Z_3] \in \CP^2 \}
\end{gather*}
at the two respective points. Note that there is a Lefschetz fibration
\begin{gather*}
f \colon \CP^2 \setminus \ell_\infty \to \C,\\
f(z_1,z_2) = z_1z_2,
\end{gather*}
whose fibres are precisely those conics satisfying the specified tangencies $v_i \subset T_{q_i}\CP^2,$ $i=1,2.$ This Lefschetz fibration has a unique singular fibre $f^{-1}(0)$; this is the standard nodal conic $C_{\OP{nodal}} \subset \CP^2$ given as the union of the coordinate lines.

Here we show how Gromov's strategy for establishing a foliation by pseudoholomorphic lines extends to give an analogous result also for conics. Namely, for an arbitrary tame almost complex structure $J$ which is standard at infinity, there exists fibration $f_J$ by $J$-holomorphic conics having properties similar to the standard fibration $f.$

We first need to introduce a couple of notions. Denote by $\mathcal{M}_J$ the moduli space of $J$-holomorphic conics and $\mathcal{M}_J(v_1,v_2) \subset \mathcal{M}_J$ the subspace of conics satisfying the two tangencies $v_i,$ $i=1,2.$ For a smooth family $J_{\mathbf{s}},$ $\mathbf{s} \in I^k,$ of tame almost complex structures on $(\CP^2,\omega_{\OP{FS}}),$ all which are assumed to be standard near $\ell_\infty,$ we denote by $C^{\mathbf{s}}_{\OP{nodal}} \subset \CP^2$ the union of the two unique $J_{\mathbf{s}}$-holomorphic lines satisfying the tangencies $v_i \subset T_{q_i}\CP^2.$ (Recall Gromov's result Theorem \ref{thm:gromov}.) The lines $C^{\mathbf{s}}_{\OP{nodal}} \in \mathcal{M}_{J_{\mathbf{s}}}(v_1,v_2)$ can of course be considered to be a nodal conic. Further, the holomorphicity of $\ell_\infty$ implies that $C^{\mathbf{s}}_{\OP{nodal}}$ intersects the line at infinite transversely precisely in the two points $q_i.$ The node $x^{\mathbf{s}}_{\OP{nodal}} \in \CP^2$ of $C^{\mathbf{s}}_{\OP{nodal}},$ which must be different from the two points $q_i,$ is thus contained inside $\CP^2 \setminus \ell_\infty.$

We are now ready to formulate the existence result for conic foliations.
\begin{thm}
\label{thm:Lefschetz}
The conics in $\mathcal{M}_{J_{\mathbf{s}}}(v_1,v_2)$ form a smooth foliation of $\CP^2 \setminus (\ell_\infty \cup \{x^{\mathbf{s}}_{\OP{nodal}}\})$ with symplectic leaves, whose unique singular fibre is given by the nodal conic $C^{\mathbf{s}}_{\OP{nodal}}$ with node $x^{\mathbf{s}}_{\OP{nodal}}.$ There is an induced family of symplectic fibrations $f_{J_{\mathbf{s}}} \colon \CP^2 \setminus \ell_\infty \to \C,$ which are submersions outside of the singularity of the nodal conic, and which depend smoothly on the parameter ${\mathbf{s}} \in I^k.$

Under the further assumption that $J=i$ holds inside some given subset of the form $f^{-1}(U),$ $ U \subset \C,$ one can ensure that $f_{\widetilde{J}}|_{f^{-1}(U)}=f$ is satisfied there. In particular, this can be assumed to hold above some subset $U \subset \C$ whose complement is compact. 
\end{thm}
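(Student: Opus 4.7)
My plan is to adapt Gromov's proof of Theorem \ref{thm:gromov} to the constrained moduli $\mathcal{M}_{J_{\mathbf{s}}}(v_1,v_2)$ of $J_{\mathbf{s}}$-holomorphic conics tangent to $v_1,v_2$ at $q_1,q_2$. The essential tools will be automatic regularity for embedded $J$-holomorphic spheres of positive self-intersection in dimension four, Gromov compactness combined with the classification of singular conics provided by Lemma \ref{lma:conics}, positivity of intersection, and a cobordism-plus-degree comparison to the integrable case $J=i$, where $f$ itself realises the sought foliation.

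First I would verify that $\mathcal{M}_{J_{\mathbf{s}}}(v_1,v_2)$ is a smooth parametric family of real dimension $2$: a smooth conic has self-intersection $+4$ so automatic regularity yields a smooth unconstrained moduli of real dimension $10$, and the four tangency constraints (each a single complex condition) are transversely cut out in four dimensions. Gromov compactness then forces any degenerate limit to fall into one of the categories of Lemma \ref{lma:conics}. A two-fold branched cover of a line is ruled out because the only $J_{\mathbf{s}}$-holomorphic line through both $q_1$ and $q_2$ is $\ell_\infty$, whose tangent directions $T_{q_i}\ell_\infty$ disagree with $v_i$; a reducible limit must split into two $J_{\mathbf{s}}$-holomorphic lines, one tangent to $v_1$ at $q_1$ and the other to $v_2$ at $q_2$, and by Theorem \ref{thm:gromov} these are uniquely determined and produce exactly $C^{\mathbf{s}}_{\OP{nodal}}$.

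Pointwise uniqueness will come from positivity of intersection: two distinct elements $C,C'$ of $\mathcal{M}_{J_{\mathbf{s}}}(v_1,v_2)$ have $[C]\cdot[C']=4$, while the shared tangency at each $q_i$ forces local intersection multiplicity at least $2$; these contributions already account for all intersections, so $C$ and $C'$ must be disjoint elsewhere, and the same accounting with $C'$ replaced by $C^{\mathbf{s}}_{\OP{nodal}}$ shows that a smooth conic meets the nodal one only at $x^{\mathbf{s}}_{\OP{nodal}}$. For existence I would consider the parametric moduli along a path of tame almost complex structures $J_t$ interpolating between $i$ and $J_{\mathbf{s}}$, all agreeing with $i$ near $\ell_\infty$. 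By parametric regularity and the compactness argument above, this parametric moduli is a smooth proper cobordism whose only additional boundary stratum is the corresponding path of nodal conics, so the evaluation map to the complement of $\ell_\infty$ and of the moving node is a proper map of equi-dimensional manifolds of constant degree; at $J=i$ this degree equals $+1$ from the explicit Lefschetz fibration $f$.

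Combining uniqueness and existence produces the foliation, and $f_{J_{\mathbf{s}}}$ will be defined by sending each point to the parameter of the unique conic through it, via any fixed identification $\mathcal{M}_{J_{\mathbf{s}}}(v_1,v_2)\cong\C$; smoothness in $\mathbf{s}$ and symplecticity of the leaves are then standard. The normalisation clause will follow from the observation that for $c\in U$ the closure $\overline{f^{-1}(c)}\subset f^{-1}(U)\cup\{q_1,q_2\}$ is entirely supported where $\widetilde{J}=i$ (using also that $\widetilde{J}=i$ near $\ell_\infty$), so it is itself $\widetilde{J}$-holomorphic and defines an element of $\mathcal{M}_{\widetilde{J}}(v_1,v_2)$; an appropriate choice of parametrisation $\mathcal{M}_{\widetilde{J}}(v_1,v_2)\cong\C$ then makes $f_{\widetilde{J}}=f$ on $f^{-1}(U)$. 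The hardest part will be the existence/degree step: properness of the parametric moduli along the homotopy must be controlled and no unaccounted boundary strata may appear, which I would handle using tameness of all $J_t$, integrability near $\ell_\infty$, and Lemma \ref{lma:conics}.
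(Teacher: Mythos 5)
Your overall strategy coincides with the paper's: automatic transversality for the constrained conics, positivity of intersection for uniqueness of the leaf through a point, and a deformation to the integrable case $J=i$ to get existence. But there is one genuine error in the compactness step. You claim that the degeneration to a two-fold branched cover of a line is ``ruled out'' because the tangent directions of $\ell_\infty$ at $q_i$ disagree with $v_i$. This is false: the double cover $2\ell_\infty$ of the line at infinity branched at $\{q_1,q_2\}$ \emph{is} a limit of conics in the family (already for $J=i$, the conics $Z_1Z_2=cZ_3^2$ converge to $Z_3^2=0$ as $c\to\infty$; at a branch point the differential vanishes, so the tangency condition degenerates rather than obstructs). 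Consequently your assertion that the parametric moduli is a ``proper cobordism whose only additional boundary stratum is the path of nodal conics'' is not correct as stated: the moduli space $\mathcal{M}_{J}(v_1,v_2)$ is non-compact, with an end at $2\ell_\infty$. The degree argument survives only because the evaluation map remains proper over compact subsets of $\CP^2\setminus\ell_\infty$ --- conics Gromov-close to $2\ell_\infty$ are, by the standardness of $J$ near $\ell_\infty$, honest algebraic conics near infinity and hence disjoint from any fixed compact subset of $\C^2$. This is exactly the point the paper isolates (persistence of the solutions in a neighbourhood of $2\ell_\infty$ in $\mathcal{M}_{J_{s,\mathbf{s}}}(v_1,v_2)$, using that all $J_{(s,\mathbf{s})}$ are standard near $\ell_\infty$), and you need to add it to control the end of your cobordism. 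Relatedly, your sentence that the intersection accounting shows a smooth conic ``meets the nodal one only at $x^{\mathbf{s}}_{\OP{nodal}}$'' has the conclusion backwards: since the two tangencies at $q_1,q_2$ already exhaust $[C]\bullet[C^{\mathbf{s}}_{\OP{nodal}}]=4$, a smooth leaf is \emph{disjoint} from the nodal conic away from $\ell_\infty$, and in particular avoids the node --- which is what the foliation statement requires.

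Two smaller remarks. First, defining $f_{J_{\mathbf{s}}}$ via ``any fixed identification $\mathcal{M}_{J_{\mathbf{s}}}(v_1,v_2)\cong\C$'' is weaker than what the paper does: there $f_J$ is defined canonically as the second Taylor coefficient $a_2$ of the leaf in affine coordinates at $q_1$, whose injectivity (local multiplicity $\ge 3$ at $q_1$ plus $\ge 2$ at $q_2$ would exceed $4$) and submersiveness (via the vanishing pattern of normal sections from automatic transversality) are proved directly; this is also what makes the submersion claim away from the node and the normalisation clause over $f^{-1}(U)$ immediate, whereas with an arbitrary identification you still owe an argument that the induced map is a submersion. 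Second, your treatment of the normalisation clause is correct in spirit, but you should note that the global identification $\mathcal{M}_{\widetilde J}(v_1,v_2)\cong\C$ must be chosen to restrict to $c\mapsto f^{-1}(c)$ over all of $U$ simultaneously, which the $a_2$-construction delivers for free.
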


\begin{proof}
The proof of the existence of the foliation relies on the well known fact that tame almost complex structures form a contractible space \cite{Gromov:Pseudo}. As a consequence, also the tame almost complex structures being standard at infinity form a contractible space. We may thus extend the family $J_{\mathbf{s}},$ $\mathbf{s} \in I^k,$ to a smooth family $J_{(s,\mathbf{s})}$ parametrised by $(s,{\mathbf{s}}) \in I \times I^k,$ where $J_{(0,\mathbf{s})}\equiv i$ and $J_{(1,\mathbf{s})}= J_{\mathbf{s}}.$

The transversality of the space of conics for all almost complex structures $J_{(s,\mathbf{s})},$ as well as the foliation property, are then both consequences of the below automatic transversality result Lemma \ref{lma:auttrans}, together with a cobordism argument applied to the moduli space $\bigcup_{(s,\mathbf{s})\in I \times I^k} \mathcal{M}_{J_{s,\mathbf{s}}}(v_1,v_2)$ of conics satisfying the given tangencies. Note that Lemma \ref{lma:auttrans} implies that the evaluation map from the moduli space is a \emph{diffeomorphism} defined locally near any given conic (except at the node). The global foliation property is then a consequence of the facts that 
\begin{itemize}
\item for any fixed $\mathbf{s},$ we have $\mathcal{M}_{J_{0,\mathbf{s}}}(v_1,v_2) \setminus \{2\ell_\infty\} \cong \C,$ where we use $2\ell_\infty$ to denote the two-fold cover of the line at infinity branched at $\{q_1,q_2\},$
\item the conics $\mathcal{M}_{J_{0,\mathbf{s}}}(v_1,v_2) \setminus \{2\ell_\infty\}$ foliate $\C^2 \setminus \{0\},$ and
\item the almost complex structures $J_{(s,\mathbf{s})}$ are all standard near $\ell_\infty,$ and hence there exists a neighbourhood $2\ell_\infty \in U \subset \mathcal{M}_{J_{s,\mathbf{s}}}(v_1,v_2)$ of solutions which persist for all $(s,\mathbf{s}) \in I \times I^k.$
\end{itemize}

The third point above combined with positivity of intersection also implies that no line can satisfy both tangency conditions $v_i,$ $i=1,2,$ simultaneously. Hence, the two lines satisfying these tangency conditions join to form a nodal conic $C_{\OP{nodal}}^{(s,\mathbf{s})},$ as sought.

To produce the symplectic fibrations $f_{J_{(s,\mathbf{s})}} \colon \CP^2 \setminus \ell_\infty \to \C$ whose fibres are the leaves in our conic foliation, we use the fact that the evaluation map from the moduli space is a diffeomorphism away from the node. 

First, by positivity of intersection together with $[2\ell_\infty]\bullet[2\ell_\infty]=4,$ two conics in this family must intersect precisely in the two points $\{q_1,q_2\} \subset \ell_\infty.$

Then, we fix standard affine holomorphic coordinates $[1:z_1:z_2] \in \CP^2$ around $q_1=[1:0:0]$ in which $\ell_\infty$ is given by $\{z_2=0\}.$ Since the almost complex structures considered are standard near $q_1,$ each conic $u$ has a uniquely determined power series expansion of the form
$$ z \mapsto (z_1,z_2)=(0,z)+\sum_{k\ge 2} (a_k^{(s,\mathbf{s})}(u)z^k,0), \:\: a_k \in \C,$$
after a suitable reparametrisation of the domain (depending smoothly on the conic $C$). The map $f_{J_{(s,\mathbf{s})}} \colon \C^2 \to \C$ which along each $J_{(s,\mathbf{s})}$-holomorphic conic $u \subset \C^2,$ $u \in \mathcal{M}_{J_{(s,\mathbf{s})}}(v_1,v_2),$ takes as value the corresponding coefficient $a_2^{(s,\mathbf{s})}(u)$ is a smooth function by the foliation property. We end by arguing that this is a fibration of the sought form.

First we show that
$$a_2^{(s,\mathbf{s})} \colon \mathcal{M}_{J_{(s,\mathbf{s})}}(v_1,v_2) \to \C$$
is injective. To that end, note that two different conics would intersect with local intersection index at least $\ge 3$ at $q_1$ if they would have the same coefficient $a_2^{(s,\mathbf{s})}$ in the above expansion. Together with positivity of intersection, this is then in contradiction with $[2\ell_\infty]\bullet [2\ell_\infty]=4,$ taking into account that the local intersection index at the other point $q_2 \in \ell_\infty$ is at least $\ge 2.$

Then we claim that, when $s=0,$ this construction gives back the standard fibration $f=f_i.$ A topological argument now shows that $f_{J_{(s,\mathbf{s})}}$ is surjective for all $(s,\mathbf{s}).$

It remains to show that $f_{J_{(s,\mathbf{s})}}$ is submersive away from the node of the singular conic. This is a consequence of the foliation property, together with the last statement of Lemma \ref{lma:auttrans} by which $a_2^{(s,\mathbf{s})}$ is submersive.
\end{proof}

The following automatic transversality result was crucial in the above proof.

\begin{lma}
\label{lma:auttrans}
Any smooth conic (i.e.~a conic which is neither nodal nor a branched cover) inside $\mathcal{M}_J(v_1,v_2)$ is a regular solution to this moduli problem for an arbitrary tame $J.$ Consequently, $\mathcal{M}_J(v_1,v_2)$ is a smooth two-dimensional manifold. Furthermore, the section normal to some conic $u \in \mathcal{M}(v_1,v_2)$ corresponding to a nonzero vector in $T_u\mathcal{M}_J(v_1,v_2)$ vanishes precisely at the two points $q_i,$ $i=1,2,$ where it moreover has zeros of precisely order two.
\end{lma}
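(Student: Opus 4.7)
The plan is to reduce the whole statement to the automatic transversality theorem for real-linear Cauchy--Riemann operators on complex line bundles over $\CP^1$, in the style of Hofer--Lizan--Sikorav. First I would recall that a smooth conic $u\colon S^2\to\CP^2$ is an embedding, whose normal bundle $N_u$ is a complex line bundle of degree
$$\deg N_u = c_1(T\CP^2) \cdot [u] - \chi(S^2) = 6 - 2 = 4.$$
The linearisation of $\overline\partial_J$ at $u$ splits as tangential plus normal, and regularity of the constrained moduli problem at $u$ is equivalent to the surjectivity of the normal part $D_u^N$, a real-linear Cauchy--Riemann operator on $\Gamma(N_u)$, restricted to sections satisfying the appropriate jet conditions at $q_1$ and $q_2$.

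The next step is to translate the two tangency constraints into vanishing conditions on these normal sections. Working in a local holomorphic chart around $q_i$ in which $\ell_\infty$ is defined by the vanishing of one coordinate and $v_i$ is the direction of the other, a perturbed conic takes the local form $z \mapsto (f(z) + \epsilon\eta(z),z)$; the requirement that it still pass through $q_i$ and still be tangent to $v_i$ there translates into $\eta(q_i) = 0$ and $\nabla\eta(q_i)=0$, respectively. Hence the admissible sections form the space $\Gamma(N_u(-2q_1-2q_2))$, on which $D_u^N$ acts as a Cauchy--Riemann operator on a line bundle of degree $4 - 4 = 0$. Automatic transversality for real-linear Cauchy--Riemann operators on $\CP^1$ applies whenever the underlying complex line bundle has degree at least $-1$; here it yields surjectivity together with a kernel of real dimension two. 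This gives the regularity claim and identifies $\mathcal{M}_J(v_1,v_2)$ as a smooth two-manifold.

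For the final statement about the order of the zeros, I would invoke the similarity principle: any nonzero $\eta \in \ker D_u^N$ has only isolated zeros whose positive integer multiplicities sum to at most the degree of the underlying twisted bundle. If such an $\eta$ had an additional zero, or vanished to order at least three at one of the $q_i$, then $\eta$ would lie in the kernel of $D_u^N$ acting on a further-twisted line bundle of degree $-1$, which by automatic transversality is zero; a contradiction. Therefore $\eta$ vanishes precisely at $q_1$ and $q_2$ and to order exactly two there. The main obstacle I anticipate is the careful book-keeping that identifies the constrained linearisation with the twisted-bundle Cauchy--Riemann operator $D^N_u$ on $N_u(-2q_1-2q_2)$, for which one must choose local trivialisations of $N_u$ near $q_i$ compatible with the directions $v_i$; modulo this verification the rest of the argument is a direct application of automatic transversality.
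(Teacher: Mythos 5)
Your proposal is correct and follows essentially the same route as the paper: both arguments rest on Hofer--Lizan--Sikorav automatic transversality together with positivity of zeros of normal sections, the degree count $c_1(\nu_u)=4$, and the observation that each tangency condition forces a normal variation to vanish to order two at $q_i$ (so the two constraints exactly exhaust the total count $2+2=4$). The only difference is packaging -- you twist the normal bundle down to degree zero and apply automatic transversality to the constrained operator directly, whereas the paper applies it to the unconstrained problem and then shows the two-jet evaluation map at $q_1,q_2$ is submersive by the same positivity argument; the book-keeping you flag (choosing holomorphic trivialisations near $q_i$) is legitimate here precisely because $J$ is assumed standard near $\ell_\infty$.
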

\begin{proof}
The statement is a fairly straight forward consequence of the automatic transversality result in \cite{Hofer:OnGenericity}; we proceed to give the argument.

Consider the space $\mathcal{M}_J$ of embedded $J$-holomorphic conics $u \colon \CP^1 \to \CP^2,$ together with a fixed solution $u_0 \in \mathcal{M}_J(v_1,v_2)$ satisfying the tangency conditions $Du_0(T_0\CP^1)=v_1$ and $Du_0(T_\infty \CP^1)=v_2.$ Recall that these conics are embedded by Lemma \ref{lma:conics}. The kernel of the linearised $\overline{\partial}$-problem \emph{disregarding reparametrisations} is thus a complex five-dimensional space by the aforementioned automatic transversality; indeed, the cokernel vanishes and the Fredholm index is equal to $(n-3)\chi(\CP^1)+2c_1(\CP^2)[u_0]=-1(2)+12=10.$

We need to show that the infinitesimal evaluation map
$$(u,\pt_1,\pt_2) \mapsto ((u(\pt_1),Du(T_{\pt_1}\CP^1)),(u(\pt_2),Du(T_{\pt_2}\CP^1)))$$
is transverse to the pair $((q_1,v_1),(q_2,v_2)).$ Since we consider an embedded conic, we can identify the solutions near $u_0 \in \mathcal{M}_J$ with certain sections $\sigma_u \in \Gamma(\nu_{u_0})$ in the normal bundle of $u_0.$ We make the choice of appropriate holomorphic coordinates near the two points $q_i$ (recall that $u_0$ is holomorphic near these two points), and can then assume that the normal bundle is holomorphic there, and that the equation $\overline{\partial}_J$ for the sections $\sigma_u$ actually is the standard Cauchy--Riemann operator near the two points $\{0,\infty\}=u_0^{-1}\{q_1,q_2\} \subset \CP^1.$

From this point of view, the problem boils down to showing that the map
$$ \mathcal{M}_J \ni u \mapsto ((\sigma_u(0),\sigma_u'(0),(\sigma_u(\infty),\sigma_u'(\infty))) \in ((\nu_{u_0})_{0})^2 \times ((\nu_{u_0})_{\infty})^2 \cong (\C^2)^2$$
is submersive to the origin for $u$ close to $u_0.$ The differential of this map is a linear map
$$ \Phi \colon \C^5 \cong T_{u_0}\mathcal{M}_J \ni \xi \mapsto ((\xi(0),\xi'(0)),(\xi(\infty),\xi'(\infty))) \in \C^4,$$
where $\xi \in T_{u_0}\mathcal{M}_J$ again can be seen as a section in the normal bundle of $u_0.$ Moreover, it satisfies the properties that
\begin{itemize}
\item $\xi$ can be identified with a holomorphic map to $\C$ near the two points $\{0,\infty\}=u_0^{-1}\{q_1,q_2\} \subset \CP^1$ in the above coordinates;
\item every geometric intersection of $\xi$ with the zero section contributes positively to the algebraic intersection number.
\end{itemize}
The second statement is the main technical result of \cite{Hofer:OnGenericity}.

In conclusion, when $\Phi$ is not surjective, one can readily find a section $\xi \in \Gamma(\nu_{u_0})$ with a sufficiently high vanishing at the points $\{0,1\}=u_0^{-1}\{q_1,q_2\},$ so that the algebraic intersection index there is equal to at least $+5$ there. Using the aforementioned result concerning positivity of intersection, this is in contradiction with the fact that the Euler number of $\nu_{u_0}$ is equal to $4.$ In order to see the claimed vanishing, we argue as follows. When $\Phi$ is not surjective, then the linear subspace $\ker \Phi \subset \C^2$ is of real dimension $\ge 3.$ In this situation one thus finds a one-dimensional subspace which satisfies the vanishing $\xi''(0)=0$ as well.

The claim concerning the vanishing of the section corresponding to the infinitesimal variation in $T_u\mathcal{M}_J(v_1,v_2)$ is shown similarly, using the positivity of intersection from \cite{Hofer:OnGenericity} together with the fact $[u] \bullet [u] =4.$ To that end, note that any section in the normal bundle coming from a nonzero variation in $T_u\mathcal{M}_J(v_1,v_2)$ automatically vanishes to order \emph{at least} two at both points $q_i,$ $i=1,2,$ due to the tangency condition.
\end{proof}

\subsection{Normalising the fibration}
For us it is necessary to perform a further normalisation of the conic fibration supplied by Theorem \ref{thm:Lefschetz} above. In particular, we want to make the fibration standard outside of a compact subset, and to make the nodal conic standard near its node.
\begin{rmk}
The normalised fibration will still not define a symplectic Lefschetz fibration in the complement of the line $\ell_\infty$ at infinity in the normal sense; see e.g.~\cite{McLean:LefschetzFibrations} for the definition. The reason is that the requirement for the fibration to be symplectically trivial outside of a compact subset is not satisfied even for the standard fibration $f.$
\end{rmk}
\begin{thm}
 
 \label{thm:normalise}
Assume that we are given a conic fibration $f_J$ as produced by Theorem \ref{thm:Lefschetz} above, where $f_J$ is standard inside a subset of the form $f^{-1}U,$ $U \subset \C$ where $\C \setminus U$ is compact. It is possible to find a one-parameter family $f_{J_t},$ $t \in [0,1],$ of such conic fibrations, where $J_0=i$ and $f_{J_0}=f$ both are standard, for which:
\item \begin{itemize}
\item the fibres of $f_{J_1}$ coincide with the fibres of $f_J$ outside some small neighbourhood of the form
$$O_\epsilon:=(B^4_\epsilon(q_1) \cup B^4_\epsilon(q_2)) \setminus (\ell_\infty \cup f^{-1}F), \:\: 1 \gg \epsilon>0,$$
where $F \subset U$ is an arbitrary closed subset. Moreover, the almost complex structure $J_1$ can be taken to coincide with $J$ outside of the neighbourhood $O_{\epsilon'}$ for some $0<\epsilon' \ll \epsilon$;
\item the nodal conics $C^{J_t}_{\OP{nodal}}$ coincide with the standard nodal conic $C_{\OP{nodal}}$ near $\ell_\infty$ and is moreover given as the preimage $C^{J_t}_{\OP{nodal}}= f^{-1}_{J_t}(0)$; and
\item $f_{J_t}=f$ holds outside of some compact subset of $\C^2.$
\end{itemize}
\end{thm}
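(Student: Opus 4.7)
The plan is to first construct an almost complex structure $J_1$ by modifying $J$ only inside $O_{\epsilon'}$, so that the associated conic fibration $f_{J_1}$ is already normalised near $\ell_\infty$, and then to connect $J_0=i$ to $J_1$ by a smooth path inside the contractible space of tame almost complex structures that are standard near $\ell_\infty$ and preserve this normalisation. For the first step I work near $q_1$ in the affine coordinates $[1:z_1:z_2]$ of the proof of Theorem~\ref{thm:Lefschetz}, in which $\ell_\infty=\{z_2=0\}$, $\ell_1=\{z_1=0\}$, and $J=i$ by assumption. The proof of Theorem~\ref{thm:Lefschetz} gives the power-series expansion $\{z_1=g_c(z_2)\}$ of each $J$-holomorphic conic $C_c$, with $g_c(z_2)=cz_2^2+\sum_{k\ge 3}a_k(c)z_2^k$, whereas the standard fibre is $f^{-1}(c)=\{z_1=cz_2^2\}$. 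Choosing the closed set $F\subset U$ to be the complement of a small open disc around $0\in\C$ makes $O_\epsilon\cap B^4_\epsilon(q_1)$ lie in a parabolic region in which the fibre label $c=z_1/z_2^2$ stays bounded. Pick a smooth cut-off $\chi$ supported in $O_{\epsilon'}$ and equal to $1$ on a slightly smaller region, and put
\begin{equation*}
\psi(z_1,z_2):=\bigl(z_1+\chi(z_1,z_2)\bigl[g_{z_1/z_2^2}(z_2)-z_1\bigr],\,z_2\bigr)
\end{equation*}
near $q_1$, extend by the identity elsewhere, and perform the analogous construction near $q_2$. Since the correction $g_c(z_2)-cz_2^2$ vanishes to order three in $z_2$ and $|c|$ stays bounded on the support of $\chi$, $\psi$ extends smoothly across $\ell_\infty$ as the identity and defines a smooth diffeomorphism of $\CP^2$ supported in $O_{\epsilon'}$. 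Set $J_1\coloneqq\psi^*J$.

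By the functorial behaviour of pseudoholomorphic curves under diffeomorphisms, the $J_1$-holomorphic conics are $\psi^{-1}(C_c)$; in the region where $\chi\equiv 1$ these coincide with the standard fibres $f^{-1}(c)$, so in particular $C^{J_1}_{\OP{nodal}}$ equals the standard nodal conic near $\ell_\infty$ and is the fibre of $f_{J_1}$ over $0$. Now choose a smooth path $J_t$, $t\in[0,1]$, from $J_0=i$ to $J_1$ inside the contractible space of tame almost complex structures that (i) equal $i$ on $f^{-1}F$ together with a fixed neighbourhood of $\ell_\infty$ contained in $\{\chi\equiv 1\}$, and (ii) make the standard conic foliation $J_t$-holomorphic in that neighbourhood of $\ell_\infty$. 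Applying Theorem~\ref{thm:Lefschetz} to this family produces $f_{J_t}$, and the four bullet points are then easy to verify: the first from $\psi=\id$ outside $O_\epsilon$; the second by construction of $J_1$; the third from (ii) combined with the labelling in Theorem~\ref{thm:Lefschetz} (which places the conic whose $a_2$-coefficient at $q_1$ vanishes at the value $0$); and the fourth because on $f^{-1}F$ and near $\ell_\infty$ the almost complex structure equals $i$, so the local uniqueness of $i$-holomorphic conics satisfying the prescribed tangencies forces $f_{J_t}=f$ there.

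The subtle point is verifying that $\psi$ is a genuine smooth diffeomorphism of $\CP^2$. Because every conic in the family passes through $q_1$ and $q_2$, the natural fibration coordinate $c=z_1/z_2^2$ is singular along $\ell_\infty=\{z_2=0\}$ and the formula for $\psi$ is a priori defined only for $z_2\ne 0$. Smoothness across $\ell_\infty$ rests on the correction vanishing to order three in $z_2$ together with the cut-off $\chi$ keeping $c$ in a fixed compact set, on which the smooth Taylor coefficients $a_k(c)$ and all their derivatives are uniformly bounded. With this technical point settled, the remainder of the argument is a routine parametric exercise using the smooth dependence on parameters supplied by Theorem~\ref{thm:Lefschetz}.
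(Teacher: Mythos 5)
Your construction of $\psi$ and $J_1=\psi^*J$ is essentially a repackaging of the paper's first step and, modulo the point that you still owe an argument that $\psi^*J$ is tame (which does follow from $\psi$ being $C^1$-close to the identity for $\epsilon'$ small), it delivers the first bullet: killing the Taylor coefficients $a_k(u)$, $k\ge 3$, of the leaves near $q_1,q_2$. The genuine gap is that this is \emph{all} it does, because $\psi$ by design preserves the second-order coefficients: it sends $C_c=\{z_1=g_c(z_2)\}$ to $\{z_1=cz_2^2\}$ where $\chi\equiv 1$, i.e.\ it fixes the label $a_2(u)=c$. The nodal conic $C^J_{\OP{nodal}}$ has near $q_1$ an expansion $\{z_1=\sum_{k\ge 2}a_kz_2^k\}$ whose coefficient $a_2(C^J_{\OP{nodal}})$ is in general nonzero, so after your modification it becomes the parabola $\{z_1=a_2(C^J_{\OP{nodal}})z_2^2\}$ near $q_1$ --- a piece of the standard \emph{smooth} fibre $f^{-1}(a_2(C^J_{\OP{nodal}}))$, not of the standard nodal conic $\ell_1=\{z_1=0\}$. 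Hence $C^{J_1}_{\OP{nodal}}$ neither coincides with $C_{\OP{nodal}}$ near $\ell_\infty$ nor equals $f_{J_1}^{-1}(0)$, and the second bullet fails. Likewise a given conic $u$ has in general $a_2(u)\neq b_2(u)$, so after your modification it agrees with $f^{-1}(a_2(u))$ near $q_1$ but with the \emph{different} standard fibre $f^{-1}(b_2(u))$ near $q_2$; the third bullet ($f_{J_t}=f$ outside a compact subset of $\C^2$) therefore also fails, and your appeal to local uniqueness of $i$-holomorphic conics cannot repair it, since uniqueness of the local leaf through a point says nothing about which global fibre label it carries at the other end. The paper fixes both points by two \emph{additional} deformations acting on the coefficient functions $a_2(u),b_2(u)$ themselves, via compactly supported isotopies $\phi_t,\psi_t$ of $\C$ applied in the form $\phi_{\beta_r(2\|z\|)}(a_2(u))$, so as to force $a_2=b_2=0$ for the nodal conic and $a_2(u)=b_2(u)$ for every $u$, followed by a reparametrisation of the target.

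A secondary issue: you fix $F$ to be the complement of a small disc about $0$, whereas the statement requires the conclusion for an \emph{arbitrary} closed $F\subset U$. For general $F$ the label $c=z_1/z_2^2$ is unbounded on the support of $\chi$ (conics close to $2\ell_\infty$ have $c\to\infty$), so the uniform bounds on the $a_k(c)$ that underlie both the smoothness of $\psi$ across $\ell_\infty$ and its invertibility need a separate argument; the paper avoids this by cutting off in the domain variable $z$ only, with estimates of the form $\|d(\beta_r(\|z\|)z^k)\|\le (k+2)r^{k-1}$.
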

\begin{proof}
There exists a path $J_t$ from $J_0=i$ to $J_1=J$ with corresponding fibrations $f_{J_t}$ by $J_t$-holomorphic conics. See e.g.~the proof of Theorem \ref{thm:Lefschetz}.

We start to normalise the foliations near the two points $\{q_1,q_2\},$ making them coincide with the standard foliation there. Note that the smooth foliation property outside of these two points then allows us to find a deformations $J'_t$ of the path of almost complex structures, where still $J'_0=i,$ for which the deformed foliations are $J'_t$-holomorphic. (Here may assume that $J'=i$ still holds in a possibly smaller neighbourhood of $\ell_\infty.$)

The symplectic foliation is deformed by carefully replacing the coefficients in the power series expansions near the points $\{q_1,q_2\}$ that was described in the proof of Theorem \ref{thm:Lefschetz}. For simplicity we will here consider the case of the fixed fibration $f_J$; the general one-parameter case is proven without any additional difficulty.

First we recall our choice of power series expansions near $q_i$ for the leaves. Take the standard affine holomorphic coordinates $[1:z_1:z_2] \in \CP^2$ around $q_1=[1:0:0]$ in which $\ell_\infty$ is given by $\{z_2=0\}.$ Since the almost complex structures considered are standard near $q_1,$ each conic $u \in \mathcal{M}_J(v_1,v_2)$ has a uniquely determined power series expansion of the form
$$ z \mapsto (z_1,z_2)=(0,z)+\sum_{k\ge 2} (a_k(u)z^k,0), \:\: a_k \in \C,$$
after a suitable reparametrisation of the domain. In analogous affine coordinates near $q_2,$ the leaves can be written as
$$ z \mapsto (z_1,z_2)=(z,0) + \sum_{k\ge 2} (0,b_k(u)z^k,0), \:\: b_k \in \C.$$
The coefficients $b_k,a_k$ of non-minimal degree in the above power series can be replaced by functions of the form
$$ \beta_r(\|z\|) \cdot a_k(u), \:\: \beta_r(\|z\|) \cdot b_k(u), \:\: k \ge 3,$$
where $\beta_r$ is a bump function satisfying $\beta_r'(t) \ge 0,$ $\beta_r(t)=1$ for $t \ge r,$ $\beta_r(t)=0$ near $t=0,$ while in addition $|\beta_r'| \le 2/r$ is satisfied. Note that such a deformation does not deform those leaves which already are standard near the points $q_i.$ We claim that, when $t_0=r>0$ is taken to be sufficiently small, then this deformation is still a symplectic foliation, since only the higher order terms are deformed. Here we use the facts that the inequality
$$\|d(\beta_r(\|z\|)\cdot z^k)\|\le 2r^{k-1}+kr^{k-1}=(k+2)r^{k-1},\:\: k \ge 3,$$
holds in the region $D^2_r$ containing the support of $\beta_r'.$ (Here $r>0$ is sufficiently small.) In this manner, we can make the leaves of the foliation coincide with leaves of the standard foliation near the two points $q_i.$ This finishes the claim in the first bullet point.

For the second bullet point, we need to make
$$a_2(C^J_{\OP{nodal}})=b_2(C^J_{\OP{nodal}})=0$$
satisfied. We proceed as follows. Recall that all conic fibres except $2\ell_\infty$ are graphical over the second and first affine coordinate around $q_1$ and $q_2,$ respectively, as described above. The foliation can then readily be deformed by replacing the coefficients $a_2(u)$ and $b_2(u)$ by coefficients of the form
$$\phi_{\beta_r(2\|z\|)}(a_2(u)) \:\:\:\: \text{and} \:\:\:\: \psi_{\beta_r(2\|z\|)}(b_2(u))$$
for suitable smooth and compactly supported isotopies $\phi_t,\psi_t \colon \C \to \C$ which both satisfy $\phi_t = \id_\C$ for all $t \ge 1/2.$ For the symplectic condition of the deformed foliation, note that
\begin{align*}
& \|d(\phi_{\beta_r(2\|z\|)}(a_1(u))\cdot z^2)\| \le c_1(u)\cdot r,\\
& \|d(\psi_{\beta_r(2\|z\|)}(b_2(u))\cdot z^2)\| \le c_1(u)\cdot r,
\end{align*}
is satisfied inside $\{\|z\| \le r \},$ for $r>0$ sufficiently small, and where
$$c_1 \colon \mathcal{M}(v_1,v_2) \setminus \{2\ell_\infty\} \to \R_{\ge 0}$$
is a continuous function depending on the choice of isotopy which is constant outside of some compact subset. Since the isotopies are compactly supported, it thus suffices to take $r>0$ sufficiently small in order to guarantee the symplectic condition.

 What remains is the last bullet point. We need to make each fibre of $f_J$ coincide with \emph{one and the same} fibre of $f$ near both points $q_i,$ $i=1,2.$ This can be done by the same method as in the proof of the second bullet point, using suitable isotopies $\phi_t,\psi_t$ of $\C.$ Once this has been done, it is a simple matter of reparametrising the target $\C$ of the fibration in order to achieve the sought property.
\end{proof}
In addition it will be useful to consider the following normalisation of the fibration over a path in the base starting at the image of the nodal conic. Let
$$\gamma \colon [0,1] \hookrightarrow \C$$
be an embedded path which
\begin{itemize}
\item coincides with the canonical inclusion $\gamma_0 \colon [0,1] \hookrightarrow \C$ near its boundary, and
\item is homotopic to the canonical inclusion $\gamma_0$ through embeddings $\gamma_t,$ $\gamma_1=\gamma,$ all coinciding in some neighbourhood of the boundary.
\end{itemize}

\begin{thm}
\label{thm:normalisepath}
 
Assume that $f_{J_t} \colon (\CP^2 \setminus \ell_\infty,\omega_{\OP{FS}}) \to \C$ is a smooth path of symplectic conic fibrations with $J_0=i$ and $J_1=J,$ where $f_{J_t}$ all are fixed outside of a compact subset of $\CP^2 \setminus (\ell_\infty \cup C \cup C_{\OP{nodal}}).$ Under the above assumptions on $\gamma,$ there then exists a compactly supported Hamiltonian isotopy
$$\phi^t_{H_t} \colon (\CP^2 \setminus (\ell_\infty \cup C_{\OP{nodal}}),\omega_{\OP{FS}}) \to (\CP^2 \setminus (\ell_\infty \cup C_{\OP{nodal}}),\omega_{\OP{FS}})$$
which maps $f_J^{-1}(\gamma(x))$ to $f^{-1}(x)$ for each $x \in [0,1].$
\end{thm}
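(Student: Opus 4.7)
The required isotopy will be built as a composition $\phi = \Phi_B \circ \Phi_A$ of two compactly supported Hamiltonian isotopies of $\CP^2 \setminus (\ell_\infty \cup C_{\OP{nodal}})$, handling separately the change of fibration (from $f_J$ back to $f$) and the reparametrization of the path (from $\gamma$ to $\gamma_0$). $\Phi_A$ will send each fibre $f_J^{-1}(z)$ to $f^{-1}(z)$ for $z$ near $[0,1]$, and $\Phi_B$ will send each $f^{-1}(\gamma(x))$ to $f^{-1}(x)$; the composition then sends $f_J^{-1}(\gamma(x))$ first to $f^{-1}(\gamma(x))$ and finally to $f^{-1}(x)$.

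\textbf{Step 1: aligning the fibrations.} For $\Phi_A$, I apply Moser's trick to the given path $f_{J_t}$, seeking a time-dependent Hamiltonian $K_t$ whose flow $\psi_t$ satisfies the fibred relation $f \circ \psi_t = f_{J_t}$. Differentiating in $t$ gives the infinitesimal condition $\{K_t, f\} \circ \psi_t = \partial_t f_{J_t}$ read as $\C$-valued functions, which I solve by integrating along $f$-fibres---using that $f$ is a submersion outside of $C_{\OP{nodal}}$, and that fibres of $f$ are symplectic. Compact support of $K_t$ is inherited from the assumption that the family $f_{J_t}$ is fixed outside a compact subset of $\CP^2 \setminus (\ell_\infty \cup C \cup C_{\OP{nodal}})$, so that $\partial_t f_{J_t}$ is compactly supported there. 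The ambiguity in solving $\{K_t, f\} = \phi_t$ (a vertical shift of $K_t$) is used to arrange vanishing near $C_{\OP{nodal}}$ and $\ell_\infty$.

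\textbf{Step 2: reparametrizing the base.} Having aligned the fibrations, I now use the path homotopy $\gamma_s$ (which fixes neighbourhoods of $\{0,1\}$) to construct $\Phi_B$. First I build a compactly supported smooth isotopy of $\CP^2 \setminus (\ell_\infty \cup C_{\OP{nodal}})$ realizing the deformation fibrewise: at $p \in f^{-1}(\gamma_s(x))$ the time-dependent vector field is the horizontal lift of $-\partial_s\gamma_s(x)$ under the symplectic connection of $f$, cut off outside a compact subset. This isotopy is in general not symplectic, but Moser's trick applied to the family $\Psi_s^*\omega_{\OP{FS}}$---available since $\CP^2 \setminus \ell_\infty \cong \R^4$ has trivial compactly supported cohomology in degrees $1$ and $2$---corrects it to a compactly supported symplectic, and therefore Hamiltonian, isotopy. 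The Moser correction can be chosen to act vertically along the $f$-fibres so that the fibre-to-fibre correspondence set up by the initial smooth isotopy survives.

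\textbf{Main obstacle.} The principal technical point is checking that both Moser-type corrections can be made compactly supported in $\CP^2 \setminus (\ell_\infty \cup C_{\OP{nodal}})$ while preserving the prescribed fibre-mapping property. In Step 2 there is a genuine flux-type obstruction to realizing the base reparametrization by a base-level Hamiltonian alone---the curves $\gamma$ and $\gamma_0$ need not bound zero signed area in $\C$---and this discrepancy must be absorbed into a vertical correction inside the $f$-fibres. The fact that all smooth fibres of $f$ are symplectic conics of one and the same area, combined with the simple topology of $\CP^2 \setminus \ell_\infty$, is precisely what makes such a vertical correction available.
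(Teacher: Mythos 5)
Your Step 1 contains a genuine gap, and it is the crux of the matter. You are asking for a symplectomorphism $\psi_t$ with $f \circ \psi_t = f_{J_t}$ over a two-dimensional neighbourhood of $[0,1]$ in the base. Such a map would send $f_{J_t}^{-1}(\Omega)$ onto $f^{-1}(\Omega)$ for every measurable $\Omega$ in that neighbourhood, and hence would force $\OP{vol}(f_{J_t}^{-1}(\Omega))=\OP{vol}(f^{-1}(\Omega))$ for all such $\Omega$; equivalently, the push-forward of the Liouville measure under $f_{J_t}$ would have to be independent of $t$. There is no reason for this to hold for an arbitrary family of pseudoholomorphic conic fibrations, so the fibred symplectomorphism you seek does not exist in general. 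The same obstruction appears infinitesimally: since the fibres of $f$ are symplectic, the Hamiltonian vector fields of $\Re f$ and $\Im f$ span the symplectic orthogonal complement of the fibre tangent spaces, so the equation $\{K_t,f\}\circ\psi_t=\partial_t f_{J_t}$ prescribes $dK_t$ on a two-dimensional distribution \emph{transverse} to the fibres. ``Integrating along $f$-fibres'' therefore does not produce a solution (the fibre directions are exactly the ones left free), and the integrability of the prescribed horizontal differential to an exact one-form is obstructed -- by precisely the volume/flux discrepancy above.

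The statement only requires the fibres to be matched over the one-parameter family $x\mapsto\gamma(x)$, and the paper's proof exploits exactly this reduction in dimension: it uses the symplectic parallel transport of $f_{J_t}$ along the extended paths $\gamma_t$ to produce an isotopy of the \emph{hypersurfaces} $f_{J_t}^{-1}(\gamma_t)$, marked by their fibres and pulling back $\omega_{\OP{FS}}$ to a fixed form; it then thickens this to an ambient symplectic isotopy via the symplectic neighbourhood theorem and cuts off the generating Hamiltonian. Over a path there is no volume obstruction -- the only invariant is the induced presymplectic structure on the hypersurface, which parallel transport matches on the nose. Your Step 2 is in the same spirit as this (and the flux issue you flag there is real but manageable precisely because one works along the fibres of a hypersurface), but Step 1 cannot be repaired without abandoning the attempt to match the fibrations over an open set of the base; once you restrict Step 1 to the path $\gamma([0,1])$ itself, the natural argument is the parallel-transport one, which handles both steps simultaneously.
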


\begin{proof}
Recall that the symplectic fibrations $f_{J_t}$ give rise to a parallel transport along the extended curves $\gamma_t$ by integrating a suitably normalised characteristic vector field inside $f_{J_t}^{-1}\gamma_t.$ Using this parallel transport, starting from the conic fibre $f_{J_t}^{-1}(\epsilon)=f^{-1}(\epsilon)$ for some small $\epsilon>0,$ we obtain a compactly supported isotopy
$$ \varphi_t \colon f_{J_0}^{-1}(\epsilon) \times [\epsilon,1+\epsilon] \hookrightarrow \CP^2 \setminus (\ell_\infty \cup C_{\OP{nodal}})$$
of hypersurfaces where
\begin{itemize}
 
\item $\varphi_t^*\,\omega_{\OP{FS}}$ is constant,
\item $\varphi_t(f_{J_0}^{-1}(\epsilon) \times \{x\})=f_{J_t}^{-1}(\gamma_t(x))$; and
\item $\varphi_t=\varphi_0$ holds outside of a compact subset, as well as near $C_{\OP{nodal}}.$
\end{itemize}
The independence of $t$ asserted in the last two points is a consequence of the assumption that the fibrations all are standard outside of a compact subset of $\CP^2 \setminus (\ell_\infty \cup C \cup C_{\OP{nodal}}).$

The standard symplectic neighbourhood theorem \cite{McDuff:Introduction} then provides us with an extension of the family $\varphi_t$ to a family
\begin{gather*}
\Phi_t \colon (f_{J_0}^{-1}(\epsilon) \times [\epsilon,1+\epsilon] \times [-\epsilon',\epsilon'],\omega) \hookrightarrow (\CP^2 \setminus \ell_\infty,\omega_{\OP{FS}}),\\
\Phi_t|_{f_{J_0}^{-1}(\epsilon) \times [\epsilon,1+\epsilon] \times \{0\}}=\varphi_t,
\end{gather*}
of open symplectic embeddings fixed outside of a compact subset of each $f_{J_0}^{-1}(\epsilon) \times \{(x,y)\}$. For the last property, we again rely on the assumption that the fibrations are standard outside of a compact subset.

Finally, since the support of the above family of symplectomorphisms is of the aforementioned form, a standard fact now shows that it can be generated by a compactly supported Hamiltonian, i.e. $\Phi_t=\phi^t_{G_t} \circ \Phi_0,$ where $G_t$ moreover can be taken to vanish in some neighbourhood of $f^{-1}[0,\epsilon].$ A suitable cut-off of this Hamiltonian then generates the sought global Hamiltonian isotopy of $(\CP^2 \setminus \ell_\infty,\omega_{\OP{FS}}).$
\end{proof}

\subsection{Hamiltonian isotopies of symplectic surfaces with smooth self-intersection}
\label{sec:nodal}
Here we recall and establish facts concerning Hamiltonian isotopy of nodal symplectic surfaces, as well as symplectic surfaces with more complicated discrete self-intersections. First recall that a smooth family of embedded symplectic surfaces can be generated by a Hamiltonian isotopy by the following basic result.
\begin{prp}[Proposition 0.3 in \cite{Siebert:OnTheHolomorphicity}]
\label{prp:SiebertTian}
Let $\Sigma_t \subset (X^4,\omega)$ be a smooth isotopy of symplectic surfaces, where the isotopy moreover is fixed inside some (possibly empty) subset $F \subset X.$ Then there exists a Hamiltonian $H_t \colon X \to \R$ for which $\Sigma_t=\phi^t_{H_t}(\Sigma_0).$ The Hamiltonian $H_t$ can moreover be taken to vanish on any given open subset $U \subset F.$ 
\end{prp}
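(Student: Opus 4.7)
The plan is to reduce the problem to prescribing the $1$-jet of $H_t$ along each surface $\Sigma_t$, which is possible because in a four-dimensional symplectic manifold the tangent space of a symplectic surface admits a canonical symplectic complement.

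First, apply the relative smooth isotopy extension theorem to extend $\Sigma_t$ to an ambient smooth isotopy $\phi_t \colon X \to X$ with $\phi_t|_F = \mathrm{id}_F$ and $\phi_t(\Sigma_0) = \Sigma_t.$ Let $V_t := \partial_t \phi_t \circ \phi_t^{-1}$ be the generating time-dependent vector field, which then vanishes on $F.$ At each point $p \in \Sigma_t$ use the splitting
$$T_pX = T_p\Sigma_t \oplus T_p\Sigma_t^\omega,$$
valid because $\Sigma_t$ is symplectic, to decompose $V_t(p) = V_t^T(p) + V_t^\perp(p).$ Only $V_t^\perp$ is responsible for moving the image $\Sigma_t,$ so it suffices to produce $H_t$ satisfying $X_{H_t}(p) = V_t^\perp(p)$ for every $p \in \Sigma_t.$

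The local model is as follows. Near any $p_0 \in \Sigma_{t_0}$ fix a symplectic tubular neighbourhood identifying $\Sigma_t$ locally with the zero section of its symplectic normal bundle $T\Sigma_t^\omega,$ giving coordinates $(q, \xi).$ Then
$$ H_t^{\mathrm{loc}}(q, \xi) := -\omega_q\bigl(V_t^\perp(q), \xi\bigr), $$
with $\xi$ interpreted as a vector in $T_q\Sigma_t^\omega,$ vanishes along $\Sigma_t$ and satisfies $X_{H_t^{\mathrm{loc}}}(q, 0) = V_t^\perp(q)$ by nondegeneracy of $\omega|_{T_q\Sigma_t^\omega}.$ Cover a neighbourhood of the space-time union $\bigcup_t \Sigma_t \times \{t\} \subset X \times [0,1]$ by finitely many such charts (arranged to avoid the prescribed open set $U \subset F,$ which is possible because $V_t^\perp$ vanishes on $F \cap \Sigma_t$), take a subordinate partition of unity $\{\rho_\alpha\},$ and define $H_t := \sum_\alpha \rho_\alpha H_t^{\mathrm{loc},\alpha},$ extended by zero outside the cover.

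The expected main technical point is verifying that the patched $H_t$ still satisfies $X_{H_t}|_{\Sigma_t} = V_t^\perp.$ This works because every $H_t^{\mathrm{loc},\alpha}$ vanishes on $\Sigma_t,$ so the Leibniz terms $d\rho_\alpha \cdot H_t^{\mathrm{loc},\alpha}$ drop out along $\Sigma_t,$ leaving $dH_t|_{\Sigma_t}$ equal to the weighted combination $\sum_\alpha \rho_\alpha \cdot dH_t^{\mathrm{loc},\alpha}|_{\Sigma_t}.$ Since the weights sum to one and every local model contributes the correct normal $1$-jet corresponding to $V_t^\perp,$ the same holds for $H_t.$ The time-dependent flow of $H_t$ applied to $\Sigma_0$ then traces out precisely the family $\Sigma_t,$ and by construction $H_t$ vanishes on $U.$
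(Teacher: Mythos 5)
The paper does not prove this statement at all; it is quoted verbatim as Proposition 0.3 of Siebert--Tian, so there is no internal proof to compare against. Your argument is the standard one for that result, and its core is correct: extend to an ambient isotopy rel $F,$ use that $T_pX=T_p\Sigma_t\oplus (T_p\Sigma_t)^{\omega}$ because $\Sigma_t$ is symplectic, observe that the fibrewise-linear function $-\omega(V_t^\perp(q),\xi)$ has vanishing restriction and the prescribed conormal differential along the zero section, and patch. The key point you identify -- that all the Leibniz terms $H_t^{\mathrm{loc},\alpha}\,d\rho_\alpha$ die along $\Sigma_t$ because each local model vanishes there, so the $1$-jet survives the partition of unity -- is exactly right, and it is also worth noting (as you implicitly use) that the $1$-jets of the different local models agree along $\Sigma_t$ because the splitting $T\Sigma_t\oplus T\Sigma_t^{\omega}$ is canonical, so the convex combination with $\sum_\alpha\rho_\alpha=1$ near $\Sigma_t$ reproduces it.

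The one genuine soft spot is the parenthetical claim that the charts can be ``arranged to avoid $U$.'' They cannot: the charts must cover all of $\Sigma_t,$ and a chart centred at a point $q\in\Sigma_t\setminus F$ close to $F$ will in general have a domain meeting $U,$ where $H_t^{\mathrm{loc}}(q,\xi)=-\omega(V_t^\perp(q),\xi)$ need not vanish. The correct repair is a cutoff rather than a choice of cover: since $H_t$ vanishes on $\Sigma_t,$ replacing $H_t$ by $\chi H_t$ with $\chi$ smooth changes $dH_t|_{\Sigma_t}$ only by the factor $\chi,$ so one takes $\chi\equiv 0$ on $U$ and $\chi\equiv 1$ on a neighbourhood of the closure $A$ of $\{p\in\bigcup_t\Sigma_t\times\{t\}:V_t^\perp(p)\neq 0\}$; over $\Sigma_t\cap F$ one has $V_t^\perp=0,$ so the damped $1$-jet $\chi\,dH_t|_{\Sigma_t}$ is still the required one everywhere. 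This requires $\overline{U}\cap A=\emptyset,$ which holds e.g.\ when $\overline{U}$ lies in the interior of $F$ -- and some such separation is in fact forced on the statement itself: if $H_t\equiv 0$ on the open set $U,$ then $dH_t,$ hence $X_{H_t},$ vanishes on $\overline{U}$ by continuity, so the conclusion is simply false if $\Sigma_t$ genuinely moves at a point of $\overline{U}\setminus F.$ You should flag this implicit hypothesis rather than assert that the cover avoids $U.$
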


There is no analogous results for \emph{nodal} symplectic surfaces; for example, the tangent planes at the node are not generically symplectically orthogonal. We now argue that we still can find a Hamiltonian isotopy that generates a path of e.g.~nodal symplectic surfaces, albeit after an initial deformation near its nodes. First, a discrete self-intersection locus may be assumed to remain fixed in the family after a Hamiltonian isotopy. Proposition \ref{prp:NormaliseNode} below then makes it possible to deform the surfaces near the self-intersection locus in order to make the symplectomorphism class of the singularities constant. Finally, we can apply the above Proposition \ref{prp:SiebertTian} to this deformed family.

Let $\Sigma_t \subset (X^4,\omega)$ be a smooth family of symplectic immersions of a finite union of closed surfaces whose self-intersection loci all consists of precisely the $k$ number of points $p^1,\ldots,p^k \in \Sigma_t$ fixed in the family. (These points are all isolated singularities by assumption, but they are not necessarily transverse double points.)
\begin{prp}
\label{prp:NormaliseNode}
After a deformation of $\Sigma_t$ through smooth paths of symplectic immersions of the same kind, where the deformation can be assumed to be supported in an arbitrarily small neighbourhood of $p^1,\ldots,p^k,$ we obtain a path $\widetilde{\Sigma}_t$ of symplectic immersions that satisfies $\widetilde{\Sigma}_0=\Sigma_0$ and which is fixed in a \emph{neighbourhood} of its self-intersections.
\begin{enumerate}
\item If, in addition, the path of immersion $\Sigma_t$ is fixed when restricted to a number of components $A \subset \Sigma_t,$ then we may assume $A \subset \widetilde{\Sigma}_t$ to be fixed as well.
\item If, in addition, $p^i \in \Sigma_t$ is a transverse double-point for all $t,$ and if the corresponding sheets of $\Sigma_0$ and $\Sigma_1$ coincide near $p^i,$ then $\widetilde{\Sigma}_1=\Sigma_1$ can be assumed to hold near $p^i.$
\end{enumerate}
\end{prp}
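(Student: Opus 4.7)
The plan is to construct the deformation locally near each self-intersection point $p^i$ using a graphical cutoff that replaces each sheet of $\Sigma_t$ by the corresponding sheet of $\Sigma_0$ in a small ball around $p^i.$ I would first fix pairwise disjoint Darboux charts $\Phi^i \colon (B^4_{2r},\omega_0) \hookrightarrow (X,\omega)$ with $\Phi^i(0) = p^i,$ so that in each chart the local sheets of $\Sigma_t$ form a smoothly varying collection of symplectic discs $S_t^j$ through the origin. After subdividing $[0,1]$ into pieces $0 = t_0 < \cdots < t_N = 1$ and shrinking $r,$ each sheet $S_t^j$ on a subinterval $[t_{k-1},t_k]$ can be expressed as a graph over the reference symplectic plane $V^{j,k} \coloneqq T_0 S_{t_{k-1}}^j,$ giving smooth graph functions $g_t^{j,k} \colon V^{j,k}\cap B^2_r \to (V^{j,k})^{\perp_\omega}$ with $g_t^{j,k}(0) = 0$ and $dg_{t_{k-1}}^{j,k}(0) = 0.$

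Fix a non-decreasing cutoff $\chi \colon [0,\infty) \to [0,1]$ with $\chi \equiv 0$ on $[0,1/2]$ and $\chi \equiv 1$ on $[1,\infty),$ and take $\delta > 0$ small. The core construction on each subinterval is the graph interpolation
$$
\tilde g^{j,k}_{t,s}(x) \coloneqq g_{t_{k-1}}^{j,k}(x) + \bigl(1 - s + s\chi(|x|/\delta)\bigr)\bigl(g_t^{j,k}(x) - g_{t_{k-1}}^{j,k}(x)\bigr), \qquad s \in [0,1],
$$
which at $s = 0$ recovers $\Sigma_t$ and at $s = 1$ fixes the graph to $g_{t_{k-1}}^{j,k}$ on $B^2_{\delta/2}.$ Iterating over $k = 1,\ldots,N$ and interspersing with auxiliary cutoffs on progressively smaller inner balls that consolidate the local germ at $p^i$ to that of $\Sigma_0$ (each such auxiliary cutoff connects $S_{t_{k-1}}^j$ to $S_0^j$ via a short path of local symplectic discs through $p^i,$ using the connectedness of the space of symplectic $2$-planes) produces the desired path $\widetilde{\Sigma}_t$ coinciding with $\Sigma_0$ in a fixed neighborhood of each $p^i.$

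The main technical obstacle is verifying symplecticity and the absence of new self-intersections at every interpolation step. Since $g_t^{j,k}(0) = g_{t_{k-1}}^{j,k}(0) = 0$ and $dg_{t_{k-1}}^{j,k}(0) = 0,$ the inequality $|g_t^{j,k}(x) - g_{t_{k-1}}^{j,k}(x)| \le |x|\cdot\|dg_t^{j,k} - dg_{t_{k-1}}^{j,k}\|_{L^\infty(B^2_\delta)}$ bounds the contribution of the annular cutoff term to $d\tilde g^{j,k}_{t,s}$ uniformly in $\delta$ by a constant times the $C^1$-variation of the graph function across the subinterval; making the subdivision fine enough and then $\delta > 0$ small enough renders each modified graph a $C^1$-small perturbation of $V^{j,k}$ and hence symplectic. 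Openness of the pairwise disjointness of different sheets away from $p^i$ likewise prevents new self-intersections. Finally, for property (1), the construction does not alter any sheet whose graph functions are $t$-independent, so fixed components $A$ are automatically preserved; for property (2), if $p^i$ remains a transverse double point throughout and the germs of $\Sigma_0$ and $\Sigma_1$ at $p^i$ agree, then $g_0^{j,k} = g_1^{j,k}$ on some fixed neighborhood of the origin for every $k,$ and choosing $\delta > 0$ smaller than that radius makes the final-stage cutoff trivial at $t = 1,$ yielding $\widetilde{\Sigma}_1 = \Sigma_1$ near $p^i.$
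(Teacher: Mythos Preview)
Your approach via graphical cutoffs is genuinely different from the paper's, and while it is plausible for transverse double points, there is a gap in the general case.

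The paper's proof works with the \emph{links} of the singularities: in a small Darboux ball around $p^i$ the intersection $\Sigma_t \cap S^3_\epsilon$ is a transverse link in $(S^3,\xi_{\OP{std}})$, each component being a standard transverse unknot (the boundary of a small symplectic disc). The deformation is then obtained by keeping $\Sigma_0$ on an inner ball, keeping $\Sigma_t$ outside a larger ball, and filling the annular shell by the trace of an isotopy through embedded transverse links connecting the link of $\Sigma_0$ to the link of $\Sigma_t$. Because every time-slice of this trace is an embedded transverse link, the cobordism is automatically an embedded symplectic surface, so the self-intersection locus of $\widetilde{\Sigma}_t$ remains exactly $\{p^1,\ldots,p^k\}$. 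Part~(2) then reduces to the contractibility of the space of symplectic $2$-planes transverse to a fixed one.

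Your argument, by contrast, modifies each sheet by a cutoff between $g_t^{j,k}$ and $g_{t_{k-1}}^{j,k}$ over its own tangent plane $V^{j,k}$. The step that does not go through as written is the sentence ``openness of the pairwise disjointness of different sheets away from $p^i$ likewise prevents new self-intersections.'' When two sheets meet non-transversely at $p^i$ (which the proposition explicitly allows), their distance in the annulus $\{\delta/2 \le |x| \le \delta\}$ can be of order $\delta^m$ for $m\ge 2$, or even decay faster than any power of $\delta$. Your perturbation of each sheet in that annulus has $C^0$-size comparable to $\delta\cdot\|dg_t^{j,k}-dg_{t_{k-1}}^{j,k}\|$, which need not be smaller than the inter-sheet distance; refining the subdivision and shrinking $\delta$ do not automatically resolve this, since the required inequality couples the two parameters. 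Moreover, different sheets are graphed over different planes $V^{j,k}$, so the cutoff regions do not even coincide, making the comparison still more delicate. In short, the interpolated sheets in the annular region can acquire new intersections, violating the ``same kind'' hypothesis on the deformation and the requirement that the self-intersection locus of $\widetilde{\Sigma}_t$ be exactly $\{p^1,\ldots,p^k\}$.

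For transverse double points your $C^1$-smallness argument is enough (transversality is open), and your treatment of (1) and (2) is fine in that setting. To handle the general case you would need to reorganise the interpolation so that each radial slice remains an embedded link --- which is precisely what the paper's transverse-link construction provides.
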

\begin{proof}
 
The deformation follows as in the proof of Corollary 3.7 in \cite{Dimitroglou:Isotopy}. Roughly speaking, we consider the links of the singularities of the immersions (i.e.~the points $p^1,\ldots,p^k$). The singularities consist of intersections of a number of smooth sheets. The links are thus transverse links inside the standard contact three-sphere $(S^3,\xi_{\OP{std}}),$ where each component of the link is a standard transverse unknot. The deformation is obtained by a suitable smooth interpolation, by attaching symplectic cylinders given as the trace of an isotopy through transverse links.

(2): Here we need to use the fact that the space consisting of linear symplectic two-planes that are transverse to another given linear symplectic two-plane is contractible.
\end{proof}

\section{Properties derived from broken conic fibrations}

In this section $L \subset V \subset (\CP^2,\omega_{\OP{FS}})$ will always be used to denote an embedded Lagrangian \emph{torus} satisfying at least one of the conditions of Theorem \ref{thm:main}. Our goal here is to establish Theorem \ref{thm:CompatibleFibration}, i.e.~to construct a pseudoholomorphic conic fibration in the sense of Section \ref{sec:pencils} that is compatible with $L.$ Recall that compatibility of the fibration with the torus implies that the latter is fibred over an embedded closed curve in the base. In the entire section we rely heavily on the technique of stretching the neck, which is performed to the unit normal bundle of the Lagrangian torus.

\subsection{A neck-stretching sequence}
\label{sec:neckstretch}

We follow \cite[Section 2]{Dimitroglou:Isotopy} in the construction of a sequence of almost complex structures which {\bf stretch the neck} around an embedding of the unit cotangent bundle of the Lagrangian torus $L \subset (\CP^2 \setminus \ell_\infty,\omega_{\OP{FS}}).$ This is a sequence $J_\tau,$ $\tau \ge 0,$ of compatible almost complex structures on $(\CP^2,\omega_{\OP{FS}})$ satisfying the following properties:
\begin{itemize}
\item $J_\tau=i$ in a neighbourhood of $\ell_\infty$ as well as in a neighbourhood of the smooth conic $C \subset \CP^2;$
\item in a fixed Weinstein neighbourhood
\begin{gather*}
\phi \colon (D_{3\epsilon}T^*\T^2,d\lambda_{\T^2}) \hookrightarrow (V,\omega_{\OP{FS}}),\\
\phi(0_{\T^2})=L,
\end{gather*}
the almost complex structure takes the form 
$$J_\tau \partial_{\theta_i}=-\rho_\tau(\|\mathbf{p}\|)\partial_{p_i}$$
for a function $\rho_\tau \colon \R_{\ge 0} \to \R_{> 0}$ satisfying $\rho_\tau(t)=\epsilon$ for $t \le \epsilon,$ $\rho_\tau(t)=t$ for $t \ge 2\epsilon,$ while $\int_\epsilon^{2\epsilon} \rho(t)dt \ge \tau;$ and
\item $J_\tau$ is fixed outside of the above Weinstein neighbourhood $\phi(D_{3\epsilon}T^*\T^2) \subset \CP^2.$
\end{itemize}
In Section \ref{sec:proof-whitney} a variation of the above construction will be used, where we stretch the neck around two disjoint Lagrangian tori simultaneously. In that case, the sequence is constructed in the analogous manner utilising disjoint Weinstein neighbourhoods of the two tori.

The above choices also specifies the following important compatible almost complex structures:
\begin{itemize}
\item The compatible almost complex structure $J_{\OP{std}}$ on $T^*\T^2$ which is given by $J_{\OP{std}}\partial_{\theta_i}=-\rho_1(\|\mathbf{p}\|)\partial_{p_i}$;
\item The compatible almost complex structures $J_{\OP{cyl}}$ defined on
$$(T^*\T^2 \setminus 0_{\T^2},d\lambda_{\T^2})=(\R \times ST^*\T^2,d(e^t\lambda_{\T^2}|_{ST^*\T^2}))$$
by $J_{\OP{cyl}}\partial_{\theta_i}=-\|\mathbf{p}\|\partial_{p_i}.$ It is important to note that this almost complex structure is cylindrical with respect to the contact form $\alpha\coloneqq\lambda_{\T^2}|_{ST^*\T^2}$ induced by the flat Riemannian metric on $\T^2$; and
\item The compatible almost complex structure $J_\infty$ defined on $\CP^2 \setminus L$ which coincides with $J_{\OP{cyl}}$ inside the above Weinstein neighbourhood $\phi(D_{3\epsilon}T^*\T^2)$ and with $J_\tau,$ where $\tau \ge 0$ is arbitrary, in the complement $\CP^2 \setminus \phi(D_{3\epsilon}T^*\T^2).$
\end{itemize}

Recall that the periodic Reeb orbits of $\alpha$ correspond to lifts of closed geodesics on $\T^2$ induced by the flat metric. A basic but very crucial fact is that these geodesics all live in Bott manifolds $\Gamma_\alpha \cong S^1,$ which are in bijection with the nonzero homology classes $\alpha \in H_1(L) \setminus \{0\}$ of the corresponding geodesics. In particular, there are no closed contractible geodesics for the flat metric.

When speaking about finite energy pseudoholomorphic spheres we mean pseudoholomorphic spheres inside either $(T^*\T^2,J_{\OP{std}}),$ $(\R \times ST^*\T^2,J_{\OP{cyl}}),$ or $(\CP^2 \setminus L,J_\infty)$ with a finite number of punctures asymptotic to periodic Reeb orbits on $(S^*\T^2,\alpha).$ By a classical result this condition is equivalent to that of having finite so-called Hofer energy; see \cite{Hofer:I} and \cite{Hofer:IV}. In the following all punctured pseudoholomorphic spheres will tacitly be assumed to be holomorphic for one of the almost complex structures as described above, and to be of finite Hofer energy.

A one-punctured pseudoholomorphic sphere is called a {\bf plane} while a two-punctured pseudoholomorphic sphere is called a {\bf cylinder}.

By a {\bf broken pseudoholomorphic conic} we mean a pseudoholomorphic building of at least two levels whose components satisfy the following topological conditions: gluing all the domains of the components at their nodes we obtain a sphere without punctures, on which the maps compactify to give a continuous cycle $S^2 \to \CP^2$ of degree \emph{two}. Of course there also exist closed $J_\infty$-holomorphic curves inside $\CP^2 \setminus L$ without punctures; e.g.~the conic $C \subset \CP^2$ is $J_\infty$-holomorphic by the assumptions made on $J_\infty.$ Such curves will be called {\bf unbroken}. Similarly we also consider {\bf (un)broken pseudoholomorphic lines} inside $\CP^2 \setminus L.$

It is immediate from the SFT compactness theorem that the limit of a sequence of $J_\tau$-holomorphic conics (resp. lines) as $\tau \to +\infty$ is a broken or unbroken pseudoholomorphic conic (resp. line); see \cite{Bourgeois:Compactness} or \cite{Cieliebak:Compactness}.
\begin{rmk}
It is not clear a priori that the converse holds, i.e.~that all broken pseudoholomorphic conics or lines arise as such limits; this would require a rather strong form of pseudoholomorphic gluing in the Bott setting under consideration. Since we do not rely on such a result, we must instead use somewhat roundabout arguments based upon (asymptotic) positivity of intersection of pseudoholomorphic buildings, combined with automatic transversality for the components in the building, in order to rule out certain unwanted broken configurations.
\end{rmk}
When considering limits of pseudoholomorphic conics or lines in the four-dimensional setting, the following crucial property is a consequence of positivity of intersection \cite{McDuff:LocalBehaviour}.
\begin{lma}
\label{lma:embedded-components}
The limit of pseudoholomorphic lines or conics under a neck-stretching sequence consists of components all being (possibly trivial) branched covers of embedded punctured pseudoholomorphic spheres. In the case of a limit of lines, it is moreover the case that two different components have disjoint interiors.
\end{lma}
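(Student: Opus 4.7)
The plan is to deduce both assertions from positivity of intersection \cite{McDuff:LocalBehaviour}, applied in both its interior and its asymptotic form. First I would reduce to the case of somewhere-injective components: every component $v_j$ of the broken limit factors as a (possibly trivial) branched cover of an underlying simple curve $\bar v_j,$ and the statement of the lemma --- that each $\bar v_j$ be embedded, and that (in the line case) distinct $\bar v_j$ in the same level have disjoint interiors --- is a statement purely about those simple components. The branched-cover multiplicities may be pulled out at the very start of the argument.

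The key input is then that positivity of intersection is preserved in the neck-stretching limit. On the one hand, interior positivity ensures that every self-intersection of a simple component $\bar v_j,$ every critical point of $d\bar v_j,$ and every geometric interior intersection between two distinct simple components in the same level each contribute a strictly positive integer to a formal adjunction defect, respectively to a mutual intersection count. On the other hand, the pre-limit curves $u_\tau$ are sharply controlled: every $J_\tau$-holomorphic line is smoothly embedded with self-intersection number $1$ by Theorem \ref{thm:gromov}, and every $J_\tau$-holomorphic conic has total self-intersection exactly $4$ realised either as an embedding, a two-fold branched cover, or a nodal union of two lines by Lemma \ref{lma:conics}. Any additional singularity or extra inter-component interior intersection appearing in the limit would, by positivity of intersection applied to the converging family, force a corresponding singularity on $u_\tau$ for $\tau \gg 0,$ contradicting these topological bounds.

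For lines this leaves no room whatsoever: each simple limit component must be an embedding and distinct simple components in the same level must have disjoint interiors, which yields both assertions of the lemma. For conics, precisely one interior double point (or one branch point) of the adjunction count is available, exactly what is consumed by the node in $C_{\OP{nodal}};$ so each simple component is still embedded, but two distinct components may meet transversely at a single interior point, as happens for the nodal conic itself.

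The principal obstacle is the asymptotic intersection analysis at the matching punctures, since the Reeb orbits on $(ST^*\T^2,\alpha)$ appear in Bott families $\Gamma_\alpha$ rather than being isolated. Happily only the sign of the asymptotic contribution is needed, and asymptotic winding numbers remain non-negative in the Morse-Bott setting; in particular no form of SFT gluing is required, consistent with the cautionary remark preceding the lemma.
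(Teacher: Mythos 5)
Your argument is correct and follows exactly the route the paper indicates: the paper offers no written proof beyond attributing the lemma to positivity of intersection \cite{McDuff:LocalBehaviour}, and your persistence-of-intersections argument (a double point or inter-component intersection of the limit building would force, by local positivity of intersection and $C^\infty_{\OP{loc}}$-convergence, a double point of the embedded line $u_\tau$, resp.\ an excess double point of the conic $u_\tau$, for $\tau \gg 0$) is the standard and correct way to fill that in. One could quibble that for an \emph{irreducible} rational conic the adjunction defect is already zero, so the single available node really comes only from the reducible case $C_{\OP{nodal}}=\ell_1\cup\ell_2$, but this does not affect your conclusion.
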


Another important technical feature of the dimension where we are working is that the Fredholm index of a punctured sphere can be forced to be nonnegative under certain mild assumptions; this facilitates transversality arguments and analysis significantly. More precisely, we have
\begin{lma}[Lemma 3.3 in \cite{Dimitroglou:Isotopy}]
\label{lma:posindex}
If all simple $J_\infty$-holomorphic punctured spheres inside $\CP^2 \setminus L$ are of non-negative Fredholm index, then the same is true for all $J_\infty$-holomorphic punctured spheres. Since a plane has odd Fredholm index, its index is thus at least one in this case. Moreover, if the index of a such a punctured plane inside $\CP^2 \setminus (\ell_\infty \cup L)$ is equal to one, then it is simply covered with a simply covered asymptotic. Finally, for a generic almost complex structures $J_\infty$ as in Section \ref{sec:neckstretch}, all simply covered curves can indeed be assumed to be transversely cut out and hence of nonnegative index.
\end{lma}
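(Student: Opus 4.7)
The plan is to deduce the four assertions from the Fredholm index formula for punctured pseudoholomorphic spheres in the Morse--Bott setting, combined with Riemann--Hurwitz and standard SFT transversality.

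For the first assertion, I would begin with the index formula
$$ \OP{ind}(u) = -\chi(\dot\Sigma) + 2\, c_1^\tau(u^*T\CP^2) + \sum_i \mu_{\OP{RS}}^\tau(\gamma_i)$$
for a $J_\infty$-holomorphic punctured sphere $u$ asymptotic to Reeb orbits $\gamma_i$ in the $S^1$-Bott families $\Gamma_\alpha \subset S^*\T^2$. If $u$ is a $k$-fold branched cover of a simple sphere $v$, then linearity of the relative Chern class gives $c_1^\tau(u) = k\, c_1^\tau(v)$; Riemann--Hurwitz gives $-\chi(\dot\Sigma_u) = -k\chi(\dot\Sigma_v) + B$ for $B \geq 0$ the total interior branching; and the crucial observation that the lifted-geodesic orbits on the \emph{flat} torus have no conjugate points ensures that the Robbin--Salamon index of a multiple cover of a Reeb orbit equals that of the simple orbit. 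Combining these yields
$$ \OP{ind}(u) \;\geq\; k\cdot\OP{ind}(v) + B,$$
so non-negativity propagates from simple curves to arbitrary covers.

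Next I would address parity. The Euler-characteristic contribution for a plane is $-1$, while the remaining terms $2c_1^\tau + \sum_i \mu_{\OP{RS}}^\tau(\gamma_i)$ have even parity under the chosen trivialisation (this ultimately rests on the Maslov class of the oriented Lagrangian torus $L$ taking values in $2\Z$, combined with the integer-valued Robbin--Salamon indices of the flat-torus Reeb orbits). Hence a plane has odd Fredholm index, which is $\geq 1$ once non-negativity has been established. For the rigidity statement, suppose for contradiction that a plane $u$ of index one is either a $k$-fold cover, $k \geq 2$, of a simple plane $v$, or has an asymptotic that is multiply covered. The cover estimate above gives $\OP{ind}(u) \geq k\,\OP{ind}(v) + B \geq 2$ when $k \geq 2$, and an analogous Robbin--Salamon comparison produces a strict inequality when the asymptotic orbit is a non-trivial multiple; either alternative contradicts $\OP{ind}(u) = 1$. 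Hence $u$ must be simply covered with simply covered asymptotic.

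Finally, the transversality statement is a standard application of the SFT transversality theorem for simple curves: under the specified constraints on $J_\infty$ (cylindrical near $S^*\T^2$, integrable near $\ell_\infty \cup C$), generic perturbations still leave enough freedom to cut out simple punctured $J_\infty$-holomorphic spheres in $\CP^2 \setminus L$ transversely --- see the arguments in \cite{Dimitroglou:Isotopy}. For such a $J_\infty$ the moduli space of simple curves near any given one is a smooth manifold of dimension equal to the Fredholm index, which is therefore non-negative, closing the loop with the first assertion. The chief technical obstacle in the plan lies in the Morse--Bott index bookkeeping for branched covers, and in particular in confirming the invariance of the Conley--Zehnder index under iteration of the flat-torus Reeb orbits; once that is pinned down, the remaining steps follow by now standard SFT arguments.
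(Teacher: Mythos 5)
The paper offers no proof of this lemma at all---it is quoted directly from \cite[Lemma 3.3]{Dimitroglou:Isotopy}---and your sketch reconstructs essentially the argument given in that reference: the Morse--Bott index formula, multiplicativity of the relative Chern/Maslov term under covers, Riemann--Hurwitz for the punctured domains, iteration-invariance of the Robbin--Salamon indices of the flat-torus Reeb orbits, and standard transversality for simple curves. One bookkeeping remark: since you (correctly) take the orbit indices to be iteration-invariant, the strict index growth for a plane in $\C^2\setminus L$ with a multiply covered asymptotic must come from the relative Chern/Maslov term, whose value on the capped class scales linearly with the covering multiplicity of the boundary and is even (so index one forces that multiplicity to be one), rather than from a ``Robbin--Salamon comparison'' as you phrase it.
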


\begin{lma}[Proposition 3.5 in \cite{Dimitroglou:Isotopy}]
\label{lma:planecyl}
Assume that there exists no punctured pseudoholomorphic spheres inside $\CP^2 \setminus L$ of negative index. After perturbing $J_\infty$ inside some compact subset of $U \setminus \ell_\infty,$ where $U \subset \CP^2$ is an arbitrarily small neighbourhood of $\ell_\infty,$ we may assume that any curve being either
\begin{itemize}
\item a broken line satisfying a fixed point constraint at $q \in \ell_\infty,$ or
\item a broken conic satisfying two fixed tangency condition at two points $q_1,q_2 \in \ell_\infty,$
\end{itemize}
has a top level consisting of precisely two planes of index \emph{one}, and possibly several cylinders of index \emph{zero}. (Here we take appropriate constraint(s) into account when considering the index of a component.) Moreover, the components passing through $\ell_\infty$ are transversely cut out when considered with the appropriate point and tangency condition, respectively.
\end{lma}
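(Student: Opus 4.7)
The plan is to adapt the argument of Proposition~3.5 in \cite{Dimitroglou:Isotopy}, combining the SFT compactness theorem with positivity of intersection against the $J_\infty$-holomorphic divisor $\ell_\infty$, the index bound of Lemma~\ref{lma:posindex}, and the fact that the closed Reeb orbits on $(ST^*\T^2,\alpha)$ lie in Morse--Bott families indexed by the nonzero classes of $H_1(\T^2)$---since the flat metric on $\T^2$ has no contractible closed geodesics. First I would extract a broken limit as $J_\tau \to J_\infty$ and observe, via Lemma~\ref{lma:embedded-components}, that every top-level component in $\CP^2 \setminus L$ is a (possibly multiply covered) embedded punctured sphere.

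The structure of the top level is then pinned down by three observations. Positivity of intersection with $\ell_\infty$ shows that the total $\ell_\infty$-intersection of the top level is $+1$ for a broken line and $+2$ for a broken conic, concentrated at the constrained points (the tangencies $v_i$ to $\ell_i$ are transverse to $T_{q_i}\ell_\infty$ and hence do not raise the intersection multiplicity at $q_i$). Next, every asymptotic Reeb orbit appearing in the building lies in a nonzero class of $H_1(L)$, and the sum of the asymptotic classes of the top-level planes must vanish in order to match with the next level down; in particular at least two planes are required at the top level, since a single plane would have to be asymptotic to a single nonzero class that cannot cancel against itself. Finally, the virtual dimension of the full constrained moduli space equals $2$ in either case ($4-2$ for a line through a point, $10-8$ for a conic with two codim-$4$ tangency conditions), and this equals the total Fredholm index of the building.

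With these three ingredients in hand, an index-budget argument finishes the structural claim. By Lemma~\ref{lma:posindex}, a simply covered top-level plane contributes at least $1$ to the budget, a cylinder at least $0$, and a component with $\geq 3$ punctures strictly more through the $(n-3)\chi$ term. Since the homological constraint forbids fewer than two planes and two planes already saturate the budget of $2$, the top level consists of precisely two planes of constrained index $1$, any number of cylinders of index $0$, and nothing else; the remaining levels contribute total index $0$. Transversality for the two planes through $\ell_\infty$ then follows from a standard Sard--Smale argument after a generic perturbation of $J_\infty$ inside $U\setminus\ell_\infty$, while keeping $J_\infty=i$ on $\ell_\infty$ itself so that $\ell_\infty$ remains holomorphic; each plane is somewhere injective near its transverse intersection with $\ell_\infty$, which is exactly what one needs to apply the universal moduli space machinery with the prescribed incidence (and, in the conic case, tangency) conditions, in the spirit of Lemma~\ref{lma:auttrans}.

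The hardest step is making the homological and Conley--Zehnder bookkeeping above watertight in the Morse--Bott setting. One has to verify that the asymptotic contributions to the Fredholm index of the top-level components are of the sign and magnitude claimed, and in particular that no single top-level component of higher degree can simultaneously absorb two tangency conditions for the conic while still fitting inside the total budget of $2$. This is carried out by combining the Morse--Bott version of the Fredholm index formula with the Maslov-type hypothesis on $L$ coming from Theorem~\ref{thm:main}, following the method of \cite[Section~3]{Dimitroglou:Isotopy}.
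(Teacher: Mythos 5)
Your proposal follows essentially the same route as the paper. The paper's proof consists precisely of (i) observing that the constrained Fredholm index of an unbroken solution is $2$ in both cases, (ii) declaring that the argument of Proposition 3.5 of the cited reference then carries over verbatim, and (iii) establishing the required transversality by noting that positivity of intersection forces the constrained component to be simply covered and transverse to $\ell_\infty$, after which a generic perturbation (together with automatic transversality, as in Lemma~\ref{lma:auttrans}, in the unbroken case) suffices. Your index budget, your appeal to Lemma~\ref{lma:posindex} for the lower bounds, and your transversality discussion reproduce exactly this; you simply unpack more of the imported argument than the paper does.

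One step of that unpacking is wrong as stated. You claim the top level must contain at least two planes because \emph{``a single plane would have to be asymptotic to a single nonzero class that cannot cancel against itself.''} But the homological matching condition involves \emph{all} top-level punctures, including those of top-level cylinders: a configuration with one plane asymptotic to a geodesic in class $\gamma$ and a cylinder asymptotic to classes $\gamma_1,\gamma_2$ with $\gamma+\gamma_1+\gamma_2=0$ is perfectly consistent homologically, with all three classes nonzero, and it is not excluded by the naive index count either (one plane of index $\ge 1$ plus cylinders of index $\ge 0$ still fits under the budget of $2$ before the Morse--Bott corrections are taken into account). The mechanism that actually forces two planes is combinatorial: the dual graph of a genus-zero building is a tree, its leaves are exactly the one-punctured components, and a tree with at least two vertices --- which holds because the curve is broken --- has at least two leaves. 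A plane in a symplectization level or in $T^*\T^2$ would compactify to a null-homotopy of its asymptotic geodesic, which is impossible since the flat metric on $\T^2$ has no contractible closed geodesics; hence every leaf lies in the top level, yielding the two planes. With that correction, the remainder of your outline goes through as in the cited proposition.
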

\begin{rmk}
In the above lemma we do not assume that the broken curve is an actual limit under a neck stretching sequence.
\end{rmk}
\begin{proof}
First we note that the Fredholm index for the corresponding non-broken solutions is equal to two in both of the cases, where the moduli spaces are considered with the corresponding point or tangency constraint. For that reason, the proof of \cite[Proposition 3.5]{Dimitroglou:Isotopy} carries over immediately to the current situation, once the following transversality result has been established: for a generic almost complex structures of the type considered, the constrained moduli spaces are transversely cut out and assume their expected dimensions.

Since we are in a very particular situation the sought transversality is not difficult to establish. In the unbroken case, this is a consequence of automatic transversality; see Lemma \ref{lma:auttrans}. In the case of a broken curve, positivity of intersection implies that the component satisfying the additional constraint(s) necessarily is simply covered and traverse to $\ell_\infty.$ The transversality properties can now be achieved by finding explicit deformations by hand, while using standard transversality techniques \cite[Section 3]{McDuff:J}.
\end{proof}

 Most work in the remaining part of this section is to sharpen the above result, by showing that a broken conic consists of precisely two components in its top level, where each component moreover intersects $\ell_\infty,$ as is depicted in Figure \ref{fig:breaking}. In order for this to hold, it is crucial to use one of the two conditions in the assumptions of Theorem \ref{thm:main}. 

\begin{figure}[htp]
\centering
\vspace{3mm}
\hspace{20mm}
\labellist
\pinlabel $\CP^2\setminus L$ at -26 58
\pinlabel $T^*L$ at -18 19
\pinlabel $q_2$ at 73 81
\pinlabel $q_1$ at 33 81
\pinlabel $5$ at 73 53
\pinlabel $A_2$ at 92 65
\pinlabel $A_1$ at 15 65
\pinlabel $5$ at 33 53
\pinlabel $2$ at 42 11
\pinlabel $\color{red}\mathbf{e}_0$ at 35 23
\pinlabel $\color{red}-\mathbf{e}_0$ at 72 23
\pinlabel $\color{blue}L$ at 53 -7
\endlabellist
\includegraphics{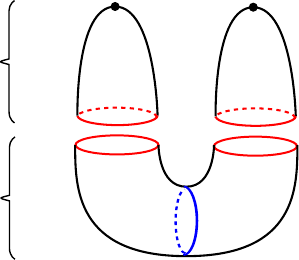}
\vspace{5mm}
\caption{The broken conic in the generic case. The two planes $A_i$ each intersect $\ell_\infty$ transversely in the point $q_i,$ where they moreover are tangent to the smooth conic $C.$ The two planes join to form a continuous embedding of a sphere.}
\label{fig:breaking}
\end{figure}

\subsection{Solid tori foliated by pseudoholomorphic planes and consequences}

When a broken line or conic consists of a plane that is disjoint from $\ell_\infty,$ we can under certain assumptions use the techniques from \cite{Dimitroglou:Isotopy} in order to construct an embedded solid torus with boundary equal to $L,$ which is foliated by discs of which one of is close to the aforementioned plane.

\begin{prp}{\cite[Section 5]{Dimitroglou:Isotopy}}
\label{prp:solidtorus}
 
Let $L \subset (\CP^2 \setminus \ell_\infty,\omega_{\OP{FS}})$ be an embedded Lagrangian torus and consider any compatible almost complex structure $J_\infty \subset \CP^2 \setminus L$ that satisfies the properties of Lemma \ref{lma:planecyl}. Assume that there exists a pseudoholomorphic plane $A \subset \CP^2 \setminus (L \cup \ell_\infty)$ asymptotic to $L$ which arises a component of a generic broken line satisfying a fixed point constraint at one of $q_i \in \ell_\infty.$ Then there exists a smooth embedding of a solid torus
$$ (S^1 \times D^2,S^1 \times S^1) \hookrightarrow (\CP^2 \setminus \ell_\infty,L)$$
foliated by embedded $J$-holomorphic discs in the same homotopy class as $A,$ and where $J=J_\infty$ can be taken to be satisfied outside of an arbitrarily small neighbourhood of $L.$
\end{prp}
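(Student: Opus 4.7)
My plan is to use the pseudoholomorphic plane $A$ as the starting point, and to consider the moduli space $\mathcal{M}$ of $J_\infty$-holomorphic planes $u \colon \CC \to \CP^2 \setminus (L \cup \ell_\infty)$ whose punctures are asymptotic to the Bott family $\Gamma_{\mathbf{e}_0} \cong S^1$ of Reeb orbits over $L$ in the same asymptotic class as $A.$ First I would check the Fredholm index: by Lemma \ref{lma:planecyl}, the plane $A$ (considered with fixed asymptotic Reeb orbit) has index $1,$ so when we relax the asymptotic constraint to the Bott family we gain one dimension and the expected dimension of $\mathcal{M}$ becomes $2.$ By the automatic transversality combined with Lemma \ref{lma:posindex}, the moduli space $\mathcal{M}$ is a smooth 2-manifold near $A.$

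Next, I would invoke positivity of intersection — both in its classical form \cite{McDuff:LocalBehaviour} and its asymptotic form at the Reeb orbit ends — to conclude that two distinct planes in $\mathcal{M}$ whose asymptotic Reeb orbits lie in the \emph{primitive} Bott family $\Gamma_{\mathbf{e}_0}$ must have disjoint images. (The primitivity of the class $\mathbf{e}_0 \in H_1(L)$, which is the homology class of the shortest geodesic representing this asymptotic, is crucial here to rule out nontrivial tangential intersections at infinity.) Consequently, the evaluation map $\OP{ev} \colon \mathcal{M} \times \CC \to \CP^2 \setminus (L \cup \ell_\infty)$ restricts to an injection on each connected component of $\mathcal{M},$ and the image of the component $\mathcal{M}_0 \ni A$ is a smoothly embedded (open) surface in $\CP^2 \setminus (L \cup \ell_\infty).$

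I would then analyze the closure of $\mathcal{M}_0$ using SFT compactness \cite{Bourgeois:Compactness}. Any limiting broken configuration of planes in the class of $A$ has a top level whose components are controlled by Lemma \ref{lma:planecyl}: all pieces have nonnegative index, and the planes passing through $\ell_\infty$ are already accounted for by the other component of the broken line containing $A.$ Combined with positivity of intersection and the primitivity of the asymptotic class, this should rule out any further breaking of planes within $\mathcal{M}_0,$ forcing $\overline{\mathcal{M}_0} \cong S^1.$ The asymptotic convergence of each plane to a geodesic on $L$ then gives a natural compactification $S^1 \times D^2 \to \CP^2 \setminus \ell_\infty$ of the total evaluation map, whose image is a solid torus with boundary $L.$

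Finally, to arrange that the foliating objects are genuine holomorphic \emph{discs} rather than punctured planes, I would perform the standard replacement of the infinite SFT neck with a finite Weinstein neighbourhood: one deforms $J_\infty$ inside an arbitrarily small neighbourhood of $L$ to an almost complex structure $J$ for which the cotangent fibres become $J$-holomorphic, and the previously constructed planes compactify to embedded $J$-holomorphic discs with boundary on $L.$ Smoothness and the foliating property are preserved under this deformation because the planes are transversely cut out and their intersections with the modified region can be arranged to be graphical over the fibres. The main obstacle, as usual in this circle of ideas, is Step 3: ruling out extraneous breakings in the compactification of $\mathcal{M}_0$ and demonstrating that $\overline{\mathcal{M}_0}$ is a genuine $S^1$ rather than an interval with uncontrolled boundary behaviour. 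This is precisely where the hypothesis that $A$ arose as a component of a \emph{generic} broken line satisfying a fixed point constraint at $q_i$ enters: it furnishes the normal form for broken configurations from Lemma \ref{lma:planecyl} and hence the index budget needed to exclude further degenerations.
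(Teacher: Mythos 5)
The paper offers no proof of this proposition at all---it is imported wholesale from \cite[Section 5]{Dimitroglou:Isotopy}---and your outline reconstructs essentially that argument: sweep out the Bott $S^1$-family of index-one planes containing $A$, use (asymptotic) positivity of intersection to make them disjoint, use SFT compactness plus the index bounds of Lemmas \ref{lma:posindex} and \ref{lma:planecyl} to show the relevant component of the moduli space is a circle, and finally deform $J_\infty$ in a small neighbourhood of $L$ (undoing the infinite neck) so that the punctured planes close up to embedded discs with boundary on $L.$ The strategy is the right one, but two steps need repair.

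First, your dimension count is internally inconsistent. Under the Morse--Bott convention used throughout the paper (see Lemma \ref{lma:posindex}, and the proof of Theorem \ref{thm:CompatibleFibration}, where the component of the moduli space containing an index-one plane is \emph{diffeomorphic to} $S^1$), the Fredholm index of $A$ already incorporates the freedom of moving the asymptotic orbit within its Bott family; there is no extra dimension to gain, so $\mathcal{M}$ has expected dimension one, not two, and the union of the (two-dimensional) planes is a three-dimensional hypersurface, not a ``surface.'' This is not cosmetic: a genuinely two-dimensional family of planes could not foliate a solid torus, and your later (correct) assertion that $\overline{\mathcal{M}_0}\cong S^1$ contradicts your own count. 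Second, positivity of intersection only guarantees that two distinct planes intersect \emph{nonnegatively}; to conclude they are disjoint you must exhibit a homological budget that any intersection would exceed. That budget comes from the hypothesis that $A$ is a component of a broken \emph{line}: each plane in the family completes to a cycle of degree one, two such cycles have total intersection number $[\ell_\infty]\bullet[\ell_\infty]=1,$ and the asymptotic winding computations of \cite{Hind:SymplecticEmbeddings} show that two distinct planes sharing the same Bott family of asymptotics would already produce an excess intersection after a small perturbation. Relatedly, the primitivity of the asymptotic class is supplied by Lemma \ref{lma:posindex} (an index-one plane is simply covered with a simply covered asymptotic), not by the basis element $\mathbf{e}_0$ of Lemma \ref{lma:goodbasis}, which is only available under the hypotheses of Theorem \ref{thm:main} that this proposition does not assume.
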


Using the above we show that a Lagrangian torus that satisfies either of the conditions in the assumptions of Theorem \ref{thm:main} can be disjoined from the standard nodal conic by a Hamiltonian isotopy.

\begin{lma}
\label{lma:DisjoinNodal}
Assume that either Condition (1) or (2) of Theorem \ref{thm:main} is satisfied. For an almost complex structure $J_\infty$ that satisfies the properties of Lemma \ref{lma:planecyl}, the image of the nodal conic is not broken. It follows that the embedded Lagrangian torus $L \subset (V,\omega_{\OP{FS}})$ can be disjoined from the standard nodal conic $C_{\OP{nodal}}=\ell_1 \cup \ell_2$ by a Hamiltonian isotopy supported inside $V.$
\end{lma}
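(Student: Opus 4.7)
The plan is to derive a contradiction from any broken configuration that the nodal conic could limit to under $J_\infty$, and then to convert the resulting rigidity into a Hamiltonian isotopy by deforming through the family of pseudoholomorphic nodal conics $C_{\OP{nodal}}^{J_\tau}$ supplied by Theorem \ref{thm:Lefschetz} and Theorem \ref{thm:normalise}.

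For the non-breaking step, I would suppose toward a contradiction that one of the two $J_\infty$-holomorphic constituent lines of $C_{\OP{nodal}}^{J_\infty}$, say $\ell_i^{J_\infty}$, is broken. Lemma \ref{lma:planecyl} then forces its top level to consist of exactly two index-one planes $P_1, P_2$ (possibly with index-zero cylinders in the intermediate levels). Since a line intersects $\ell_\infty$ transversely at the single point $q_i$ and $J_\infty = i$ near $\ell_\infty$, exactly one plane -- say $P_1$ -- passes through $q_i$, while $P_2$ and every cylinder avoid $\ell_\infty$ entirely. The key geometric observation is that the tangency direction $v_i = T_{q_i}\ell_i$ coincides with $T_{q_i} C$ (both $\ell_i$ and $C$ meet $\ell_\infty$ at $q_i$ with the same tangent), so $P_1$ is tangent to $C$ at $q_i$, contributing at least $2$ to the algebraic intersection $P_1 \cdot C$. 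Since $[\ell_i] \cdot [C] = 2$ in $\CP^2$ and positivity of intersection applies near $C$ (where $J_\infty = i$), the remaining components satisfy $P_2 \cdot C = 0$ and in particular $P_2 \subset V$.

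Next I would extract a contradiction from the plane $P_2$. Compactifying its asymptotic Reeb orbit, it represents a class in $\pi_2(V,L)$ of Maslov index $\mu_L(P_2) = 2$ (from $\OP{ind}(P_2) = \mu_L(P_2) - 1 = 1$) with positive symplectic area. Under Condition (2a) this is immediately forbidden. Under Condition (1), I would apply Proposition \ref{prp:solidtorus} to $P_2$ to produce a solid torus smoothly embedded in $\CP^2 \setminus \ell_\infty$ with boundary $L$, foliated by pseudoholomorphic discs homotopic to $P_2$; by positivity of intersection with $C$ together with $P_2 \cdot C = 0$, each such disc is disjoint from $C$, placing the entire solid torus inside $V$ and hence forcing $[L] = 0 \in H_2(V)$, which contradicts Condition (1).

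Given non-breaking, I would construct the Hamiltonian isotopy as follows. Theorems \ref{thm:Lefschetz} and \ref{thm:normalise} applied to the stretching family $\{J_\tau\}_{\tau \in [0,T]}$ produce a smooth family of conic fibrations $f_{J_\tau}$ coinciding with the standard $f$ in fixed neighborhoods of $\ell_\infty$ and of $C$. SFT compactness combined with the non-breaking conclusion above guarantees that, for $T$ sufficiently large, $C_{\OP{nodal}}^{J_T} = f_{J_T}^{-1}(0)$ is disjoint from $L$ (otherwise a subsequential limit would break). The family $\{C_{\OP{nodal}}^{J_\tau}\}_{\tau \in [0,T]}$ is thus a smooth isotopy of nodal symplectic surfaces which is stationary near $\ell_\infty \cup C$; Proposition \ref{prp:NormaliseNode} first regularizes the varying symplectic model of the node, after which Proposition \ref{prp:SiebertTian} yields a compactly supported Hamiltonian isotopy $\phi_t$ inside $V$ realizing the family and satisfying $\phi_T(C_{\OP{nodal}}) = C_{\OP{nodal}}^{J_T}$. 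The inverse path $t \mapsto \phi_t^{-1}(L)$ then Hamiltonian-isotopes $L$ inside $V$ to a torus disjoint from $C_{\OP{nodal}}$. The main obstacle is the intersection-count placing $P_2$ inside $V$: without the coincidence $T_{q_i}C = T_{q_i}\ell_i$ at infinity, the plane would only be known to sit in $\CP^2 \setminus \ell_\infty$, and Condition (1) could not be brought to bear.
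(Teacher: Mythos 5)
Your argument is correct and follows essentially the same route as the paper: Lemma \ref{lma:planecyl} produces an index-one plane disjoint from $\ell_\infty$, the tangency of $\ell_i$ to $C$ at $q_i$ together with $[\ell_i]\bullet[C]=2$ and positivity of intersection forces that plane into $V$, and Condition (2) directly (or Condition (1) via the solid torus of Proposition \ref{prp:solidtorus}) then yields the contradiction — the paper merely runs this same intersection count in the opposite logical order; the Hamiltonian isotopy is likewise built from a family of pseudoholomorphic nodal conics normalised by Propositions \ref{prp:NormaliseNode} and \ref{prp:SiebertTian}. The only detail you elide is that Proposition \ref{prp:NormaliseNode} as stated requires the node's \emph{position} to be fixed along the family, so one must first deform the family of nodal conics by a Hamiltonian isotopy pinning the node at $\ell_1\cap\ell_2$, as the paper does explicitly.
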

\begin{proof}
 Assume that the nodal conic is broken. Lemma \ref{lma:planecyl} implies that the broken line in the nodal conic has a component that is a plane of index one which is disjoint from $\ell_\infty.$ Using either Condition (2), or Condition (1) in conjunction with the existence of the solid torus produced by Proposition \ref{prp:solidtorus} that bounds $L$ (and contains a perturbation of the pseudoholomorphic plane as a compressing disc), we conclude that the plane necessarily intersects $C.$ Since such a line already is tangent to $C$ at one of the points $q_i \in \ell_\infty$ this finally leads to a contradiction with the total intersection number $[\ell_\infty] \bullet C = 2$ of a line and a conic. For the last part we need positivity of intersection \cite{McDuff:LocalBehaviour}.

What remains is to construct the Hamiltonian isotopy. Start by considering a compatible almost complex structure $J$ for which the aforementioned broken conic $\ell_1^{J} \cup \ell_2^{J}$ as well as $C$ and $\ell_\infty$ all are $J$-holomorphic simultaneously. Then interpolate between $J$ and $J_0$ through compatible almost complex structures, all for which $C \cup \ell_\infty$ remain pseudoholomorphic. Gromov's result Theorem \ref{thm:gromov} produces a smoothly varying family $\Sigma_t$ of symplectic immersions where $\Sigma_0=\ell_1 \cup \ell_2 \cup C \cup \ell_\infty,$ $\Sigma_1=\ell_1^J \cup \ell_2^J \cup C \cup \ell_\infty.$ Deforming this family at times $t>0$ using a Hamiltonian isotopy constructed by hand, we may assume that the position of the node remains fixed in the family. In particular, the node now coincides with $\ell_1 \cap \ell_2$ for all $t \ge 0.$

Part (1) of Proposition \ref{prp:NormaliseNode} applies to this family, yielding a family $\widetilde{\Sigma}_t$ to which Proposition \ref{prp:SiebertTian} can be applied. Here $\widetilde{\Sigma}_0=\Sigma_0,$ while we also have $C,\ell_\infty \subset \widetilde{\Sigma}_t,$ and where $\widetilde{\Sigma}_1 \subset V \setminus L$ is a small perturbation of $\Sigma_1.$ In other words, the global Hamiltonian isotopy produced by Proposition \ref{prp:SiebertTian} that generates the isotopy $\widetilde{\Sigma}_{1-t}$ of symplectic surfaces is the one we seek.
\end{proof}

\begin{lma}
\label{lma:goodbasis}
 Assume that either Condition (1) or (2) of Theorem \ref{thm:main} is satisfied for the Lagrangian torus $L \subset (V,\omega_{\OP{FS}}).$ There exists a basis $\langle \mathbf{e}_0, \mathbf{e}_1 \rangle = H_1(L)$ with the property that $\mathbf{e}_0$ generates the kernel of the canonical map $H_1(L) \to H_1(V)$ induced by the inclusion $L \subset V,$ while any disc $(D,\partial D) \to (\CP^2\setminus \ell_\infty,L)$ with boundary in the homology class $\mathbf{e}_1$ must satisfy $D \bullet C = a_1 >0.$
\end{lma}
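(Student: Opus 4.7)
The plan is to first apply Lemma \ref{lma:DisjoinNodal} to put $L$ in a convenient position, then introduce the intersection-with-$C$ homomorphism, rule out its vanishing using the hypothesis, and read off the basis.

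First I would invoke Lemma \ref{lma:DisjoinNodal}: after a compactly supported Hamiltonian isotopy inside $V,$ we may assume $L$ is disjoint from $C_{\OP{nodal}},$ so the Lefschetz fibration $f(z_1,z_2)=z_1z_2$ sends $L$ into a compact subset of $\C \setminus \{0,1\}.$ Since $H_2(\CP^2 \setminus \ell_\infty) = H_2(\C^2)=0,$ the intersection number $D \bullet C$ for any smooth 2-chain $D \subset \CP^2 \setminus \ell_\infty$ with $\partial D \subset L$ depends only on $[\partial D] \in H_1(L),$ giving a homomorphism $\phi \colon H_1(L) \to \Z.$ Since the generator of $H_1(V) \cong \Z$ is identified with a meridian around $C,$ this $\phi$ agrees up to sign with the canonical map $H_1(L) \to H_1(V),$ so $\ker\phi = \ker(H_1(L) \to H_1(V)).$

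The crucial step is to show $\phi \not\equiv 0.$ Suppose for contradiction that $\phi \equiv 0.$ Then $(f|_L)_* \colon H_1(L) \to H_1(\C \setminus \{1\}) \cong \Z$ vanishes, i.e.~no loop on $L$ winds around $1 \in \C$ under $f,$ so combined with $0 \notin f(L),$ the image $f(L)$ can be enclosed in a region $R \subset \C \setminus \{0,1\}$ that is, up to isotopy, either (a) a topological disc or (b) an annulus enclosing $0$ and contained in $\{|z|<1\}.$ Setting $U \coloneqq f^{-1}(R) \subset V,$ the substitution $(z_1,z_2) \mapsto (z_1, z_1z_2)$ identifies $U \cong \C^* \times R,$ so $H_2(U) = H_1(\C^*) \otimes H_1(R)$ vanishes in case~(a) and in case~(b) is generated by a product torus $\{|z_1|=r,\,|z_2|=s\}$ with $rs < 1;$ such a product torus is null-homologous in $V$ via the explicit 3-chain $\{|z_1| \le r,\,|z_2|=s\},$ which stays in $V$ since $|z_1 z_2| \le rs < 1.$ In either case $[L]=0 \in H_2(V),$ contradicting Condition~(1). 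Under Condition~(2), in case~(a) the subcritical Stein structure on $U \cong \C^* \times D^2$ combined with the Gromov--Polterovich theorem for Lagrangian tori in subcritical Stein subdomains produces a holomorphic Maslov-$2$ disc of positive area on $L$ inside $U \subset V,$ contradicting Condition~(2)(a); in case~(b), the symplectic identification $U \cong T^*\T^2$ together with \cite[Theorem~7.1]{Dimitroglou:Isotopy} shows $L$ is Hamiltonian isotopic inside $U$ to a product torus $\{|z_1|=1,\,|z_2|=s\}$ with $s<1,$ which bounds the explicit Maslov-$2$, positive-area disc $\{z_2=c,\,|z_1| \le 1\}$ for $|c|=s,$ again contradicting Condition~(2)(a).

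Given $\phi \not\equiv 0,$ let $a_1 > 0$ be the positive generator of the image of $\phi.$ Choose any $\mathbf{e}_1 \in H_1(L)$ with $\phi(\mathbf{e}_1) = a_1,$ and let $\mathbf{e}_0$ generate the rank-one subgroup $\ker\phi \subset H_1(L) \cong \Z^2.$ Then $\{\mathbf{e}_0, \mathbf{e}_1\}$ is a $\Z$-basis of $H_1(L)$ with $\mathbf{e}_0$ generating $\ker(H_1(L) \to H_1(V))$ and, by construction, any disc $(D,\partial D) \to (\CP^2 \setminus \ell_\infty, L)$ with $[\partial D] = \mathbf{e}_1$ satisfies $D \bullet C = \phi(\mathbf{e}_1) = a_1 > 0.$ The main obstacle in the plan is the non-vanishing of $\phi;$ both Conditions require some delicate input, namely explicit null-homology 3-chains in $V$ under Condition~(1) and the invocation of the earlier nearby Lagrangian theorem together with careful Maslov-class bookkeeping under the embedding $U \hookrightarrow V$ under Condition~(2).
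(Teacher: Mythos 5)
Your overall skeleton — define the intersection homomorphism $\phi \colon H_1(L) \to \Z,$ $\gamma \mapsto D\bullet C,$ identify $\ker\phi$ with $\ker(H_1(L)\to H_1(V)),$ and reduce everything to showing $\phi\not\equiv 0$ — matches the paper's reduction. The divergence is in how $\phi\equiv 0$ is ruled out, and this is where your argument has a genuine gap. From the hypothesis that every loop on $L$ has zero winding around $1$ under $f$ you conclude that $f(L)$ "can be enclosed in a region $R$" that is either a disc or an annulus around $0.$ This does not follow: the hypothesis constrains the image of $H_1(L) \to H_1(f^{-1}(R)),$ not the topology of a neighbourhood $R$ of the compact set $f(L)$ inside $\C\setminus\{0,1\}.$ In general $R$ is an arbitrary planar domain (it could be an annulus encircling $1,$ an annulus encircling both $0$ and $1,$ or have several complementary components), so your cases (a) and (b) are not exhaustive, and the subsequent analysis of $H_2(U)$ and of $[L]\in H_2(V)$ does not go through as written. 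At a minimum one must treat the general planar $R,$ and it is not clear the soft topology suffices to finish Condition (2) there.

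There is a second, independent soft spot under Condition (2): that condition is a statement about classes in $\pi_2(V,L),$ i.e.\ about discs avoiding $C.$ The discs you produce (via a subcritical-Stein existence theorem in case (a), or via \cite[Theorem 7.1]{Dimitroglou:Isotopy} plus an explicit disc in case (b)) live a priori in $\C^2,$ and the Hamiltonian isotopies you transport them along need not avoid $C$; knowing $D\bullet C=0$ does not let you represent the class by a disc in $V.$ The paper's own proof is engineered precisely to avoid this: it takes the solid torus of Proposition \ref{prp:solidtorus}, foliated by Maslov-two $J$-holomorphic discs $D(\theta)$ for a $J$ making $C\cup\ell_\infty$ holomorphic. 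Then positivity of intersection converts $D(\theta)\bullet C=0$ into genuine disjointness $D(\theta)\cap C=\emptyset,$ which immediately yields either a positive-area Maslov-two class in $\pi_2(V,L)$ (contradicting Condition (2)) or a null-homology of $L$ inside $V$ (contradicting Condition (1)); writing $[\partial D(\theta)]=k\mathbf{e}_0+l\mathbf{e}_1$ then forces $\phi(\mathbf{e}_1)\neq 0.$ If you want to keep your route, you would need to replace the disc-or-annulus dichotomy by an argument valid for arbitrary planar $R$ and, for Condition (2), to produce the Maslov-two disc inside $V$ itself — which in practice sends you back to the pseudoholomorphic solid torus.
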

\begin{proof}
 
Take any basis $\langle \mathbf{e}_0, \mathbf{e}_1 \rangle = H_1(L)=\Z^2$ where $\mathbf{e}_0$ is in the kernel of $H_1(L) \to H_1(V)=\Z.$ It suffices to show that $\mathbf{e}_1$ satisfies the property that any disc $(D,\partial D) \to (\CP^2 \setminus \ell_\infty,L)$ with boundary in the homology class $\mathbf{e}_1$ must satisfy $D \bullet C \neq 0.$

Using Proposition \ref{prp:solidtorus} applied to a suitable broken line, we produce a solid torus inside $\CP^2 \setminus \ell_\infty$ with boundary equal to $L,$ and which is foliated by $J$-holomorphic discs $D(\theta)$ of Maslov index two. The compatible almost complex structure $J$ on $\CP^2$ can moreover be chosen so that $C \cup \ell_\infty$ is $J$-holomorphic.

Under either of the assumptions of Theorem \ref{thm:main}, positivity of intersection \cite{McDuff:LocalBehaviour} implies that the discs foliating the above solid torus must intersect $C$ positively. Since $[\partial D(\theta)]=k\mathbf{e}_0+l\mathbf{e}_1$ the statement follows.
\end{proof}

\begin{lma}
\label{lma:geodclass}
Assume that either Condition (1) or (2) of Theorem \ref{thm:main} is satisfied for the Lagrangian torus $L \subset (V,\omega_{\OP{FS}}),$ and consider a basis $\langle \mathbf{e}_0,\mathbf{e}_1\rangle=H_1(L)$ as in Lemma \ref{lma:goodbasis}. Any broken $J_\infty$-holomorphic conic satisfying the given tangency conditions at $q_i$ consists of components all whose asymptotics are geodesics in homology classes of the form $k\mathbf{e}_0 \in H_1(L)$ with $k \in \Z \setminus \{0\}.$
\end{lma}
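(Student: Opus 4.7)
The strategy is to combine Lemma \ref{lma:planecyl}, which controls the top level of the broken conic, with a tree-theoretic argument for the entire building plus two intersection-theoretic computations: a first one with the smooth conic $C$ to constrain the asymptotic classes of the two top-level planes, and a second one using the vanishing of the sum of asymptotic classes of any pseudoholomorphic cylinder to propagate the constraint across the whole building.

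First, I would pin down the combinatorial structure of the building. By Lemma \ref{lma:planecyl} the top level consists of precisely the two index-one planes $A_1, A_2$ (through $q_1, q_2$ with the prescribed tangencies) together with some index-zero cylinders. No plane can occur at a lower level: a finite-energy pseudoholomorphic plane in either $(\R \times ST^*\T^2, J_{\OP{cyl}})$ or $(T^*\T^2, J_{\OP{std}})$ would project under the bundle map to a continuous map $\C \to \T^2$ whose loop at infinity traces the nontrivial homology class of the associated closed geodesic, contradicting the simple-connectedness of $\C$. Since the ambient symplectic form is exact in these levels, there are also no closed pseudoholomorphic sphere bubbles there. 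Consequently $A_1, A_2$ are the only leaves of the tree of components, and since a tree with exactly two leaves is a path whose internal vertices all have valency two, each non-leaf component of the building is topologically a cylinder.

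Second, I would compute $A_i \bullet C$. Because $C$ is $J_\infty$-holomorphic and disjoint from $L$, positivity of intersection applies to the top-level components; combined with the global identity $[\OP{conic}] \bullet [C] = 4$ and the local contribution of at least $+2$ from each tangency at $q_i$, this forces $A_1 \bullet C = A_2 \bullet C = 2$ and each top-level cylinder $\bullet C = 0$. The compactified disc $\bar{A}_i$ has absolute class $[\ell]$ (from the transverse intersection with $\ell_\infty$ at $q_i$) and boundary $\alpha_i \in H_1(L)$; combining Lemma \ref{lma:goodbasis} with the fact that a chain bounding $\mathbf{e}_0$ can be chosen inside $V \subset \CP^2 \setminus C$, the intersection with $C$ decomposes as $\bar{A}_i \bullet C = 2 + a_1 \cdot (\alpha_i)_{\mathbf{e}_1}$. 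Hence $(\alpha_i)_{\mathbf{e}_1} = 0$, i.e.~$\alpha_i \in \Z \mathbf{e}_0$.

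Finally, I would propagate along the path. For any pseudoholomorphic cylinder in the building, the two asymptotic classes taken with the sphere-induced orientation sum to zero in $H_1(L)$: the compactified cylinder has two boundary circles that are cobordant through the cylinder with opposite boundary orientations. At each matched edge of the tree the two paired asymptotic classes must likewise be negatives of each other in order for the gluing to be orientation-consistent. Starting from $A_1$ with $\alpha_1 \in \Z \mathbf{e}_0$, the asymptotic classes along the path therefore alternate between $\alpha_1$ and $-\alpha_1$, so every asymptotic in the building lies in $\Z \mathbf{e}_0 \setminus \{0\}$, the non-vanishing being automatic since flat $\T^2$ admits no contractible closed geodesics. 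The main obstacle is the tree-structure step: one must carefully exclude higher-valency lower-level components such as pairs of pants or configurations with internal nodes that could disrupt the simple path structure, alongside the careful orientation bookkeeping that supports the propagation.
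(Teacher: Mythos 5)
Your core computation coincides with the paper's: cap a component off with a null-homology of its total asymptotic class inside $\CP^2\setminus\ell_\infty$, compare the topological intersection number $2d$ of the resulting degree-$d$ cycle with $C$ against the geometric count (localised at $q_1,q_2$ by positivity of intersection, plus the contribution $a_1\cdot l$ from the cap), and conclude $l=0$ from $a_1>0$ in Lemma \ref{lma:goodbasis}. The paper runs this argument uniformly over \emph{all} top-level components and is silent about the lower levels; your tree-is-a-path analysis and your treatment of the symplectisation and $T^*\T^2$ levels via the projection to $L$ correctly make that implicit part precise.

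The gap is in the propagation step. You claim that for \emph{any} cylinder in the building the two asymptotic classes sum to zero in $H_1(L)$ because the cylinder is a cobordism between its boundary circles. That justification is valid only for cylinders in $\R\times ST^*\T^2$ and in $T^*\T^2$, which retract onto $L$; a cylinder in the top level $\CP^2\setminus L$ provides a cobordism only in $\CP^2\setminus\ell_\infty\cong\C^2$, where every class bounds, so no relation in $H_1(L)$ follows. A priori such a cylinder could have asymptotics $k_1\mathbf{e}_0+l\mathbf{e}_1$ and $k_2\mathbf{e}_0-l\mathbf{e}_1$ with $l\neq 0$, which breaks your alternation and leaves everything beyond it on the path unconstrained -- and you cannot exclude top-level cylinders at this stage, since that only comes in Proposition \ref{prp:characterisation}, which relies on the present lemma. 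The repair is already in your own setup: positivity of intersection gives that each top-level cylinder is disjoint from $C$ \emph{and} from $\ell_\infty$ (the two planes already account for the full intersection numbers $4$ and $2$), so your capping computation applied to such a cylinder yields $0=0+a_1\cdot l$ for the $\mathbf{e}_1$-component $l$ of the \emph{sum} of its two asymptotics. Combined with the matching conditions and the genuine sum-to-zero relation in the lower levels, this propagates the vanishing of the $\mathbf{e}_1$-component -- which is all the lemma asserts -- from $A_1$ to every puncture; the non-vanishing of each class is, as you say, automatic from the absence of contractible closed geodesics on the flat torus.
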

\begin{rmk}
In the above lemma we make no assumptions on the genericity of $J_\infty.$
\end{rmk}
\begin{cor}
The same is true also for an almost complex structure $J_\infty$ on $\CP^2 \setminus (L \cup L')$ obtained by stretching the neck also around an additional Lagrangian torus $L' \subset V \setminus L,$ with the only caveat that any component then also may have additional punctures asymptotic to $L'.$
\end{cor}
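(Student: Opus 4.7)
My plan is to lift the argument of Lemma \ref{lma:geodclass} to the two-neck setting essentially verbatim, on the grounds that the only geometric input in that argument is intersection theory with the $J_\infty$-holomorphic conic $C$, and $C$ is disjoint from $L'$. So introducing stretching around $L'$ does not interact with the $L$-asymptote bookkeeping; it only enlarges the building with extra towers attached at $L'$.

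Since $L$ and $L'$ are disjoint, the two neck-stretching deformations take place inside disjoint Weinstein neighbourhoods, and an SFT-limit $J_\infty$-holomorphic building decomposes into (i) a top level in $\CP^2 \setminus (L \cup L')$ whose punctures split into $L$-asymptotes and $L'$-asymptotes, (ii) a symplectization tower in $\R \times S^*L$ together with bottom-level planes in $T^*L$ matching the $L$-asymptotes, and (iii) an independent symplectization tower in $\R \times S^*L'$ with planes in $T^*L'$ matching the $L'$-asymptotes. The conic $C$, being $J_\tau$-holomorphic for every $\tau$ and disjoint from both tori by construction, persists as a $J_\infty$-holomorphic curve localized in the top level, with no interaction with either tower.

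The basis $\langle \mathbf{e}_0,\mathbf{e}_1\rangle$ of $H_1(L)$ produced by Lemma \ref{lma:goodbasis} depends purely on the triple $(L, V, C)$ and is unaffected by the presence of $L'$. I would then reapply the intersection argument of Lemma \ref{lma:geodclass}: for each component of the building carrying an $L$-asymptote in class $k\mathbf{e}_0 + l\mathbf{e}_1$, a capping disc in $V$ intersects $C$ with local contribution $a_1 \cdot l$. The total algebraic intersection number of the (broken) conic with $C$ inside $\CP^2 \setminus \ell_\infty$ is zero, because the entire $[\text{conic}] \bullet [C] = 4$ is absorbed by the two tangencies at $q_1, q_2 \in \ell_\infty$; and positivity of intersection with $C$ forces each per-component contribution to be nonnegative, hence each $l$ to vanish. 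The $L'$-asymptotes enter only as harmless decoration: they lie in a part of $V$ disjoint from $C$, so contribute nothing to the $C$-bookkeeping, and the capping discs used for the $L$-asymptotes may be chosen to avoid $L'$.

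The main technical subtlety I expect is justifying per-component positivity of intersection with $C$ when components now carry punctures at both $L$ and $L'$ simultaneously: one invokes the standard asymptotic intersection theory of SFT buildings to ensure the local contributions near each puncture are well defined and nonnegative. This is routine here because $C$ remains a uniformly positive distance from both stretch regions, so no asymptotic linking terms at $L$ or $L'$ can interfere with $C$-intersections. Finally, the nonvanishing $k \neq 0$ is automatic: punctures are asymptotic to genuine Reeb orbits, and the flat metric on $\T^2$ admits no contractible closed geodesics.
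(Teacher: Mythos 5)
Your overall strategy is the same as the paper's: the corollary is proved there by the very argument of Lemma \ref{lma:geodclass} (compactify a top-level component, cap it off by a chain inside $\CP^2\setminus\ell_\infty$, compare the homological intersection number of the resulting cycle with $C$ against the geometric contributions, which by positivity of intersection are concentrated at the tangency points $q_1,q_2\in\ell_\infty$, and invoke $a_1>0$ from Lemma \ref{lma:goodbasis}). Your decomposition of the SFT limit into a top level together with two independent towers over the disjoint Weinstein necks, and the observation that the basis from Lemma \ref{lma:goodbasis} is unaffected by $L'$, are both correct.

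The gap is in the sentence dismissing the $L'$-asymptotes as ``harmless decoration'' that ``contributes nothing to the $C$-bookkeeping'' because the orbits ``lie in a part of $V$ disjoint from $C$.'' What matters is not where the asymptotic orbits sit (every Reeb orbit, whether over $L$ or $L'$, is disjoint from $C$), but what the \emph{capping chains} contribute: to close a top-level component $A$ into a cycle you must cap \emph{all} of its punctures, and by Lemma \ref{lma:goodbasis} applied to $L'$ (with basis $\langle\mathbf{e}_0',\mathbf{e}_1'\rangle$ and intersection number $a_1'>0$), a cap of an $L'$-asymptotic in class $k'\mathbf{e}_0'+l'\mathbf{e}_1'$ meets $C$ with algebraic number $l'a_1'$ --- nonzero exactly when $l'\neq 0$, which is part of what must be proved. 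For a component carrying punctures at both tori your degree computation therefore yields only the single relation $la_1+l'a_1'=0$, and since the caps are merely topological chains, positivity of intersection gives no control over their signs; e.g.\ $(l,l')=(1,-1)$ with $a_1=a_1'$ passes your bookkeeping. Excluding this mixed case is precisely the content of the ``caveat'' in the statement and is the one point at which the two-torus setting genuinely differs from Lemma \ref{lma:geodclass}; it requires a further input (for instance, constraints on which components can carry punctures at both tori, as is ultimately verified for the explicit buildings in Section \ref{sec:proof-whitney}), which your proposal does not supply.
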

\begin{proof}
The statements follow by positivity of intersection with $C$ together with topological considerations in conjunction with Lemma \ref{lma:goodbasis}. Recall that, as prescribed by the tangency conditions satisfied by the broken conic, it intersects $C$ with algebraic intersection index $+4$ at the two points $q_i \in C,$ $i=1,2.$ In other words, there are precisely two intersection of this building and the unbroken conic $C,$ both which occur at the two points $q_i \in \ell_\infty.$

Consider a puncture $p$ appearing as an asymptotic of a top-level component $A$ in the building, where the asymptotic at $p$ corresponds to a geodesic in the homology class $k\mathbf{e}_0+l\mathbf{e}_1 \in H_1(L).$ Compactify $A$ to a continuous chain with boundary on $L$ in the homology class $-(k\mathbf{e}_0+l\mathbf{e}_1)\in H_1(L)$ (note the sign!).
Choose an arbitrary null-homology $B$ of $k\mathbf{e}_0+l\mathbf{e}_1 \in H_1(L)$ inside $\CP^2 \setminus \ell_\infty,$ and create a cycle $A \cup B.$

In this manner we have produced a cycle in $H_2(\CP^2)$ of degree $d=A \bullet \ell_\infty,$ where either $d=0,$ $1$ or $2.$ The intersection number of the cycle and $C$ is precisely $2d+la_1=2d.$ The first term arises from the intersections of $A$ and $\ell_\infty$ (these occur at $\{q_1,q_2\} \subset \ell_\infty$ by positivity of intersection) while the second term arises from the intersections of $B$ and $C.$ Since $a_1>0$ by Lemma \ref{lma:goodbasis} we must have $l=0$ as sought.
 
\end{proof}

\subsection{Constructing a compatible fibration of conics}
\label{sec:compatible}
 We are now ready to state the result concerning the existence of a pseudoholomorphic conic foliation compatible with a Lagrangian torus $L \subset (V,\omega_{\OP{FS}})$ that satisfies one of the assumptions of Theorem \ref{thm:main}. We postpone its proof to Subsection \ref{pf:CompatibleFibration} below. 
\begin{thm}
\label{thm:CompatibleFibration}
After a Hamiltonian isotopy of $L \subset V \setminus (\ell_\infty \cup C \cup C_{\OP{nodal}}),$ there exists a tame almost complex structure $J$ on $(\CP^2,\omega_{\OP{FS}}),$ where $J=i$ is standard near the divisor $\ell_\infty \cup C \cup C_{\OP{nodal}},$ and for which an application of Theorem \ref{thm:Lefschetz} gives rise to a fibration $f_J \colon (V,\omega_{\OP{FS}}) \to \C$ by $J$-holomorphic conics which is compatible with the Lagrangian torus in the following sense:
\begin{enumerate}
\item The restriction $f_J|_L$ makes the torus a smooth $S^1$-fibre bundle over the embedded closed curve
$$\sigma\coloneqq f_J(L) \subset \C \setminus \{0=f_J(C_{\OP{nodal}}),1=f_J(C)\}$$
which has winding number one around $1 \in \C.$ The fibres of $f_J|_L$ are, moreover, closed curves inside $L$ which are contractible inside $V$ and of Maslov index zero;
\item The fibre of $f_J^{-1}(s)$ for any $s \in \sigma$ coincides with the standard fibre $f^{-1}(s)$ outside of a compact subset; and
\item The curve $\sigma$ is either disjoint from the curve $[0,1] \subset \C$ or is of the from $s_0+i[-\delta,\delta]$ in some neighbourhood of $[0,1],$ where $s_0 \in (0,1).$ Furthermore, we may assume that $f_J=f$ holds inside the same neighbourhood.
\end{enumerate}
\end{thm}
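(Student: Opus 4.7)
The plan is to produce both $J$ and the fibration $f_J$ through a neck-stretching construction around $L$, followed by suitable normalisations. First I apply Lemma \ref{lma:DisjoinNodal} to perform a preliminary Hamiltonian isotopy ensuring $L \subset V \setminus C_{\OP{nodal}}$. I then build a neck-stretching sequence $J_\tau$ around $L$ as in Section \ref{sec:neckstretch}, with every $J_\tau$ equal to the standard $i$ near $\ell_\infty \cup C \cup C_{\OP{nodal}}$ and arranged so that the limiting $J_\infty$ satisfies the genericity conclusions of Lemma \ref{lma:planecyl}. For each finite $\tau$, Theorem \ref{thm:Lefschetz} produces a conic fibration $f_{J_\tau} \colon \CP^2 \setminus \ell_\infty \to \C$ for which $C_{\OP{nodal}}$ and $C$ remain unbroken fibres, lying above $0$ and $1 \in \C$ respectively.

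Next I analyse the $J_\tau$-conics that intersect $L$ as $\tau \to +\infty$ using SFT compactness. By Lemma \ref{lma:planecyl} any broken limit has top level consisting of exactly two index-one planes through $q_1$ and $q_2$ together possibly with some index-zero cylinders, and by Lemma \ref{lma:geodclass} every asymptotic Reeb orbit is a geodesic in a class $k\mathbf{e}_0$ for the basis of Lemma \ref{lma:goodbasis}. Combining automatic transversality (Lemma \ref{lma:auttrans}), action restrictions which force the index-zero cylinders in the symplectisation to be trivial, and the matching of asymptotic classes needed when the building compactifies to a continuous degree-two sphere, the generic broken conic is forced to consist of precisely two planes $A_1, A_2$ asymptotic to $\pm\mathbf{e}_0$, joined across $L$ exactly as in Figure \ref{fig:breaking}. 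The locus of such broken configurations in the base then traces out an embedded closed curve $\sigma \subset \C \setminus \{0,1\}$, and for $\tau$ sufficiently large the image $\sigma_\tau \coloneqq f_{J_\tau}(L)$ is $C^\infty$-close to $\sigma$, hence also embedded. Lagrangian/$J_\tau$-holomorphic transversality makes $f_{J_\tau}|_L \colon L \to \sigma_\tau$ of rank one everywhere and therefore a smooth $S^1$-bundle with fibre class $\mathbf{e}_0$ of Maslov index zero, as required by item (1).

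The winding number around $1$ is then automatic: any $S^1$-bundle over $S^1$ with total space $T^2$ is trivial, so $\mathbf{e}_1$ admits a representative that is a section of $f_{J_\tau}|_L$, whose image in $\C \setminus \{1\}$ is the parametrised $\sigma_\tau$; the winding of $\sigma_\tau$ around $1$ therefore equals the intersection number $a_1$ of Lemma \ref{lma:goodbasis}, which is strictly positive, combined with the elementary fact that the winding of a simple closed curve around a point lies in $\{-1,0,+1\}$. Item (2) follows because $J_\tau$ already agrees with $i$ outside a compact set, so Theorem \ref{thm:normalise} can be invoked without destroying compatibility with $L$. For item (3), I pick an embedded path joining $0$ to $1$ in $\C$ which either avoids $\sigma$ or crosses it transversely at a single point $s_0$ along a short vertical segment, and apply Theorem \ref{thm:normalisepath} to the interpolation of $J$ with $i$; the resulting Hamiltonian isotopy may be taken to be supported away from $L$ by construction.

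The main obstacle is the analysis in the second paragraph: rigorously extracting the two-plane picture as the only generic broken limit. Ruling out multiply covered top-level planes, the presence of additional cylinder levels, and planes disjoint from $\ell_\infty$ requires combining all the available constraints---the index-positivity of Lemma \ref{lma:posindex}, the geodesic-class restriction of Lemma \ref{lma:geodclass}, positivity of intersection with the unbroken conic $C$, and the hypothesis of Theorem \ref{thm:main} channelled through Lemma \ref{lma:goodbasis}---in a careful case analysis, together with the complementary observation that the broken configurations assemble into a one-parameter family tracing an embedded curve in the base rather than accumulating or self-intersecting there.
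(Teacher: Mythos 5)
Your overall strategy coincides with the paper's: disjoin $L$ from $C_{\OP{nodal}}$ via Lemma \ref{lma:DisjoinNodal}, stretch the neck around $L$, characterise the broken limit conics, and then normalise via Theorems \ref{thm:normalise} and \ref{thm:normalisepath}. However, there are two genuine gaps. First, the step you yourself flag as ``the main obstacle'' is precisely the content of the paper's Proposition \ref{prp:characterisation}, and it is not enough to list the ingredients: under Condition (1) of Theorem \ref{thm:main} alone, ruling out a top-level plane disjoint from $\ell_\infty$ requires showing that the moduli space containing such a plane is compact (hence an $S^1$), invoking automatic transversality to assemble it into a continuous chain with boundary $m[L]$, $m \neq 0$, disjoint from $C$ by positivity of intersection, and deriving a contradiction with homological essentiality. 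This is a nontrivial argument (the null-homology construction from the theory of foliating solid tori), not a routine case check, and your proposal never produces it. Likewise, the exclusion of extra top-level components away from $\ell_\infty$ is done in the paper by a symplectic-area cancellation $\int_{\mathbf{e}_0}\lambda_{\OP{std}} - \int_{\mathbf{e}_0}\lambda_{\OP{std}} = 0$, which you do not reproduce.

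Second, your passage from the broken limit to an actual compatible fibration does not work as stated. You take $\tau$ large and assert that $f_{J_\tau}(L)$ is an embedded curve with $f_{J_\tau}|_L$ of rank one everywhere ``by Lagrangian/holomorphic transversality.'' The Lagrangian condition only excludes a two-dimensional tangency $T_pL \cap T_pF$ with a symplectic fibre $F$; it does not exclude a zero-dimensional intersection, in which case $f_{J_\tau}|_L$ has rank two near $p$ and the image is open, not a curve. The paper avoids this by working at the limit: it shows the planes $A_i$ lie in \emph{compact} $S^1$-components of their moduli spaces (using the absence of Maslov-two planes from Proposition \ref{prp:characterisation}), that the asymptotic evaluation maps onto the Bott families $\Gamma_{\pm\mathbf{e}_0} \cong S^1$ are diffeomorphisms (the asymptotic-winding computation from \cite[Lemma 5.13]{Dimitroglou:Isotopy}, adapted to index five with a tangency constraint), that no two planes share an asymptotic orbit, and only then glues the buildings via \cite[Section 5.3]{Dimitroglou:Isotopy} into closed conics each meeting $L$ in an embedded circle, before applying Theorem \ref{thm:Lefschetz}. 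Without this construction the $S^1$-bundle structure in Part (1) is unproved. Finally, for Part (3) the paper must \emph{reiterate} the entire neck-stretching argument with almost complex structures standard near $f^{-1}[0,1]$ to make $f_J = f$ on a whole neighbourhood; your appeal to Theorem \ref{thm:normalisepath} alone only normalises the fibres over the path itself.
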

\begin{rmk}
In the case when $L \subset \CP^2 \setminus (\ell_\infty \cup C \cup C_{\OP{nodal}})$ already is satisfied, then it is not necessary to apply a Hamiltonian isotopy to $L$ in order to achieve Parts (1) and (2) of Theorem \ref{thm:CompatibleFibration}.
\end{rmk}
In view of Part (3) of the above theorem there are two possibilities for a Lagrangian torus: the closed curve $\sigma \subset \C \setminus \{0,1\}$ over which the torus is fibred either has winding number around $0 \in \C$ equal to $w=1$ (in the case when it is disjoint from $[0,1] \subset \C$) or $w=0$ (in the case when it intersects $[0,1] \subset \C$ in a single point). When $w=1$ and $0$ we say that the torus is in in \emph{Clifford position} (see Figure \ref{fig:fibration-chekanov2}) and \emph{Chekanov position} (see Figure \ref{fig:fibration-clifford2}), respectively. These Lagrangian tori will later be shown to be Hamiltonian isotopic to standard tori of the corresponding types.

\begin{figure}[htp]
\begin{center}
\vspace{6mm}
\labellist
\pinlabel $1$ at 140 52
\pinlabel $\color{blue}\sigma$ at 93 127
\pinlabel $\gamma$ at 101 95
\pinlabel $x$ at 196 63
\pinlabel $iy$ at 56 143
\endlabellist
\includegraphics[scale=0.8]{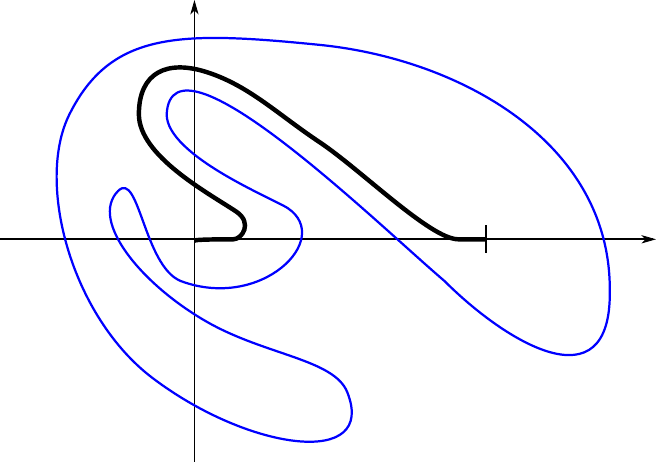}
\caption{The image of a torus in Clifford in position under a compatible fibration, before carrying out the deformation to make the fibration standard above $[0,1] \subset \C.$}
\label{fig:fibration-clifford2}
\end{center}
\end{figure}

\begin{figure}[htp]
\begin{center}
\vspace{6mm}
\labellist
\pinlabel $1$ at 140 52
\pinlabel $\color{blue}\sigma$ at 106 126
\pinlabel $\gamma$ at 101 95
\pinlabel $x$ at 196 63
\pinlabel $iy$ at 56 142
\endlabellist
\includegraphics[scale=0.8]{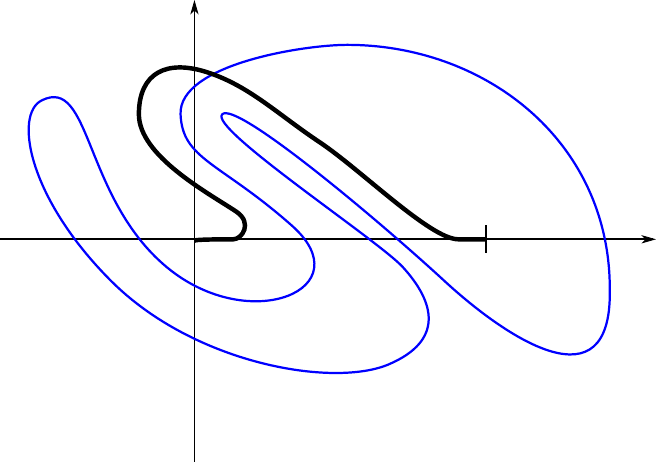}
\caption{The image of a torus in Chekanov position under a compatible fibration, before carrying out the deformation in order to make the fibration standard above $[0,1] \subset \C.$}
\label{fig:fibration-chekanov2}
\end{center}
\end{figure}

We now state some consequences of Theorem \ref{thm:CompatibleFibration}.

\begin{cor}
\label{cor:conditions}
Under the assumption of either Condition (1) or (2) of Theorem \ref{thm:main}, the Lagrangian torus $L \subset \CP^2 \setminus (\ell_\infty \cup C)$ is smoothly isotopic inside the same subset to a fibre of the Lagrangian fibration $\Pi_s \colon V \to (-1,1) \times (0,+\infty)$ through totally real tori. In particular, Conditions (1) and (2) of Theorem \ref{thm:main} are equivalent for embedded Lagrangian tori.
\end{cor}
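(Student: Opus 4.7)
The plan is to extract a smooth isotopy from the compatible conic fibration of Theorem \ref{thm:CompatibleFibration}, and then deduce the equivalence of Conditions (1) and (2) from the invariance of classical topological data under smooth isotopy through totally real tori.

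After the preliminary Hamiltonian isotopy provided by Theorem \ref{thm:CompatibleFibration}, there is a tame $J$ on $\CP^2$ and an induced $J$-holomorphic conic fibration $f_J \colon V \to \C$ under which $L$ is a smooth $S^1$-bundle over an embedded simple closed curve $\sigma \coloneqq f_J(L) \subset \C \setminus \{0,1\}$ of winding number one around $1$. The winding number of $\sigma$ around $0$ is either $\pm 1$ (Clifford position) or $0$ (Chekanov position). By the classification of simple closed curves in the twice-punctured plane up to isotopy, $\sigma$ is smoothly isotopic through embedded simple closed curves $\sigma_t \subset \C \setminus \{0,1\}$ to the base curve $\sigma_1 \coloneqq f(\Pi_{s_0}^{-1}(u_1,u_2))$ of an appropriately chosen standard fibre, matching the winding numbers.

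Next I would lift this base isotopy to a smooth isotopy $L_t$ of embedded tori in $V$ by integrating a family of vector fields $X_t$ on $V \setminus C_{\OP{nodal}}$ whose $f_J$-projection equals $\dot\sigma_t$ and which preserves the $S^1$-orbits inside each conic $f_J^{-1}(s)$. Since $f_J=f$ holds in a neighbourhood of $\sigma_1$ by Part (3) of Theorem \ref{thm:CompatibleFibration}, the endpoint $L_1$ and the standard fibre $\Pi_{s_0}^{-1}(u_1,u_2)$ are then both $S^1$-bundles over $\sigma_1$ sitting inside the common conic fibres, and a final isotopy supported within these conics brings $L_1$ exactly onto the standard fibre. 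The hard part will be ensuring that every $L_t$ in this smooth path is totally real: at any point of $L_t$ the tangent plane is spanned by the $S^1$-fibre direction $v_1 \in T(f_J^{-1}(s))$ together with the horizontal lift $v_2$ of $\dot\sigma_t$, and since $Jv_1$ also lies in the conic tangent plane $\OP{span}(v_1, Jv_1)$ while $v_2$ is transverse to the conic, total reality of $T_pL_t$ only fails on the non-generic locus where $v_2 \in \OP{span}(v_1, Jv_1)$; a generic choice of $X_t$, or equivalently an arbitrarily small $C^1$-perturbation inside the conic fibres, avoids this codimension-one locus.

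Finally, for the equivalence of the two conditions I would invoke Proposition \ref{prp:mainprp}: any standard fibre is a generator of $H_2(V) \cong \Z$, and its Maslov class on $H_2(V,L)$ vanishes identically, so that the implication in Condition (2) is vacuously satisfied. Both the homology class $[L] \in H_2(V)$ and the Maslov homomorphism $\mu_L \colon H_2(V,L) \to \Z$ are well-defined and invariant under smooth isotopy through totally real tori (note that $\mu_L$ only requires total reality, not the Lagrangian condition). Hence the isotopy constructed above transfers both invariants from the standard fibre back to $L$, so that whichever of Conditions (1) or (2) was initially assumed, both end up holding.
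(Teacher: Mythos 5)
Your overall strategy --- extract the isotopy from the compatible conic fibration of Theorem \ref{thm:CompatibleFibration}, and read off the equivalence of Conditions (1) and (2) from invariants that survive a totally real isotopy --- is the same as the paper's, and your final paragraph on the equivalence is correct. However, your total-reality argument is both misstated and harder than it needs to be. A two-plane $P$ in a four-dimensional complex vector space fails to be totally real only if $P=JP$ is a complex line. If $T_pL_t=\OP{span}(v_1,v_2)$ with $v_1$ tangent to the $J$-holomorphic conic leaf through $p$ and $v_2$ transverse to that leaf, then $T_pL_t$ cannot be complex: otherwise $Jv_1\in T_pL_t$, which forces $T_pL_t=\OP{span}(v_1,Jv_1)=T_p\bigl(f_J^{-1}(s)\bigr)$ and contradicts the transversality of $v_2$. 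Since every torus in your family is by construction an $S^1$-bundle over an embedded curve $\sigma_t$, its second tangent direction always projects onto $T\sigma_t\neq 0$ and is therefore automatically transverse to the leaf; no genericity or $C^1$-perturbation is needed, and the "codimension-one locus" you worry about is empty by construction. (You appear to conflate the isotopy vector field $X_t$ with the tangent plane of $L_t$; the former is irrelevant to total reality.) This automatic statement is precisely the paper's argument: a torus contained in a three-manifold foliated by pseudoholomorphic leaves, with no full tangency to any leaf, has no complex tangencies.

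The genuine gap is in your final matching step. Part (3) of Theorem \ref{thm:CompatibleFibration} gives $f_J=f$ only in a neighbourhood of $[0,1]\subset\C$, and Part (2) gives agreement of the conic fibres only outside a compact subset, i.e.\ near $\ell_\infty$; neither gives $f_J=f$ over the whole standard base curve $\sigma_1=f(\Pi_{s_0}^{-1}(u_1,u_2))$. Hence $L_1$ and the standard fibre need not lie in common conic fibres, and the "final isotopy supported within these conics" is not yet available. The paper avoids this by first sliding the torus within the fixed hypersurface $f_J^{-1}(\sigma)$ towards the point $q_1\in\ell_\infty$, where the $J$-holomorphic and standard conics coincide, and only then isotoping through Lagrangian tori compatible with $f$ to a standard fibre. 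Your version is repaired the same way: after reaching $L_1$, push it within $f_J^{-1}(\sigma_1)$ into the region near $\ell_\infty$ where $f_J^{-1}(s)=f^{-1}(s)$, and only then compare circle fibres inside the common conics (these circles are essential in each conic, as the two points $q_1,q_2$ lie on opposite sides, so the comparison isotopy exists and again stays totally real). A last small point: since $\sigma$ is embedded with winding number $+1$ around $1$, its winding number around $0$ is $0$ or $+1$, not $\pm 1$.
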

\begin{proof}
 The smooth isotopy is readily constructed by first moving the torus within the $S^1$-family of conic fibres $\sigma$ to a Lagrangian torus that lives near, say, the point $q_1.$ The latter Lagrangian torus can then be smoothly isotoped through Lagrangian tori compatible with the standard Lefschetz fibration $f \colon \CP^2 \setminus (\ell_\infty \cup C ) \to \C$ to a standard representative. The first part of the isotopy is not necessarily through Lagrangian tori, but at least it is through \emph{totally real} tori. The reason is that, since these tori all live inside a three-dimensional hypersurface $\sigma$ foliated by pseudoholomorphic curves, while they by construction have no full tangency to any of the curves, they cannot have any complex tangencies. 

We end by noting that Conditions (1) and (2) of Theorem \ref{thm:main} are equivalent merely by the existence of this formal Lagrangian isotopy; the standard tori are obviously in the class of the generator of $H_2(V),$ and all continuous discs of nonzero Maslov index on them must intersect $C$ with a nonzero algebraic intersection number.

\end{proof}
Later we will establish that the tori actually are Lagrangian isotopy, and not just formally Lagrangian isotopic. Recall that this already is known to be the case inside the larger space $(\CP^2 \setminus \ell_\infty,\omega_{\OP{FS}}) \supset V$ by the main result of \cite{Dimitroglou:Isotopy}; constructing a Lagrangian isotopy in the complement of the divisor $C$ will turn out to require some additional work.

The property of having vanishing Maslov class for a torus satisfying either assumption of Theorem \ref{thm:main} has the following important and nontrivial consequence.
\begin{prp}
\label{prp:nondisp}
Any Lagrangian torus inside $(V,\omega_{\OP{FS}})$ which satisfies either of the assumptions of Theorem \ref{thm:main} is not Hamiltonian displaceable inside $V.$
\end{prp}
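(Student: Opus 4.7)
The plan is to deduce non-displaceability of $L$ in $V$ from the non-vanishing of its self-Floer cohomology $HF^*(L,L)$ computed inside $(V,\omega_{\OP{FS}})$, combined with the existence of pseudoholomorphic half-conics bounded by $L$.

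First, by Corollary \ref{cor:conditions} the torus $L$ is homologically essential in $V$ and its Maslov class $\mu_L \colon H_2(V,L) \to \Z$ vanishes. Together with the exactness of $(V,\omega_{\OP{FS}}=d\lambda)$ for any of the Liouville forms considered in the paper, this ensures that $HF^*(L,L)$ is a well-defined $\Z$-graded Floer group for a generic choice of compatible almost complex structure. By the classical Floer-theoretic criterion, if $L$ were Hamiltonian displaceable inside $V$ then $HF^*(L,L)$ would vanish; hence it suffices to exhibit a nonzero class.

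To produce such a class I would use the compatible $J$-holomorphic conic fibration $f_J$ supplied by Theorem \ref{thm:CompatibleFibration}. Each smooth fibre $f_J^{-1}(s)$ for $s \in \sigma$ intersects $L$ transversely in an embedded circle, splitting the conic into two pseudoholomorphic half-cylinders in $V$ with boundary on $L$. Applying the neck-stretching construction of Section \ref{sec:neckstretch}, these half-conics degenerate in the limit into punctured pseudoholomorphic planes in $\CP^2 \setminus L$ whose asymptotic Reeb orbits lie in the class $\mathbf{e}_0 \in H_1(L)$ provided by Lemma \ref{lma:goodbasis}; the corresponding moduli problem is transverse in view of the automatic transversality Lemma \ref{lma:auttrans} combined with the index estimates of Lemma \ref{lma:posindex}. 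The fact that the half-conics form an honest \emph{foliation} of a tubular neighbourhood of $L$ (one side of it, say) implies that the count of these discs through a generic point of $L$ equals $\pm 1$; via the standard PSS / open-closed map formalism this translates into the non-vanishing of the image of the fundamental class $[L] \in H^*(L)$ inside $HF^*(L,L)$.

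The main obstacle is the careful verification that no competing broken pseudoholomorphic configurations produce terms that cancel the contribution of the half-conics in this count. Controlling these requires the combined strength of the results from Section \ref{sec:pencils}: positivity of intersection with the divisors $\ell_\infty$ and $C$, Lemma \ref{lma:geodclass} restricting admissible asymptotic classes to multiples of $\mathbf{e}_0$, and the foliation property of Theorem \ref{thm:CompatibleFibration} which forces the relevant moduli space to be a smoothly parametrised family of embedded planes. Once this analytical bookkeeping is in place, the non-vanishing of $HF^*(L,L)$ and hence the Hamiltonian non-displaceability of $L$ inside $V$ follow.
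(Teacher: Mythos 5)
There is a genuine gap here, and the paper's own argument is far more elementary than the route you sketch. The objects you propose to count are not admissible inputs for the machinery you invoke: a smooth fibre $f_J^{-1}(s)$, $s\in\sigma$, meets $V=\CP^2\setminus(\ell_\infty\cup C)$ in a \emph{cylinder} (the conic punctured at $q_1,q_2\in\ell_\infty$), so cutting it along $L$ produces two \emph{punctured} half-cylinders, each with one boundary circle on $L$ and one end escaping to $\ell_\infty$. These are not compact pseudoholomorphic discs with boundary on $L$ inside $V$, and they cannot be fed into the standard PSS or open--closed map computing $HF^*(L,L)$ in $V$ without building a substantially more elaborate relative framework (curves with prescribed intersection or asymptotics along the divisor). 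Moreover, before you can even assert that displaceability forces $HF^*(L,L)=0$, you must show the Floer complex is unobstructed, i.e.\ rule out bubbling of nonconstant $J$-holomorphic discs on $L$; vanishing of the Maslov class alone does not give this for free, since Maslov-zero discs can still contribute a nontrivial obstruction term.

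The point you are missing is that ruling out those discs is both necessary and, once done, already finishes the proof without any Floer cohomology. Since $\mu_L\equiv 0$ on $H_2(V,L)$ by Corollary \ref{cor:conditions}, a somewhere-injective disc of Maslov index zero has expected dimension $-1$, so none exist for generic $J$; Lazzarini's decomposition result then shows there are \emph{no} nonconstant $J$-holomorphic discs with boundary on $L$ at all. At that stage Chekanov's theorem bounding the displacement energy from below by the minimal disc/sphere area applies directly (the complement of the positive divisors being a tame symplectic manifold) and yields non-displaceability. This is the paper's proof. Your foliation-by-half-conics input is not needed for this statement, and as written it does not close the argument.
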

\begin{proof}
By Corollary \ref{cor:conditions} the Maslov class of the torus $L$ vanishes when considered inside $V.$

Since the expected dimension of a pseudoholomorphic disc of Maslov index zero is equal to $-1,$ there are no such somewhere injective discs when the almost complex structure is generic; this follows by a standard argument, see e.g.~\cite{McDuff:J}. Then, by Lazzarini's result \cite{Lazzarini:RelativeFrames}, it is the case that the space of nonconstant pseudoholomorphic discs with boundary on $L$ is \emph{empty} for such a generic almost complex structure. Indeed, otherwise we would be able to extract a somewhere injective pseudoholomorphic disc by alluding to Lazzarini's result.

The Hamiltonian non-displaceability is then a direct consequence of the main result of \cite{Chekanov:LagrangianIntersections}. In order to see that this result can be applied, we note that the complement of a union of positive pseudoholomorphic divisors in a closed symplectic manifold is a noncompact but tame symplectic manifold.
\end{proof}

\subsection{The proof of Theorem \ref{thm:CompatibleFibration}}
\label{pf:CompatibleFibration}

We start by performing a Hamiltonian isotopy in order to disjoin $L$ from the standard nodal conic; this is possible by Lemma \ref{lma:DisjoinNodal}.

Now we assume that $J_\infty$ is chosen generically as in Lemma \ref{lma:planecyl}, so that there are no pseudoholomorphic planes inside $\CP^2 \setminus L$ of negative index. We may assume that the nodal conic $\ell_1 \cup \ell_2$ tangent to $C$ at $\{q_1,q_2\} \subset \ell_\infty$ is $J_\infty$-holomorphic. 
\begin{prp}
\label{prp:characterisation}
Consider the basis $\langle \mathbf{e}_0,\mathbf{e}_1\rangle = H_1(L)$ as in Lemma \ref{lma:goodbasis}. Any broken conic tangent to $C$ at $q_i \in \ell_\infty,$ $i=1,2,$ which appears as the limit of pseudoholomorphic conics when stretching the neck satisfies the following: its top level consists of precisely two planes $A_i \subset \CP^2 \setminus L,$ $i=1,2,$ where
\begin{itemize}
\item $A_i \cap \ell_\infty=\{q_i\},$ where it satisfies the tangency $v_i \subset T_{q_i}\CP^2;$ and
\item $A_i$ is asymptotic to a geodesic on $L$ in the class $(-1)^{i+1} \mathbf{e}_0$ (after a suitable choice of sign for the generator $\mathbf{e}_0$);
\end{itemize}
while the remaining components consists of a single cylinder contained inside $T^*L.$ Moreover, for the above choice of sign for $\mathbf{e}_0,$ the disc $(D,\partial D) \to (V,L)$ with $[\partial D]=\mathbf{e}_0$ is of Maslov index zero and satisfies $D \bullet \ell_i=(-1)^{i}.$ See Figure \ref{fig:breaking}.
\end{prp}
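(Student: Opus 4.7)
My plan is to combine the structural constraints on broken conics supplied by Lemma \ref{lma:planecyl} with a direct intersection calculation against the two lines $\ell_1, \ell_2$ of the standard nodal conic. Lemma \ref{lma:planecyl} ensures that the top level contains exactly two transversely cut out planes $A_1, A_2$ of index one, each passing through $\ell_\infty$ under the prescribed tangency condition. By positivity of intersection with the $J_\infty$-holomorphic line $\ell_\infty$, combined with the total count $[C]\bullet[\ell_\infty]=2$ for a conic, each $A_i$ meets $\ell_\infty$ transversely at precisely one of the two points $q_i$, being tangent to $\ell_i$ there. Lemma \ref{lma:geodclass} then forces the single asymptotic of each $A_i$ to lie in a class $k_i\mathbf{e}_0$ with $k_i\in\Z\setminus\{0\}$.

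The heart of the proof will be the following closed-cycle computation. I cap each plane $A_i$ by a chain $B_i$ on $L$ whose boundary cancels the asymptotic of $A_i$, producing a closed cycle $A_i\cup B_i\in H_2(\CP^2)$; since $A_i\bullet\ell_\infty=1$ transversely, this cycle lies in the class $[\ell_\infty]$. Let $D$ be a fixed reference chain in $V$ with $[\partial D]=\mathbf{e}_0$ and write $\delta_j\coloneqq D\bullet\ell_j$. Intersecting $A_i\cup B_i$ with each $\ell_j$ produces a linear relation of the form
$$ A_i\bullet\ell_j + \sigma k_i\delta_j \;=\; [\ell_\infty]\bullet[\ell_j]\;=\;1, \qquad j=1,2,$$
for a sign $\sigma\in\{\pm 1\}$ determined by the orientation convention. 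Positivity of intersection gives $A_i\bullet\ell_i\geq 2$ at $q_i$ (due to the tangency) and $A_i\bullet\ell_j\geq 0$ for $j\neq i$. The homological identity $[\ell_1]+[\ell_2]=2[\ell_\infty]$ combined with $D\cap\ell_\infty=\emptyset$ forces $\delta_2=-\delta_1$, and these constraints together pin down $\sigma k_i\delta_i=-1$. In particular both $k_i$ and $\delta_i$ lie in $\{\pm 1\}$: the asymptotic orbits are simply covered, the tangencies at $q_i$ are simple, and $A_i$ meets neither line elsewhere. Comparing the cases $i=1,2$ gives $k_2=-k_1$; fixing the sign of $\mathbf{e}_0$ so that $\delta_i=(-1)^i$ then yields the asserted $k_i=(-1)^{i+1}$ and $D\bullet\ell_i=(-1)^i$.

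The remaining components are analyzed as follows. The asymptotics of the two planes lie in the two distinct Bott manifolds $\Gamma_{+\mathbf{e}_0}$ and $\Gamma_{-\mathbf{e}_0}$, so $A_1$ and $A_2$ cannot match their punctures directly. A connected genus-zero building containing these two planes must therefore include at least one cylinder whose punctures bridge the orbits $\pm\mathbf{e}_0$, and by Lemma \ref{lma:geodclass} any such cylinder has asymptotic classes in $\Z\cdot\mathbf{e}_0$. The minimal such configuration is a single cylinder in $T^*L$ with its two positive punctures at the orbits $\pm\mathbf{e}_0$. Ruling out additional top-level cylinders (which Lemma \ref{lma:planecyl} permits a priori) and additional lower-level components will be done by a dimension count against the two-dimensional moduli of conics satisfying the two tangency constraints: for generic $J_\infty$ (as in Lemma \ref{lma:posindex}), only the generic expected configuration appears in the SFT limit.

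Finally, the Maslov-index vanishing of $D$ follows from the Fredholm-index formula for $A_i$. The relative first Chern class of $A_i$ is pinned down by the splitting of $c_1(T\CP^2)[\ell_\infty]=3$ between $A_i$ and its capping disc, taking into account the correction coming from the transverse intersection with $\ell_\infty$ at $q_i$; with the Morse--Bott Conley--Zehnder contribution at the simply covered Reeb orbit fixed by the flat metric on $\T^2$, the requirement that the constrained index equal one forces $\mu_L^{\C^2}(\mathbf{e}_0)=0$, and since $D\bullet\ell_\infty=0$ we also conclude $\mu_L(D)=\mu_L^{\C^2}(\mathbf{e}_0)=0$. The step I expect to be most delicate is the exclusion of extraneous lower-level components; by contrast, the closed-cycle intersection computation that simultaneously determines the top-level asymptotic classes and the formula $D\bullet\ell_i=(-1)^i$ comes out remarkably cleanly.
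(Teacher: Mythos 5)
Your reading of Lemma \ref{lma:planecyl} contains the gap. That lemma only asserts that the top level of the broken conic consists of precisely two planes of index one plus possibly some cylinders of index zero; it does \emph{not} assert that both planes meet $\ell_\infty.$ A priori one of the two index-one planes could be contained in $\CP^2 \setminus (\ell_\infty \cup L)$ (a ``Maslov-two disc'' on $L$), with the second tangency condition carried by a different component. Ruling this out is the heart of the proposition and is exactly where the hypotheses of Theorem \ref{thm:main} enter --- your proposal never invokes either Condition (1) or Condition (2), which by itself signals that something essential is missing. In the paper, under Condition (2) such a plane is excluded immediately because it would be a positive-area disc of Maslov index two; under Condition (1) the argument is substantially harder: by Lemma \ref{lma:geodclass} the plane is asymptotic to $\pm\mathbf{e}_0$ and disjoint from $C,$ its moduli space is shown to be a compact $S^1$-family via positivity of intersection and Lemma \ref{lma:goodbasis}, and assembling this family produces a null-homology of $m[L],$ $m\neq 0,$ disjoint from $C,$ contradicting homological essentiality. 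Without this step your subsequent intersection bookkeeping (which presupposes that $A_1,A_2$ are the only components meeting $\ell_\infty$ and that no other plane is present) does not get off the ground.

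A secondary weakness: you propose to exclude extra top-level cylinders and lower-level components by a ``dimension count against the two-dimensional moduli of conics'' for generic $J_\infty.$ In this Morse--Bott setting that is not a valid shortcut --- the paper explicitly warns that one cannot assume every broken configuration is an honest transversely-cut-out limit, and instead kills the extra top-level components by a symplectic area argument: once the two planes are known to be asymptotic to $\pm\mathbf{e}_0,$ their areas already exhaust the area of the conic, so any remaining nonconstant top-level component would have total area $\int_{\mathbf{e}_0}\lambda_{\OP{std}}-\int_{\mathbf{e}_0}\lambda_{\OP{std}}=0,$ which is impossible. On the positive side, your closed-cycle computation capping $A_i$ by $-k_i$ copies of the reference disc and intersecting with $\ell_1,\ell_2$ is essentially the paper's argument and correctly pins down $k_i=(-1)^{i+1}$ and $D\bullet\ell_i=(-1)^i$ simultaneously, as does your concluding index calculation for the Maslov class; but these are the easy parts once the structure of the top level has been established.
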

\begin{proof}
 
We begin by noting that a broken conic intersects the smooth conic $C$ precisely at the two points $q_i\in \ell_\infty,$ $i=1,2$; this is a consequence of positivity of intersection \cite{McDuff:LocalBehaviour}.

First we argue that a broken conic does not contain a plane that is disjoint from $\ell_\infty.$ Under the assumption of Condition (2) of Theorem \ref{thm:main} this is immediate from Lemma \ref{lma:planecyl}.

Under the assumption of Condition (1) we must work a bit harder: By Lemma \ref{lma:geodclass} we can conclude that the plane appearing in the broken conic must be asymptotic to a geodesic in the class $k\mathbf{e}_0,$ $k \neq 0,$ and that it is disjoint from $C.$ In fact, since the plane is of index one by Lemma \ref{lma:planecyl}, we must have $k=\pm 1.$ Positivity of intersection \cite{McDuff:LocalBehaviour} together with Lemma \ref{lma:goodbasis} now implies that the moduli space containing this plane is compact, and hence homeomorphic to $S^1$; if the family of planes break in the sense of SFT-compactness \cite{Bourgeois:Compactness}, then we can readily extract a different plane in the compactification which is asymptotic to an orbit in the homology class $k'\mathbf{e}_0+l'\mathbf{e}_1$ with $l' \neq 0.$ (The non-zero intersection number with $C$ is then a contradiction.) We then argue as in \cite{Dimitroglou:Extremal}: by alluding to automatic transversality \cite{Wendl:Automatic} we can then use this moduli space to construct continuous chain inside $\CP^2 \setminus \ell_\infty$ with boundary equal to $m[L]$ for some $m \neq 0,$ i.e.~yielding a null-homology of $m[L].$ By positivity of intersection, this chain is moreover disjoint from $C,$ which is in contradiction with Condition (1).

By the above, together with Lemmas \ref{lma:planecyl} and \ref{lma:geodclass}, we thus conclude that the broken conic consists of precisely the two planes $A_i,$ $i=1,2,$ in its top level where $A_i \cap \ell_\infty=q_i.$

We then show that any disc $(D,\partial D) \to (B^4,L)$ with boundary in homology class $[\partial D]=\mathbf{e}_0 \in H_1(L)$ must satisfy $D \bullet \ell_i = (-1)^i,$ after an appropriate choice of sign for the generator. Indeed, we can complete the plane $A_i$ intersecting $q_i$ with a single puncture asymptotic to $k\mathbf{e}_0$ to a cycle in the class of a line by adding a $-k$ number of such discs, and then use the fact that
$$A_i \bullet \ell_j = \begin{cases} 2, & i=j,\\
0, & i \neq j,
\end{cases}
$$
where again we have alluded to positivity of intersection. From this we also conclude that the plane intersecting $q_i$ must be primitive and, moreover, asymptotic to a geodesic in class $(-1)^{i+1}\mathbf{e}_0.$

It now follows by a topological argument that the remaining top components, i.e.~those which are disjoint from $\ell_\infty,$ must have total symplectic area equal to
$$ \int_{\mathbf{e}_0} \lambda_{\OP{std}}-\int_{\mathbf{e}_0}\lambda_{\OP{std}}=0$$
Hence, there can be no top level components disjoint from $\ell_\infty$ and, again by positivity of intersection, the top level component consists of precisely the two planes $A_i,$ $i=1,2.$

 A final index calculation shows that $\mathbf{e}_0$ is of Maslov index zero. 
\end{proof}

We now proceed with the proof of Theorem \ref{thm:CompatibleFibration}.

(1): We run a neck stretching with $J_\infty=i$ near $\ell_\infty \cup C_{\OP{nodal}} \cup C.$ The SFT compactness theorem \cite{Bourgeois:Compactness}, \cite{Cieliebak:Compactness}, together with Theorem \ref{thm:Lefschetz} allows us to extract \emph{broken} pseudoholomorphic conics as limits from the moduli spaces $\mathcal{M}_{J_\tau}(v_1,v_2).$ In view of Proposition \ref{prp:characterisation} this conic consists of precisely two planes $A_i,$ $i=1,2,$ in the top level, where the plane $A_i$ satisfies the tangency $v_i$ at $q_i \in \ell_\infty.$ Here we must use a generic almost complex structure $J_\infty.$

The rest of the argument follows the same ideas as \cite[Section 5.4]{Dimitroglou:Isotopy}; we need to vary the components in the top level of the obtained broken conic in order to produce a whole one-parameter family of broken conics.

 By the non-existence of planes of Maslov index two inside $V \setminus L$ that are asymptotic to $L,$ as follows from Proposition \ref{prp:characterisation} (also see Lemma \ref{lma:goodbasis}), we conclude that the planes $A_i$ are contained inside compact components of its moduli spaces of planes satisfying a tangency to $C$ at $\{q_1,q_2\} \subset \ell_\infty.$ Namely, a broken configuration arising as the SFT limit of such planes would consist of at least one plane contained in the top level $\CP^2 \setminus L$ and which is disjoint from $\ell_\infty$ and $C$ by positivity of intersection. However, since the only geodesic on $L$ which is contractible inside $V$ has vanishing Maslov class by Proposition \ref{prp:characterisation}, this contradicts the genericity of the almost complex structure chosen.

Since automatic transversality is satisfied by the main result of \cite{Wendl:Automatic}, the component of the moduli space of planes tangent to $C$ at $q_i$ that contains the plane $A_i$ is diffeomorphic to $S^1$ for each $i=1,2.$ In order to achieve automatic transversality with the tangency condition, we must argue as in the proof of Lemma \ref{lma:auttrans} (but for one tangency condition instead of two). More precisely, we need to infer that the infinitesimal variations of solutions that vanish to order two at the point $q_i,$ must vanishes to order precisely two there, while vanishing nowhere else (including asymptotically at the puncture). This is a consequence of positivity of intersection together with the computation of the asymptotic intersection number made in the subsequent paragraph.

The argument from \cite[Lemma 5.13]{Dimitroglou:Isotopy}, based upon the computations of the asymptotic winding of the eigenvectors from \cite{Hind:SymplecticEmbeddings}, shows that the asymptotic evaluation map is a diffeomorphism onto the space of orbits. The computation is the same in this case, despite the fact that the Fredholm index of the plane $A_i$ is equal to five (instead of one). To that end, we use the fact that the index of this plane again is equal to one, when we consider it together with the tangency constraint at $q_i \in \ell_\infty.$

Now we recall how to exclude two planes having the same asymptotic orbit by the aforementioned calculation: Two different planes asymptotic to the same orbit have a new intersection point after a small holomorphic perturbation (the holomorphic perturbation exists in view of the aforementioned automatic transversality result). In this way, two planes with the same asymptotic and tangency at $q_i$ must coincide; otherwise we could readily construct two cycles in degree two that intersect with algebraic intersection number at least five by assembling suitable pseudoholomorphic planes asymptotic to the torus and both being tangent to $C$ at $q_i,$ $i=1,2.$

Finally, we can use the smoothing procedure from \cite[Section 5.3]{Dimitroglou:Isotopy} in order to assemble the planes in these components of the moduli space to form a $S^1$-family of closed embedded pseudoholomorphic conics, each intersecting $L$ in a closed curve, and all being tangent to $v_i$ at $q_i.$ The claim that the Maslov index of this curve is zero when considered on $L,$ is an immediate consequence of Proposition \ref{prp:characterisation}.

The existence of the global symplectic conic foliation is finally a consequence of Theorem \ref{thm:Lefschetz}.

 The claim about the winding number of $\sigma$ around the projection of the fibre $C$ is an immediate consequence of either of the conditions on $L$ made in the assumptions of Theorem \ref{thm:main}. 

(2): This is a straight forward consequence of the normalisation carried out by Theorem \ref{thm:normalise}.

(3): One can construct an embedded curve $\gamma \subset \C$ which connects $0$ to $1$ and which
\begin{itemize}
\item coincides with $[0,1] \subset \C$ near its boundary,
\item is isotopic to the latter standard embedding through embeddings of the same type, and
\item either is disjoint from $\sigma$ (when the torus is in Clifford position) or intersects it transversely in a single point (when the torus is in Chekanov position).
\end{itemize}
An application of Theorem \ref{thm:normalisepath} finally normalises the conics above the path $\gamma.$

In order to make the fibration standard inside a whole neighbourhood of $f^{-1}([0,1])$ we perform a reiteration of the entire neck stretching argument, while using almost complex structures which are standard in the aforementioned neighbourhood. (Note that the neighbourhood has been made disjoint from $L$ by the previous application of Theorem \ref{thm:normalisepath}.)
\qed

\section{Liouville forms and inflation}

Here we construct a family of Liouville forms on the complement of the holomorphic divisor $D_0 \cup D_1 \cup D_2 \cup D_3 \subset \CP^2,$ where $D_0=\ell_\infty,$ $D_1=\ell_1,$ $D_2=\ell_2,$ and $D_3=C.$ The main use of these different Liouville forms is to deform Lagrangian isotopies to Hamiltonian isotopies, by applications of the corresponding Liouville flows. This process is called inflation, and is described in Subsection \ref{sec:inflation} below. As a part of the construction, we also obtain the natural one-parameter family $\lambda_r$ of Liouville forms on $(V=\CP^2 \setminus (D_0 \cup D_3),\omega_{\OP{FS}})$ that we study in Subsection \ref{sec:cp2liouville} below.

The Fubini--Study K\"{a}hler form normalised so that $\int_{\ell_\infty}\omega_{\OP{FS}}=\pi$ can be expressed as
\begin{eqnarray*}
\lefteqn{ \omega_{\OP{FS}} \coloneqq \frac{1}{4} dd^c\log{(1/(1+\|z_1\|^2+\|z_2\|^2))}}\\
& = & -\frac{i}{2} \partial\overline{\partial}\log{(1/(1+\|z_1\|^2+\|z_2\|^2))} = \frac{i}{2} \partial\overline{\partial}\log{(1+\|z_1\|^2+\|z_2\|^2)} \\
& = & \frac{i}{2(1+\|z_1\|^2+\|z_2\|^2)}\sum_{i=1}^2dz_i \wedge d\overline{z}_i + \frac{-i}{2(1+\|z_1\|^2+\|z_2\|^2)^2}\sum_{i,j}\overline{z}_iz_jdz_i \wedge d\overline{z}_j
\end{eqnarray*}
in the affine chart $\C^2 = \CP^2 \setminus \ell_\infty.$ Here we have used the notation $d^cf(\cdot)\coloneqq df(i\cdot).$

Consider the anti-tautological line bundle $E \coloneqq \OP{Tot}(\mathcal{O}(1)) \to \CP^2$ with sheaf of holomorphic sections $\mathcal{O}(1).$ In other words, the line bundle $E^{\otimes 3}=\det(T\CP^2)$ is the anti-canonical line bundle with sheaf of holomorphic sections $\mathcal{O}(3)$ on $\CP^2.$ We endow $E$ with the Hermitian metric determined by the following condition: for the standard trivialisation over the affine chart $\C^2 = \CP^2 \setminus \ell_\infty$ this metric takes the value
$$\|s\|_E^2=\frac{\|s\|^2}{1+\|z_1\|^2+\|z_2\|^2}$$
on the locally defined section $s \colon \C^2 \to \C.$ We also consider the induced metrics 
$$ \|s\|_{E^{\otimes{n}}}^2=\frac{\|s\|^2}{(1+\|z_1\|^2+\|z_2\|^2)^n}$$
on the $n$:th tensor power $E^{\otimes{n}}$ of the line-bundle.

Take holomorphic sections $s_{\ell_\infty}$ and $s_C$ of $E$ and $E^{\otimes 2}$ respectively, which in the trivialisation over $\C^2 = \CP^2 \setminus \ell_\infty$ are given by
$$ s_{\ell_\infty}(z_1,z_2)=1 \:\:\:\: \text{and} \:\:\:\: s_C(z_1,z_2)=z_1z_2-1. $$
Similarly, there are sections $s_{\ell_i}$ of $E$ which in the trivialisation over the affine coordinate chart $\C^2 = \CP^2 \setminus \ell_\infty$ takes the form $s_{\ell_1}=z_2,$ while $s_{\ell_2}=z_1.$ Note that $(s_{\ell_i})=\ell_i.$

Using the above sections and Hermitian metrics we are now ready define a family of Liouville forms on the complement $\CP^2 \setminus (\ell_\infty \cup C \cup C_{\OP{nodal}})$ of a particular singular divisor. Let $\mathbf{r}=(r_0,\ldots,r_3) \in \R^4$ be numbers satisfying $r_0 + \ldots + r_3=1.$ Using the formula for the curvature we compute
\begin{eqnarray*}
\omega_{\OP{FS}} =\frac{1}{4}dd^c\left(r_0\log{(\|s_{\ell_\infty}\|_{E}^2)}+ r_1\log{(\|s_{\ell_1}\|_{E}^2)} + r_2\log{(\|s_{\ell_2}\|_{E}^2)} + r_3\frac{1}{2}\log{(\|s_C\|_{E^{\otimes 2}}^2)}\right)
\end{eqnarray*}
and hence
\begin{eqnarray*}
\lambda_{\mathbf{r}} \coloneqq \frac{1}{4}d^c\left(r_0\log{(\|s_{\ell_\infty}\|_{E}^2)}+ r_1\log{(\|s_{\ell_1}\|_{E}^2)} + r_2\log{(\|s_{\ell_2}\|_{E}^2)} + r_3\frac{1}{2}\log{(\|s_C\|_{E^{\otimes 2}}^2)}\right)
\end{eqnarray*}
 is a family of Liouville forms defined in the complement of the divisor $\bigcup_{\{i:\: r_i \neq 0\}} D_i.$ 

\subsection{A family of Liouville forms on $V=\CP^2 \setminus(\ell_\infty \cup C)$}

\label{sec:cp2liouville}

In this subsection we endow $(V,\omega_{\OP{FS}})$ with a family of Liouville forms. Also compare to the construction in Section \ref{sec:Liouville}, where the Liouville form is more carefully adapted to the Lagrangian fibrations considered. Setting $r_1=r_2=0,$ $ r_0=1-r,$ $ r_3=r$ in the construction above, we obtain a family
$$\lambda_r\coloneqq(1-r)\frac{1}{4}d^c\log{(\|s_{\ell_\infty}\|_{E}^2)}+r\frac{1}{8}d^c\log{(\|s_C\|_{E^{\otimes 2}}^2)}, \:\: r \in [0,1] ,$$
of Liouville forms for $\omega_{\OP{FS}}$ defined on all of $V.$ Denote by
$$\phi^t_{\lambda_r} \colon (V,\omega_{\OP{FS}}) \to (V,e^{-t}\omega_{\OP{FS}})$$
the corresponding Liouville flow.

\begin{rmk}
\label{rmk:Liouville}
It makes sense to also consider the limit cases $\lambda_r$ with $r=0$ and $r=1.$
\begin{enumerate}
\item The Liouville form $\lambda_0$ is identified with the standard radial Liouville form
is the pull-back of
$$ \lambda_{\OP{std}}=\|\widetilde{z}_1\|^2/2\,d\widetilde{\theta}_1+\|\widetilde{z}_2\|^2/2\,d\widetilde{\theta}_2=i(\widetilde{z}_1d\overline{\widetilde{z}}_1-\overline{\widetilde{z}}_1d\widetilde{z}_1)/4+i(\widetilde{z}_2d\overline{\widetilde{z}}_2-\overline{\widetilde{z}}_2d\widetilde{z}_2)/4$$
on $(B^4,\omega_0=d\lambda_{\OP{std}})$
under the symplectomorphism
$\varphi.$ By Gromov's theorem \cite{Gromov:Pseudo} there are no exact closed Lagrangian embeddings with respect to this Liouville form.
\item The Liouville $\lambda_{1}$ is defined on $(\CP^2 \setminus C,\omega_{\OP{FS}})$ and vanishes along the Lagrangian embedding
$$ \overline{\{z_2=-\overline{z_1}\}} \subset (\CP^2,\omega_{\OP{FS}})$$
of $\RP^2,$ which thus in particular is exact. Note that $\CP^2 \setminus C$ is symplectomorphic to the disc cotangent bundle of $\RP^2.$
\end{enumerate}
\end{rmk}

Different Lagrangian fibres of $\Pi_s$ become exact for different Liouville forms in the family $\lambda_r.$ Here we give a description, using the conventions from Section \ref{sec:ActionProperties} for the computation of the symplectic action. 
\begin{lma}
\label{lma:LiouvilleCP2}
\begin{enumerate}
\item The Lagrangian fibre $\Pi_s^{-1}(0,u_2)$ is exact and, in the case of the immersed sphere $u_2=1$ even strongly exact, precisely for the Liouville form $\lambda_r$ with parameter $ r=(2/\pi)A_s(0,u_2)$ in the above family (note that $ (2/\pi)A_s(0,u_2) \in (0,1)$); 
\item The backwards Liouville flow $\phi^{-t}_{\lambda_r},$ $t \ge 0,$ is complete on $V$ for all $ r \in (0,1);$ and
\item For any $A \in (0,\pi/2),$ the backwards Liouville flow $\phi^{-t}_{\lambda_{ (2/\pi)A}}$ applied to the torus fibre $\Pi^{-1}_s(u_1,u_2)$ rescales both the quantities
$$ \int_{\mathbf{e}_0} \lambda_{\OP{std}}=\pi\cdot u_1 \:\:\:\: \text{and} \:\:\:\: \int_{\mathbf{e}_1} \lambda_{\OP{std}}-A = A_s(u_1,u_2)-A$$
by multiplication with $e^{-t},$ where the function $A_s$ is as given in Lemma \ref{lma:action}. (Here we have used the flow to identify the basis in homology.)
\end{enumerate}
\end{lma}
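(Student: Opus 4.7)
The core of the whole lemma is a single identity. My first step is to expand
\[
\lambda_r - \lambda_0 = \tfrac{r}{8}\,d^c\log\|s_C\|^2_{E^{\otimes 2}} - \tfrac{r}{4}\,d^c\log\|s_{\ell_\infty}\|^2_E,
\]
cancel the $-\log(1+\|z\|^2)$ contributions coming from the Hermitian metrics, and thereby recognise $\lambda_r - \lambda_0 = \tfrac{r}{8}\,d^c \log|z_1 z_2 - 1|^2$ as a closed one-form on $V.$ Using a local transverse coordinate $w$ to $C$ and the standard computation $d^c\log|w|^2 = -2\,d\theta_w$ (with the paper's convention $d^c f = df \circ J$), the period of $\tfrac{1}{8}d^c\log|z_1z_2-1|^2$ around a loop linking $C$ once positively is $-\pi/2.$ Thus for any cycle $\gamma \subset V$ we have $\int_\gamma (\lambda_r - \lambda_0) = -\tfrac{r\pi}{2}\,\mathrm{lk}(\gamma, C),$ which in particular vanishes on any cycle null-homologous in $V.$ This single computation drives parts (1) and (3).

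For part (1), I would use Lemma \ref{lma:basis0} and Lemma \ref{lma:action} to compute, for a weakly exact fibre $\Pi_s^{-1}(0,u_2)$, that $\int_{\mathbf{e}_0}\lambda_0 = 0$ and $\int_{\mathbf{e}_1}\lambda_0 = A_s(0, u_2)$, while Lemma \ref{lma:homologybasis} gives $\mathrm{lk}(\mathbf{e}_1, C) = D\bullet C = +1$ (and the linking of $\mathbf{e}_0$ with $C$ is zero since $\mathbf{e}_0$ is null-homologous in $V$). Combined with the identity from the opening paragraph this yields $\int_{\mathbf{e}_0}\lambda_r = 0$ and $\int_{\mathbf{e}_1}\lambda_r = A_s(0, u_2) - r\pi/2$, so both cohomology classes on the torus vanish exactly when $r = (2/\pi) A_s(0, u_2).$ For the Whitney sphere ($u_2 = 1$) the same formula governs the primitive of $\iota^*\lambda_r$ evaluated on an arc in $S^2$ connecting the two preimages of the double point: such an arc is, in nearby torus fibres, a representative of $\mathbf{e}_1$, so by continuity and Lemma \ref{lma:whitneyaction} the $\lambda_0$-action difference is $s$, and the same computation $s - r\pi/2 = 0$ delivers the strong-exactness criterion.

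For part (2), I would compute the Liouville vector field explicitly via the K\"ahler structure. Writing $\lambda_r = \tfrac{1}{4}d^c F_r$ with
\[
F_r \coloneqq (1-r)\log\|s_{\ell_\infty}\|^2_E + \tfrac{r}{2}\log\|s_C\|^2_{E^{\otimes 2}},
\]
the K\"ahler identities $\omega(X,Y) = g(JX, Y)$ and $d^c F_r(Y) = dF_r(JY) = -g(J\nabla_g F_r, Y)$ reduce the defining equation $\iota_{\zeta_r}\omega = \lambda_r$ to $\zeta_r = -\tfrac{1}{4}\nabla_g F_r.$ Hence the backwards Liouville flow is, up to a positive rescaling of time, the gradient flow of $F_r$, and $F_r$ is strictly increasing along it. Because $r \in (0,1)$ both summands of $F_r$ contribute nontrivially, so $F_r$ is smooth on $V$, bounded above there, and tends to $-\infty$ along each of the two components $\ell_\infty$ and $C$ of $\partial V$; equivalently $-F_r$ is proper on $V.$ Backwards trajectories are thus confined to super-level sets $\{F_r \ge F_r(x_0)\}$ which are compact in $V$, ruling out finite-time blow-up and proving completeness. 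The subtlety I expect to handle carefully is verifying properness at \emph{both} components of the divisor, which is precisely why the strict inequalities $0 < r < 1$ appear in the hypothesis.

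For part (3), the defining relation $(\phi^{-t}_{\lambda_r})^*\lambda_r = e^{-t}\lambda_r$ tells me that the $\lambda_r$-action on $L_t \coloneqq \phi^{-t}_{\lambda_r}(L)$ rescales by $e^{-t}$, but the $\lambda_0$-action does not. Setting $\kappa_i \coloneqq \int_{\mathbf{e}_i}(\lambda_r - \lambda_0),$ which is purely topological since $\lambda_r - \lambda_0$ is closed, subtracting and rescaling gives
\[
\int_{\mathbf{e}_i}\lambda_0|_{TL_t} = e^{-t}\int_{\mathbf{e}_i}\lambda_0|_{TL} + (e^{-t}-1)\kappa_i.
\]
Specialising to $r = (2/\pi)A$, the opening paragraph yields $\kappa_0 = 0$ and $\kappa_1 = -A,$ which immediately produces the two rescaling identities in the statement. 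The shift by $-A$ built into the second quantity is tailored precisely so that the additive contribution $(e^{-t}-1)\kappa_1 = (1 - e^{-t})A$ is absorbed, leaving a clean $e^{-t}$-rescaling of $A_s(u_1,u_2) - A.$
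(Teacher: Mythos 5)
Your proposal is correct and follows essentially the same route as the paper: your period identity $\int_\gamma(\lambda_r-\lambda_0)=-\tfrac{r\pi}{2}\,\mathrm{lk}(\gamma,C)$ is exactly the paper's ``crucial identities'' for part (1), and your gradient-flow/properness argument for part (2) is the paper's argument with the compactness of super-level sets spelled out. For part (3) the paper instead compares $L$ to a reference exact fibre via a connecting chain, whereas you compute the flux directly from $(\phi^{-t}_{\lambda_r})^*\lambda_r=e^{-t}\lambda_r$ together with $\kappa_0=0$, $\kappa_1=-A$; this is only a repackaging of the same underlying fact, and arguably the cleaner one.
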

\begin{proof}
(1): This follows from an explicit computation of the symplectic action of $\mathbf{e}_1 \in H_1(L).$ The crucial identities are
\begin{align*}
& \int_{\mathbf{e}_1} (1/4)d^c\log(\|s_{\ell_\infty}\|^2_E)=\int_{\mathbf{e}_1} \lambda_{\OP{std}}=A_s(u_1,u_2),\\
& \int_{\mathbf{e}_1} (1/8)d^c\log(\|s_C\|^2_E)=A_s(u_1,u_2)-\pi/2.
\end{align*}

(2): The Liouville vector field is equal to the gradient
$$\nabla\left(-(1-r)\frac{1}{4}\log{(\|s_{\ell_\infty}\|_E^2)}-r\frac{1}{8}\log{(\|s_C\|_{E^{\otimes 2}}^2)}\right).$$
Since this function has no critical points outside of a compact subset of $\CP^2 \setminus (\ell_\infty \cup C),$ and since the function moreover blows up along the divisor, the completeness { is now immediate}.

(3): The symplectic action of $\mathbf{e}_0$ coincides with the symplectic area of a suitable disc living inside $V;$ its rescaling properties are thus immediate from the basic property that the Liouville flow rescales the symplectic form by $e^t.$

For the rescaling properties of the symplectic action of $\mathbf{e}_1,$ one can e.g.~use the fact that the fibre $\Pi_s^{-1}(0,u)$ with $A_s(0,u)=A$ is \emph{exact} for the Liouville form $\lambda_{(3/2)A}.$ Hence, the corresponding symplectic actions for this torus, as well as its images under the Liouville flow, vanishes.

For an arbitrary torus $L$ it is now a simple matter of finding a two-dimensional chain with boundary on $L \cup \Pi_s^{-1}(0,u),$ where the boundary on $L$ moreover lives in the class $\mathbf{e}_1$ (and whose symplectic area hence coincides with the symplectic action of $\mathbf{e}_1$ on $L$).
\end{proof}

In the proof of Proposition \ref{prp:LagrangianIsotopy} we will need to use the fact that the backwards Liouville flow corresponding to $\lambda_r$ contracts $V$ into any neighbourhood of the subset $\{\|\widetilde{z}_1\|^2-\|\widetilde{z}_2\|^2=0\} \cap V.$ This is a consequence of the following.
\begin{lma}
\label{lma:LiouvilleComputation}
The Liouville vector field corresponding to $\lambda_r$ for any $r \in [0,1]$ evaluates positively (resp. negatively) on the exterior derivative
$$ d(\|\widetilde{z}_1\|^2-\|\widetilde{z}_2\|^2)=d\left(\frac{\|z_1\|^2-\|z_2\|^2}{1+\|z_1\|^2+\|z_2\|^2}\right)$$
inside the subset $\{\|\widetilde{z}_1\|^2-\|\widetilde{z}_2\|^2>0\}$ (resp. $\{\|\widetilde{z}_1\|^2-\|\widetilde{z}_2\|^2<0\}$) of $\CP^2 \setminus (\ell_\infty \cup C).$
It follows that the same is true also for the Liouville flows corresponding to $\lambda_{\mathbf{r}}$ with $\mathbf{r}=(r-\eta,\eta,\eta,1-r-\eta) \in \R_{>0}^4.$
\end{lma}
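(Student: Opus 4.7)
The plan is to exploit the linearity of the Liouville vector field in its primitive one-form. Writing the general form as $\lambda_{\mathbf{r}} = \sum_{i=0}^{3} r_i \lambda^{(i)}$, where $\lambda^{(i)}$ is the basic Liouville primitive associated to the divisor $D_i$ (so $\lambda^{(i)} = \frac{1}{4}d^c\log\|s_{D_i}\|^2$ for $i=0,1,2$ and an additional factor $\frac{1}{2}$ for $i=3$), the defining relation $\iota_{\zeta_\lambda}\omega_{\OP{FS}} = \lambda$ yields $\zeta_{\lambda_{\mathbf{r}}} = \sum_i r_i \zeta^{(i)}$ and hence
\[
df(\zeta_{\lambda_{\mathbf{r}}}) = \sum_{i=0}^{3} r_i\, df(\zeta^{(i)}).
\]
The proof thus reduces to evaluating each of the four quantities $df(\zeta^{(i)})$ separately.

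The case $i=0$ is immediate: under $\varphi$, $\lambda^{(0)} = \lambda_0$ is the standard radial primitive on $B^4$ with Liouville vector field $\zeta^{(0)} = \tfrac{1}{2}\sum_j(\tilde x_j\partial_{\tilde x_j}+\tilde y_j\partial_{\tilde y_j})$, and a direct computation gives $df(\zeta^{(0)}) = f$. For the remaining indices I would use the fact that $\log|h|$ is pluriharmonic for any nonvanishing holomorphic function $h$, so that $d^c\log|h| = -d\arg h$ as a globally defined closed one-form on $\{h\neq 0\}$. Expanding $\|s_{D_i}\|^2 = |h_i|^2/(1+\|z\|^2)^{n_i}$ in the affine chart $\C^2$, with $h_1 = z_2$, $h_2 = z_1$, $h_3 = z_1 z_2 - 1$, the $\log(1+\|z\|^2)$ contributions are absorbed into $\lambda^{(0)}$ and one obtains
\[
\lambda^{(1)} - \lambda^{(0)} = -\tfrac{1}{2}d\arg z_2, \quad \lambda^{(2)} - \lambda^{(0)} = -\tfrac{1}{2}d\arg z_1, \quad \lambda^{(3)} - \lambda^{(0)} = -\tfrac{1}{4}d\arg(z_1 z_2 - 1),
\]
so that each $\zeta^{(i)} - \zeta^{(0)}$ is a constant multiple of the Hamiltonian vector field $X_{\arg h_i}$.

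The crux is then an $S^1$-symmetry observation. The action $(\tilde z_1, \tilde z_2) \mapsto (e^{i\alpha}\tilde z_1, e^{-i\alpha}\tilde z_2)$ has infinitesimal generator $\tfrac{1}{2}X_f = \partial_{\tilde\theta_1} - \partial_{\tilde\theta_2}$, where $X_f = 2(\partial_{\tilde\theta_1} - \partial_{\tilde\theta_2})$ is the Hamiltonian vector field of $f$; it preserves $z_1 z_2$ (so annihilates $\arg h_3$) while scaling $\arg z_j$ at rates $\pm 1$. By antisymmetry of the Poisson bracket, $df(X_{\arg h_i}) = -X_f(\arg h_i)$, which evaluates to $0$ for $h_3$ and to $\pm 2$ for $h_1, h_2$. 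Tracking the prefactors yields
\[
df(\zeta^{(0)}) = df(\zeta^{(3)}) = f, \qquad df(\zeta^{(1)}) = f+1, \qquad df(\zeta^{(2)}) = f-1.
\]

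Assembling the linear combination: for the unperturbed $\lambda_r$ (i.e.\ $r_1 = r_2 = 0$) one obtains $df(\zeta_{\lambda_r}) = (1-r)f + r\cdot f = f$, which has the asserted strict sign on $\{f\gtrless 0\}$. For the perturbation in the lemma, where $r_1 = r_2 = \eta$, the contributions $\eta(f+1)$ and $\eta(f-1)$ combine into $2\eta f$, so that $df(\zeta_{\lambda_{\mathbf{r}}}) = f$ exactly, and both claims follow simultaneously. The main point requiring mild attention is the sign bookkeeping in the identity $d^c \log|h| = -d\arg h$ under the conventions $d^c = d\circ J$ and $\iota_{X_H}\omega = -dH$; beyond this, the argument is a direct computation driven by the rotational symmetry of the pencil of conics $z_1 z_2 = \mathrm{const}$.
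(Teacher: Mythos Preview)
Your argument is correct and in fact sharper than the paper's: you obtain the exact identity $df(\zeta_{\lambda_{\mathbf{r}}}) = f$ for every convex combination considered, whereas the paper only establishes the sign. The approaches differ substantially. The paper treats the two extreme cases $r=0$ and $r=1$ separately and interpolates by convexity; for $r=1$ it invokes the K\"{a}hler identity that the Liouville vector field is minus the Fubini--Study gradient of the potential $\tfrac{1}{8}\log\|s_C\|^2_{E^{\otimes 2}}$, then explicitly computes $\nabla f$ in affine coordinates and evaluates the relevant pairing by hand. The second claim about $\lambda_{\mathbf{r}}$ is left to the reader. Your route instead decomposes the primitive as $\lambda^{(0)}$ plus closed one-forms $-\tfrac{1}{2}d\arg h_i$ (using pluriharmonicity of $\log|h_i|$), so that the correction $\zeta^{(i)}-\zeta^{(0)}$ is a \emph{symplectic dual} to a closed form; the Poisson-bracket identity $df(X_{\arg h_i}) = -X_f(\arg h_i)$ together with the $S^1$-invariance of $z_1z_2$ then dispatches all four cases at once without any gradient computation, and the cancellation $r_1(f+1)+r_2(f-1)=2\eta f$ for $r_1=r_2=\eta$ makes the second claim automatic rather than a separate check. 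The only cosmetic caveat is that $\arg h_i$ is multivalued, so one should speak of the vector field dual to the closed one-form $d\arg h_i$ rather than a Hamiltonian vector field proper; you already flag this, and it affects nothing in the computation.
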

\begin{proof}
We prove the fact for the parameters $r=0$ and $r=1$; since the Liouville forms for the remaining parameters are convex interpolations of $\lambda_0$ and $\lambda_1,$ the statement then follows in general.

For the Liouville form $\lambda_0$ the statement follows by Part (1) of Remark \ref{rmk:Liouville}.

For the Liouville form $\lambda_1$ we argue as follows. By the proof of Lemma \ref{lma:LiouvilleCP2} the corresponding Liouville vector field is given as the gradient
$$ -\nabla\frac{1}{8}\log{(\|s_C\|_{E^{\otimes 2}}^2)}$$
for the Fubini--Study metric. It thus suffices to show that the gradient
$$V \coloneqq \nabla \frac{\|z_1\|^2-\|z_2\|^2}{1+\|z_1\|^2+\|z_2\|^2}=2(z_1\partial_{z_1}+\overline{z}_1\partial_{\overline{z}_1}-z_2\partial_{z_2}-\overline{z}_2\partial_{\overline{z}_2})$$
evaluates positively (resp. negatively) on
$$ d(-\|s_C\|_{E^{\otimes 2}}^2)=-d\frac{\|z_1z_2-1\|^2}{(1+\|z_1\|^2+\|z_2\|^2)^2}$$
in the subset $\{\|\widetilde{z}_1\|^2-\|\widetilde{z}_2\|^2>0\}$ (resp. $\{\|\widetilde{z}_1\|^2-\|\widetilde{z}_2\|^2<0\}$), since
$$ d(-\|s_C\|_{E^{\otimes 2}}^2)(V)=\frac{4\|z_1z_2-1\|^2(\|z_1\|^2-\|z_2\|^2)}{(1+\|z_1\|^2+\|z_2\|^2)^3}=\frac{4\|z_1z_2-1\|^2}{1+\|z_1\|^2+\|z_2\|^2}(\|\widetilde{z}_1\|^2-\|\widetilde{z}_2\|^2).$$
\end{proof}

\subsection{Inflation via the Liouville flow}
\label{sec:inflation}

 Recall that a Lagrangian isotopy is generated by a Hamiltonian if and only if the corresponding symplectic flux-path vanishes identically. Since we are considering the case of an exact symplectic manifold $(X,\omega=d\lambda),$ the latter condition can be formulated as having a Lagrangian isotopy under which the pullback of $\lambda$ to $L$ is a path of closed one-forms on $L$ that are \emph{constant in cohomology}. In cases when we can find suitable symplectic divisors in the complement of the Lagrangian isotopy, the technique of `inflation' along the divisors can in favourable situations be used to deform a Lagrangian isotopy into one with vanishing flux-path.

\begin{rmk}
 In the proof of the nearby Lagrangian conjecture for the torus \cite[Theorem 7.1]{Dimitroglou:Isotopy} the Lagrangian isotopy was turned into a Hamiltonian isotopy by applying the fibre-wise addition of suitable Lagrangian sections in $T^*\T^2$ (i.e.~deformation by a non-exact symplectomorphisms). That procedure can be reinterpreted as an inflation as well; first one compactifies $DT^*\T^2$ to $S^2 \times S^2,$ and then one inflates along appropriate lines inside the compactifying divisor.
\end{rmk}

In our case the inflation will be performed along the divisors $D_0=\ell_\infty,$ $D_1=\ell_1,$ $D_2=\ell_2,$ and $D_3=C$ living inside $\CP^2,$ with different parameters $r_0,r_1,r_2,r_3\ge 0$; one parameter for each of the divisors. In practice we found it most efficient to perform the inflation by constructing a family of Liouville forms on $\CP^2 \setminus (D_0 \cup \ldots \cup D_3)$ parametrised by $\mathbf{r}=(r_0,r_1,r_2,r_3) \in \R^4.$ Again we emphasise that it also would be possible to use a more hands-on approach as in e.g.~\cite[Section 6]{Dimitroglou:Isotopy}.

 We start by investigating where these different Liouville flows can be integrated
\begin{lma}
\label{lma:WellDefLiouville}
Assume that $r_i \ge 0$ holds for all $i=0,1,2,3.$ Then the backwards Liouville flow of $\lambda_{\mathbf{r}}$ exists for all times when restricted to $\CP^2 \setminus \cup_{\{i:\: r_i \neq 0\}} D_i.$
\end{lma}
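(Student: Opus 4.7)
The plan is to realise $\lambda_{\mathbf{r}}$ as $d^c F$ for a K\"ahler potential $F$ which goes to $-\infty$ along the removed divisors, and then to use the K\"ahler identities to show that $F$ is monotone along the Liouville flow. Compactness of superlevel sets of $F$ in the open manifold $\CP^2\setminus\bigcup_{\{i:r_i\neq 0\}}D_i$ will then force the backwards flow to be complete.

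First I would set
$$
F\coloneqq\tfrac{1}{4}\left(r_0\log\|s_{\ell_\infty}\|_E^2 + r_1\log\|s_{\ell_1}\|_E^2 + r_2\log\|s_{\ell_2}\|_E^2 + \tfrac{r_3}{2}\log\|s_C\|_{E^{\otimes 2}}^2\right),
$$
so that $\lambda_{\mathbf{r}}=d^c F$ by construction and $dd^c F=\omega_{\OP{FS}}$ in view of the constraint $\sum_i r_i=1.$ Since each squared norm $\|s_{D_i}\|^2$ is continuous and hence bounded above on the compact $\CP^2,$ and since each coefficient $r_i$ is non-negative, $F$ is bounded above on its domain. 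On the other hand, for any $i$ with $r_i>0$ the term $r_i\log\|s_{D_i}\|^2$ tends to $-\infty$ as one approaches $D_i$ while the remaining terms stay bounded from above, so $F\to-\infty$ at each removed divisor.

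Next I would use the K\"ahler structure to turn the Liouville flow into gradient flow. With the convention $d^c f=df\circ J$ recorded in the excerpt and the standard K\"ahler identity $\omega_{\OP{FS}}(X,JX)=|X|_{g_{\OP{FS}}}^2,$ a direct computation starting from $\iota_{\zeta_{\mathbf{r}}}\omega_{\OP{FS}}=d^c F$ yields
$$
dF(\zeta_{\mathbf{r}})=-d^c F(J\zeta_{\mathbf{r}})=-\omega_{\OP{FS}}(\zeta_{\mathbf{r}},J\zeta_{\mathbf{r}})=-|\zeta_{\mathbf{r}}|_{g_{\OP{FS}}}^2\le 0,
$$
where I used $J^2=-\mathrm{id}.$ Equivalently, $\zeta_{\mathbf{r}}$ is the negative Fubini--Study gradient of $F;$ in particular $F$ is monotonically non-decreasing along the backwards Liouville flow.

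Finally I would conclude by a compactness argument. For any $p_0$ in the domain, the backwards trajectory starting at $p_0$ remains inside the superlevel set $K\coloneqq\{F\ge F(p_0)\}$ by monotonicity. This set is closed in the domain, and any sequence in $K$ has a subsequence converging to some $q_\infty\in\CP^2;$ the limit cannot lie on a removed divisor, since there $F\to-\infty$ would contradict $F\ge F(p_0)>-\infty.$ Hence $q_\infty$ belongs to the domain and, by continuity, to $K,$ so $K$ is a compact subset of the open manifold. The smooth vector field $\zeta_{\mathbf{r}}$ therefore integrates on $K$ for all positive time in the backwards direction, producing the sought complete flow. The only step with any content is the monotonicity identity $dF(\zeta_{\mathbf{r}})\le 0,$ which is a formal consequence of the K\"ahler compatibility between $F,$ $\omega_{\OP{FS}},$ and $J;$ everything else reduces to continuity and the logarithmic blow-up of $F$ at the divisors.
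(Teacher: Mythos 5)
Your proposal is correct and follows essentially the same route as the paper: the text surrounding Lemma \ref{lma:WellDefLiouville} identifies the Liouville vector field with the negative Fubini--Study gradient of the potential $f_{\mathbf{r}}=4F,$ and Lemma \ref{lma:gradient} records exactly your two facts that $F$ is bounded above and tends to $-\infty$ along each removed divisor, from which completeness of the backwards flow follows by the same superlevel-set compactness argument.
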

 
The Liouville vector field that corresponds to $\lambda_{\mathbf{r}}$ is equal to the negative gradient $-\nabla f_{\mathbf{r}}/4$ (taken w.r.t.~the Fubini--Study metric) of the function
$$ f_{\mathbf{r}}\coloneqq r_0 \log{(\|s_{\ell_\infty}\|_{E}^2)} + r_1\log{(\|s_{\ell_1}\|_{E}^2)}+r_2\log{(\|s_{\ell_2}\|_{E^{\otimes 3}}^2)}+r_3(1/2)\log{(\|s_C\|_{E^{\otimes 3}}^2)}.$$
 Hence Lemma \ref{lma:WellDefLiouville} is a direct consequence of the following behaviour of $f_{\mathbf{r}}$ that we now proceed to establish. 
\begin{lma}
\label{lma:gradient}
Whenever all $r_i \ge 0$ are nonnegative, $f_{\mathbf{r}} \le C_{\mathbf{r}}$ is bounded uniformly from above by a constant depending continuously on $\mathbf{r}.$ If, moreover, $r_i > 0$ holds for some $i \in \{0,1,2,3\},$ then for any $c < 0,$ there exists a neighbourhood $U_i \supset D_i$ inside $B^4$ on which $f_{\mathbf{r}}|_{U_i \setminus (\ell_\infty \cup C \cup C_{\OP{nodal}})} \le c.$
\end{lma}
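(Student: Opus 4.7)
The plan is to reduce both assertions to the pointwise inequality $\|s_{D_j}\|^2 \le 1$ on all of $\CP^2$. My approach would be to pass to homogeneous coordinates $[Z_1:Z_2:Z_3]$, in which any holomorphic section of $E^{\otimes k} = \mathcal{O}(k)$ is represented by a degree-$k$ homogeneous polynomial $P$, and the chosen Fubini--Study-type Hermitian metric takes the familiar form
\[
\|s\|^2 \;=\; \frac{|P(Z_1,Z_2,Z_3)|^2}{(|Z_1|^2+|Z_2|^2+|Z_3|^2)^k}.
\]
For the linear sections $s_{\ell_\infty} \leftrightarrow Z_3$, $s_{\ell_1} \leftrightarrow Z_2$, $s_{\ell_2} \leftrightarrow Z_1$ the bound would follow from the elementary inequality $|Z_j|^2 \le |Z_1|^2+|Z_2|^2+|Z_3|^2$. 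For the conic section $s_C \leftrightarrow Z_1 Z_2 - Z_3^2$ I would combine the triangle inequality with the AM--GM estimate $|Z_1||Z_2| \le \tfrac{1}{2}(|Z_1|^2+|Z_2|^2)$ to conclude $|Z_1 Z_2 - Z_3^2| \le |Z_1|^2+|Z_2|^2+|Z_3|^2$, so that $\|s_C\|^2 \le 1$ as well.

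Given these four bounds, each logarithm $\log \|s_{D_j}\|^2$ appearing in the definition of $f_{\mathbf{r}}$ is nonpositive, and since the coefficients $r_j$ are all nonnegative the sum $f_{\mathbf{r}}$ is itself nonpositive wherever it is defined. This would establish the first assertion with the constant bound $C_{\mathbf{r}} \equiv 0$, which depends continuously (even trivially) on $\mathbf{r}$.

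For the second assertion I would fix an index $i$ with $r_i > 0$ together with the chosen $c < 0$, and introduce the open sublevel set
\[
U_i \;\coloneqq\; \bigl\{\, p \in \CP^2 \;:\; \|s_{D_i}(p)\|^2 < e^{c/r_i}\,\bigr\}.
\]
Since $\|s_{D_i}\|^2$ is continuous on $\CP^2$ and vanishes precisely along $D_i$, the set $U_i$ is an open neighbourhood of $D_i$. On the restriction $U_i \setminus (\ell_\infty \cup C \cup C_{\OP{nodal}})$ the summand $r_i \log\|s_{D_i}\|^2$ is strictly less than $c$, while the remaining three summands are nonpositive by the previous step, so summing yields $f_{\mathbf{r}} < c$ there. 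I do not anticipate any substantive obstacle: the entire lemma is a bookkeeping consequence of the pointwise bound $\|s\|^2 \le 1$ together with the divergence of $\log\|s_{D_i}\|^2$ to $-\infty$ along its zero divisor $D_i$.
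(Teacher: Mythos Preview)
Your proof is correct and follows essentially the same two-step structure as the paper's: each individual term $\log\|s_{D_j}\|^2$ is bounded above, and the $i$-th term diverges to $-\infty$ along $D_i$ while the others stay bounded. The paper simply invokes holomorphicity of the sections on the compact space $\CP^2$ to get the upper bound, whereas you give the explicit estimate $\|s_{D_j}\|^2\le 1$ in homogeneous coordinates, yielding the sharper constant $C_{\mathbf{r}}=0$; this is the same idea made concrete.

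One small bookkeeping point: for $i=3$ the summand is $\tfrac{r_3}{2}\log\|s_C\|^2$, so your threshold should be $\|s_C\|^2 < e^{2c/r_3}$ rather than $e^{c/r_3}$ to conclude $f_{\mathbf{r}}<c$; as written you only get $f_{\mathbf{r}}<c/2$ in that case. This is a trivial adjustment and does not affect the argument.
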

\begin{proof}
The uniform bound is straight forward to show for any of the four terms in the above expression of $f_\mathbf{r},$ using the property that all sections $s_{\ell_\infty},$ $s_{\ell_i},$ and $s_C$ are holomorphic.

Concerning the behaviour near the divisor $D_i$ in the case when $r_i>0,$ the claim is a standard consequence of the fact that the holomorphic section corresponding to the $i$:th term vanishes along that divisor, together with the above uniform bound.
\end{proof}
We will now see that the effect of the backwards Liouville flow $\phi^{-t}_{\lambda_{\mathbf{r}}}$ corresponding to $\lambda_{\mathbf{r}}$ on a Lagrangian $L \subset \CP^2 \setminus (C \cup \ell_\infty \cup C_{\OP{nodal}})$ in the complement of this singular divisor corresponds to the Lagrangian isotopy induced by performing an appropriate `inflation' along the same divisors. In other words, we need to investigate which effect the Liouville flow has on the symplectic action of the Lagrangian.

It is clear that the change of symplectic action under the flow only depends on the homotopy class of the Lagrangian. Here we consider two cases, characterised in terms of the existence of a basis satisfying certain properties. For a Lagrangian torus $L \subset \CP^2 \setminus (\ell_\infty \cup C \cup C_{\OP{nodal}})$ any basis
$$H_1(L)=\langle \mathbf{f}_1,\mathbf{f}_2 \rangle$$
is induced by a basis
$$F_i \in H_2(B^4,L), \:[\partial F_i]=\mathbf{f}_i, \:\: i=1,2,$$
of the corresponding relative homology group. In the following we are interested in two types of bases that are characterised as follows:
\begin{itemize}
\item \emph{A basis of Clifford type:} $F_i \bullet C =1$ for $i=1,2,$ while $F_i \bullet \ell_j = \delta_i^j$; or
\item \emph{A basis of Chekanov type:}
\begin{itemize}
\item $F_0 \bullet \ell_i=(-1)^i,$ $i=1,2,$ while $F_0 \bullet C=0,$ and
\item $F_1 \bullet C=1,$ while $F_1 \bullet \ell_i=0$ for $i=1,2;$
\end{itemize}
\end{itemize}
The fibres of $\Pi_s$ of Clifford and Chekanov type also admit bases of the respective types; In the case of the Clifford torus both basis elements have Maslov index two, while in the Chekanov case $F_0$ and $F_1$ have Maslov index zero and two, respectively.
\begin{rmk}
 
\label{rmk:bases}
\begin{enumerate}
\item The property of admitting a basis as above only depends on the homotopy class of the torus inside $\CP^2 \setminus (\ell_\infty \cup C \cup C_{\OP{nodal}}).$ Furthermore, the two cases are mutually exclusive for a given torus.
\item The basis of Chekanov type coincides with the basis considered in Lemmas \ref{lma:whitneyaction} and \ref{lma:basis0} for the same tori, i.e.~$\mathbf{f}_i=\mathbf{e}_i.$
\end{enumerate}
\end{rmk}

The Liouville flow has the following effect on the symplectic action of a Lagrangian torus in two aforementioned cases:
\begin{prp}
\label{prp:action}
Take some
$$\mathbf{r} \in \{(r_0,\ldots,r_3); \:\: r_i \ge 0, \: r_0 +\ldots+r_3=1\}$$
and consider the symplectic action
$$A_i(t)\coloneqq\int_{\mathbf{f}_i} \lambda_{\OP{std}}, \:\: i=1,2,$$
of the image $L_t$ of a Lagrangian torus $L \subset \CP^2 \setminus (\ell_\infty \cup C \cup C_{\OP{nodal}})$ under the time-$(-t)$ Liouville flow of $\lambda_{\mathbf{r}},$ where $L_t$ moreover is assumed to be contained inside the same subset for $t \in [0,t_0].$ Then
\begin{itemize}
\item if $\langle \mathbf{f}_1,\mathbf{f}_2 \rangle$ is a basis of Clifford type, then
\begin{align*}
& A_1(t)=\pi(r_1+r_3/2)+e^{-t}(A_1(0)-\pi(r_1+r_3/2)),\\
& A_2(t)=\pi(r_2+r_3/2)+e^{-t}(A_2(0)-\pi(r_2+r_3/2)),
\end{align*}
for all $t \in [0,t_0],$ while
\item if $\langle \mathbf{f}_1,\mathbf{f}_2 \rangle$ is a basis of Chekanov type, then
\begin{align*}
& A_1(t)=\pi(r_1-r_2)+e^{-t}(A_1(0)-\pi(r_1-r_2)),\\
& A_2(t)= \pi r_3/2 +e^{-t}(A_2(0)-\pi r_3/2),
\end{align*}
for all $t \in [0,t_0],$
\end{itemize}
where we have used the Lagrangian isotopy to make the identification $H_1(L_t) \cong H_1(L).$
\end{prp}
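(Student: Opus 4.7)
The strategy is to combine two ingredients: the rescaling property of the Liouville flow, $(\phi^{-t}_{\lambda_{\mathbf{r}}})^*\lambda_{\mathbf{r}} = e^{-t}\lambda_{\mathbf{r}}$, which immediately gives $\int_{\mathbf{f}_i^{(t)}} \lambda_{\mathbf{r}} = e^{-t} \int_{\mathbf{f}_i} \lambda_{\mathbf{r}}$; together with a Stokes-type residue formula expressing $\int_{\partial F} \lambda_{\mathbf{r}}$ in terms of $\int_F \omega_{\OP{FS}}$ and the intersection numbers of $F$ with the four divisors. The plan is to combine these, using also the identification $\lambda_{\OP{std}} = \lambda_{(1,0,0,0)}$ (on $B^4$ via $\varphi$, as noted in Remark \ref{rmk:Liouville}(1), or checked directly from the formula for $\varphi$), and substitute the intersection numbers prescribed by either the Clifford- or Chekanov-type basis.

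For the residue formula, I would take a relative $2$-chain $F \subset B^4 = \CP^2 \setminus \ell_\infty$ with $\partial F \subset L$ meeting $\ell_1, \ell_2, C$ transversely, remove small disks $B_\epsilon(p)$ around each intersection point, and apply Stokes' theorem to $F_\epsilon := F \setminus \bigsqcup_p B_\epsilon(p)$. The only local computation needed is the limit $\lim_{\epsilon\to 0} \int_{\partial B_\epsilon(p)} \lambda_{\mathbf{r}}$ at a positive transverse intersection with a divisor $D_j$. In local holomorphic coordinates where $D_j = \{z=0\}$, the section satisfies $\|s_{D_j}\|^2 = |z|^2\cdot(\text{positive smooth})$, so $\lambda_{\mathbf{r}^{(j)}} = (1/4) d^c\log|z|^2$ modulo a smooth form, and a direct parametrisation using the paper's convention $d^cf = df(i\,\cdot)$ yields $\int_{\partial B_\epsilon}(1/4)d^c\log|z|^2 = -\pi$. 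The analogous residue for $\lambda_{\mathbf{r}^{(3)}} = (1/8)d^c\log\|s_C\|^2$ is $-\pi/2$ due to the extra factor $1/2$. Passing to the limit in Stokes gives
\[
\int_{\partial F} \lambda_{\mathbf{r}} = \int_F \omega_{\OP{FS}} - \pi\!\left(r_1 (F\bullet\ell_1) + r_2(F\bullet\ell_2) + \tfrac{r_3}{2}(F\bullet C)\right),
\]
where the contribution from $\ell_\infty$ vanishes because $F \subset B^4$.

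To conclude, specialising to $\mathbf{r} = (1,0,0,0)$ gives $A_i(0) = \int_{F_i} \omega_{\OP{FS}}$, and for general $\mathbf{r}$ we have $\int_{\mathbf{f}_i}\lambda_{\mathbf{r}} = A_i(0) - \pi X_i$ with $X_i := r_1(F_i\bullet\ell_1) + r_2(F_i\bullet\ell_2) + (r_3/2)(F_i\bullet C)$. By assumption $L_t \subset \CP^2\setminus(\ell_\infty\cup C\cup C_{\OP{nodal}})$ throughout the isotopy, so the intersection numbers $F_i^{(t)}\bullet D_j$ are preserved and hence $X_i$ is independent of $t$. Combining with the rescaling of $\lambda_{\mathbf{r}}$-periods then yields
\[
A_i(t) = \int_{F_i^{(t)}} \omega_{\OP{FS}} = \int_{\mathbf{f}_i^{(t)}}\lambda_{\mathbf{r}} + \pi X_i = e^{-t}(A_i(0) - \pi X_i) + \pi X_i.
\]
Substituting the intersection numbers from the two types of basis produces $X_1 = r_1+r_3/2$, $X_2 = r_2+r_3/2$ in the Clifford case, and $X_1 = r_1-r_2$, $X_2 = r_3/2$ in the Chekanov case, as claimed. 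The main technical step is the residue calculation above; once the sign and orientation conventions are pinned down, the rest of the argument proceeds formally from Stokes' theorem and the Liouville rescaling.
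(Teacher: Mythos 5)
Your proposal is correct and follows essentially the same route as the paper: the paper's proof likewise combines the conformal rescaling $(\phi^{-t}_{\lambda_{\mathbf{r}}})^*\lambda_{\mathbf{r}}=e^{-t}\lambda_{\mathbf{r}}$ with the Stokes/residue identity $\int_{F}\omega_{\OP{FS}}=\int_{\partial F}\lambda_{\mathbf{r}}+\pi\bigl(r_0(F\bullet\ell_\infty)+r_1(F\bullet\ell_1)+r_2(F\bullet\ell_2)+\tfrac{r_3}{2}(F\bullet C)\bigr)$, with each residue computed on a small complex disc in a fibre normal to the divisor, exactly as you do. One bookkeeping caveat: with the paper's normalisation $F_0\bullet\ell_i=(-1)^i$ for the Chekanov-type basis, your formula literally yields $X_1=r_2-r_1$ rather than $r_1-r_2$; this sign is tied to the conventional choice of generator for $\ker(H_1(L)\to H_1(V))$ and is immaterial in the subsequent applications, where $r_1=r_2$.
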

\begin{rmk}
In view of Lemma \ref{lma:gradient}, assuming that $r_0,\ldots,r_3>0$ is sufficient in order to guarantee that $L_t \subset \CP^2 \setminus (\ell_\infty \cup C \cup C_{\OP{nodal}})$ is fulfilled for the entire backwards Liouville flow.
\end{rmk}
\begin{proof}
The statement is a consequence of the following more general properties. Consider a surface $\Sigma \subset \CP^2$ which intersects $D_i$ transversely with algebraic intersection number equal to $n_i \in \Z,$ while having boundary contained in the complement of $D_0 \cup \ldots \cup D_3.$ We assume that all intersections $\Sigma \pitchfork D_i$ are transverse. The Liouville flow $\phi^{-t}_{\mathbf{r}}$ with $r_i>0$ acts on $\Sigma \setminus (D_0 \cup \ldots \cup D_3)$ by an isotopy, and this punctured surface can be suitably completed near $D_0 \cup \ldots \cup D_3$ to extend this isotopy to a smooth isotopy $\Sigma_t \subset \CP^2$ of surfaces, where $\Sigma_0=\Sigma,$ and $\Sigma_t \bullet D_i=n_i$ for all $t \ge 0.$ The general claim that shows the statement is that
$$ \int_{\Sigma_t}\omega_{\OP{FS}}=e^{-t}\int_\Sigma\omega_{\OP{FS}}+(1-e^{-t})(n_0r_0+n_1r_1+n_2r_2+n_3r_3/2)$$
whenever $r_i > 0.$ This holds, since the area of any such surface can be computed to be equal to
$$ \int_{\Sigma_t}\omega_{\OP{FS}}=\int_{\partial \Sigma_t} \lambda_\mathbf{r} + n_0r_0+n_1r_1+n_2r_2+n_3r_3/2,$$
while $(\phi^{-t}_{\mathbf{r}})^*\, \lambda_{\mathbf{r}}=e^{-t}\lambda_{\mathbf{r}}$ is satisfied. (The contributions for each transverse intersection with a divisor, i.e.~the latter terms, are preferably computed for a model problem consisting of a small complex disc contained in a fibre normal to the divisor.)
\end{proof}
\begin{cor}
\label{cor:flux}
 
For $\mathbf{t} \in [0,1]^k$ smoothly parametrising a family
$$\widetilde{L}_{\mathbf{t}}:=\Pi_{s(\mathbf{t})}^{-1}(u_1(\mathbf{t}),u_2(\mathbf{t})) \subset \CP^2 \setminus (\ell_\infty \cup C \cup C_{\OP{nodal}}), \:\: u_2(\mathbf{t}) \neq 1,$$
of Lagrangian standard torus fibres, there exists smooth functions
$$\mathbf{r}(\mathbf{t}) \in \{(r_0(\mathbf{t}),\ldots,r_3(\mathbf{t})); \:\: r_i(\mathbf{t})>0, \: r_0(\mathbf{t}) +\ldots+r_3(\mathbf{t})=1\}$$
and $\alpha(\mathbf{t}) \in \R_{\ge 0}$ for which the path
$$L_{\mathbf{t}}\coloneqq \phi^{-\alpha(\mathbf{t})}_{\mathbf{r}(\mathbf{t})}(\Pi_{s(0)}^{-1}(u_1(0),u_2(0))) \subset \CP^2 \setminus (\ell_\infty \cup C \cup C_{\OP{nodal}})$$
of Lagrangian tori realises the same symplectic flux-path as $\widetilde{L}_{\mathbf{t}}.$ We may moreover assume that $\alpha(\mathbf{t})=0$ holds whenever $(u_1(\mathbf{t}),u_2(\mathbf{t}))=(u_1(0),u_2(0)).$
\end{cor}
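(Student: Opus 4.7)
The plan is to reduce matching the symplectic flux-paths to matching two numerical action values, and then to solve the explicit system supplied by Proposition \ref{prp:action}. Since each $\widetilde{L}_\mathbf{t}$ avoids the nodal conic $C_{\OP{nodal}}$, Lemmas \ref{lma:basis0} and \ref{lma:homologybasis} provide a basis $(\mathbf{e}_0, \mathbf{e}_1)$ of $H_1(\widetilde{L}_\mathbf{t})$ varying continuously along the family, on which Lemma \ref{lma:action} gives the action values $\pi u_1(\mathbf{t})$ and $A_{s(\mathbf{t})}(u_1(\mathbf{t}), u_2(\mathbf{t}))$. Since $L_0 = \widetilde{L}_0$, the flux-path of $L_\mathbf{t}$ will match that of $\widetilde{L}_\mathbf{t}$ precisely when these two action values agree for all $\mathbf{t}$.

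I would first treat the Chekanov-type case ($u_2(\mathbf{t}) < 1$); by Remark \ref{rmk:bases}(2) the basis $(\mathbf{e}_0, \mathbf{e}_1)$ is itself of Chekanov type, so the second formula of Proposition \ref{prp:action} applies directly. Setting $\tau := 1 - e^{-\alpha(\mathbf{t})}$, matching the two action values translates into
\begin{align*}
\tau(r_1 - r_2) &= u_1(\mathbf{t}) - (1-\tau)\,u_1(0), \\
\tau\,r_3/2 &= \frac{A_{s(\mathbf{t})}(u_1(\mathbf{t}), u_2(\mathbf{t}))}{\pi} - (1 - \tau)\,\frac{A_{s(0)}(u_1(0), u_2(0))}{\pi},
\end{align*}
which for $\tau > 0$ determines $r_1 - r_2$ and $r_3$ uniquely and leaves two free coordinates (say $r_0$ and $r_1$) subject only to the open positivity condition $|r_1 - r_2| + r_3 < 1$.

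The key input is that, as $\tau \to 1^-$, this positivity condition approaches $|u_1(\mathbf{t})| + 2 A_{s(\mathbf{t})}(u_1(\mathbf{t}), u_2(\mathbf{t})) / \pi < 1$, which is exactly the open bound guaranteed by Part (1) of Lemma \ref{lma:action}. Hence for $\tau$ sufficiently close to $1$ the parameters $\mathbf{r}$ lie in the open simplex, and I would select $\tau(\mathbf{t}) \in [0, 1)$ smoothly so that it vanishes on the locus where $(u_1(\mathbf{t}), u_2(\mathbf{t})) = (u_1(0), u_2(0))$ and remains uniformly bounded away from $\{0, 1\}$ otherwise; a partition-of-unity construction then produces the remaining coordinates of $\mathbf{r}(\mathbf{t})$ smoothly. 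The Clifford-type case ($u_2 > 1$) is entirely analogous after changing to the Clifford-type basis $(\mathbf{e}_1, \mathbf{e}_0 + \mathbf{e}_1)$ (or $(\mathbf{e}_1 - \mathbf{e}_0, \mathbf{e}_1)$ for $u_1 < 0$), with the same strict positivity inequality as the decisive input. The hard part will be engineering the smooth transition of $\mathbf{r}(\mathbf{t})$ across the vanishing locus of $\tau$, but this should be routine since the governing inequality is open and strict, giving ample slack to interpolate via a partition of unity.
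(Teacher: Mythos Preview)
Your proposal is correct and follows essentially the same route as the paper: both reduce to matching the two action values, invoke Proposition~\ref{prp:action} in the appropriate (Chekanov or Clifford) basis, and use the strict inequality of Lemma~\ref{lma:action}(1) to place $\mathbf{r}$ in the open simplex once $\tau$ is taken close to $1$. The paper additionally begins with a preliminary reduction to constant $s(\mathbf{t})$ (replacing $\widetilde{L}_{\mathbf{t}}$ by a family with the same actions but fixed $s$), which makes the locus $(u_1(\mathbf{t}),u_2(\mathbf{t}))=(u_1(0),u_2(0))$ coincide with the locus of equal actions and thereby renders the ``moreover'' clause automatic; you should incorporate this small step, since without it your choice $\tau=0$ on that locus need not satisfy the second equation when $A_{s(\mathbf{t})}\neq A_{s(0)}$.
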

\begin{proof}
 
We prove the statement in the case of tori of tori of Chekanov type, i.e.~when $u_2(\mathbf{t}) \in (0,1).$ The case of tori of Clifford type is similar. Without loss of generality we can consider the case when $s(\mathbf{t}) \equiv \pi/2-\epsilon$ for $\epsilon>0$ sufficiently small. Indeed, we may realise the path $\widetilde{L}_{\mathbf{t}}$ of tori by tori in the same homotopy class, and of the same action, while the path $s(\mathbf{t})$ is constant.

Denote by $(A_1(\mathbf{t}),A_2(\mathbf{t}))$ the symplectic actions of the tori $\widetilde{L}_{\mathbf{t}}$ with respect to the basis of Chekanov type, and write
$$(a_1(\mathbf{t}),a_2(\mathbf{t})) \coloneqq (A_1(\mathbf{t}),A_2(\mathbf{t}))-(A_1(0),A_2(0)).$$
For a small number $\delta>0$ we choose a smooth bump function $\beta \colon \R_{\ge0} \to [-\delta^{-1},\delta]$ which satisfies $\beta(t)=t$ near $t=0,$ while $\beta(t)=-\delta^{-1}$ for all $t \ge \delta.$ Then we construct the smooth and nonnegative function
$$\alpha(\mathbf{t}) \coloneqq -\log{\left(\frac{e^{\beta(\|(a_1(\mathbf{t}),a_2(\mathbf{t}))\|^2)}}{1+e^{\beta(\|(a_1(\mathbf{t}),a_2(\mathbf{t}))\|^2)}}\right)}$$
In view of Lemma \ref{prp:action} it then suffices to take a suitable path $\mathbf{r}(\mathbf{t})$ that satisfies
\begin{eqnarray*}
\lefteqn{(\pi(r_1(\mathbf{t})-r_2(\mathbf{t})),\pi r_3(\mathbf{t})/2)=}\\
& & \left(A_1(0)+\left(1+e^{\beta(\|(a_1(\mathbf{t}),a_2(\mathbf{t}))\|^2)}\right)a_1(\mathbf{t}),A_2(0)+\left(1+e^{\beta(\|(a_1(\mathbf{t}),a_2(\mathbf{t}))\|^2)}\right)a_2(\mathbf{t})\right).
\end{eqnarray*}
That this is possible when $\delta>0$ in the construction of the above bump-function is chosen to be sufficiently small can be readily seen using Lemma \ref{lma:action}. Namely, by this lemma the values of $(A_1(\mathbf{t}),A_2(\mathbf{t}))$ and hence also
$$(A_1(0),A_2(0))+\left(1+e^{\beta(\|(a_1(\mathbf{t}),a_2(\mathbf{t}))\|^2)}\right)(a_1(\mathbf{t}),a_2(\mathbf{t})),$$
are restricted to the interior of the polytope shown in Figure \ref{fig:action}; also c.f.~Part (2) of Remark \ref{rmk:bases}.
\end{proof}

\subsection{Flux-paths that determine the Hamiltonian isotopy class}
 
A Lagrangian isotopy with a nonzero symplectic flux-path cannot be generated by a Hamiltonian isotopy. Furthermore, it is known that two Lagrangian isotopies starting at the same Lagrangian torus (i.e.~the Clifford torus) and realising the \emph{same} symplectic flux-paths still need not end at Hamiltonian isotopic tori (we may e.g.~end at either the Clifford or Chekanov torus). In contrast to this, we here show that when the symplectic flux-path can be generated by a complete negative Liouville flow, then the Hamiltonian isotopy class is indeed determined by the flux-path.
 
\begin{lma}
\label{lma:LiouvilleFlow}
Consider a symplectic manifold $(X^{2n},\omega=d\lambda),$ with a \emph{complete} negative Liouville flow $\phi^{-t}$ induced by $\lambda,$ and a smooth Lagrangian submanifold $L \subset X.$ For any Lagrangian isotopy $L_t$ with $L_0=L$ that satisfies the property that the symplectic actions of $\phi^{-t}(L)$ and $L_t$ agree for all $t \in [0,T],$ it is the case that $\phi^{-T}(L)$ and $L_T$ are Hamiltonian isotopic inside $X^{2n}.$
\end{lma}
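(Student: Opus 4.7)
The strategy is to construct an explicit Lagrangian isotopy from $\phi^{-T}(L)$ to $L_T$ whose symplectic flux-path vanishes identically; the result will then follow from the standard characterisation of Hamiltonian-generated Lagrangian isotopies recalled in the Preliminaries. The clever choice is
\[
M_s \coloneqq \phi^{s-T}(L_s), \qquad s \in [0,T],
\]
which is well-defined by the assumed completeness of the backward Liouville flow, which consists of Lagrangians since $\phi^t$ is a conformal symplectomorphism, and which manifestly satisfies $M_0=\phi^{-T}(L)$ together with $M_T=L_T.$

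The computational ingredient is the cohomological identity $[((\phi^t)^*\lambda)|_{TL'}]=e^t[\lambda|_{TL'}]$ in $H^1(L')$ valid for any Lagrangian $L' \subset X.$ This follows from $\mathcal{L}_{\zeta_\lambda}\lambda - \lambda = d(\lambda(\zeta_\lambda))$ being exact: integrating the resulting ODE gives $(\phi^t)^*\lambda = e^t\lambda + df_t$ for some smooth function $f_t,$ which restricts to an exact form on any Lagrangian. Writing $\psi_s \colon L \to L_s$ for the identification given by the Lagrangian isotopy in the hypothesis, the assumption that the action classes of $L_s$ and $\phi^{-s}(L)$ agree translates, via this same identity applied to $\phi^{-s},$ into the equality $\psi_s^*[\lambda|_{TL_s}] = e^{-s}[\lambda|_{TL}]$ in $H^1(L).$

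Now identify $H^1(M_s) \cong H^1(\phi^{-T}(L))$ via the composite diffeomorphism $\mu_s \coloneqq \phi^{s-T} \circ \psi_s \circ \phi^{T} \colon \phi^{-T}(L) \to M_s$ induced by the isotopy $M_s.$ Plugging in, a direct calculation yields
\[
\mu_s^*[\lambda|_{TM_s}] \;=\; e^{s-T}(\phi^{T})^*\psi_s^*[\lambda|_{TL_s}] \;=\; e^{s-T} \cdot e^{-s} \cdot e^{T} [\lambda|_{T\phi^{-T}(L)}] \;=\; [\lambda|_{T\phi^{-T}(L)}],
\]
so the pulled-back action class is constant in $s.$ In other words, the flux-path of $M_s$ vanishes identically. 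By the standard criterion, $M_s$ is therefore generated by a global Hamiltonian isotopy of $(X,\omega),$ whose time-$T$ map provides the sought Hamiltonian isotopy from $\phi^{-T}(L)$ to $L_T.$

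I do not anticipate a genuine obstacle: the whole argument is essentially a bookkeeping exercise in tracking the $H^1$ identifications through the three diffeomorphisms $\phi^T,$ $\psi_s,$ and $\phi^{s-T},$ after which the three rescaling factors $e^{T}, e^{-s}, e^{s-T}$ cancel and force the constancy. The one place to be careful is the verification that $(\phi^t)^*\lambda \equiv e^t\lambda$ modulo exact forms (rather than on the nose), since otherwise the argument might be mistakenly conducted at the chain level instead of in cohomology.
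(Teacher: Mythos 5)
Your proposal is correct, and it takes a genuinely different route from the paper. The paper argues by connectedness of $[0,T]$: using Weinstein's Lagrangian neighbourhood theorem it shows that the set of $S$ for which $\phi^{-t}(L)$ and $L_t$ are Hamiltonian isotopic for all $t\le S$ is both open and closed, the closedness step using completeness of $\phi^{-t}$ to transport a Hamiltonian isotopy near time $S_0$ backwards by $\phi^{-\epsilon}.$ You instead write down the single interpolating Lagrangian isotopy $M_s=\phi^{s-T}(L_s)$ (well defined for all $s\in[0,T]$ precisely by completeness), verify that its flux-path vanishes by the cancellation $e^{s-T}\cdot e^{-s}\cdot e^{T}=1,$ and invoke the flux criterion already recorded in the paper's preliminaries. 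This is cleaner and makes the mechanism transparent; the trade-off is that it leans on the global statement ``zero flux-path implies ambient Hamiltonian isotopy,'' whose proof is essentially the same Weinstein-neighbourhood argument the paper deploys directly, so the two proofs have the same technical core packaged differently. Two minor remarks: for the Liouville field defined by $\iota_{\zeta_\lambda}\omega=\lambda$ one has $\lambda(\zeta_\lambda)=\omega(\zeta_\lambda,\zeta_\lambda)=0,$ so in fact $(\phi^t)^*\lambda=e^t\lambda$ on the nose and the exact correction term $df_t$ you allow for is zero (your cohomological formulation is of course still valid); and the flux criterion as used requires $L$ to be closed so that the locally defined generating Hamiltonians can be cut off to a global one, which is the situation in all applications in the paper.
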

\begin{proof}
By Weinstein's Lagrangian neighbourhood theorem one readily proves the following: the set of numbers $S \in [0,T]$ satisfying the property that $\phi^{-t}(L)$ and $L_t$ are Hamiltonian isotopic for all $t \in [0,S]$ forms an interval which is open inside $[0,T]$ and contains zero. It suffices to show that this interval also is closed, since it then is equal to $[0,T].$

Thus, we take $S_0>0$ such that the property is true for all $S < S_0.$ By the same reason as to why the previously established openness property holds, there exists some $\epsilon>0$ such that $L_{S_0-t}$ is Hamiltonian isotopic to $\phi^t(L_{S_0})$ whenever $0\le t \le \epsilon.$

By our assumptions, $\phi^{-(S_0-\epsilon)}(L)$ is Hamiltonian isotopic to $L_{S_0-\epsilon}$ and, by the previous paragraph, the Lagrangian $L_{S_0-\epsilon}$ is Hamiltonian isotopic to $\phi^{\epsilon}(L_{S_0}).$ Since Hamiltonian isotopies are preserved by the conformal symplectomorphism $\phi^{-\epsilon},$ it finally follows that $\phi^{-{S_0}}(L)=\phi^{-\epsilon} \circ \phi^{-(S_0-\epsilon)}(L)$ is Hamiltonian isotopic to $\phi^{-\epsilon}\circ \phi^\epsilon(L_{S_0})=L_{S_0}$ as sought. (The assumption of completeness is needed in order to guarantee that $\phi^{-\epsilon}$ is well-defined on the whole Hamiltonian isotopy from $\phi^{-(S_0-\epsilon)}(L)$ to $\phi^\epsilon(L_{S_0})$.)
\end{proof}

\section{The Hamiltonian isotopy to a standard torus}
\label{sec:LagrangianIsotopy}

In this section we prove Theorem \ref{thm:main} in embedded case. In other words, given a Lagrangian torus $L \subset (V,\omega_{\OP{FS}})$ satisfying the assumptions made in that theorem, we produce a Hamiltonian isotopy to a standard torus. The first step of the proof consists of applying Theorem \ref{thm:CompatibleFibration}. From now on we hence assume that $L$ has been placed in either Clifford or Chekanov position by the Hamiltonian isotopy produced by the aforementioned result and thus, in particular, that we have $L \subset \CP^2 \setminus (\ell_\infty \cup C \cup C_{\OP{nodal}}).$ The proof of Theorem \ref{thm:main} for $L$ in Clifford position is given in Section \ref{sec:LagIsoClifford}, and in Section \ref{sec:LagIsoChekanov} when $L$ is in Chekanov position.

\subsection{Classification up to Hamiltonian isotopy for tori in Clifford position}
\label{sec:LagIsoClifford}

The argument presented in Sections \ref{sec:LagIsoChekanov} and \ref{sec:LagToHam} below, and which concerns with the case of a torus in Chekanov position, applies also to the present case of tori in Clifford position after minor modifications. However, we here choose a different path with a somewhat simpler argument. This argument also turns out to be useful later when we classify the immersed Lagrangian spheres.

After the application of Theorem \ref{thm:CompatibleFibration} the torus can be assumed to be contained inside $V \setminus f^{-1}[0,1].$ Since $f^{-1}[0,1]$ deformation retracts onto $f^{-1}(0),$ it follows from Corollary \ref{cor:conditions} that the torus is homologically essential in the same subset. The existence of the Hamiltonian isotopy is now a direct consequence of Theorem \ref{thm:nearby} combined with the following lemma:
\begin{lma}
\label{lma:ConvexSubsets}
There exists a symplectomorphism
$$\Phi \colon (V \setminus f^{-1}[0,1],\omega_{\OP{FS}}) \xrightarrow{\cong} \T^2 \times U\subset (T^*\T^2,d\lambda_{\T^2})$$
 
where
$$ U \coloneqq \{(p_1,p_2);\: |p_1|< \pi, \max{(0,p_1)} < p_2 < (p_1+\pi)/2\} \subset \R^2$$
is the bounded convex subset shown in Figure \ref{fig:action2}. 
This symplectomorphism can moreover be taken to satisfy
\begin{enumerate}
\item for any fixed $(u,v) \in (-1,1) \times (1,+\infty],$ the torus fibres $\Pi_s^{-1}(u,v)$ are all mapped standard tori $\Phi(\Pi_s^{-1}(u,v))=\T^2 \times \{\mathbf{p}_{(u,v)}\}$ whenever $s>0$ is sufficiently small, and
\item the punctured Lagrangian disc
$$ \{(z,-\overline{z})\in \C^2\} \cap f^{-1}(-\infty,0) \subset V \setminus f^{-1}[0,1]$$
(i.e.~the standard Lefschetz thimble with a point removed) is mapped into the Lagrangian annulus
$$(S^1 \times \{0\}) \times (\{0\} \times \R) \subset T^*\T^2.$$
\end{enumerate}
\end{lma}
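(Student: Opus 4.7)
The plan is to apply the global version of the Arnol'd--Liouville theorem to the Lagrangian torus fibration $\Pi_0 \colon V \setminus f^{-1}[0,1] \to (-1,1) \times (1,+\infty)$ constructed in Section \ref{sec:LagrangianFibrationsIntro}. By Property (2) in the construction of $\Psi_s$ together with Lemma \ref{lma:compactfibres}, the map $\Pi_0$ is a Lagrangian submersion all of whose fibres are compact embedded tori; since the base is simply connected, Arnol'd--Liouville produces a global symplectomorphism
\[
\Phi_0 \colon (V \setminus f^{-1}[0,1], \omega_{\OP{FS}}) \xrightarrow{\cong} (\T^2 \times U_0, d\lambda_{\T^2})
\]
for some open $U_0 \subset \R^2$, taking each fibre $\Pi_0^{-1}(u_1,u_2)$ to a standard torus $\T^2 \times \{\mathbf{p}(u_1,u_2)\}$.

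To identify $U_0$ with the convex set $U$ of the statement, I would take the basis $(\mathbf{f}_1,\mathbf{f}_2)$ where $\mathbf{f}_1 = \mathbf{e}_0$ is the canonical kernel generator from Lemma \ref{lma:basis0} and $\mathbf{f}_2$ is the unique continuous class characterised by the intersection numbers $D_{\mathbf{f}_2} \bullet \ell_1 = 0$ and $D_{\mathbf{f}_2} \bullet \ell_2 = 1$; in terms of Lemma \ref{lma:homologybasis} this is $\mathbf{f}_2 = \mathbf{e}_0 + \mathbf{e}_1$ on $\{u_1 \ge 0\}$ and $\mathbf{f}_2 = \mathbf{e}_1$ on $\{u_1 \le 0\}$, the two expressions matching across $u_1 = 0$ by the intersection-theoretic identity $\mathbf{e}_1^+ = \mathbf{e}_1^- - \mathbf{e}_0$ underlying Remark \ref{rmk:monodromy}. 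Lemma \ref{lma:action} then yields the action coordinates $p_1 = \pi u_1$ and $p_2 = \max(p_1,0) + \lim_{s \to 0^+} A_s(u_1,u_2)$; the boundary values listed in Parts (3) and (4) of the same lemma identify $U_0$ as the convex set $U$.

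For Property (1), for small $s$ the fibre $\Pi_s^{-1}(u,v)$ has, with respect to the basis $(\mathbf{f}_1,\mathbf{f}_2)$, the same symplectic action class as a suitable fibre $\Pi_0^{-1}(u,w)$ in the same solid torus $\Sigma_u$; hence it is Hamiltonian isotopic to the latter inside $V \setminus f^{-1}[0,1]$ (the isotopy is constructed by hand as in the second bullet of Proposition \ref{prp:mainprp}), and precomposing $\Phi_0$ with this isotopy yields a modified $\Phi$ mapping each $\Pi_s^{-1}(u,v)$ to a standard torus. For Property (2), the disc $\{(z,-\overline{z})\} \cap f^{-1}(-\infty,0)$ satisfies $\|\widetilde{z}_1\|^2 = \|\widetilde{z}_2\|^2$, hence lies in $\{u_1 = 0\}$, which maps to $\{p_1 = 0\}$; its tangent direction along each fibre is $\partial_{\theta_1} - \partial_{\theta_2}$, representing $\mathbf{e}_0 = \mathbf{f}_1$, so after a final constant Hamiltonian translation in the $\T^2$-factor the disc is positioned as $(S^1 \times \{0\}) \times (\{0\} \times \R)$ as required.

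The main obstacle is ensuring that the single $\Phi$ thus constructed realises the explicit convex region $U$ from the statement; since the Arnol'd--Liouville map is only determined up to an $\SL(2,\Z)$-change of basis, fibrewise Lagrangian sections, and affine shifts of $\R^2$, the specific intersection-theoretic choice of basis $(\mathbf{f}_1,\mathbf{f}_2)$ above is essential. The principal technical subtlety is the monodromy of $\mathbf{e}_1$ across the locus $u_1 = 0$, which obstructs the naive basis choice $(\mathbf{e}_0,\mathbf{e}_1)$ from being continuous and forces the continuous replacement $\mathbf{f}_2$ above; once this combinatorial setup is in place, the remaining verifications are a routine flux-Hamiltonian isotopy argument and a direct inspection of the Lagrangian disc inside the individual torus fibres.
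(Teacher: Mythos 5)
Your main construction follows the paper's own proof: patch local Arnol'd--Liouville charts for $\Pi_0$ over the contractible base, and identify the image with $U$ via the continuous basis obtained by extending the $u_1<0$ recipe of Lemma \ref{lma:homologybasis} across $u_1=0$ --- your $\mathbf{f}_2$ is exactly the paper's $\widetilde{\mathbf{e}}_1$, and the computation $p_2=\max(p_1,0)+A$ together with Lemma \ref{lma:action} correctly produces the region $U$. Your treatment of Property (2) also coincides with the paper's (the disc lies in $\{u_1=0\}$, meets each fibre cleanly in a circle in class $\pm\mathbf{e}_0$, and the Lagrangian condition forces the image to be $(S^1\times\{\theta_0\})\times(\{0\}\times\R)$ before a final translation). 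One point you should make explicit: a ``global Arnol'd--Liouville theorem'' over a simply connected base only gives a well-defined fibre-preserving local symplectomorphism; that $\Phi_0$ is actually injective is precisely the statement that distinct fibres have distinct action coordinates, which is supplied by the monotonicity in Lemma \ref{lma:action} and is recorded in the paper's proof.

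The genuine gap is in your argument for Property (1). The lemma asserts a \emph{single} $\Phi$ such that, for each fixed $(u,v)$, the fibres $\Pi_s^{-1}(u,v)$ are mapped to the \emph{same} standard torus $\T^2\times\{\mathbf{p}_{(u,v)}\}$ for all sufficiently small $s>0$; since $\Phi$ is a bijection, this forces the subsets $\Pi_s^{-1}(u,v)\subset V$ to coincide for all small $s$, i.e.\ to stabilise as $s\to 0^+$ and to be literal fibres of $\Pi_0$. No precomposition with Hamiltonian isotopies can manufacture this: the isotopy you invoke depends on $s$ and on $(u,v)$, so it modifies $\Phi_0$ into a different map for each choice, whereas a single symplectomorphism cannot carry the three-parameter family $\{\Pi_s^{-1}(u,v)\}_{s,u,v}$ into the two-parameter family of disjoint $\Pi_0$-fibres unless that family already degenerates to two parameters. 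The paper instead obtains (1) directly from the construction of the family $\Psi_s$ in Section \ref{sec:LagrangianFibrationsIntro}: the diffeomorphisms are built so that, away from a shrinking neighbourhood of $[0,1]\subset\C$, the curves $\Psi_s^{-1}(\eta_c)$ agree with those of $\Psi_0$ once $s$ is small, so the Clifford-type fibres of $\Pi_s$ \emph{are} fibres of $\Pi_0$ for small $s$ and the property holds with no further work. You should replace the isotopy argument by this observation (or note that the freedom in choosing the family $\Psi_s$ allows one to arrange the stabilisation).
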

\begin{figure}[htp]
\begin{center}
\vspace{6mm}
\labellist
\pinlabel $p_2$ at 84 101
\pinlabel $\pi$ at 78 80
\pinlabel $\pi/2$ at 70 45
\pinlabel $U$ at 65 20
\pinlabel $p_1$ at 180 6
\pinlabel $-\pi$ at 9 -2
\pinlabel $\pi$ at 156 -2
\endlabellist
\includegraphics{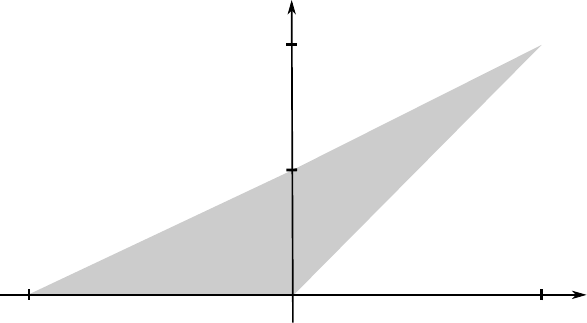}
\vspace{3mm}
\caption{ The image $U \subset \R^2$ of the values $(p_1,p_2)$ of the symplectic action class evaluated on the pair $(\mathbf{e}_0,\widetilde{\mathbf{e}}_1)$ of basis vectors of $H_1(\Pi_s^{-1}(u_1,u_2))$ with $u_2>1.$}
\label{fig:action2}
\end{center}
\end{figure}
\begin{proof}
The symplectomorphism is constructed by considering the Lagrangian torus fibration $\Pi_0$ on $V \setminus f^{-1}[0,1]$ from Section \ref{sec:LagrangianFibrationsIntro} obtained as the `limit' of the fibrations $\Pi_s$ as $s \to 0.$ Using the classical Arnol'd--Liouville theorem \cite[Theorem 2.3]{Symington:FourDimensions} we can find locally defined fibre preserving symplectomorphisms from this torus fibration into the standard fibration $\T^2 \times \{\mathbf{p}\}.$ Since the space of fibres is simply connected (even contractible), the maps can be patched together to give a well-defined global symplectomorphism. This symplectic map is an embedding, as follows from the action considerations in Lemma \ref{lma:action} (the Lagrangian tori in the fibration are determined uniquely by their symplectic actions).

To see that the imaged is $\T^2 \times U \subset T^*\T^2$ as claimed, we choose action--angle coordinates $(p_1,p_2)$ that correspond to the following global continuous choice of basis of $H_1(L)$ for the tori of Clifford type in the fibration. First choose the global choice of basis vector $\mathbf{e}_0$ as specified in Lemma \ref{lma:homologybasis}. Then choose the second basis vector $\widetilde{\mathbf{e}}_1$ by following the recipe in the case $u_1<0$ in Part (2.b) the same lemma. The shape of the image is now a consequence of the action computations made in Lemma \ref{lma:action}; also c.f. Figures \ref{fig:action} and \ref{fig:action2}. (Observe that, while the obtained basis $\langle \mathbf{e}_0,\widetilde{\mathbf{e}}_1\rangle$ of course satisfies $\widetilde{\mathbf{e}}_1=\mathbf{\mathbf{e}}_1$ in the region $u_1<0,$ it satisfies $\widetilde{\mathbf{\mathbf{e}}_1}=\mathbf{e}_1+\mathbf{e}_0$ in the region $u_1 \ge 0$.)

(1): This property is follows from the construction of the fibration $\Pi_0$ in Section \ref{sec:LagrangianFibrationsIntro}. 

(2): The Lagrangian punctured disc inside $V$ under consideration can alternatively be described as
$$\Pi_s^{-1}(\{0\} \times (1,+\infty)) \cap f^{-1}(-\infty,0),$$
for any $s \ge 0.$ The action properties of the tori in the family $\Pi_s^{-1}(\{0\} \times (1,+\infty))$ implies that, after an appropriate choice of coordinates on $T^*\T^2,$ the image of the punctured disc lives inside the subset
$$ (S^1 \times S^1) \times (\{0\} \times \R) \subset T^*\T^2,$$
while it intersects the Lagrangian tori $S^1 \times S^1 \times \{(0,t)\}$ cleanly in embedded closed curves. Hence, using the Lagrangian condition for the punctured disc, its image can be seen to be of the form $(S^1 \times \{\theta_0\}) \times (\{0\} \times \R)$ for some fixed $\theta_0 \in S^1.$
\end{proof}

\subsection{Classification up to Lagrangian isotopy for tori in Chekanov position}
\label{sec:LagIsoChekanov}
In the case of a torus in Chekanov position we must start by constructing a mere \emph{Lagrangian} isotopy inside the subset $\CP^2 \setminus (\ell_\infty \cup C \cup C_{\OP{nodal}})$ that deforms it to a standard torus.
\begin{prp}
\label{prp:LagrangianIsotopy}
Any Lagrangian torus $L \subset (\CP^2 \setminus (\ell_\infty \cup C \cup C_{\OP{nodal}}),\omega_{\OP{FS}})$ in Chekanov position is Lagrangian isotopic inside the same subset to a standard torus.
\end{prp}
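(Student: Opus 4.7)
The plan is to deform $L$ to a standard fibre of $\Pi_{s_0}$ of Chekanov type through a Lagrangian isotopy inside $V \setminus (\ell_\infty \cup C \cup C_{\OP{nodal}})$, in two stages. By Theorem \ref{thm:CompatibleFibration} we may assume that $L$ is an $S^1$-bundle over an embedded simple closed curve $\sigma \subset \C \setminus \{0,1\}$ under a pseudoholomorphic conic fibration $f_J$, with $\sigma$ of winding number zero around $0$ and one around $1$, and with $f_J = f$ together with $\sigma$ a vertical segment $s_0+i[-\delta,\delta]$ in a neighbourhood of $[0,1]$.

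In the first stage I will straighten the conic fibration, replacing $f_J$ by the standard Lefschetz fibration $f$ while carrying $L$ along. Since the space of tame almost complex structures on $\CP^2$ that are standard near $\ell_\infty \cup C \cup C_{\OP{nodal}}$ is contractible, Theorem \ref{thm:Lefschetz} yields a smooth family $f_{J_t}$ of conic fibrations interpolating between $f_{J_0}=f$ and $f_{J_1}=f_J$, each containing $C$ and $C_{\OP{nodal}}$ as fibres and each standard in the chosen neighbourhood of $[0,1]$. The three-dimensional hypersurfaces $Y_t := f_{J_t}^{-1}(\sigma)$ then form a smooth family of coisotropic submanifolds of $V \setminus (\ell_\infty \cup C \cup C_{\OP{nodal}})$, whose one-dimensional characteristic foliations vary smoothly with $t$. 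A Moser-type argument produces an ambient symplectic isotopy $\Phi_t \colon V \to V$ with $\Phi_1=\mathrm{id}$, $\Phi_t(Y_1)=Y_t$, and sending characteristic leaves to characteristic leaves. Since a two-dimensional $S^1$-subbundle of the coisotropic hypersurface $Y_t$ is Lagrangian precisely when it is a union of characteristic leaves, the family $L_t := \Phi_t(L)$ is a Lagrangian isotopy whose endpoint $L_0$ is fibred over $\sigma$ under the standard Lefschetz fibration $f$.

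In the second stage I will deform $\sigma$ through embedded simple closed curves $\sigma_u \subset \C \setminus \{0,1\}$ preserving the winding numbers, to the image $f(\Pi_{s_0}^{-1}(u_1,u_2))$ of some standard Chekanov fibre with $u_2 < 1$. Using an analogous Moser-type lift for the smooth family of coisotropic hypersurfaces $f^{-1}(\sigma_u)$, this base isotopy lifts to a Lagrangian isotopy of $L_0$ whose endpoint is fibred over $\sigma_0$ under $f$. A final adjustment within the standard Lagrangian fibration $\Pi_{s_0}$, carried out by varying the parameters $(u_1,u_2)$ along the one-parameter family of Lagrangian tori inside the solid torus $\Sigma_{u_1}$ from Lemma \ref{lma:compactfibres}, identifies this endpoint with a standard Chekanov fibre.

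The hard part will be the Moser construction in the first stage. The characteristic distributions of $Y_t$ are smooth and vary smoothly with $t$ away from the two fibres through $\{q_1, q_2\} \subset \ell_\infty$, which remain at a positive distance from $L_t$ throughout the deformation. A parametrised coisotropic neighbourhood theorem applied in a tubular neighbourhood of $\bigcup_t L_t$ therefore furnishes the required symplectic ambient isotopy locally, and a suitable cutoff extends it globally while preserving the Lagrangian condition on each $L_t$ and keeping the trajectory disjoint from $\ell_\infty \cup C \cup C_{\OP{nodal}}$.
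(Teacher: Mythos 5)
The central step of your plan---the Moser-type construction of an ambient symplectic isotopy $\Phi_t$ with $\Phi_t(Y_1)=Y_t$ carrying characteristic leaves to characteristic leaves---does not exist in general, and this is a genuine gap. Such a $\Phi_t$ would restrict to an isomorphism of presymplectic manifolds $(Y_1,\omega|_{Y_1})\to(Y_t,\omega|_{Y_t})$, and a presymplectomorphism of these mapping-torus-like hypersurfaces must conjugate their return maps, i.e.\ the monodromies of the symplectic parallel transport around $\sigma$ (resp.\ around $\sigma_u$), as area-preserving maps of the conic fibre. Deforming the almost complex structure, or the base curve, changes this monodromy only within its symplectic isotopy class, not within its conjugacy class, and invariant circles of area-preserving maps are notoriously non-persistent under exactly such perturbations. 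Your torus $L$ is precisely an invariant circle of the monodromy of $f_J$ around $\sigma$ (being Lagrangian in $Y_1$ it is a union of characteristic leaves, as you correctly note); there is no soft reason for the intermediate hypersurfaces $Y_t$ to contain any Lagrangian torus continuing $L$, so the family $L_t=\Phi_t(L)$ need not exist even locally. A telling consistency check: the identical argument, if valid, would also dispose of the Clifford case without ever invoking the nearby Lagrangian theorem for $\T^2$, whereas the paper needs that hard input (Theorem \ref{thm:nearby}, via Lemma \ref{lma:ConvexSubsets}) even after Theorem \ref{thm:CompatibleFibration} has made the torus compatible with a conic fibration. (The cutoff worry you raise at the end is comparatively harmless, since one only needs $\Phi_t$ to be symplectic near $L$; the problem is the local existence itself.)

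The paper's route is entirely different and is designed to avoid this issue: it contracts $L$ by the backwards Liouville flow of $\lambda_{(1/2-\eta,\eta,\eta,1/2-\eta)}$ (using Lemma \ref{lma:LiouvilleComputation}) into a small neighbourhood of $\{\|\widetilde{z}_1\|^2=\|\widetilde{z}_2\|^2\}\cap V$ that embeds into the plumbing $(W,d\lambda)$, applies the involution $I$ of Proposition \ref{prp:involution} there to convert the torus to Clifford position, solves the Clifford case by the identification with a convex subset of $T^*\T^2$ together with Theorem \ref{thm:nearby}, and finally transports the resulting isotopy back into the neighbourhood with the Liouville flow and applies $I$ once more. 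Your observation that a Lagrangian torus sitting inside $f^{-1}(\sigma_0)$ as a union of characteristic leaves is essentially a standard fibre is a reasonable endpoint characterisation, but reaching that position requires the hard pseudoholomorphic input rather than a Moser argument.
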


\begin{proof}
 
Consider a sufficiently small neighbourhood $U \subset V = \CP^2 \setminus C$ of the subset $\{\|\widetilde{z}_1\|^2-\|\widetilde{z}_2\|^2=0\} \cap V.$ Using standard techniques, such as the symplectic neighbourhood theorem, one readily constructs a symplectic embedding $\psi \colon (U,\omega_{\OP{FS}}) \hookrightarrow (W,d\lambda)$ into the neighbourhood of the self-plumbing of the sphere as constructed in Section \ref{sec:SelfPlumbing}. Recall that there exists a symplectic involution $I$ of $(W,d\lambda)$ which interchanges tori of Chekanov and Clifford type; see Proposition \ref{prp:involution}. We may furthermore require that
\begin{itemize}
\item the nodal conic $C_{\OP{nodal}}$ is identified with the symplectic nodal surface $\{p_1=\pm q_2, p_2=\mp q_1\}$ which is fixed setwise by $I$ (while its two sheets are interchanged) and
\item the image $\psi(U) \subset W$ is invariant under the symplectic involution $I.$
\end{itemize}

Using Lemma \ref{lma:LiouvilleComputation} we can see that the backwards Liouville-flow of $\lambda_{(1/2-\eta,\eta,\eta,1/2-\eta)}$ for some small $\eta \in (0,1)$ is a Lagrangian isotopy of $L$ confined to $\CP^2 \setminus (\ell_\infty \cup C \cup C_{\OP{nodal}})$ that produces a Lagrangian isotopy from $L$ to a torus
$$ \phi_{\lambda_{(1/2-\eta,\eta,\eta,1/2-\eta)}}^{-T}(L) \subset U \setminus C_{\OP{nodal}}.$$
An application of Theorem \ref{thm:CompatibleFibration} to the torus $L' \coloneqq \psi^{-1}\circ I\circ \psi \circ \phi^{-T}(L) \subset V \setminus C_{\OP{nodal}}$ can readily be seen to make it compatible with a conic fibration while being in \emph{Clifford position}; see Proposition \ref{prp:involution} together with the homotopical consideration in Part (1) of Remark \ref{rmk:bases}. In Section \ref{sec:LagIsoClifford} we constructed a Hamiltonian isotopy from $L'$ to a standard torus of Clifford type. There is a Lagrangian isotopy contained inside $V \setminus C_{\OP{nodal}}$ from that torus to a standard torus of Clifford type that moreover contained inside $U \setminus C_{\OP{nodal}}.$ Denote by $L'_t$ this entire Lagrangian isotopy.

The sought Lagrangian isotopy is then obtained by an application of the Liouville flow of $\lambda_{(1/2-\eta,\eta,\eta,1/2-\eta)}$ to the Lagrangian isotopy $L'_t,$ with the purpose of making it confined to the subset $U\setminus C_{\OP{nodal}},$ followed by an application of the involution $\psi^{-1} \circ I \circ \psi$ which is defined in the same subset. Indeed, note that this involution interchanges standard tori of Clifford and Chekanov type (at least up to Hamiltonian isotopy) by Proposition \ref{prp:involution}.
\end{proof}

\subsection{From Lagrangian to Hamiltonian isotopy for tori in Chekanov position}
\label{sec:LagToHam}

Proposition \ref{prp:LagrangianIsotopy} gives a Lagrangian isotopy $L_t \subset (\CP^2 \setminus (\ell_\infty \cup C \cup C_{\OP{nodal}}),\omega_{\OP{FS}})$ from $L_0=L$ in Chekanov position to a standard torus. In order to correct the symplectic flux, it is sufficient to find a standard torus of Chekanov type of the same action as $L,$ as the following lemma shows.

\begin{lma}
\label{lma:hamiso}
 
Let $L_t \subset (\CP^2 \setminus (\ell_\infty \cup C \cup C_{\OP{nodal}}),\omega_{\OP{FS}})$ be a Lagrangian isotopy of tori in Chekanov position from $L_0=L$ to $L_1=\Pi_s^{-1}(u_1,u_2),$ where $s \in (0,\pi/2)$ and $(u_1,u_2) \in (-1,1) \times (0,1).$ Assume that there exists a Lagrangian isotopy
$$L_t^{\OP{std}} \coloneqq \Pi_{s(t)}^{-1}(u_1(t),u_2(t)) \subset \CP^2 \setminus (\ell_\infty \cup C \cup C_{\OP{nodal}}), \:\: u_2(t) \in (0,1),$$
of standard tori satisfying $s(1)=s$ and $(u_1(1),u_2(1))=(u_1,u_2),$ and such that the symplectic action of $L_t^{\OP{std}}$ and $L_t$ coincides for all $t$ (when using the identifications induced by the isotopies). Then $L$ is Hamiltonian isotopic inside $\CP^2 \setminus (\ell_\infty \cup C \cup C_{\OP{nodal}})$ to the standard torus $\Pi_{s(0)}^{-1}(u_1(0),u_2(0)).$ 
\end{lma}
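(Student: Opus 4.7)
The plan is to bring both $L_t$ and $L_t^{\OP{std}}$ into comparison with a common reference path obtained by applying Liouville flows, so as to be in a position to apply Lemma~\ref{lma:LiouvilleFlow}. First I would use Corollary~\ref{cor:flux} on the standard-fibre isotopy $L_t^{\OP{std}}$ to produce smooth functions $\mathbf{r}(t) \in \R^4_{>0}$ (with $\sum_i r_i(t)=1$) and $\alpha(t) \ge 0$ for which the Lagrangian path $\widetilde{L}_t^{\OP{std}} \coloneqq \phi^{-\alpha(t)}_{\lambda_{\mathbf{r}(t)}}(L_0^{\OP{std}})$ realises the same symplectic flux-path as $L_t^{\OP{std}}$. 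By Lemma~\ref{lma:WellDefLiouville} the negative Liouville flows involved are complete on $V \setminus (\ell_\infty \cup C \cup C_{\OP{nodal}})$, so this path is well-defined.

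I would then apply the same family of (conformal) symplectomorphisms to $L$ itself, setting $\widetilde{L}_t \coloneqq \phi^{-\alpha(t)}_{\lambda_{\mathbf{r}(t)}}(L)$. The crucial point is that the action rescaling formula of Proposition~\ref{prp:action} depends only on the initial symplectic action class and on $(\mathbf{r},\alpha)$, not on the specific Lagrangian. Combined with the hypothesis that $L=L_0$ and $L_0^{\OP{std}}$ share the same action (which is the $t=0$ instance of the matching-action assumption), this forces $\widetilde{L}_t$ to have the same flux-path as $\widetilde{L}_t^{\OP{std}}$, hence also as $L_t$ and as $L_t^{\OP{std}}$.

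The endgame would be to apply Lemma~\ref{lma:LiouvilleFlow} to the two pairs $(L_t, \widetilde{L}_t)$ and $(L_t^{\OP{std}}, \widetilde{L}_t^{\OP{std}})$, concluding that $L_1$ is Hamiltonian isotopic inside $V \setminus (\ell_\infty \cup C \cup C_{\OP{nodal}})$ to $\widetilde{L}_1$, and similarly that $L_1=L_1^{\OP{std}}$ is Hamiltonian isotopic to $\widetilde{L}_1^{\OP{std}}$. Chaining these Hamiltonian isotopies shows that $\widetilde{L}_1 = \phi^{-\alpha(1)}_{\lambda_{\mathbf{r}(1)}}(L)$ and $\widetilde{L}_1^{\OP{std}} = \phi^{-\alpha(1)}_{\lambda_{\mathbf{r}(1)}}(L_0^{\OP{std}})$ are Hamiltonian isotopic. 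Since both arise from $L$ and $L_0^{\OP{std}}$ under the same conformal symplectomorphism $\phi^{-\alpha(1)}_{\lambda_{\mathbf{r}(1)}}$---which preserves Hamiltonian isotopy classes because it rescales $\omega_{\OP{FS}}$ by a constant scalar---pulling back by its inverse yields the desired Hamiltonian isotopy from $L$ to $L_0^{\OP{std}} = \Pi_{s(0)}^{-1}(u_1(0),u_2(0))$.

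The main obstacle I anticipate is the last invocation of Lemma~\ref{lma:LiouvilleFlow}: the lemma as stated requires comparison with a \emph{fixed} Liouville flow, whereas the reference paths $\widetilde{L}_t$ and $\widetilde{L}_t^{\OP{std}}$ are built from Liouville forms $\lambda_{\mathbf{r}(t)}$ whose parameters genuinely depend on $t$. To circumvent this I would subdivide $[0,1]$ into short intervals on which $\mathbf{r}(t)$ is nearly constant, apply Lemma~\ref{lma:LiouvilleFlow} on each subinterval relative to the frozen Liouville form $\lambda_{\mathbf{r}(t_i)}$, and then concatenate the resulting Hamiltonian isotopies; the continuous dependence of the action rescaling on $(\mathbf{r},\alpha)$ guarantees that the flux-matching hypothesis of the lemma holds up to arbitrarily small error on each piece, and a standard limit argument as the mesh tends to zero upgrades this to an honest global Hamiltonian isotopy of the endpoints inside the complement of the divisor.
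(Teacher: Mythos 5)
Your first two steps are sound: Corollary~\ref{cor:flux} does produce the family $(\mathbf{r}(t),\alpha(t))$, and Proposition~\ref{prp:action} does show that applying $\phi^{-\alpha(t)}_{\lambda_{\mathbf{r}(t)}}$ to $L$ and to $L_0^{\OP{std}}$ produces paths with identical flux (since the rescaling of the action depends only on the initial action class and on $(\mathbf{r},\alpha)$, and both tori carry a basis of Chekanov type). The gap is exactly where you anticipate it, and your proposed repair does not close it. Lemma~\ref{lma:LiouvilleFlow} is not a statement about flux-paths in general --- the paper stresses immediately before it that two Lagrangian isotopies from the same torus with \emph{identical} flux-paths need not have Hamiltonian isotopic endpoints (Clifford vs.\ Chekanov). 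The lemma's conclusion is extracted from the semigroup identity $\phi^{-S_0}=\phi^{-\epsilon}\circ\phi^{-(S_0-\epsilon)}$ for a single globally defined conformal symplectomorphism $\phi^{-\epsilon}$; for your path $\phi^{-\alpha(t)}_{\lambda_{\mathbf{r}(t)}}$ with varying $\mathbf{r}(t)$ the transition maps $\phi^{-\alpha(t')}_{\lambda_{\mathbf{r}(t')}}\circ\bigl(\phi^{-\alpha(t)}_{\lambda_{\mathbf{r}(t)}}\bigr)^{-1}$ involve the \emph{forward} flow of $\lambda_{\mathbf{r}(t)}$, which is not complete, so they are not globally defined and the closedness step breaks. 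Your subdivision scheme then only gives that on each piece the actions of $L_t$ and the frozen-$\mathbf{r}$ reference agree \emph{approximately}; Lemma~\ref{lma:LiouvilleFlow} requires exact agreement, and there is no limit theorem upgrading ``Hamiltonian isotopic up to an $O(\mathrm{mesh})$ action discrepancy on each piece'' to an honest Hamiltonian isotopy of the endpoints --- the set of Lagrangians Hamiltonian isotopic to a given one is not closed under such approximations.

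The paper sidesteps all of this by never comparing $L_t$ to a reference path at all: it \emph{deforms $L_t$ itself}. First concatenate $L_t$ with $L^{\OP{std}}_{1-t}$ to get a path from $L$ to $\Pi_{s(0)}^{-1}(u_1(0),u_2(0))$ whose associated path of standard tori of equal action is now a \emph{loop}; since $u_2(t)\in(0,1)$ this loop is contractible among Chekanov-type fibres. Feeding the contraction $L^{\OP{std}}_{t,\tau}$ into Corollary~\ref{cor:flux} gives a two-parameter family $\phi^{-\alpha(t,\tau)}_{\mathbf{r}(t,\tau)}$ with $\alpha(0,\tau)=\alpha(1,\tau)=0$, and at $\tau=1$ the corrected path $\phi^{-\alpha(t,1)}_{\mathbf{r}(t,1)}(L_t)$ has the flux-path of a constant family, i.e.\ vanishing flux, while its endpoints are still $L$ and $\Pi_{s(0)}^{-1}(u_1(0),u_2(0))$. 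A Lagrangian isotopy with vanishing flux-path \emph{is} a Hamiltonian isotopy, so the corrected path itself is the desired isotopy and Lemma~\ref{lma:LiouvilleFlow} is never invoked. If you want to keep your architecture, you would need to prove a genuine generalisation of Lemma~\ref{lma:LiouvilleFlow} to paths of the form $\phi^{-\alpha(t)}_{\lambda_{\mathbf{r}(t)}}(L)$, which is not available by the completeness obstruction above; the endpoint-preserving deformation of $L_t$ is the essential missing idea.
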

\begin{proof}
 By our assumptions, after concatenating the Lagrangian isotopy $L_t$ with the Lagrangian isotopy $L^{\OP{std}}_{1-t},$ we may produce a Lagrangian isotopy $L_t$ of the same type, but that satisfies $L_0=L$ and $L_1=\Pi_{s(0)}^{-1}(u_1(0),u_2(0)).$ In particular, the symplectic actions of $L_0=L$ and the standard torus $L_1$ now coincide.

In the remainder of this proof we construct a deformation $L_t$ of the path relative endpoints $t=0,1,$ to a Lagrangian isotopy with vanishing symplectic flux-path by applying suitable negative Liouville flows; the main ingredient is Corollary \ref{cor:flux}.

Let $L^{\OP{std}}_t \coloneqq \Pi_{s(t)}^{-1}(u_1(t),u_2(t))$ be the path of standard tori being of the same action as (the new version of) the tori $L_t.$ Note that, since (the new version of) the Lagrangian isotopy $L_t$ starts and ends at a Lagrangian of the same symplectic action, $L^{\OP{std}}_t$ is actually a \emph{loop} of standard tori, i.e.~$L^{\OP{std}}_0=L^{\OP{std}}_1.$ Since $u_2(t)\in(0,1),$ this loop is moreover contractible; i.e.~there exists a homotopy $L_{t,\tau}^{\OP{std}} \coloneqq \Pi_{s(t,\tau)}^{-1}(u_1(t,\tau),u_2(t,\tau))$ of loops of standard tori satisfying
\begin{align*}
& \Pi_{s(t,0)}^{-1}(u_1(t,0),u_2(t,0))=\Pi_{s(t)}^{-1}(u_1(t),u_2(t)),\\
& \Pi_{s(t,1)}^{-1}(u_1(t,1),u_2(t,1)) \equiv \Pi_{s(0)}^{-1}(u_1(0),u_2(0)),\\
& \Pi_{s(i,\tau)}^{-1}(u_1(i,\tau),u_2(i,\tau)) \equiv \Pi_{s(0)}^{-1}(u_1(0),u_2(0)), \:\: i=0,1,
\end{align*}
and where $u_2(t,\tau) \in (0,1)$ moreover is satisfied. Using Corollary \ref{cor:flux} we then find smooth paths
\begin{align*}
& \mathbf{r}(t,\tau) \in \{ \R^4; \:\: r_i >0, \: r_0+\ldots+r_3=1\},\\
& \alpha(t,\tau) \ge 0, \:\: \alpha(0,\tau) \equiv \alpha(1,\tau) \equiv 0,
\end{align*}
for which $\phi^{-\alpha(t,\tau)}_{\mathbf{r}(t,\tau)}(\Pi_{s(t)}^{-1}(u_1(t),u_2(t))),$ and hence $\phi^{-\alpha(t,\tau)}_{\mathbf{r}(t,\tau)}(L_t),$ realises the same symplectic flux-path as $L^{\OP{std}}_{t,\tau}.$ In particular, $\phi^{-\alpha(t,1)}_{\mathbf{r}(t,1)}(L_t)$ is the sought Hamiltonian isotopy from the torus $\phi^{-\alpha(0,1)}_{\mathbf{r}(0,1)}(L_0)=L_0=L$ to the standard torus
$$\phi^{-\alpha(1,1)}_{\mathbf{r}(1,1)}(\Pi_{s(0)}^{-1}(u_1(0),u_2(0)))=\Pi_{s(0)}^{-1}(u_1(0),u_2(0)).$$
\end{proof}
The nondisplaceability result in Proposition \ref{prp:nondisp} can now be used to show that the assumptions of the previous lemma indeed are satisfied.
\begin{lma}
\label{lma:HamisoAssump}
 
Let $L_t \subset (\CP^2 \setminus (\ell_\infty \cup C \cup C_{\OP{nodal}}),\omega_{\OP{FS}})$ be a Lagrangian isotopy of tori in Chekanov position from $L_t=L$ to $L_1=\Pi_s^{-1}(u_1,u_2),$ where $s \in (0,\pi/2)$ and $(u_1,u_2) \in (-1,1) \times (0,1).$ Then the assumptions of Lemma \ref{lma:hamiso} can be assumed to hold. In particular $L$ is Hamiltonian isotopic to a standard torus.
\end{lma}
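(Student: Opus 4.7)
The plan is to construct a Lagrangian isotopy of standard Chekanov torus fibres realising the same symplectic action path as a prescribed Lagrangian isotopy from $L$, and then invoke Lemma \ref{lma:hamiso}.

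First, by Proposition \ref{prp:LagrangianIsotopy} we fix a Lagrangian isotopy $L_t \subset \CP^2 \setminus (\ell_\infty \cup C \cup C_{\OP{nodal}})$ from $L_0 = L$ to $L_1 = \Pi_s^{-1}(u_1, u_2)$ with $u_2 \in (0,1)$. Let $(a_1(t), a_2(t)) \in \R^2$ denote the associated symplectic action path in the Chekanov-type basis, transported along the isotopy as in Remark \ref{rmk:bases}. By Lemma \ref{lma:action} combined with Property (4) therein, the action map for standard Chekanov torus fibres,
$$(s', u_1', u_2') \longmapsto (\pi u_1', A_{s'}(u_1', u_2')),$$
is a smooth submersion onto the open triangle
$$T \coloneqq \{(a_1, a_2) \in \R^2 : a_2 > 0, \, |a_1| + 2 a_2 < \pi\}.$$
Provided $(a_1(t), a_2(t)) \in T$ for all $t \in [0, 1]$, we can smoothly lift the action path to a curve $(s(t), u_1(t), u_2(t))$ with prescribed endpoint $(s(1), u_1(1), u_2(1)) = (s, u_1, u_2)$; setting $L_t^{\OP{std}} \coloneqq \Pi_{s(t)}^{-1}(u_1(t), u_2(t))$ then yields the matching standard family, so that Lemma \ref{lma:hamiso} produces the sought Hamiltonian isotopy from $L$ to $\Pi_{s(0)}^{-1}(u_1(0), u_2(0))$.

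The remaining content is therefore to verify $(a_1(t), a_2(t)) \in T$ throughout. Continuity of the action path together with the endpoint value $(a_1(1), a_2(1)) \in T$ reduces the question to ruling out crossings of $\partial T$. Here Proposition \ref{prp:action} is decisive: as $\mathbf{r}$ varies over the standard simplex, the attracting locus $(\pi(r_1 - r_2), \pi r_3 / 2)$ of the backwards Liouville flow of $\lambda_{\mathbf{r}}$ sweeps out precisely $\overline{T}$, with each face of $\partial T$ corresponding to a boundary face of the simplex. If the action path were to reach a face of $\partial T$ at some time $t_0$, one selects a corresponding $\mathbf{r}$ so that $L_{t_0}$ becomes exact with respect to $\lambda_{\mathbf{r}}$; Lemma \ref{lma:LiouvilleFlow}, combined with the completeness of the backwards Liouville flow of $\lambda_{\mathbf{r}}$ (Lemmas \ref{lma:WellDefLiouville} and \ref{lma:LiouvilleCP2}) and the attracting behaviour of Lemma \ref{lma:gradient}, then Hamiltonian-isotopes $L_{t_0}$ inside a suitable completion of $V$ into an arbitrarily small neighbourhood of the attracting divisor, forcing it to be displaceable there.

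The principal obstacle is ensuring that this displacement takes place inside $V$ itself, as required to contradict Proposition \ref{prp:nondisp}: only the Liouville parameters $\mathbf{r}$ with $r_1 = r_2 = 0$ give Liouville forms globally defined on $V,$ yet the boundary faces of $T$ correspond to parameters with at least one of $r_1, r_2$ strictly positive. For those faces we invoke the involution $I$ of Proposition \ref{prp:involution}, which interchanges Chekanov and Clifford positions in a neighbourhood of $C_{\OP{nodal}}$: a hypothetical boundary crossing of $L_{t_0}$ can be translated via $I$ into a crossing of the boundary of the convex action region of Lemma \ref{lma:ConvexSubsets}, contradicting the confinement obtained by the Clifford argument of Section \ref{sec:LagIsoClifford} in conjunction with Theorem \ref{thm:nearby}. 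This symmetry argument handles all faces of $\partial T$ and completes the verification of the action bound.
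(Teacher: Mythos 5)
Your overall scheme---produce a path of standard Chekanov fibres realising the same symplectic action as $L_t$ and feed it to Lemma \ref{lma:hamiso}---is the right one, and your identification of the realizable action region as the open triangle $T$ agrees with Lemma \ref{lma:action}. The gap is in the step you yourself isolate as ``the remaining content'': the claim that the action path $(a_1(t),a_2(t))$ of the \emph{given} Lagrangian isotopy never leaves $T.$ This is not established in the paper, and your argument for it does not work. If the path touches a face of $\partial T,$ making $L_{t_0}$ exact for the corresponding boundary parameter $\mathbf{r}$ and pushing it by the backwards Liouville flow toward the skeleton does \emph{not} ``force it to be displaceable'': by Lemma \ref{lma:LiouvilleFlow} this flow preserves the Hamiltonian isotopy class of an exact Lagrangian, and exact closed Lagrangians living near a Liouville skeleton are exactly the objects one expects to be non-displaceable---so no contradiction with Proposition \ref{prp:nondisp} is produced. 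Worse, away from isolated corner points every face of $\partial T$ forces $r_1>0$ or $r_2>0,$ so the relevant Liouville form is not even defined on $V,$ and Proposition \ref{prp:nondisp} (non-displaceability \emph{inside $V$}) cannot be invoked at all; the appeal to the involution $I$ and to the convex region of Lemma \ref{lma:ConvexSubsets} does not repair this, since $I$ is defined only on $\widehat{W},$ it does not transform the action path of $L_t$ into one confined to the Clifford region $U,$ and the tori $L_t$ here need not be disjoint from $f^{-1}[0,1].$

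The paper's proof avoids ever having to prove your claim. It applies the backwards Liouville flow $\phi^{-T}_{(1/2-\eta,\eta,\eta,1/2-\eta)}$ with \emph{all} weights positive (hence complete on the complement of the full divisor, Lemma \ref{lma:WellDefLiouville}) to the whole isotopy $L_t$; by Proposition \ref{prp:action} this contracts every action into an arbitrarily small neighbourhood of the interior point $(0,\pi(1/2-\eta)/2)\in T,$ so for $T\gg 0$ a matching standard path exists trivially and Lemma \ref{lma:hamiso} yields that $\phi^{-T}(L)$ is Hamiltonian isotopic to some $\Pi_{s'}^{-1}(u_1',u_2').$ The real work is transferring this back to $L$: embed $V$ into the completion $(\widehat{W},d\lambda)$ via $\iota_{s'},$ note that $\phi^t_\lambda\circ\iota_{s'}\circ\phi^{-t}(L)$ is a Hamiltonian isotopy, conclude via Lemma \ref{lma:LiouvilleFlow} that $L$ is Hamiltonian isotopic in $\widehat{W}$ to a fibre $\hat{\pi}^{-1}(a,b),$ and then use Proposition \ref{prp:nondisp} to force that fibre to lie inside $\iota_{s'}(V)$---which is precisely the existence of a standard torus in $V$ of the same action as $L.$ As written, your boundary-crossing step is a genuine gap, and the proof needs to be restructured along these lines.
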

\begin{proof}
 
It suffices to show that there exists a standard torus of Chekanov type that is of the same symplectic action as $L_t$ for all $t \in [0,1].$ This is the case for the image of $L_t$ under the negative Liouville flow $\phi^{-T}_{1/2-\eta,\eta,\eta,1/2-\eta}$ for some small $\eta>0$ and sufficiently large $T \gg 0,$ as can be seen by alluding to the symplectic action computations in Proposition \ref{prp:action}.

Since $\phi^{-T}_{1/2-\eta,\eta,\eta,1/2-\eta}(\Pi_s^{-1}(u_1,u_2))$ is Hamiltonian isotopic to a standard torus by Lemma \ref{lma:LiouvilleFlow}, we can in fact conclude the following by alluding to Lemma \ref{lma:hamiso}: The image $\phi^{-T}_{1/2-\eta,\eta,\eta,1/2-\eta}(L)$ of our torus is Hamiltonian isotopic to a standard torus $\Pi_{s'}^{-1}(u_1',u_2')$ of Chekanov type.

From this we will now deduce that there exists a standard torus of the same symplectic action as $L_t.$ Consider the symplectic embedding
$$ \iota_{s'} \colon (V,\omega_{\OP{FS}}) \hookrightarrow (\widehat{W},d\lambda)$$
into its Liouville completion as provided by Proposition \ref{prp:embedding}, which takes torus fibres to torus fibres. Since $\iota_{s'}\circ \phi^{-T}_{1/2-\eta,\eta,\eta,1/2-\eta}(L)$ is Hamiltonian isotopic to a standard torus fibre $\hat{\pi} \colon \widehat{W} \to \R^2,$ we can use Lemma \ref{lma:LiouvilleFlow} to conclude that $\widetilde{L} \coloneqq \phi^T_\lambda \circ \iota_{s'}\circ \phi^{-T}_{1/2-\eta,\eta,\eta,1/2-\eta}(L)$ is Hamiltonian isotopic to a fibre $\hat{\pi}^{-1}(a,b)$ as well. Since $\phi^t_\lambda \circ \iota_{s'}\circ \phi^{-t}_{1/2-\eta,\eta,\eta,1/2-\eta}(L)$ is a Hamiltonian isotopy from $\iota_{s'}(L)$ to $\widetilde{L},$ we conclude that the latter fibre $\hat{\pi}^{-1}(a,b)$ must be contained in the image of $\iota_{s'}$ as sought. If not, then we have managed produced a Hamiltonian displacement of $\iota_{s'}(L),$ which is in contradiction with Proposition \ref{prp:nondisp}.
\end{proof}

\section{The Hamiltonian isotopy to a standard immersed sphere}

\label{sec:proof-whitney}
In this section we let $\widetilde{L} \subset (B^4,\omega_0)$ be an immersed Lagrangian sphere that satisfies one of the conditions in the assumption of Theorem \ref{thm:main}. The goal is to prove the theorem by finding a Hamiltonian isotopy to a standard sphere. We begin with a preliminary lemma which holds for arbitrary such spheres.
\begin{lma}
For any immersed Lagrangian sphere $\widetilde{L} \subset (B^4,\omega_0=d\lambda_{\OP{std}})$ with a single transverse double point, a primitive $f \colon \widetilde{L} \to \R$ of $\lambda_{\OP{std}}|_{T\widetilde{L}}$ satisfies the property that the difference of its values at the two preimages $\{p,q\}$ of the double point is bounded by $|f(p)-f(q)| \le \pi/2.$
\end{lma}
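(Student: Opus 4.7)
The plan is to reduce the statement to the classical symplectic area bound for embedded Lagrangian tori in $B^4$ via a Polterovich-type Lagrange surgery at the double point. I would set $\Delta = f(p) - f(q)$ and assume without loss of generality that $\Delta \ge 0$.

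First I would perform a Lagrange surgery at the double point with a small smoothing parameter $\epsilon > 0$, producing an embedded Lagrangian torus $L_\epsilon \subset (B^4, \omega_0)$. A direct application of Stokes' theorem to the cycle obtained by concatenating a path on $\widetilde{L}$ from $q$ to $p$ with the core of the Lagrangian handle inserted by the surgery should exhibit an embedded disk in $B^4$ bounded by a generator of $H_1(L_\epsilon)$, with Maslov index two and symplectic area equal to $\Delta + O(\epsilon)$; the $O(\epsilon)$-correction comes from the contribution of the handle region itself, whose size is controlled by the smoothing parameter. Next I would invoke the classical consequence of Gromov's theorem on pseudoholomorphic disks: every embedded Lagrangian torus in $(B^4,\omega_0)$ admits a Maslov-index-two pseudoholomorphic disk with symplectic area at most $\pi/2$. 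Applied to $L_\epsilon$, this produces a disk $D$ with $\int_D \omega_0 \le \pi/2$, in some homology class $k \mathbf{e}_0 + \ell \mathbf{e}_1 \in H_1(L_\epsilon)$, where $\mathbf{e}_1$ is the handle class above and $\mathbf{e}_0$ is the complementary generator coming from the ``equator'' of the original sphere. The Maslov-index constraint and positivity of symplectic area should then force $\ell = 1$ and constrain $k$, giving $\Delta + O(\epsilon) \le \pi/2$; letting $\epsilon \to 0$ yields $|\Delta| \le \pi/2$.

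The main obstacle will be the identification of the homology class of the minimal-area disk with the handle class $\mathbf{e}_1$: a priori, the $\pi/2$-bound only guarantees a disk in \emph{some} class in $H_1(L_\epsilon)$, and we must rule out competing classes for small $\epsilon$. I expect this to be handled by Gromov compactness applied to disks of bounded area and Maslov index, which keeps their homology classes bounded as $\epsilon \to 0$, combined with the observation that the class $\mathbf{e}_0$ has Maslov zero (so it alone cannot realise a Maslov-two disk). An alternative cleaner route, avoiding the Lagrange surgery entirely, would be to directly construct a pseudoholomorphic half-disk with boundary on $\widetilde{L}$ and area essentially $\Delta$ via a neck-stretching argument applied to a $J$-holomorphic line in $B^4$ passing through the double point, and then to use Gromov's width bound for $B^4$ to control its area.
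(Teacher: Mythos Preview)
Your approach via Lagrange surgery is exactly the paper's, but you have misdiagnosed the difficulty. The paper's proof is one line: a zero-area Lagrange surgery yields a \emph{monotone} torus in $B^4$ whose minimal positive symplectic action is precisely $|f(p)-f(q)|$, and then Cieliebak--Mohnke's bound (the minimal positive symplectic action of any Lagrangian torus in the unit ball is at most $\pi/2$) finishes.

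The ``main obstacle'' you flag is not genuine, and your Gromov-compactness fix would fail as the boundary condition degenerates when $\epsilon\to 0$. What you are missing is that $\mathbf{e}_0$ has symplectic action \emph{exactly} zero, not $O(\epsilon)$: it bounds the Lagrangian cap excised from $\widetilde{L}$ (equivalently, a hemisphere of $\widetilde{L}$), and a Lagrangian disc has zero symplectic area. The same cap shows $\mu(\mathbf{e}_0)=0$, since the loop of tangent Lagrangian planes along its boundary extends over the disc and is therefore contractible in the Lagrangian Grassmannian. Hence the surgered torus is automatically monotone, every Maslov-two class carries the same action, and the coefficient $k$ is irrelevant. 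The zero-area condition in the paper serves only to make this action exactly $\Delta$ rather than $\Delta+O(\epsilon)$; your $\epsilon\to 0$ limit achieves the same. Finally, the area bound you need is due to Cieliebak--Mohnke rather than Gromov, though for monotone tori the two formulations coincide.
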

\begin{proof}
This is a consequence of \cite[Corollary 1.3]{Cieliebak:PuncturedHolomorphic}, by which the minimal positive symplectic action of a Lagrangian torus satisfies the same upper bound $\pi/2.$ Indeed, performing a ``zero-area'' Lagrangian surgery on $\widetilde{L}$ (this can always be done in an arbitrarily small neighbourhood of the double point) we obtain a monotone Lagrangian torus that is contained inside $B^4$ and whose minimal value of the modulus of the nonzero symplectic action is equal to precisely $|f(p)-f(q)|.$ (For such a surgery, the symplectic area of a disc with a corner is \emph{the same} as the corresponding area of the disc with boundary on the resolved Lagrangian.)
\end{proof}
In view of the above lemma, combined with Part (1) Proposition \ref{prp:mainprp}, when taking $s=|f(p)-f(q)|$ we conclude that $\widetilde{L}$ and $L_{\OP{Wh}}(s)$ both are strongly exact for the Liouville form $\lambda_{3s/\pi}$ on $(V,\omega_{\OP{FS}})$; c.f.~Lemma \ref{lma:LiouvilleCP2}.

Consider the two non-monotone tori $T_\pm^0 \coloneqq \Pi_s^{-1}(\pm \epsilon,1)$ fibred over the closed curve $f(T_\pm^0) =\Psi_s^{-1}(1+e^{i\theta}) \subset \C$ in the standard conic Lefschetz fibration. The subset of the fibres bounded by these two Lagrangian tori is a one-parameter family
$$ A^0(\theta) \coloneqq f^{-1}(\Psi_s^{-1}(1+e^{i\theta})) \cap \{ |\|\widetilde{z}_1\|^2-\|\widetilde{z}_2\|^2| \le \epsilon \}, \:\: \theta \in S^1,$$
of holomorphic annuli, whose boundaries thus provide foliations of the two tori. All annuli are embedded and smooth except for the `nodal' annulus $A^0(\pi),$ which consists of two embedded discs intersecting transversely in a single point. In addition, the annuli $\{A^0(\theta)\}_{\{\theta \neq \pi\}}$ intersect the smooth part (i.e.~the complement of the double point) of the Whitney immersion $L_{\OP{Wh}}(s)$ in a foliation by closed curves, while the double point of $L_{\OP{Wh}}(s)$ intersects $A^0(\pi)$ precisely in its node.

A basis $\langle \mathbf{e}^\pm_0,\mathbf{e}^\pm_1\rangle=H_1(T_\pm^0)$ with the properties in Lemma \ref{lma:goodbasis} can be fixed in the following manner. We let $\mathbf{e}_0^\pm$ be the boundary inside $T_\pm^0$ of either holomorphic disc in the nodal annulus $A^0(\pi)$; their orientations are moreover chosen so that the symplectic action of $\mathbf{e}_0^\pm$ satisfies $\pm\int_{\mathbf{e}_0^\pm} \lambda_{\OP{std}} > 0$ on $H_1(T_\pm^0).$ We then let $\mathbf{e}_1^\pm$ be the unique class of Maslov index two inside $\C^2$ for which $\int_{\mathbf{e}_1^\pm}\lambda_{\OP{std}}=s.$

Consider a Weinstein neighbourhood of $\widetilde{L}$ identified with a neighbourhood of the standard sphere $L_{\OP{Wh}}(s)=\Pi_s^{-1}(0,1)$ by a symplectomorphism $\Phi$ that satisfies $\Phi(L_{\OP{Wh}})=\widetilde{L}$ (here we have used Proposition \ref{prp:Weinstein}). The map $\Phi$ is constructed so that induces the identity map in homology; to that end recall that $H_i(V)\cong\Z$ for $i=0,1,$ and that $V$ deformation retracts onto the Whitney sphere by Propositions \ref{prp:W} and \ref{prp:embedding}.

Since $\widetilde{L}$ satisfies the assumption of Theorem \ref{thm:main}, the same holds also for the induced Lagrangian tori $T_\pm\coloneqq\Phi(T_\pm^0),$ which can be assumed to be arbitrarily close to $\widetilde{L}.$ Moreover, using this symplectomorphism we can produce a family $A(\theta)\coloneqq\Phi(A^0(\theta))$ of symplectic annuli containing our immersed Lagrangian sphere $\widetilde{L} \subset \bigcup_\theta A(\theta).$
Using the same identification $\Phi,$ we also obtain an induced basis $\langle \mathbf{e}_0^\pm,\mathbf{e}_1^\pm \rangle = H_1(T_\pm)$ from the choice of basis for $H_1(T_\pm^0)$ made above. We formulate its properties in a lemma.

\begin{lma}
\label{lma:orientation}
 The ordered basis $\langle \mathbf{e}_0^\pm, \mathbf{e}_1^\pm \rangle=H_1(T_\pm)$ satisfies the properties of Lemma \ref{lma:goodbasis}. Furthermore, the induced orientation on $[T_\pm] \in H_2(V)$ is equivalent to the orientation of a standard torus $[\Pi_s^{-1}(u_1,u_2)] \in H_2(V)$ that is induced by the action-angle coordinates and the standard orientation of the base $\{(u_1,u_2)\} \subset \R^2$ (more precisely, orienting the base gives an orientation of the cotangent bundle of the torus, which is canonically isomorphic to the tangent bundle by the Lagrangian condition).
\end{lma}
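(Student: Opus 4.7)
The plan is to verify both assertions first on the standard torus $T_\pm^0 = \Pi_s^{-1}(\pm \epsilon, 1)$ and then transfer to $T_\pm = \Phi(T_\pm^0)$, exploiting that $\Phi$ induces the identity on $H_*(V)$. The transfer will use an extension $\Phi_{\OP{ext}} \colon V \to V$ of $\Phi$ that equals $\Phi$ on a fixed Weinstein neighbourhood of $\widetilde{L}$ (chosen small enough to be disjoint from $C$) and the identity outside.

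The first step is to identify $\mathbf{e}_0^\pm = \mathbf{e}_0$ on $T_\pm^0$ via Lemma \ref{lma:basis0}: the holomorphic discs composing the nodal annulus $A^0(\pi)$ lie inside $C_{\OP{nodal}} \subset V$, which witnesses $\mathbf{e}_0^\pm \in \ker(H_1(T_\pm^0) \to H_1(V))$, and the sign convention $\pm\int_{\mathbf{e}_0^\pm}\lambda_{\OP{std}} > 0$ matches $\int_{\mathbf{e}_0}\lambda_{\OP{std}} = \pm\pi\epsilon$. Since $\mu_L^{\C^2}(\mathbf{e}_1^\pm) = 2$ and $\mathbf{e}_0$ generates $\ker\mu_L^{\C^2}$, one has $\mathbf{e}_1^\pm = \mathbf{e}_1 + k \mathbf{e}_0$ for some $k \in \Z$, where $\mathbf{e}_1$ is the class supplied by Lemma \ref{lma:homologybasis}. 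The homomorphism $D \bullet C \colon H_1(T_\pm^0) \to \Z$ vanishes on $\mathbf{e}_0$ (any $2$-chain bounding $\mathbf{e}_0$ inside $V$ is automatically disjoint from $C$) and takes value $+1$ on $\mathbf{e}_1$ by Lemma \ref{lma:homologybasis}, so $D \bullet C = 1$ for $\mathbf{e}_1^\pm$ as well. To transfer this property to $T_\pm$, given any disc $(D, \partial D) \to (\CP^2 \setminus \ell_\infty, T_\pm)$ in the class $\mathbf{e}_1^\pm$, pick a disc $D_0 \subset \C^2$ bounded by $T_\pm^0$ in the same class; the difference $D - \Phi_{\OP{ext}}(D_0)$ is a closed $2$-cycle in $\C^2$ and hence null-homologous, whereas $\Phi_{\OP{ext}}(D_0) \bullet C = D_0 \bullet C = 1$ because $C$ is disjoint from the neighbourhood where $\Phi_{\OP{ext}}$ is non-trivial, yielding $D \bullet C = 1$.

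For the orientation assertion, since $\mathbf{e}_1^\pm = \mathbf{e}_1 + k \mathbf{e}_0$, the change of basis from $\langle\mathbf{e}_0, \mathbf{e}_1\rangle$ to $\langle\mathbf{e}_0^\pm, \mathbf{e}_1^\pm\rangle$ has determinant $+1$, so both induce the same orientation on $T_\pm^0$; this agrees with the action-angle orientation of $\Pi_s^{-1}(\pm\epsilon, 1)$ fixed by the conventions of Lemmas \ref{lma:basis0} and \ref{lma:homologybasis}, as can be seen from the positivity of the Jacobian of $(I_0, I_1) = (\pi u_1, A_s(u_1, u_2))$ with respect to the base coordinates $(u_1, u_2)$ via Lemma \ref{lma:action}(2). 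Since $\Phi$ is a symplectomorphism, hence orientation-preserving, and $\Phi_* = \id$ on $H_2(V)$, the oriented class $[T_\pm]$ agrees in $H_2(V)$ with $[\Pi_s^{-1}(\pm\epsilon, 1)]$ equipped with its action-angle orientation. The principal technical delicacy lies in the intersection transfer in the preceding paragraph, where it is essential that the Weinstein neighbourhood can be chosen disjoint from $C$.
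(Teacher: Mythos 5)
The paper states Lemma \ref{lma:orientation} without proof, so your argument cannot be compared to an ``official'' one; judged on its own terms, your overall strategy (verify everything on $T_\pm^0$, where $\mathbf{e}_0^\pm=\mathbf{e}_0$ by the action normalisation and $\mathbf{e}_1^\pm=\mathbf{e}_1+k\mathbf{e}_0$ by the Maslov normalisation, then transport along $\Phi$ using that it is homologically trivial and orientation-preserving) is the right one, and the orientation discussion via the positivity of the Jacobian of $(\pi u_1,A_s)$ is correct in substance.

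The one step that does not work as written is the transfer of the intersection number via $\Phi_{\OP{ext}}$. A map $V\to V$ that equals $\Phi$ on a Weinstein neighbourhood and the identity outside it is discontinuous along the boundary of that neighbourhood unless $\Phi$ happens to be the identity there, and producing an honest ambient extension of $\Phi$ (even just a homeomorphism of $\C^2$ fixing a neighbourhood of $C$) is an isotopy-extension problem that is not automatic and is uncomfortably close to what one is trying to prove. Worse, the chain $\Phi_{\OP{ext}}(D_0)$ is applied to a disc $D_0$ that \emph{must} leave the Weinstein neighbourhood, since $D_0\bullet C=1\neq 0$ while the neighbourhood is disjoint from $C$; so the pushforward is taken exactly across the locus where $\Phi_{\OP{ext}}$ fails to be defined. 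The repair is standard and uses only the hypothesis you already invoke: since $\Phi$ induces the identity on $H_1(V)$, the curves $\partial D_0\subset T_\pm^0$ and $\Phi(\partial D_0)\subset T_\pm$ represent the same class in $H_1(V)$, hence cobound a $2$-chain $\Sigma\subset V$; as $\Sigma$ is disjoint from $C$ by definition of $V$, the chain $D_0+\Sigma$ has boundary $\Phi(\partial D_0)$ and intersection number $D_0\bullet C=1$ with $C$, and any other disc with the same boundary class gives the same number because $H_2(\C^2)=0$. Two further points deserve a sentence each in a final write-up: Lemma \ref{lma:homologybasis} is formulated for fibres with $u_2\neq 1$, so the value $D\bullet C=1$ on a complement of $\mathbf{e}_0$ at $u_2=1$ should be obtained by continuity from nearby fibres; and the fact that $\mathbf{e}_0^\pm$ \emph{generates} (rather than merely lies in) $\ker(H_1(T_\pm)\to H_1(V))$ uses that this kernel is a rank-one direct summand containing the primitive class $\Phi_*\mathbf{e}_0$, which again follows from $\Phi$ being homologically trivial together with surjectivity of $H_1(T_\pm^0)\to H_1(V)$.
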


\begin{lma}
\label{lma:e0}
For a generic neck-stretching sequence around either of $T_\pm,$ the broken conic fibres produced by Proposition \ref{prp:characterisation} satisfy the property that the unique plane in the building which passes through $q_1$ (resp. $q_2$) is asymptotic to a closed geodesic in the class $\mathbf{e}_0^\pm$ (resp. $-\mathbf{e}_0^\pm$).
\end{lma}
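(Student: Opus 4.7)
The strategy is to apply Proposition~\ref{prp:characterisation} to the tori $T_\pm$ and then to identify the resulting asymptotic class with $\mathbf{e}_0^\pm$.

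I first verify that $T_\pm$ satisfies the hypotheses of Proposition~\ref{prp:characterisation}: since $\widetilde L$ satisfies Theorem~\ref{thm:main} and $\Phi$ was constructed to induce the identity on $H_*(V)$, the tori $T_\pm$ and $T_\pm^0$ are homologous and both nonzero in $H_2(V)\cong\Z$, so that condition~(1) of the theorem applies. A generic neck-stretching around $T_\pm$ thus produces, by Proposition~\ref{prp:characterisation}, a broken conic whose top level consists of exactly two planes $A_1,A_2$ through $q_1,q_2$, with $A_i$ asymptotic to $(-1)^{i+1}\mathbf{e}_0^{\mathrm{Prop}}$. Here $\mathbf{e}_0^{\mathrm{Prop}}$ denotes the unique generator of $\ker(H_1(T_\pm)\to H_1(V))$ subject to the sign condition $D\bullet\ell_j=(-1)^j$ on any disc $D\subset\CP^2\setminus\ell_\infty$ with $\partial D=\mathbf{e}_0^{\mathrm{Prop}}$, and the task reduces to proving $\mathbf{e}_0^{\mathrm{Prop}}=\mathbf{e}_0^\pm$.

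By Lemma~\ref{lma:orientation} the class $\mathbf{e}_0^\pm$ also generates $\ker(H_1(T_\pm)\to H_1(V))$, so the two classes differ at most by a sign. On the standard tori $T_\pm^0$ the identification is immediate: the nodal-annulus definition of $\mathbf{e}_0^\pm$ yields the normalisation $\int_{\mathbf{e}_0^\pm}\lambda_{\OP{std}}=\pi u_1$ with $u_1=\pm\epsilon$, matching the unique class from Lemma~\ref{lma:basis0}; that same class satisfies $D\bullet\ell_j=(-1)^j$ by part~(2) of that lemma. Hence $\mathbf{e}_0^\pm=\mathbf{e}_0^{\mathrm{Prop}}$ on $T_\pm^0$.

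The main obstacle is transporting this equality along $\Phi$ to $T_\pm$, since $\Phi$ is defined only on a Weinstein neighbourhood which need not contain the lines $\ell_j$. I would handle this by extending $\Phi$ to a smooth ambient self-diffeomorphism of $\CP^2\setminus\ell_\infty$ that is isotopic to the identity; such an extension exists because $\Phi$ induces the identity on $H_*(V)$ and $V$ is homotopy equivalent to the immersed sphere $L_{\mathrm{Wh}}$. Under such an isotopy the fixed lines $\ell_j$ are not moved, so intersection numbers are preserved, and combined with Lemma~\ref{lma:orientation} (which guarantees that $\Phi_*$ preserves the orientation of $\mathbf{e}_0^\pm$) we obtain $\Phi_*(\mathbf{e}_0^\pm|_{T_\pm^0})=\mathbf{e}_0^{\mathrm{Prop}}|_{T_\pm}=\mathbf{e}_0^\pm|_{T_\pm}$. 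Should the extension argument prove too delicate, an alternative is to exploit the symplectic character of the nodal annulus $A(\pi)=\Phi(A^0(\pi))$: its holomorphic disc bounded by $T_\pm$ is intrinsic to the Weinstein neighbourhood and canonically orients $\mathbf{e}_0^\pm$, so that comparison with $\mathbf{e}_0^{\mathrm{Prop}}$ can be reduced to a local intersection computation near the nodal point.
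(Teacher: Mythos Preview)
Your overall strategy---apply Proposition~\ref{prp:characterisation} and then match the resulting class $\mathbf{e}_0^{\mathrm{Prop}}$ with $\mathbf{e}_0^\pm$---is sound, but the transport step contains a genuine gap. You propose to extend the locally defined symplectomorphism $\Phi$ to an ambient diffeomorphism of $\CP^2\setminus\ell_\infty$ isotopic to the identity and then claim that ``the fixed lines $\ell_j$ are not moved.'' This is not correct: an ambient isotopy of $\C^2$ has no reason to fix the coordinate lines, and since each $\ell_j\setminus\ell_\infty$ is contractible (hence nullhomologous) in $\C^2$, intersection numbers of relative cycles with $\ell_j$ are \emph{not} invariants of such an isotopy. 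Concretely, $\Phi(D)\bullet\ell_j$ need not equal $D\bullet\ell_j$ even when $\Phi$ is isotopic to the identity. Your fallback suggestion of a ``local intersection computation near the nodal point'' does not address this, because the characterisation of $\mathbf{e}_0^{\mathrm{Prop}}$ in Proposition~\ref{prp:characterisation} is via \emph{global} intersection numbers with $\ell_1,\ell_2$, data that is simply not visible inside the Weinstein neighbourhood. (There is also a minor issue: you invoke part~(2) of Lemma~\ref{lma:basis0} for $T_\pm^0=\Pi_s^{-1}(\pm\epsilon,1)$, but that part requires $u_2\neq 1$; the tori $T_\pm^0$ actually meet $C_{\OP{nodal}}$, so $D\bullet\ell_j$ is not even well defined there.)

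The paper resolves the sign question by a completely different mechanism that avoids transporting anything through $\Phi$. It first invokes the already established embedded case of Theorem~\ref{thm:main}: the torus $T_\pm$ is Hamiltonian isotopic \emph{inside $V$} to a standard fibre $\Pi_s^{-1}(u_1^\pm,u_2^\pm)$. Lemma~\ref{lma:orientation}, which compares the orientation induced by $\langle\mathbf{e}_0^\pm,\mathbf{e}_1^\pm\rangle$ with the standard action--angle orientation, then forces $\int_{\mathbf{e}_0^\pm}\lambda_{\OP{std}}=\pi u_1^\pm$ with the correct sign. Finally the sign of the asymptotic of the plane $A_1$ through $q_1$ is read off from its symplectic area: a plane with $A_1\bullet\ell_\infty=1$ has area $\pi-\int_{\mathbf{g}}\lambda_{\OP{std}}$, and comparing this with the action just computed pins down $\mathbf{g}=\mathbf{e}_0^\pm$. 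The point is that the global Hamiltonian isotopy in $V$ preserves both the orientation class in $H_2(V)$ and symplectic actions, whereas your local $\Phi$ preserves neither the position of $\ell_j$ nor intersection numbers with it.
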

\begin{proof}
 
Since the assumptions of Theorem \ref{thm:main} are satisfied for the tori $T_\pm,$ they are Hamiltonian isotopic to standard tori $\Pi_s^{-1}(u_1^\pm,u_2^\pm).$ By the above Lemma \ref{lma:orientation}, we must have $\int_{\mathbf{e}_0^\pm}(\lambda_{\OP{std}})=\pi\cdot u_1^\pm$ (the sign of the right-hand side is the nontrivial part of this equivalence, for which we must use the statement concerning the orientation induced by the basis). The claim now follows from the topological fact that the area of a plane that has intersection number $+1$ with $\ell_\infty$ is of symplectic area equal to $\pi-\int_{\mathbf{g}} \lambda_{\OP{std}}=\pi-\pi u_1^\pm,$ where $\mathbf{g} \in H_1(T^\pm)$ is the class of the geodesic to which the plane is asymptotic (note the sign!).
\end{proof}

We proceed to perform a neck-stretching \emph{simultaneously} around the two disjoint Lagrangian tori $T_+ \cup T_-.$ It can be explicitly seen that it is possible to choose the neck stretching sequence $J_\tau$ so that the above family $A(\theta)$ of symplectic annuli stay pseudoholomorphic for all $\tau \ge 0.$ To that end, it is important to use the fact that they all are nicely embedded near their boundaries, and that they intersect the tori $T_\pm$ cleanly in a foliation by embedded closed curves. Moreover, we can achieve the following for the limit almost complex structure $J_\infty$ defined on $\CP^2 \setminus (T_+ \cup T_-)$:
\begin{lma}
\label{lma:planes}
The almost complex structure $J_\infty$ on $\CP^2 \setminus (T_+ \cup T_-)$ may be taken to satisfy:
\begin{enumerate}
\item $C(\theta) \coloneqq A(\theta) \setminus (T_+ \cup T_-)$ for $\theta \neq \pi$ are embedded $J_\infty$-holomorphic cylinders (i.e.~two-punctured spheres) inside $\CP^2 \setminus (T_- \cup T_+),$ one of the punctures of which is asymptotic to a closed geodesic in the class $-\mathbf{e}_0^+$ on $T_+,$ while the other puncture is asymptotic to a closed geodesic in the class $\mathbf{e}_0^-$ on $T_-;$
\item The curve $C(\pi)$ is a nodal $J_\infty$-holomorphic cylinder consisting of two transversely intersecting pseudoholomorphic planes $P_\pm,$ where the plane $P_\pm$ is asymptotic to a closed geodesic on $T_\pm$ in the class $\mp \mathbf{e}_0^\pm;$
\item Away from the node $P_+ \cap P_-,$ the punctured spheres $C(\theta)$ provide a smooth foliation of an embedded three-manifold $\bigcup_\theta C(\theta) \setminus \{ P_+ \cap P_-\},$ and the asymptotic evaluation maps $\{C(\theta)\} \to \Gamma_{\pm \mathbf{e}_0} \cong S^1$ from the cylinders to its asymptotic orbit on either torus $T_\pm$ are diffeomorphism; and
\item The Fredholm index of each cylinder $C(\theta),$ $\theta \neq \pi,$ is equal to $\indx{C(\theta)}=0,$ while each of the two punctured planes involved in the nodal cylinder $C(\pi)$ is of index $-1.$
\end{enumerate}
\end{lma}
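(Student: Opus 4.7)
The strategy is to set up the neck-stretching sequence carefully so that the prescribed family $\{A(\theta)\}_{\theta \in S^1}$ survives as a family of $J_\tau$-holomorphic annuli for all $\tau \geq 0$, then analyse the limiting punctured curves $C(\theta)$ obtained after removing $T_+ \cup T_-$. First I would choose the Weinstein tubular neighbourhoods used in the neck-stretching construction of Section \ref{sec:neckstretch} to be adapted to the clean intersections $A(\theta) \cap T_\pm$: inside these neighbourhoods $A(\theta) \cap T_\pm$ consists of closed geodesics for the flat metric, and by Lagrangian neighbourhood arguments we can arrange that $A(\theta)$ takes the form of a product of a closed geodesic on $T_\pm$ with a half-line in the cotangent fibre direction. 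Choosing the almost complex structure so that this product structure is preserved makes the annuli $A(\theta)$ simultaneously $J_\tau$-holomorphic for the full family of stretched almost complex structures, and in the limit $\tau \to +\infty$ they decompose into the punctured pseudoholomorphic curves $C(\theta)$.

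The topological type of $C(\theta)$ and its asymptotics (Parts (1) and (2)) then come from construction: for $\theta \neq \pi$ the annulus $A(\theta)$ is embedded with clean boundary on $T_\pm$, producing a pseudoholomorphic cylinder with one puncture on each torus; for $\theta = \pi$ the nodal annulus $A(\pi)$ produces a pair of planes $P_\pm$ meeting transversely at the double point of $\widetilde{L}$, with $P_\pm$ having one puncture on $T_\pm$. To determine the asymptotic homology classes, note that $P_+$ is part of a broken conic of the type analysed in Proposition \ref{prp:characterisation}; together with Lemma \ref{lma:e0} and the topology of the annuli $A(\theta)$, the asymptotic class of the puncture of $P_\pm$ on $T_\pm$ must be $\mp \mathbf{e}_0^\pm$, and for each cylinder $C(\theta)$ the boundary classes are transported continuously along $\theta \in S^1 \setminus \{\pi\}$, giving the asymptotics stated in (1).

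For the Fredholm index in Part (4), I would apply the standard SFT index formula
\[
\indx{u} = (n-3)\chi(\dot{\Sigma}) + 2c_1^\tau(u^*T\CP^2) + \sum_{p} \mu_{CZ}^\tau(\gamma_p) + \#(\text{Bott punctures}),
\]
with $n=2$ and the trivialisation $\tau$ induced by the standard framings near each torus. The classes $\pm \mathbf{e}_0^\pm$ have vanishing Maslov class in $\C^2$, so in the appropriate trivialisation the Bott family has Conley--Zehnder index $-1/2$ and $\mu_{CZ}^\tau + \tfrac{1}{2} = 0$ per puncture. For the cylinder $C(\theta)$ (with $\chi = 0$, two Bott punctures), this gives $\indx{C(\theta)} = 0$; for each plane $P_\pm$ (with $\chi = 1$, one Bott puncture), one gets $\indx{P_\pm} = -1$. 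The nodal configuration $C(\pi) = P_+ \cup P_-$ is consistent with being a transverse degeneration of the $0$-dimensional moduli space of embedded cylinders picked up by $\theta$, thanks to the one-dimensional intersection gluing parameter at the node. Finally for Part (3), automatic transversality in the sense of Wendl \cite{Wendl:Automatic} applies to the simply covered embedded cylinders $C(\theta)$ with $\theta \neq \pi$: combining with positivity of intersection (which prevents different $C(\theta)$ from intersecting in their interior since the $A(\theta)$ already do not) and the asymptotic winding computation used in the proof of Theorem \ref{thm:CompatibleFibration} and in \cite{Dimitroglou:Isotopy}, this shows that the $S^1$-family foliates an embedded three-manifold and that the asymptotic evaluation map to each Bott manifold $\Gamma_{\pm \mathbf{e}_0^\pm}$ is a diffeomorphism.

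The main technical obstacle is the compatibility between two competing requirements on $J_\infty$: on the one hand we must keep the prescribed family $\{A(\theta)\}$ pseudoholomorphic (which forces $J_\infty$ to be fixed along a codimension-one subset), and on the other hand we need sufficient genericity for automatic transversality and for the index-positivity conclusions inherited from Lemma \ref{lma:posindex} and Lemma \ref{lma:planecyl} to hold. I would handle this by showing that the needed genericity can be obtained by perturbations of $J_\infty$ supported away from the locus $\bigcup_\theta A(\theta)$, using the fact that the relevant moduli spaces consist of simply covered curves (automatic for $C(\theta)$ away from the node, and for $P_\pm$ by the same reasoning used in Proposition \ref{prp:characterisation}).
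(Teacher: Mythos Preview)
Your approach to Parts (1), (2), and (4) is broadly correct and in line with the paper, though more elaborate than needed: the paper simply notes that (1)--(3) ``can be explicitly checked by hand'' and that (4) follows from the Maslov-index formulation of the Fredholm index. The asymptotic homology classes in particular need no appeal to Proposition \ref{prp:characterisation} or Lemma \ref{lma:e0}; they are immediate from the \emph{definition} of $\mathbf{e}_0^\pm$, which was chosen precisely as the boundary class of the discs in the nodal annulus $A^0(\pi)$ (with the sign fixed by the action condition), and then transported by $\Phi$.

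There is a genuine problem in your treatment of Part (3). You invoke automatic transversality in the sense of \cite{Wendl:Automatic} for the cylinders $C(\theta)$, but these cylinders are \emph{not} transversely cut out: they have Fredholm index $0$ yet come in a one-parameter family. The paper makes this explicit immediately after the lemma: ``Neither the cylinders $C(\theta)$ have the correct dimension, since they come in a one-dimensional family while their index is equal to zero. In other words, an almost complex structure $J_\infty$ as above is never regular.'' So automatic transversality cannot be the mechanism here. The correct argument for (3) is purely by construction: the $C(\theta)$ are the images under the symplectomorphism $\Phi$ of the explicit standard cylinders $C^0(\theta) = A^0(\theta) \setminus (T_+^0 \cup T_-^0)$, which manifestly foliate a smooth three-manifold and whose asymptotic evaluation to each Bott family is visibly a diffeomorphism. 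Your final paragraph about reconciling holomorphicity of $\{A(\theta)\}$ with genericity is therefore addressing a non-issue for this lemma; that tension is real, but it is handled separately in the subsequent Lemma \ref{lma:almostholomorphic}, not here.
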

\begin{proof}
(1)--(3): These properties can be explicitly checked by hand.

(4): This follows easily from the formulation of the Fredholm index in terms of the Maslov index; see \cite[Section 3.1]{Dimitroglou:Isotopy}.
\end{proof}

The planes involved in $C(\pi)$ are of negative Fredholm index, and are hence clearly not transversely cut out. Neither the cylinders $C(\theta)$ have the correct dimension, since they come in a one-dimensional family while their index is equal to zero. In other words, an almost complex structure $J_\infty$ as above is never regular. For that reason, additional care must be taken in order to control the structure of the broken conics. First we need to exclude the existence of other pseudoholomorphic planes of negative index. By the following lemma, this is possible by perturbing $J_\infty$ away from $\widetilde{L}.$
\begin{lma}
\label{lma:almostholomorphic}
There exists an arbitrarily small closed neighbourhood $\Phi(U) \subset \CP^2$ of $\bigcup_\theta A(\theta) \supset T_\pm,$ where $U \subset \CP^2$ is a sufficiently small neighbourhood of $\bigcup_\theta A^0(\theta) \supset T_\pm^0,$ and an almost complex structure $J_U$ on $\Phi(U) \setminus (T_+ \cup T_-)$ for which
\begin{enumerate}
\item $J_U$ can be extended to an almost complex structure $J_\infty$ on $\CP^2 \setminus (T_+ \cup T_-)$ satisfying the properties of Lemma \ref{lma:planes};
\item $J_U$ has the property that, given an arbitrary such extension, any $J_\infty$-holomorphic plane inside $\CP^2 \setminus (T_+ \cup T_-)$ which is not equal to a branched cover of $P_\pm$ must leave the neighbourhood $\Phi(U).$
\end{enumerate}
In particular, after after choosing an extension $J_\infty$ to be generic outside of $\Phi(U),$ we may assume that the planes of negative index are precisely the planes $P_\pm$ together with their branched covers.
\end{lma}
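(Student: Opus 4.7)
The plan is to define $J_U$ as the pushforward $\Phi_* i$ of the standard integrable complex structure on the model $U \subset B^4$, after a modification in a small neck region around $T_\pm$ to be cylindrical in the SFT-compatible sense. This modification can be arranged to preserve the pseudoholomorphicity of each $A(\theta)$: since the annuli $A^0(\theta)$ intersect $T_\pm^0$ cleanly along their asymptotic circles, one can build a family of compatible almost complex structures interpolating between the integrable $i$ and the cylindrical $J_{\OP{cyl}}$ on the Weinstein neighbourhood of $T_\pm^0$ which keeps each $A^0(\theta)$ holomorphic. Part (1) then follows immediately, since the annuli $A(\theta)$, their nodal limit $P_+ \cup P_-$, and the index and asymptotic evaluation statements of Lemma \ref{lma:planes} are all transported from the integrable model via $\Phi$.

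The substance is Part (2). Pulling back by $\Phi^{-1}$, it suffices to establish the following model statement: for $U$ a sufficiently thin neighbourhood of $\bigcup_\theta A^0(\theta)$, any $i$-holomorphic plane $\tilde P \subset U \setminus (T_+^0 \cup T_-^0)$ asymptotic to a closed geodesic on $T_\pm^0$ is a branched cover of one of $P_+^0, P_-^0$. The key tool is the standard Lefschetz fibration $f(z_1,z_2)=z_1 z_2$, with respect to which the $A^0(\theta)$ are fibres over the embedded closed curve $\sigma = f(T_\pm^0) \subset \mathbb{C}$ passing through the origin. For $U$ narrow enough, $f(U)$ is an arbitrarily thin tubular neighbourhood of $\sigma$, and the holomorphic composition $f \circ \tilde P \colon \dot{\mathbb{D}} \to \mathbb{C}$ takes values in this thin set. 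The analysis splits on the asymptotic class $k \mathbf{e}_0^\pm + l \mathbf{e}_1^\pm$ of $\tilde P$.

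When $l=0$ the asymptotic orbit projects under $f$ to a single point, because $\mathbf{e}_0^\pm$ is the $S^1$-fibre class of $f|_{T_\pm^0} \colon T_\pm^0 \to \sigma$; hence the puncture of $f \circ \tilde P$ is removable, extending to a holomorphic map $\mathbb{D} \to f(U)$ taking some value $w_0 \in \sigma$ at the origin. The open mapping theorem together with the thinness of $f(U)$ forces this extension to be constant, so $\tilde P$ lies in a single fibre $f^{-1}(w_0)$. For $w_0 \neq 0$ this fibre is a smooth embedded cylinder, which contains no proper planar holomorphic subdomain, so $w_0 = 0$ and $\tilde P \subset P_+^0 \cup P_-^0$; by connectedness $\tilde P$ is then a branched cover of one of them. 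When $l \neq 0$, the composition $f \circ \tilde P$ has nontrivial asymptotic winding $l$ around points in the bounded region of $\sigma$, and the argument principle forces preimages of every such interior point; selecting one bounded away from $\sigma$ and hence outside $f(U)$, which is possible once $U$ is sufficiently thin, contradicts $f(\tilde P) \subset f(U)$.

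The main technical hurdle is the initial setup of the modification of $J_U$ near $T_\pm$, which must simultaneously realise the SFT cylindrical structure and preserve the $A(\theta)$-foliation; this is routine but requires care. The final assertion of the lemma then follows by combining Part (2) with Lemma \ref{lma:posindex} and standard transversality: if a $J_\infty$-holomorphic plane $P$ has negative Fredholm index, its underlying simply covered curve $\tilde P$ has negative index as well (by the reasoning of Lemma \ref{lma:posindex}), and hence cannot meet the region where $J_\infty$ is generic; applying Part (2) to this simple plane contained in $\Phi(U)$ forces $\tilde P = P_+$ or $P_-$, so $P$ is itself a branched cover of $P_\pm$ as claimed.
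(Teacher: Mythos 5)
Your construction of $J_U$ (pushforward of the standard structure via $\Phi$, deformed to a cylindrical end near $T_\pm$ while keeping the annuli $A(\theta)$ holomorphic) matches the paper's setup, and the reduction to the model $U \setminus (T_+^0 \cup T_-^0)$ via the biholomorphism $\Phi$ is also how the paper proceeds. The problem is the model statement you reduce to. You claim it suffices to rule out \emph{$i$-holomorphic} planes in $U \setminus (T_+^0 \cup T_-^0)$, and your entire analysis rests on $f \circ \tilde P$ being a holomorphic map of one complex variable. But the planes one must control are finite-energy planes for the neck-stretched structure $J_\infty$, which by construction differs from $i$ in the Weinstein neighbourhood of $T_\pm^0$ --- and the end of any such plane lives precisely in that region (an $i$-holomorphic curve cannot even be asymptotic to a Reeb orbit in the SFT sense). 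Since $f$ is not $(J_\infty,i)$-holomorphic there, $f\circ\tilde P$ is not holomorphic near the puncture, and the removable-singularity, open-mapping and argument-principle steps all collapse. Even granting holomorphicity, the open mapping theorem does not force $f\circ\tilde P$ to be constant: a non-constant holomorphic map can have arbitrarily small open image, so thinness of $f(U)$ yields no contradiction.

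The paper's proof avoids maps to the base entirely and works with intersection theory against explicit $J_\infty$-holomorphic curves: it arranges that each conic fibre through $T_\pm^0$ splits into three $J_\infty$-holomorphic punctured spheres $A_1^\infty(\theta)\cup C^0(\theta)\cup A_2^\infty(\theta)$, notes that a putative plane $P\subset U$ must be asymptotic to a class $-k\mathbf{e}_0^+$ (your exclusion of the $\mathbf{e}_1$-component can be done purely topologically, since such a class does not die in $H_1(U)$), computes $P\bullet(P_-^0\cup A_2^\infty(\pi))=k$ to force, via positivity of intersection, that $P$ either equals $kP_+^0$ or meets the split fibres for $\theta$ near $\pi$, and then derives a contradiction by comparing the geometric intersection count of $P\cup k\cdot A_1^\infty(\theta)$ with a split conic fibre against the homological bound $k[\ell_\infty]\bullet[f^{-1}(z)]=2k$ (the tangency at $q_1$ already accounting for $2k$). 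If you want to rescue your approach you would have to replace the complex-analytic argument about $f\circ\tilde P$ with positivity of intersection against these $J_\infty$-holomorphic fibre pieces, which is essentially the paper's argument. Your treatment of the final assertion about negative-index planes is fine.
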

\begin{proof}
There is a bijection between pseudoholomorphic planes $P \subset \Phi(U)\setminus (T_+ \cup T_-)$ contained inside the neighbourhood of interest, and pseudoholomorphic planes
$$\Phi^{-1}(P) \subset U \setminus (T_+^0 \cup T_-^0) \subset \CP^2 \setminus (T_+^0 \cup T_-^0),$$
given that the almost complex structures are chosen so that the symplectomorphism $\Phi$ is a biholomorphism. The complex structure satisfying the sought properties will be constructed on the `standard model' $U \setminus (T_+^0 \cup T_-^0),$ which then will be pushed forward to $\Phi(U)$ under the locally defined symplectomorphism $\Phi.$ In other words, we need to find a suitable neighbourhood $U$ of $\bigcup_\theta A^0(\theta),$ together with a suitable almost complex structure $J_\infty$ on $\CP^2 \setminus (T_-^0 \cup T_+^0),$ for which the branched covers of the two planes
$$P_\pm^0 \subset C^0(\pi) \subset \CP^2 \setminus (T_+^0 \cup T_-^0)$$
asymptotic to $T_\pm^0$ comprise all the $J_\infty$-holomorphic planes contained inside $U.$

We begin by constructing an almost complex structure $J_\infty$ on $(\CP^2\setminus (T_+^0 \cup T_-^0),\omega_{\OP{FS}})$ by deforming the standard complex structure $i$ near $T_+^0 \cup T_-^0$ in order to obtain a concave cylindrical end there. With some additional care, this construction can be performed so that
\begin{itemize}
\item $J_\infty$ is made to satisfy the analogous properties of Lemma \ref{lma:planes} (for $T_\pm^0$ instead of $T_\pm$), and
\item all $i$-holomorphic conic fibres $f^{-1}(z)$ that pass through $T_\pm^0$ give rise to three $J_\infty$-holomorphic punctured spheres $f^{-1}(z) \setminus (T_+^0 \cup T_-^0).$ The latter three components consist of two planes $A^\infty_i(\theta),$ $i=1,2,$ being tangent to $C$ at $A^\infty_i(\theta) \cap \ell_\infty=\{q_i\},$ together with a cylinder $C^0(\theta).$
\end{itemize}
Using the second property, one obtains a broken pseudoholomorphic conic by adjoining two cylinders in the bottom level to these three punctured spheres in the top level, as shown in Figures \ref{fig:whitneybreaking1} and \ref{fig:whitneybreaking2}.

We continue by arguing that the simply covered $J_\infty$-holomorphic planes inside $V \setminus (T_+^0 \cup T_-^0)$ are precisely the two planes $P_\pm^0.$ Argue by contradiction, assuming the existence of a $J_\infty$-holomorphic plane $P \subset U \setminus T_+^0$ asymptotic to $T_+^0$; the argument is analogous in the case of $T_-^0.$ The asymptotics of $P$ is a closed geodesic in the class $-k\mathbf{e}_0^+$ on $T_+^0$ for some $k>0.$ (Recall that $k < 0$ is impossible by positivity of symplectic area for pseudoholomorphic curves.) 

A topological consideration shows that
$$P \bullet (P^0_- \cup A^\infty_2(\pi))=k[P^0_+] \bullet (P^0_- \cup A^\infty_2(\pi))=k.$$
For the first equality, we use the fact that $T_+$ is disjoint from the line $P^0_- \cup A^\infty_2(\pi) \subset \CP^2$ and that the connecting homomorphism $H_2(\C^2,T_+) \xrightarrow{\cong} H_1(T_+)$ is an isomorphism. Positivity of intersection then shows that, unless $P = kP_+^0,$ the plane $P$ also must intersect at least one of the components $C^0(\theta), A^\infty_2(\theta)$ for all $\theta$ sufficiently close to $\pi.$ (This follows by positivity of intersection of $P$ and $C_\pi^0 \cup A^\infty_2(\pi)$.)

Now choose a $\theta$ as above, with the additional requirement that all asymptotics arising in the broken conic with top components $A^\infty_1(\theta) \cup C^0(\theta) \cup A^\infty_2(\theta) \subset \CP^2 \setminus T_-^0$ are different from the asymptotic of $P.$ In other words, $P \cup k \cdot A^\infty_1(\theta)$ and $A^\infty_1(\theta) \cup C^0(\theta) \cup A^\infty_2(\theta)=f^{-1}(z)$ inside $\CP^2 \setminus T_-^0$ compactify to two cycles in $\CP^2$ that intersect in a subset where they both are $J_\infty$-holomorphic. Positivity of intersection leads to the final contradiction, since one computes the intersection number
$$ (P \cup k\cdot A^\infty_1(\theta_0)) \bullet f^{-1}(z) > k[\ell_\infty] \bullet [f^{-1}(z)]=2k$$
between the cycles given as the completions of the respective buildings. Recall that the intersection at $q_1 \in \ell_\infty$ of the two buildings is a tangency that precisely gives the contribution $2k.$
\end{proof}

\begin{lma}
\label{lma:brokenlines}
Consider a neck-stretching sequence around $T_+$ (resp. $T_-)$ for which $P_\pm \subset \CP^2 \setminus T_\pm$ is the unique simply covered $J_\infty$-holomorphic plane of index $-1.$ Then the limit of lines passing through $q_1 \in \ell_\infty$ (resp. $q_2 \in \ell_\infty$) while being tangent to $C$ is a broken line containing a branched cover of $P_+$ (resp. $P_-$) in its limit.
\end{lma}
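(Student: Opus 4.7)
The plan is to apply the SFT compactness theorem \cite{Bourgeois:Compactness,Cieliebak:Compactness} to the sequence of $J_\tau$-holomorphic lines through $q_1$ tangent to $C$ at $q_1$---these are the lines $\ell_1^{J_\tau}$ furnished by Theorem \ref{thm:Lefschetz} as one component of the nodal conic $C_{\OP{nodal}}^{J_\tau}$---and to analyse the resulting broken line $\bar L$ via positivity of intersection together with the uniqueness assumption on $P_+.$ Using positivity of intersection \cite{McDuff:LocalBehaviour} with $\ell_\infty$ and $C$, combined with the total intersection numbers $[\ell]\bullet[\ell_\infty]=1$ and $[\ell]\bullet[C]=2$ (the latter concentrated at $q_1$ via the tangency condition), I would identify a unique top-level component $A\subset\CP^2\setminus T_+$ passing through $q_1$: this $A$ is a plane tangent to $C$ at $q_1,$ all other components are disjoint from $\ell_\infty\cup C,$ and by Lemma \ref{lma:embedded-components} all components are (possibly trivial) branched covers of embedded punctured spheres with pairwise disjoint interiors.

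Next I would pin down the asymptotic class of $A$ on $T_+.$ Capping the puncture of $A$ by a relative disc $D\subset T_+$ with $\partial D=k\mathbf{e}_0^+$ produces a topological line of degree one. Because $A$ is tangent to $\ell_1$ at $q_1$ (since $A$ is tangent to $C$ there and $\ell_1$ has the same tangent direction $v_1$), the local intersection number satisfies $A\bullet\ell_1\ge 2$ at $q_1,$ while from Proposition \ref{prp:characterisation} we have $D\bullet\ell_j=(-1)^j k.$ Combining positivity of intersection with the $J_\infty$-holomorphic lines $\ell_1,\ell_2$ with the global count $[\ell]\bullet[\ell_j]=1$ then forces $k=1,$ so the asymptotic of $A$ lies in the primitive class $\mathbf{e}_0^+.$ By the analog of Lemma \ref{lma:geodclass} for broken lines, all remaining asymptotics lie in $\Z\cdot\mathbf{e}_0^+,$ and must sum to $-\mathbf{e}_0^+$ in $H_1(T_+)$ for the building to close up.

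The central step is to produce the branched cover of $P_+$ among the remaining components. Since these are disjoint from $\ell_\infty,$ positivity of symplectic area forces the coefficient $k_i$ of each asymptotic to satisfy $k_i<0,$ whence at least one remaining top-level component $B$ is a plane asymptotic to $-k\mathbf{e}_0^+$ for some $k\ge 1.$ The relative homology class of $B$ is pinned down by the intersection constraints $B\bullet\ell_\infty=B\bullet C=B\bullet\ell_1=B\bullet\ell_2=0$ together with its asymptotic, and this class agrees with that of a $k$-fold cover of $P_+;$ consequently the underlying simply covered plane of $B$ has Fredholm index $-1.$ By our standing hypothesis that $P_+$ is the unique such simply covered plane, the underlying plane is $P_+,$ and $B$ is a (possibly trivial) branched cover of $P_+,$ as claimed.

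The hard part will be rigorously pinning down the relative homology class of $B$ from its intersection and asymptotic data, and excluding the possibility that an alternative simply covered plane of nonnegative Fredholm index (in a class differing from that of $P_+$) could appear in the limit in place of $P_+.$ I expect this to follow from an explicit Fredholm index and relative Chern class computation in the spirit of Lemma \ref{lma:auttrans} and \cite{Hofer:OnGenericity}, combined with the automatic transversality of \cite{Wendl:Automatic}, which together exclude spurious simply covered planes and ensure that $P_+$ and its branched covers are the only configurations consistent with the asymptotic and intersection constraints forced by the limit.
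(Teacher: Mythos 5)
Your proposal has a genuine gap: you never establish that the limit \emph{is} a broken line, and this is precisely the hard content of the lemma. SFT compactness only tells you that the limit of the lines $\ell_1^{J_\tau}$ is a holomorphic building which is \emph{either} broken \emph{or} an unbroken closed $J_\infty$-holomorphic line in $\CP^2\setminus T_+.$ Your entire analysis (identifying the plane $A$ through $q_1,$ summing asymptotic classes, extracting the component $B$ asymptotic to $-k\mathbf{e}_0^+$) presupposes that the building has punctures asymptotic to $T_+$; if the limit is unbroken there are no punctures and no component $B$ to speak of, so the argument never gets off the ground. The paper rules out the unbroken case by contradiction: assuming $\ell_1^{J_\infty}$ is unbroken, one perturbs $J_\infty$ to a regular $J_\infty(\epsilon)$ keeping the line unbroken, and invokes Proposition \ref{prp:characterisation} together with Lemma \ref{lma:e0} to obtain the broken conic whose plane $A_1$ through $q_1$ is asymptotic to the geodesic underlying the asymptotic of $P_+$ but with the \emph{opposite} orientation. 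Hence $A_1\cup P_+$ closes up to a cycle in the class of a line, and the tangency of $A_1$ to $C$ at $q_1$ gives $A_1\bullet\ell_1^{J_\infty}=2,$ forcing $P_+\bullet\ell_1^{J_\infty}=1-2=-1<0$ --- impossible since $P_+$ and the (assumed unbroken) $\ell_1^{J_\infty}$ are both $J_\infty$-holomorphic. Without some such argument your proof proves nothing in the unbroken case, which is exactly the case one must exclude.

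By contrast, once brokenness is granted, your treatment of the second half is essentially the paper's (and somewhat more elaborate than necessary): since the asymptotics of all components lie in $\Z\cdot\mathbf{e}_0^\pm$ by the corollary to Lemma \ref{lma:geodclass}, and $\mathbf{e}_0^\pm$ has Maslov index zero, any plane in the building that is disjoint from $\ell_\infty$ automatically has a simply covered underlying plane of index $-1,$ so the standing uniqueness hypothesis immediately identifies it with a branched cover of $P_+$; the ``hard part'' you flag at the end (excluding alternative nonnegative-index planes) is therefore not where the difficulty lies. You should redirect that effort to proving that the limit breaks at all.
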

\begin{proof}
We prove the statement in the case of the line through $q_1,$ and while stretching the neck around the Lagrangian torus $T_+;$ the proof is analogous in the other case.

If the limit is a broken line, then the statement is a direct consequence of Corollary \ref{cor:conditions} together with the fact that there are no contractible geodesics on the flat $T_+;$ by the assumptions there simply are no other pseudoholomorphic planes inside $V \setminus T_+$ to which the broken line can limit except, the branched covers of $P_+.$ Note that, by elementary topological reasons, it is clear that the building indeed must contain a plane.

It thus suffices to show that the line necessarily converges to a \emph{broken} line. We argue by contradiction and assume that the limit line passing through $q_1,$ denoted by $\ell_1^{J_\infty},$ is unbroken. 

We can perturb the almost complex $J_\infty(t),$ $t \in [0,\epsilon],$ through a path of almost complex structures in order to obtain the regular almost complex structure $J_\infty(\epsilon),$ where $J_\infty(0)=J_\infty.$ The lines passing through $q_1$ being tangent to $C$ may be assumed to remain unbroken during this isotopy, giving rise to a smooth family $\ell_1^{J_\infty(t)}$ of such lines.

Then we examine the limit of the conic fibration when stretching the neck around $T_+$ in the case of the generic almost complex structure $J_\infty(\epsilon).$ By Proposition \ref{prp:characterisation} together with Lemma \ref{lma:e0}, we see that the cycle $P_+$ (which need not be pseudoholomorphic for the perturbed almost complex structure) necessarily intersects $\ell_1^{J_\infty(1)},$ and hence also $\ell_1^{J_\infty},$ with \emph{negative} intersection number. Indeed, the plane $A_1 \subset \CP^2 \setminus T_+$ in the broken conic with asymptotic Reeb orbit covering the same geodesic as the corresponding asymptotic of $P_+$ (but necessarily with the opposite orientation of the geodesic) can be completed to a cycle $A_1 \cup P_+$ in the class of a line; then we use the fact that $A_1 \bullet \ell_1^{J_\infty}=2.$ (There is a similar argument in the proof of Proposition \ref{prp:characterisation}.)

Finally, since $P_+$ and $\ell_1^{J_\infty}$ both are $J_\infty$-holomorphic, the negative intersection number $P_+ \bullet \ell_1^{J_\infty}<0$ gives the sought contradiction.
\end{proof}

\begin{figure}[htp]
\centering
\vspace{3mm}
\hspace{20mm}
\labellist
\pinlabel $\CP^2\setminus L$ at -26 58
\pinlabel $T^*L$ at -18 19
\pinlabel $q_2$ at 153 81
\pinlabel $q_1$ at 33 81
\pinlabel $0$ at 73 53
\pinlabel $\color{blue}\widetilde{L}$ at 93 83
\pinlabel $C(\theta)$ at 132 70
\pinlabel $A_2(\theta)$ at 180 65
\pinlabel $A_1(\theta)$ at 55 73
\pinlabel $5$ at 33 53
\pinlabel $5$ at 154 53
\pinlabel $2$ at 42 11
\pinlabel $2$ at 148 11
\pinlabel $\color{red}\mathbf{e}_0$ at 35 23
\pinlabel $\color{red}-\mathbf{e}_0$ at 72 23
\pinlabel $\color{red}\mathbf{e}_0$ at 116 23
\pinlabel $\color{red}-\mathbf{e}_0$ at 153 23
\pinlabel $\color{blue}T_+$ at 57 -10
\pinlabel $\color{blue}T_-$ at 138 -10
\vspace{5mm}
\endlabellist
\includegraphics{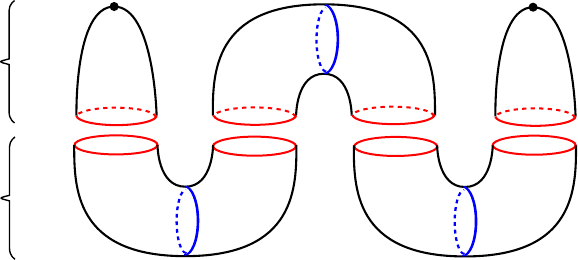}
\hspace{15mm}
\caption{The broken conic in the generic case. The two planes $A_i(\theta)$ each intersect $\ell_\infty$ transversely in the point $q_i,$ where they moreover are tangent to the smooth conic $C.$ The two planes join to form a continuous embedding of a sphere.}
\label{fig:whitneybreaking1}
\end{figure}

\begin{figure}[htp]
\centering
\vspace{3mm}
\hspace{20mm}
\labellist
\pinlabel $\CP^2\setminus L$ at -26 58
\pinlabel $T^*L$ at -18 19
\pinlabel $q_2$ at 153 81
\pinlabel $q_1$ at 33 81
\pinlabel $-1$ at 74 53
\pinlabel $-1$ at 113 53
\pinlabel $\color{blue}\widetilde{L}$ at 93 75
\pinlabel $C(\pi)$ at 132 70
\pinlabel $A_2(\pi)$ at 180 65
\pinlabel $A_1(\pi)$ at 55 73
\pinlabel $5$ at 33 53
\pinlabel $5$ at 154 53
\pinlabel $2$ at 42 11
\pinlabel $2$ at 148 11
\pinlabel $\color{red}\mathbf{e}_0$ at 35 23
\pinlabel $\color{red}-\mathbf{e}_0$ at 72 23
\pinlabel $\color{red}\mathbf{e}_0$ at 116 23
\pinlabel $\color{red}-\mathbf{e}_0$ at 153 23
\pinlabel $\color{blue}T_+$ at 57 -10
\pinlabel $\color{blue}T_-$ at 138 -10
\vspace{5mm}
\endlabellist
\includegraphics{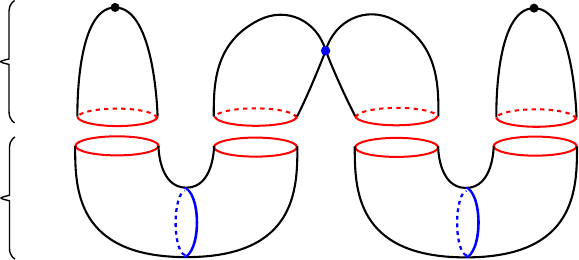}
\hspace{15mm}
\caption{The broken conic in the generic case. The two planes $A_i(\pi)$ each intersect $\ell_\infty$ transversely in the point $q_i,$ where they moreover are tangent to the smooth conic $C.$ The two planes join to form a continuous embedding of a sphere.}
\label{fig:whitneybreaking2}
\end{figure}

\begin{prp}
\label{prp:characterisation2}
For a generic almost complex structure satisfying the conclusions of Lemma \ref{lma:almostholomorphic}, there exists a family of embedded broken conics parametrised by $\theta \in S^1,$ all which are tangent to $C$ at the two points $q_1,q_2 \in \ell_\infty,$ and consisting of the following three components in the top level $\CP^2 \setminus (T_+ \cup T_-)$ varying smoothly with $\theta$:
\begin{itemize}
\item two planes $A_i(\theta) \subset \CP^2 \setminus (T_+ \cup T_-),$ $i=1,2,$ where $A_1(\theta)$ is asymptotic to $T_+$ and tangent to $C$ at $q_1 \in \ell_\infty,$ and where $A_2(\theta)$ is tangent to $C$ at $q_2 \in \ell_\infty$ and asymptotic to $T_-,$ together with
\item the cylinder $C(\theta)=\Phi(C^0(\theta))$ constructed above,
\end{itemize}
while the bottom level consists of two cylinders inside $T^*T_\pm.$ Moreover, all curves $A_i(\theta) \setminus \ell_\infty$ and $C(\theta) \setminus \ell_\infty$ are mutually disjoint and foliate the three-dimensional variety that is given by their union. See Figures \ref{fig:whitneybreaking1} and \ref{fig:whitneybreaking2} for a schematic depiction of this.
\end{prp}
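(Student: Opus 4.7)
The plan is to extract this family as the SFT limit of the $J_\tau$-holomorphic conics $\mathcal{M}_{J_\tau}(v_1,v_2)$ obtained from Theorem \ref{thm:Lefschetz}, where the neck-stretching sequence $J_\tau$ is performed \emph{simultaneously} around the two disjoint tori $T_+$ and $T_-$ and is moreover chosen so that the cylinders $C(\theta)=\Phi(C^0(\theta))$ of Lemma \ref{lma:planes} remain $J_\tau$-holomorphic throughout. Since $\mathcal{M}_{J_\tau}(v_1,v_2)\cong\C$, we obtain in the limit a (possibly multi-level) family of broken conics parametrised by $S^1$; the task is to identify its structure.

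First I would argue, using positivity of intersection and the same indexing considerations as in Proposition \ref{prp:characterisation}, that the top level of every such broken conic consists of precisely the three punctured spheres asserted in the statement. The argument proceeds in three steps: (i) By Lemma \ref{lma:posindex}, Lemma \ref{lma:planecyl} (applied with two neck regions), and the vanishing Maslov-class/positivity-of-intersection-with-$C$ input from Lemmas \ref{lma:geodclass}--\ref{lma:almostholomorphic}, there is no top-level component which is disjoint from both $\ell_\infty$ and the tori, and no top-level plane of negative index other than the branched covers of $P_\pm$ singled out in Lemma \ref{lma:almostholomorphic}. Hence the top level contains exactly two planes $A_1,A_2$ passing through $q_1,q_2$ respectively, together with cylinders in $\CP^2\setminus(T_+\cup T_-)$ joining the two tori. (ii) By Lemma \ref{lma:brokenlines} applied to the two lines $\ell_i^{J_\infty}$, the plane at $q_i$ is necessarily asymptotic to the torus from which $P_\pm$ inherits its puncture, i.e.~$A_1$ is asymptotic to $T_+$ and $A_2$ is asymptotic to $T_-$; the asymptotic classes are forced to be $\pm\mathbf{e}_0^\pm$ by the action computation of Lemma \ref{lma:e0} combined with Lemma \ref{lma:orientation}. (iii) The remaining area must be carried by a single cylinder with one puncture on each torus asymptotic to the matching classes $-\mathbf{e}_0^+$ on $T_+$ and $\mathbf{e}_0^-$ on $T_-$; any additional component would produce a cycle violating positivity of intersection with the pseudoholomorphic divisor $P_+\cup P_-\cup C$ or with one of the already-identified unbroken curves in our family.

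Second I would identify the $S^1$-parameter and the foliation property. The cylinders $C(\theta)$ already form a complete $S^1$-family for which the asymptotic evaluation maps to $\Gamma_{\pm\mathbf{e}_0^\pm}\cong S^1$ are diffeomorphisms, by Lemma \ref{lma:planes}(3). For the planes $A_i$ I would apply Wendl's automatic transversality \cite{Wendl:Automatic} to the constrained problem of planes asymptotic to $T_\pm$ satisfying the tangency condition $v_i$ at $q_i$: as in the proof of Theorem \ref{thm:CompatibleFibration} (cf.~Lemma \ref{lma:auttrans}), the constrained moduli space is then a one-dimensional manifold whose asymptotic evaluation map is a local diffeomorphism onto the corresponding Reeb orbit space, and compactness of this map (ruled out by the same index and intersection arguments as in step (i)) makes it a genuine diffeomorphism onto the $S^1$ of geodesics. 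Matching the three asymptotic evaluation maps gives a smooth map $S^1\to\{\text{broken conics}\}$; the smoothing procedure of \cite[Section 5.3]{Dimitroglou:Isotopy} then assembles these buildings into the required $S^1$-family of embedded symplectic conics. The mutual disjointness of the pieces $A_i(\theta)\setminus\ell_\infty$, $C(\theta)\setminus\ell_\infty$ for different $\theta$, and hence the foliation property, follows from positivity of intersection applied to any two broken conics in the family (as in the standard arguments for Gromov's Theorem \ref{thm:gromov}).

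The hardest part is step (i)--(iii) of the first paragraph, where one must exclude extraneous components in the broken limit despite stretching \emph{two} disjoint necks at once. The delicate point is that additional cylinders joining $T_+$ to $T_-$ (or planes asymptotic to $T_\mp$ and passing through $q_i$) are not ruled out on index grounds alone; they are excluded only by a careful intersection-number accounting using the concrete index-$(-1)$ planes $P_\pm$ of Lemma \ref{lma:almostholomorphic} together with the partner planes $A^\infty_i(\pi)$ produced in its proof, exactly as in the contradiction argument leading to Lemma \ref{lma:brokenlines}.
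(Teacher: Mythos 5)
Your proposal is correct and follows essentially the same route as the paper: both identify the planes $P_\pm$ and the $A_i$ via Lemma \ref{lma:brokenlines} together with positivity of intersection, establish compactness of the moduli components containing the $A_i$ by noting that any SFT degeneration would force an extra copy of $P_\pm$ and hence a forbidden intersection with $P_\mp$, and then sweep out the $S^1$-families using automatic transversality and the asymptotic evaluation maps, exactly as in the proof of Theorem \ref{thm:CompatibleFibration}. The only cosmetic difference is that the paper seeds the construction from the limit of the broken \emph{nodal} conic rather than from limits of generic smooth conics, and your caveat about Lemma \ref{lma:planecyl} is apt --- its no-negative-index hypothesis fails here because of $P_\pm$, which is precisely why the intersection accounting in your final paragraph is the load-bearing step.
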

\begin{proof}
Using Lemma \ref{lma:brokenlines} we conclude that the nodal conics converge to a broken conic consisting of (possibly trivial) branched covers of both planes $P_\pm$ under a neck-stretching limit. Arguing by using positivity of intersection, we conclude that these planes occur only once in each such broken line, and that they both are simply covered. The lines are thus seen to have components in the top level consisting of $P_\pm$ together with planes $A_i \subset \CP^2 \setminus (T_+ \cup T_-)$ passing through $q_i,$ for $i=1,2.$

We then argue that the planes $A_i$ necessarily live in \emph{compact} components of its moduli spaces consisting of planes tangent to $C$ at $q_i \in \ell_\infty.$ Namely, any broken building arising as an SFT limit of such planes would involve an additional plane $P_\pm.$ Since every such a plane intersects $P_\mp$ transversely, this would contradict positivity of intersection when combined with the fact that the original planes $A_i$ were disjoint from $P_+ \cup P_-$; recall that the SFT compactness theorem implies that we can extract a convergent sequence which converges uniformly in the $C^\infty$-topology on compact subsets. Finally we can argue as in the proof of Theorem \ref{thm:CompatibleFibration} (i.e.~using automatic transversality and the asymptotic intersection number) in order to show the existence of the required $S^1$-families $A_i(\theta)$ of planes.
\end{proof}

\begin{figure}[htp]
\begin{center}
\vspace{6mm}
\labellist
\pinlabel $1$ at 140 52
\pinlabel $\color{blue}\sigma=f_J(\widetilde{L})=f_J(T_\pm)$ at 136 121
\pinlabel $x$ at 197 63
\pinlabel $iy$ at 56 142
\endlabellist
\includegraphics[scale=0.8]{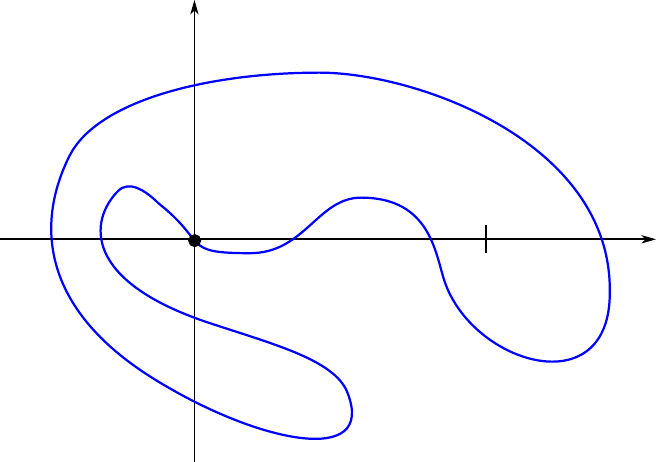}
\caption{The image of $\widetilde{L}$ and $T_\pm$ under $f_J$ coincide by construction.}
\label{fig:fibration-whitney2a}
\end{center}
\end{figure}

\subsection{The proof of Theorem \ref{thm:main} in the case of a sphere}
Proposition \ref{prp:characterisation2} above produces a family of broken conics asymptotic to $T_+ \cup T_-$ that contains the immersed sphere $\widetilde{L}.$ We then use the techniques from \cite[Section 5.3]{Dimitroglou:Isotopy} in order to smoothen the broken conics near $T_+ \cup T_-.$ Recall that this smoothing is performed so that the produced conics in the $S^1$-family
$$\overline{A_1(\theta) \cup C(\theta) \cup A_2(\theta)} \subset (\CP^2,\omega_{\OP{FS}}), \:\: \theta \in S^1,$$
each intersect the union $T_+ \cup T_-$ of tori in two embedded and disjoint closed curves that are homotopically nontrivial. In addition, we may assume that the symplectic annuli $C(\theta)$ are left undeformed by this smoothing procedure, since they already comapactify to a smooth surface with boundary on $T_\pm$ by their construction.

An application of Theorem \ref{thm:Lefschetz} combined with Theorem \ref{thm:normalise} now produces a globally defined smooth conic fibration $f_J \colon \CP^2 \setminus \ell_\infty \to \C$ satisfying the following crucial properties:
\begin{itemize}
\item $f_J^{-1}(0)$ is the nodal conic,
\item $f_J^{-1}(1)=C$ is our standard smooth conic, and
\item $f_J=f$ near $\ell_\infty.$
\end{itemize}
By construction, the immersed sphere $\widetilde{L} \subset \bigcup_\theta C(\theta)$ satisfies the property that $f_J(\widetilde{L})=\sigma \subset \C$ is a closed curve encircling $f_J(C)=1 \in \C$ with winding number one and passing through $0 \in \C$; see Figure \ref{fig:fibration-whitney2a} for an example. Note that both tori $T_\pm$ also live over the same curve $\sigma$ by construction. In the four steps below we then perform additional normalisations of the fibration $f_J.$

{\em Step I: Normalising the nodal conic.}
By construction, the nodal conic $f_J^{-1}(0)$ can be assumed to have a node modelled on the standard conic $C_{\OP{nodal}}$; recall that this nodal conic coincides with the nodal annulus $C(\pi)$ which, in turn, is symplectomorphic to the standard nodal annulus $C^0(\pi) \subset C_{\OP{nodal}}$ by construction. After an application of a suitable Hamiltonian isotopy we may also assume that $\ell_i^J \subset f_J^{-1}(0)=\ell_1^J \cup \ell_2^J$ coincides with $\ell_i$ near the node for $i=1,2.$

Arguing as in the proof of Lemma \ref{lma:DisjoinNodal}, we can readily produce a Hamiltonian isotopy that deforms $f_J^{-1}(0)$ to $C_{\OP{nodal}}$: First we consider a family of symplectic nodal conics that connects $f_J^{-1}(0)$ to $C_{\OP{nodal}}$ produced by Gromov's result Theorem \ref{thm:gromov}, for an interpolation of tame almost complex structures from $J$ to $i.$ Then we invoke Part (2) of Proposition \ref{prp:NormaliseNode} to normalise the node as well as the intersections with $\ell_\infty.$ Finally, we can apply Proposition \ref{prp:SiebertTian}. 

{\em Step II: Normalising a neighbourhood of the nodal conic.}

It is now the case that the double point of $\widetilde{L}$ intersects $C_{\OP{nodal}}$ precisely in its node. After a Hamiltonian isotopy supported in an arbitrarily small neighbourhood $f_J^{-1}B^2_{2\epsilon}$ of this node, one readily makes sure that $\widetilde{L}$ moreover coincides with $L_{\OP{Wh}}(s)$ inside $f^{-1}B^2_\epsilon$ for any fixed choice of $s \in (0,\pi/2).$ Reiterating the previous construction in this setting, we can now assume that the fibration $f_J$ produced satisfies the additional properties
\begin{itemize}
\item $f_J=f,$ and
\item $\widetilde{L}=L_{\OP{Wh}}(s)$
\end{itemize}
in some neighbourhood of $C_{\OP{nodal}}.$ The image of the immersed sphere after this modification is shown in Figure \ref{fig:fibration-whitney2b}. 

For a suitable such modification, we may in addition assume that there exists an embedded path $\gamma \subset \C$ from $0$ to $1,$ also shown in Figure \ref{fig:fibration-whitney2b}, satisfying the properties that
\begin{enumerate}
\item $\gamma \cap \sigma = \{0\},$
\item $\gamma$ coincides with $[0,1]$ near its boundary point $\{0,1\} \subset \C,$ and
\item $\gamma$ is isotopic to $[0,1]$ through paths $\gamma_t$ coinciding with $[0,1]$ near its boundary, where $\gamma_0=\gamma$ and $\gamma_1=[0,1].$
\end{enumerate}
For the last point, it might be necessary to first deform the sphere $\widetilde{L}$ near its double point by an explicit rotation via an application of the Reeb flow on the standard contact sphere. We proceed to give the details.

Consider a Hamiltonian of the form $h(\|\widetilde{\mathbf{z}}\|^2)$ defined on $(B^4 \setminus \{0\},\omega_0) \cong (\CP^2 \setminus \ell_\infty,\omega_{\OP{FS}})$ with support in a small neighbourhood of the origin. We require that $h(\|\widetilde{\mathbf{z}}\|^2)=k\pi\|\widetilde{\mathbf{z}}\|^2/4$ is satisfied near the origin for some suitable $k \in \Z.$ Note that the corresponding Hamiltonian diffeomorphisms given as the time-$1$ maps are of the form $\widetilde{\mathbf{z}} \mapsto e^{i\theta(\|\widetilde{\mathbf{z}}\|^2)}\widetilde{\mathbf{z}},$ where $\theta(\|\widetilde{\mathbf{z}}\|^2) \equiv k\pi/2$ near the origin. In particular, this Hamiltonian diffeomorphism fixes the two coordinate lines $\{\widetilde{z}_i=0\},$ $i=1,2.$

Inspecting the standard fibration $\widetilde{f}(\widetilde{z},\widetilde{z}_2)=\widetilde{z}_1\widetilde{z}_2/(1-\|\widetilde{z}_1\|^2-\|\widetilde{z}_2\|^2)$ in these coordinates, we see that the deformation of $\widetilde{L}$ by such a Hamiltonian diffeomorphism still has an image under $f_J$ which is a curve that passes through the origin, but which now spins around it with a number $k$ of additional half-turns. Here we note that the double point of the sphere is $0 \in B^4,$ and that its two sheets are tangent to the two Lagrangian discs $\{(z,\pm\overline{z})\} \subset \C^2$ there.

\begin{figure}[htp]
\begin{center}
\vspace{6mm}
\labellist
\pinlabel $1$ at 140 52
\pinlabel $\color{blue}\sigma$ at 97 121
\pinlabel $\gamma=\gamma_0$ at 96 99
\pinlabel $B_{2\epsilon}^2$ at 76 45
\pinlabel $x$ at 197 63
\pinlabel $iy$ at 56 143
\endlabellist
\includegraphics[scale=0.8]{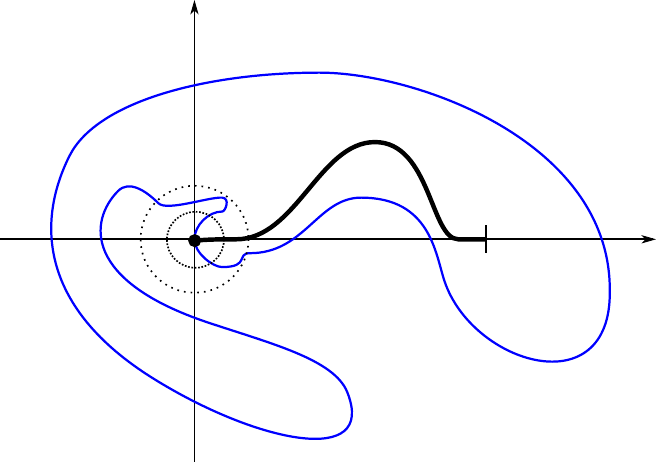}
\caption{Here we have deformed the sphere $\widetilde{L}$ inside $f^{-1}(B^2_{2\epsilon})$ in order to make it standard inside $f^{-1}(B^2_\epsilon).$}
\label{fig:fibration-whitney2b}
\end{center}
\end{figure}

{\em Step III: Normalise along a path from $0$ to $1$.}

As in the proof of Theorem \ref{thm:CompatibleFibration}, the isotopy from $\gamma$ to $[0,1]$ induces a Hamiltonian isotopy that disjoins $\widetilde{L}$ from $f^{-1}(0,1].$ A second reiteration of the whole argument in this subsection now allows us to infer that, in addition to the above, it is the case that $f_J=f$ is satisfied in some neighbourhood of $f^{-1}([-\epsilon,1]),$ and that $\widetilde{L}$ moreover coincides with a standard sphere in the same neighbourhood. See Figure \ref{fig:fibration-whitney2c} for a schematic depiction of this situation.

\begin{figure}[htp]
\begin{center}
\vspace{6mm}
\labellist
\pinlabel $1$ at 140 52
\pinlabel $\color{blue}\sigma$ at 97 121
\pinlabel $\gamma_1$ at 96 74
\pinlabel $x$ at 197 63
\pinlabel $iy$ at 56 143
\endlabellist
\includegraphics[scale=0.8]{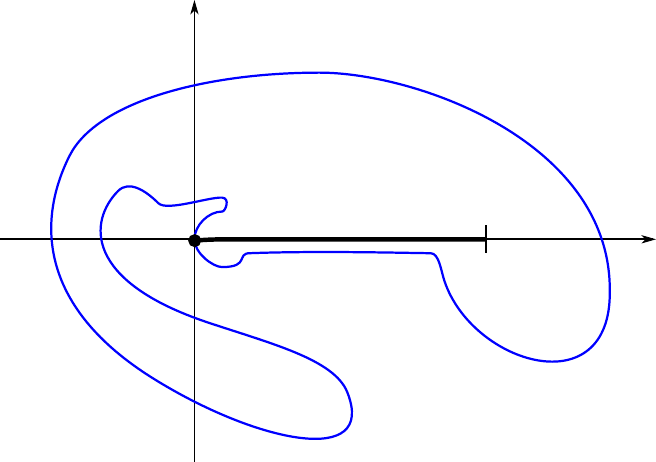}
\caption{Here we have deformed the sphere $\widetilde{L}$ to make it standard inside a neighbourhood of $f^{-1}[0,1].$}
\label{fig:fibration-whitney2c}
\end{center}
\end{figure}

{\em Step IV: Resolving the double point.}

Finally, we perform a Lagrangian surgery in a neighbourhood of $C_{\OP{nodal}},$ replacing $\widetilde{L}$ there (where it coincides with a standard immersed sphere) with a piece of a standard torus of Clifford type; this is shown in Figure \ref{fig:fibration-whitney2d}. Denote by $L$ the obtained Lagrangian torus, which is contained in an arbitrarily small neighbourhood of $\widetilde{L}.$ Note that the standard Lagrangian disc $\{(z,-\overline{z});\:\|z\|\le\sqrt{\epsilon}\} \subset V,$ may be assumed to intersect $L$ cleanly precisely along its boundary.

\begin{figure}[htp]
\begin{center}
\vspace{6mm}
\labellist
\pinlabel $1$ at 140 52
\pinlabel $-\epsilon$ at 45 54
\pinlabel $\color{blue}f_J(L)$ at 97 120
\pinlabel $x$ at 197 63
\pinlabel $iy$ at 56 143
\endlabellist
\includegraphics[scale=0.8]{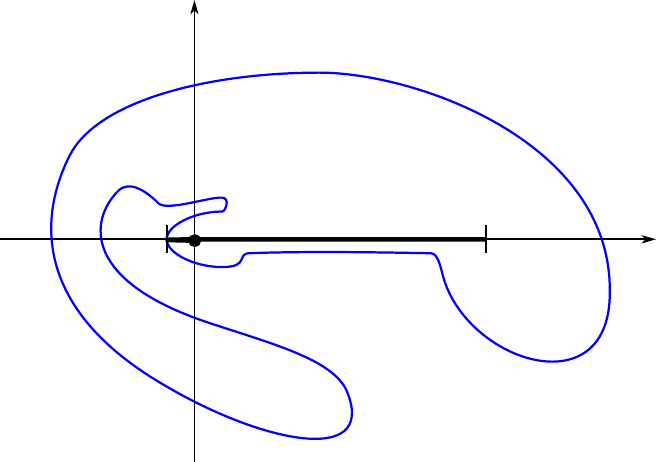}
\caption{The resolution of the double point of $\widetilde{L}$ creates a Lagrangian torus $L$ of Clifford type. The image of the Lagrangian disc $\{(z,-\overline{z}); \: \|z\|\le\sqrt{\epsilon}\},$ under $f_J$ is equal to the path $[-\epsilon,0] \subset \C.$}
\label{fig:fibration-whitney2d}
\end{center}
\end{figure}

The proof of Theorem \ref{thm:main} is finalised by a Hamiltonian isotopy from $L$ to a standard torus, where this Hamiltonian isotopy is supported away from the Lagrangian disc $\{(z,-\overline{z}),\:\:\|z\|\le \sqrt{\epsilon}\} \subset V.$ To construct this Hamiltonian isotopy we apply Parts (2) and (3) of Theorem \ref{thm:nearby}, after first using the symplectic identification of $(\CP^2 \setminus (\ell_\infty \cup f^{-1}[0,1]),\omega_{\OP{FS}})$ and a suitable subset $\R \times U \subset T^*\T^2$ as produced by Lemma \ref{lma:ConvexSubsets}.

The final Hamiltonian isotopy from $\widetilde{L}$ to a standard immersed sphere is then constructed by means of a simple modification by hand of the above Hamiltonian isotopy from $L$ to a standard Clifford torus.\qed

\section{The proof of Theorem \ref{thm:nearby}}
\label{sec:proofnearby}
In this section we prove the refined version of the nearby Lagrangian conjecture for $\T^2$ formulated in Theorem \ref{thm:nearby}. Denote by $\iota_L \colon L \hookrightarrow T^*\T^2$ the Lagrangian embedding under consideration, and let $\pi \colon T^*\T^2 \to \T^2$ be the canonical bundle projection.

We begin by deducing the existence of a Hamiltonian isotopy to a graphical Lagrangian under either of the different assumptions made on the embedding, i.e.~that $\iota_L$ either is
\begin{enumerate}
\item weakly exact, 
\item homological essential, or 
\item a Lagrangian embedding of a torus with vanishing Maslov class.
\end{enumerate}
In view of \cite[Theorem 7.1]{Dimitroglou:Isotopy}, such a Hamiltonian isotopy exists whenever $\iota_L$ is a homotopy equivalence; we thus need the following result. 
\begin{lma}
The three assumptions (1), (2), and (3) on $\iota_L$ are equivalent, and either condition is moreover equivalent to $\iota_L$ being a homotopy equivalence.
\end{lma}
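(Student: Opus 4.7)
The plan is to show that each of the three conditions (1), (2), (3) is equivalent to $\iota_L$ being a homotopy equivalence; \cite[Theorem 7.1]{Dimitroglou:Isotopy} then provides the sought Hamiltonian isotopy. The direction ``homotopy equivalence $\Rightarrow$ (1), (2), (3)'' is immediate from standard topology: if $L\simeq\T^2$ and $\iota_{L*}$ is an isomorphism on $H_1$, then via Stokes' theorem (1) reduces to the vanishing of $[\iota_L^*\lambda_{\T^2}]\in H^1(L;\R)$ on $\ker\iota_{L*}=0$; (2) is immediate since $\iota_{L*}[L]$ generates $H_2(T^*\T^2)\cong\Z$; and (3) follows from parallelizability of $\T^2$, which trivializes $T(T^*\T^2)$ and therefore $TL\oplus \nu L$, rendering the Lagrangian Gauss map nullhomotopic.

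For the converse direction, I first argue that $L$ must be a two-torus. The identification $\nu L\cong T^*L$ for Lagrangian submanifolds yields, in the orientable case, the adjunction-type computation
$$[L]\cdot [L] \;=\; e(T^*L) \;=\; -\chi(L),$$
while $H_2(T^*\T^2)\cong\Z$ is generated by the class of the zero-section, whose self-intersection vanishes. Hence $[L]\cdot[L]=0$, giving $\chi(L)=0$. Orientability is automatic under (2) and (3), and under (1) the Lagrangian Klein bottle is ruled out by Shevchishin--Nemirovski-type obstructions, so $L\cong\T^2$ in all cases. Writing $A\coloneqq\iota_{L*}\colon\Z^2\to\Z^2$, the condition that $\iota_L$ be a homotopy equivalence then amounts to $|\det A|=1$, and $|\det A|$ coincides with $|\deg(\pi\circ\iota_L)|$.

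Cases (1) and (3) can be handled via arguments already contained in \cite{Dimitroglou:Isotopy}: case (1) is a direct application of \cite[Theorem B]{Dimitroglou:Isotopy} (the nearby Lagrangian conjecture for $\T^2$, which produces homotopy equivalence as an intermediate step), and case (3) follows along the same lines after observing that the vanishing Maslov hypothesis precludes pseudoholomorphic discs of Maslov index $+2$ appearing in neck-stretching limits around $L$, so that the foliation techniques of that paper still force $A$ to be an isomorphism. Case (2) is the genuinely new input: given only $\det A \neq 0$, I would upgrade this to $|\det A|=1$ by passing to the $|\det A|$-fold cover of $T^*\T^2$ induced by the finite-index subgroup $\im A\subset\pi_1(\T^2)$. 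In this cover the preimage of $L$ splits into a disjoint union of $|\det A|$ Lagrangian tori, each representing the generator of $H_2$; applying a neck-stretching analysis analogous to Section \ref{sec:neckstretch} to this lifted configuration, in combination with the non-displaceability statement of Proposition \ref{prp:nondisp}, will produce a contradiction whenever $|\det A|>1$.

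The main obstacle is precisely this primitivity step in Case (2); the remaining implications are either immediate topological consequences or follow from adaptations of the neck-stretching arguments already developed in \cite{Dimitroglou:Isotopy}.
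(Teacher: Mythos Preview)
Your proposal has a genuine gap, which you yourself identify: the primitivity step in case (2) is left as a programme (covering spaces, neck-stretching, non-displaceability) rather than an argument, and the sketches for (1) and (3) are also not self-contained. The paper's proof is far more economical and simply cites the relevant literature. The equivalence (2)$\Leftrightarrow$(3) for a Lagrangian torus in $T^*\T^2$ is \cite[Theorem 4.1.1]{Audin:SymplecticRigidity}; the step you find hardest, namely (2)$\Rightarrow$homotopy equivalence (i.e.\ that a homologically essential Lagrangian torus in $T^*\T^2$ must project with degree $\pm 1$), is a classical result of Arnol'd \cite{FirstSteps} and independently Giroux \cite{ContactTendue}, so no pseudoholomorphic machinery is needed here at all.

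For (1)$\Rightarrow$homotopy equivalence the paper again avoids re-running the foliation arguments of \cite{Dimitroglou:Isotopy}. Instead it observes (Lemma~\ref{lma:exact}) that a weakly exact Lagrangian in $T^*\T^2$ becomes \emph{genuinely} exact after fibrewise translation by a suitable closed one-form, since $\pi_1(\T^2)$ is abelian and the action class therefore descends to $H_1(L)/\ker(\iota_{L*})$. Once $L$ is exact, the general nearby Lagrangian results of Abouzaid \cite{Abouzaid:NearbyLagrangians} and Kragh \cite{Kragh:Parametrized} immediately force $\iota_L$ to be a homotopy equivalence. Your invocation of \cite[Theorem~B]{Dimitroglou:Isotopy} for this step is circular: that theorem takes homotopy equivalence (or exactness) as an input hypothesis, not weak exactness.
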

\begin{proof}
First note that any closed Lagrangian $L \subset T^*\T^2$ which is homologically essential with $\Z$-coefficients is orientable, and is hence a torus by the Lagrangian adjunction formula. By \cite[Theorem 4.1.1]{Audin:SymplecticRigidity} a Lagrangian torus $L \subset T^*\T^2$ has vanishing Maslov class if and only if it is homologically essential. As shown by Arnol'd in \cite{FirstSteps}, and independently by E. Giroux in \cite{ContactTendue}, the latter is in turn equivalent to $\iota_L$ being a homotopy equivalence. In conclusion, we have shown that (2) and (3) both are equivalent to $\iota_L$ being a homotopy equivalence. It is clear from elementary topology that these conditions also imply weak exactness.

What remains is showing that (1) implies that $\iota_L$ is a homotopy equivalence. If $L$ weakly exact, Lemma \ref{lma:exact} below shows that $L$ is \emph{exact} after a translation by a section corresponding to the graph of a suitable closed one-form. By \cite{Abouzaid:NearbyLagrangians} and \cite{Kragh:Parametrized}, the exactness now implies that $L$ indeed is a torus whose inclusion is homotopy equivalent to the zero-section.
\end{proof}

Given a closed one-form $\alpha \in \Omega^1M$ we use $\tau_\alpha \colon (T^*M,d(\lambda_M+\alpha)) \to (T^*M,d(\lambda_M))$ to denote the (not necessarily exact) symplectomorphism given by fibre-wise addition with the section $\alpha.$ The identification $T^*\T^2=\T^2\times \R^2$ provides us with the `constant' one-forms $p_1d\theta_1+p_2d\theta_2$ and we write $\tau_{(p_1,p_2)}$ for the corresponding symplectomorphism.

\begin{lma}
\label{lma:exact}
If $L \subset (T^*M,d\lambda_M)$ is a closed connected Lagrangian submanifold which is weakly exact, and if $M$ is connected with $\pi_1(M)$ abelian, then the fibrewise translation $\tau_\alpha(L)$ by a suitable closed one-form $\alpha \in \Omega^1M$ is exact.
\end{lma}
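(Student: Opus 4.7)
The map $\tau_\alpha\colon (q,p)\mapsto (q,p+\alpha(q))$ satisfies $\tau_\alpha^*\lambda_M = \lambda_M + \pi^*\alpha$, so when $\alpha \in \Omega^1 M$ is closed, $\tau_\alpha$ is a symplectomorphism and the symplectic action class of $\tau_\alpha(L)$ equals $[\lambda_M|_{TL}] + (\pi|_L)^*[\alpha] \in H^1(L;\R)$. The goal is therefore to produce a closed $1$-form $\alpha$ on $M$ whose cohomology class satisfies $(\pi|_L)^*[\alpha] = -[\lambda_M|_{TL}]$.

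\textbf{Step 1 (weak exactness as a condition on $H_1$).} The essential point is to show that $[\lambda_M|_{TL}]$, viewed as an $\R$-linear functional on $H_1(L;\R)$, vanishes on $\ker\bigl((\pi|_L)_*\colon H_1(L;\R)\to H_1(M;\R)\bigr)$. Given an integral class $[\gamma]$ in this kernel, represent it by a loop $\widetilde\gamma\subset L$; its image in $M$ is null-homologous, and since $\pi_1(M)$ is abelian we have $\pi_1(M)=H_1(M;\Z)$, so $\pi\circ\widetilde\gamma$ is null-homotopic in $M$. Using the deformation retraction $T^*M\simeq M$, the loop $\widetilde\gamma$ is null-homotopic in $T^*M$ and thus bounds a disc representing a class in $\pi_2(T^*M,L)$. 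Weak exactness then gives $\int_{\widetilde\gamma}\lambda_M = 0$ by Stokes. Extending by $\R$-linearity handles the kernel over $\R$.

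\textbf{Step 2 (existence of $\alpha$).} By Step~1, the linear functional $[\lambda_M|_{TL}]\colon H_1(L;\R)\to\R$ descends to a functional on the image of $(\pi|_L)_*\subset H_1(M;\R)$. Extend it to a linear functional $\beta\in H^1(M;\R)$ on all of $H_1(M;\R)$, which is possible since we work with finite-dimensional real vector spaces. By the de Rham theorem, $\beta$ is represented by a closed $1$-form $\alpha_0\in\Omega^1(M)$.

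\textbf{Step 3 (conclusion).} Set $\alpha\coloneqq -\alpha_0$. Then $\tau_\alpha$ is a symplectomorphism of $(T^*M,d\lambda_M)$, and
\[
[\lambda_M|_{T\tau_\alpha(L)}] = [\lambda_M|_{TL}] + (\pi|_L)^*[\alpha] = [\lambda_M|_{TL}] - (\pi|_L)^*\beta = 0,
\]
so $\tau_\alpha(L)$ is exact.

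\textbf{Main obstacle.} The only subtle point is Step~1, where the abelian hypothesis on $\pi_1(M)$ is used in an essential way to pass from the $\pi_2$-formulation of weak exactness to an $H_1$-level statement; without it, a null-homologous loop in $M$ need not be null-homotopic and hence need not bound a disc, so the action class could fail to factor through $(\pi|_L)_*$.
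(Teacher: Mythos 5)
Your argument is correct and follows essentially the same route as the paper: both reduce weak exactness, via the hypothesis that $\pi_1(M)$ (hence $\pi_1(T^*M)=H_1(T^*M)$) is abelian, to the vanishing of the action class on $\ker\left((\pi|_L)_*\right)$ (the paper phrases this through the exact sequence $\pi_2(T^*M,L)\to\pi_1(L)\to\pi_1(T^*M)$, you through an explicit null-homotopy), and then realise the descended functional by a closed one-form. The only cosmetic imprecision is in your Step 1: an integral class in the kernel over $\R$ may map to a \emph{torsion} class in $H_1(M;\Z)$ rather than to zero, so one should first pass to a suitable multiple of the loop before invoking the null-homotopy; this changes nothing.
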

\begin{proof}
Since $\pi_1(M)$ is abelian, it follows that $H_1(T^*M)=\pi_1(T^*M),$ and hence any element
$$\gamma \in \ker(H_1(L) \xrightarrow{[\iota_L]}H_1(T^*M))$$
can be lifted to $\widetilde{\gamma} \in \pi_1(L)$ that is contained in the image of the connecting homomorphism $\pi_2(T^*M,L) \to \pi_1(L).$

By the the previous paragraph, together with the assumption of weak exactness of $L,$ it now follows that $[\iota_L^*\lambda_{M}]$ vanishes on $\ker [\iota_L] \subset H_1(L;\R).$ The symplectic action class can thus be written as $[\iota_L^*\lambda_{M}]=-[(\pi \circ \iota_L)^*\alpha]$ for some closed one-form $\alpha \in \Omega^1M.$

For any section $\alpha \in \Omega^1M$ of a closed one-form, the pull-back of $\lambda_{M}$ under $\iota_L$ and its fibrewise translation $\tau_\alpha\circ \iota_{L}$ by $\alpha$ satisfies
$$ [(\tau_\alpha \circ \iota_L)^*\lambda_{M}]=[(\pi \circ \iota_L)^*\alpha]+[\iota_L^*\lambda_{M}]=0$$
in $H^1(L,\R),$ which shows the statement.
\end{proof}

We are now ready to prove Parts (1)--(3) of Theorem \ref{thm:nearby}. 

\begin{proof}[Proof of Part (1):] As already established, there is a Hamiltonian isotopy $L_t$ from the Lagrangian torus $L=L_0$ to section $L_1$ of a closed one-form, which may be assumed to be a constant section $L_1=\T^2 \times \{\mathbf{p}_0\}.$ Unless $\mathbf{p}_0 \in U \subset \R^2,$ it follows that $L$ is displaceable from itself by a Hamiltonian isotopy, which thus contradicts e.g.~Floer's original work \cite{Floer:Morse} or the result in \cite{Laudenbach:Persistance} by F. Laudenbach and J.-C. Sikorav.

In order to see that the entire Hamiltonian isotopy can be assumed to be contained inside $U,$ we note the following. First, fibrewise rescalings
\begin{gather*}
\sigma_s \colon (T^*\T^2,d\lambda_{\T^2})\to(T^*\T^2,e^{-s}d\lambda_{\T^2}),\\
(\boldsymbol{\theta},\mathbf{p}) \mapsto (\boldsymbol{\theta},e^s\mathbf{p}),
\end{gather*}
preserve exact Lagrangian submanifolds. Hence, the induced isotopy by such a rescaling of an exact Lagrangian submanifold can thus be generated by a Hamiltonian isotopy. Since $\tau_{\mathbf{p}_0}^{-1}(L)$ is exact, we now see that the isotopy $\tau_{\mathbf{p}_0} \circ \sigma_s \circ \tau_{\mathbf{p}_0}^{-1}$ acts on our Lagrangian $L$ by Hamiltonian isotopy.

Since the conformal symplectic isotopy
$$\tau_{\mathbf{p}_0} \circ \sigma_s \circ \tau_{\mathbf{p}_0}^{-1} \colon (T^*\T^2,d\lambda_{\T^2}) \to (T^*\T^2,e^{-t}d\lambda_{\T^2})$$
contracts any compact subset into an arbitrarily small neighbourhood of $\T^2 \times \{\mathbf{p}_0\}$ as $s \to -\infty,$ and since it preserves the subset $\T^2 \times U \supset \T^2 \times \{\mathbf{p}_0\}$ by the convexity assumptions, it now follows that we can deform the Hamiltonian isotopy $L_t$ to the Hamiltonian isotopy $\tau_{\mathbf{p}_0} \circ \sigma_s \circ \tau_{\mathbf{p}_0}^{-1}(L_t),$ $s \gg 0,$ which is contained entirely inside $\T^2 \times U.$ Finally, we also need to pre-concatenate with the Hamiltonian isotopy $\tau_{\mathbf{p}_0} \circ \sigma_s \circ \tau_{\mathbf{p}_0}^{-1}(L),$ which also is contained entirely inside $\T^2 \times U$ for the very same reasons.
\end{proof}

\begin{proof}[Proof of Part (2):] The geometric assumptions made on the Lagrangian enables us to construct a Hamiltonian isotopy which `frees up' space around $L$ above $S^1 \times e^{i[-\epsilon,\epsilon]} \subset \T^2.$
\begin{lma}
\label{lma:cleanup}
There exists a Hamiltonian isotopy $\phi^t_{H_t}$ with support in the complement of $\bigcup_{|s|\le \epsilon} \dot{D}_{\mathbf{p}^0}( e^{is} ),$ where
$$ \phi^1_{H_t}(L) \cap \left( S^1 \times e^{i[-\epsilon/2,\epsilon/2]} \times \R \times (-\infty,p_2^0]\right) = S^1 \times e^{i[-\epsilon/2,\epsilon/2]} \times \{\mathbf{p}^0\}$$
is satisfied for the time-1 map.
\end{lma}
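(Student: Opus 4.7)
The strategy is to decompose $L \cap R$, where $R := S^1 \times e^{i[-\epsilon/2,\epsilon/2]} \times \R \times (-\infty,p_2^0]$, into the constant section piece $A \cap R$, with $A := S^1 \times e^{i(-\epsilon,\epsilon)} \times \{\mathbf{p}^0\} \subset L$, and a ``leftover'' piece that is well-separated from both $A \cap R$ and from the wall of discs $W := \bigcup_{|s|\le\epsilon} \dot D_{\mathbf{p}^0}(e^{is})$. A compactly supported Hamiltonian with support in $T^*\T^2 \setminus W$ will then push the leftover piece out of $R$ through the ``ceiling'' $\{p_2 = p_2^0\}$, while pointwise fixing the constant section.

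First I would observe that the hypothesis forces $L \cap R \setminus A \subset \{p_1 \neq p_1^0\}$: any point $q \in L \cap R$ with $p_1(q) = p_1^0$ lies in the punctured disc $\dot D_{\mathbf{p}^0}(\theta_2(q))$, whose parameter satisfies $|s| \le \epsilon/2 < \epsilon$, and the clean-intersection hypothesis then forces $q$ onto the boundary circle, i.e.\ into $A \cap R$. In particular $L \cap R \setminus A$ is set-theoretically disjoint from $W$. Next I would apply Weinstein's Lagrangian neighborhood theorem to a closed sub-annulus of $A$ whose interior contains $S^1 \times e^{i[-\epsilon/2,\epsilon/2]} \times \{\mathbf{p}^0\}$ to produce a tubular neighborhood in $T^*\T^2$ inside which $L$ coincides with that sub-annulus; this bounds $L \cap R \setminus A$ away from $A \cap R$. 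Combined with the compactness of $L$, the closure $\overline{L \cap R \setminus A}$ is then a compact subset of $T^*\T^2$ at positive distance from both $W$ and $A \cap R$.

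Now I would choose an open neighborhood $U$ of $\overline{L \cap R \setminus A}$ whose closure is disjoint from $W$, and enlarge $U$ by attaching to each of its points a vertical segment extending in the positive $p_2$ direction past $p_2 = p_2^0 + 1$; this enlargement remains disjoint from $W$ since $W \subset \{p_2 \le p_2^0\}$. I would then take a time-independent Hamiltonian $H := -c\,\chi(\mathbf{x})\sigma(\theta_2)$, where $\sigma : S^1 \to \R$ is smooth with $\sigma' \ge 0$ on a neighborhood of $e^{i[-\epsilon/2,\epsilon/2]}$, and $\chi$ is a cutoff supported in $U$ equal to $1$ on a ``core'' of $U$ containing both $\overline{L \cap R \setminus A}$ and its upward vertical extensions, and $c > 0$ is sufficiently large. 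On the support of $H$ the Hamiltonian vector field satisfies $\dot p_2 = c\chi\sigma' \ge 0$, and taking $c$ large enough ensures that the time-one flow pushes $\overline{L \cap R \setminus A}$ strictly above $p_2 = p_2^0$. Since $A \cap R \subset W$ lies outside the support of $H$, the constant section is pointwise fixed.

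The main subtle point is the verification that $\phi^1_{H_t}(L) \cap R \subset A \cap R$, i.e.\ that no other part of $L$ enters $R$ during the flow. This follows from the monotonicity $\dot p_2 \ge 0$ on the support of $H$: the flow preserves the half-space $\{p_2 > p_2^0\}$, so nothing outside $R$ descends into $R$, while each point of $L \cap R$ either lies in $A \cap R$ (and is fixed) or in $\overline{L \cap R \setminus A}$ (and is pushed above $p_2^0$ by time one). The inclusion $A \cap R \subset \phi^1_{H_t}(L)$ is immediate from $A \cap R \subset L$ and the flow being the identity on $W \supset A \cap R$.
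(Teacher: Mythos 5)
Your geometric picture agrees with the paper's: fix the constant-section annulus $A = S^1\times e^{i(-\epsilon,\epsilon)}\times\{\mathbf{p}^0\}\subset L$ and push the remainder of $L\cap R$ upward in $p_2$ by something supported away from the wall of discs $W$. Your preliminary observations (that $L\cap R\setminus A$ is compact and at positive distance from both $W$ and $A$, because $A$ is open in $L$ and the hypothesis forces $L\cap R\cap\{p_1=p_1^0\}$ into $A$) are fine. The implementation, however, contains a concrete computational error with two fatal consequences. For $H=-c\,\chi\,\sigma(\theta_2)$ one has $\dot p_2=-\partial_{\theta_2}H=c\,\sigma\,\partial_{\theta_2}\chi+c\,\chi\,\sigma'$, not $c\,\chi\,\sigma'$; the first term has no sign on the collar where $d\chi\neq0$, so the monotonicity ``$\dot p_2\ge0$ on $\operatorname{supp}H$'' fails, and with it both halves of your last paragraph: points pushed above $\{p_2=p_2^0\}$ can descend again after leaving the core, and points of $L$ outside $R$ that meet $\operatorname{supp}\chi$ can be driven into $R$. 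Independently, $\dot\theta_2=-c\,\sigma\,\partial_{p_2}\chi$ is nonzero on the collar, so points with $p_2\le p_2^0$ and $\theta_2$ just outside $e^{i[-\epsilon/2,\epsilon/2]}$ can enter $R$ \emph{sideways}; your argument only excludes entry from above. Finally, ``take $c\gg0$'' is delicate, since increasing $c$ also accelerates the passage into the collar region where the motion is uncontrolled.

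This is not merely a matter of care. The map you are trying to cut off --- uniform upward translation over the slab $\{\theta_2\in\operatorname{supp}\sigma'\}$ --- is the time-one map of a fibrewise translation $\tau_{t\,h(\theta_2)d\theta_2}$ with $h\ge0$, which has nonzero flux and hence is not generated by any compactly supported Hamiltonian; every cutoff therefore produces a region where the flow is not upward, and that is exactly where your argument breaks. The paper's proof takes a different route precisely to avoid this: it applies $\tau_{t\,h(\theta_2)d\theta_2}$ to $L\setminus A$ only, obtaining a \emph{Lagrangian} isotopy (the translation is uniform in $\mathbf{p}$, so there are no collar terms, and the hypothesis on $L\cap\dot{D}_{\mathbf{p}^0}(e^{is})$ is exactly what keeps the deformation embedded), and then shows this isotopy has vanishing flux by a degree argument: a loop $\gamma\subset L$ of degree one over the $\theta_2$-circle has its sheets in $L\setminus A$ over $\operatorname{supp}h$ of total degree zero, because $A$ already carries degree one. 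Only then is the isotopy realised by an ambient Hamiltonian supported away from $W$. That degree argument uses that $L$ is homotopy equivalent to the zero section --- an input entirely absent from your proof, which is itself a warning sign: the required deformation is Hamiltonian \emph{because of} this topological hypothesis, so a proof that never invokes it cannot be complete.
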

See Figure \ref{fig:cleanup} for an example of the effect of the above symplectic isotopy in the case $\mathbf{p}^0=(0,0).$
\begin{proof}
We act on the subset of $L$ contained inside 
\begin{align*}
S^1 \times (-\epsilon,\epsilon) \times O \subset T^*\T^2,\:\:\:\:O := \R^2 \setminus (\{p_1^0\} \times (-\infty,p_2^0]),
\end{align*}
by the symplectic isotopy given by the fibre-wise translation $\tau_{t h(\theta_2)d\theta_2}.$ Here $h(\theta_2) \ge 0$ has is a smooth bump function with nonempty support contained inside $(-\epsilon/2,\epsilon/2).$ Here it is crucial to use the assumptions on the intersection properties of $L$ and the Lagrangian disc in order to infer that this produces a Lagrangian isotopy.

Finally we argue that the above Lagrangian isotopy has a vanishing symplectic flux-path and, hence, that it is a Hamiltonian isotopy as sought. It suffices to consider the change of symplectic action on a closed loop $\gamma \subset L$ which is homotopic to $\{\pt\} \times S^1 \times \{0\}$ inside $T^*\T^2$ (recall that $L$ is homotopic to the zero-section). Since the degree of the projection $T^*\T^2 \to S^1$ to the second factor of $\T^2=S^1 \times S^1$ restricted to $\gamma$ is one, and since the part of $\gamma$ contained inside $S^1 (-\epsilon,\epsilon) \times \{0\} \subset L$ is fixed by the deformation, one readily computes that the flux is unchanged (by degree reasons the total integral over the remaining sheets of $\gamma$ where the isotopy is supported is equal to zero).
\end{proof}

\begin{figure}[htp]
\begin{center}
\vspace{6mm}
\labellist
\pinlabel $p_2$ at 68 120
\pinlabel $p_2$ at 224 120
\pinlabel $p_1$ at 145 35
\pinlabel $p_1$ at 301 35
\pinlabel $\color{blue}L\cap\{|\theta_2|<2\delta\}$ at 28 82
\pinlabel $\color{blue}\phi^1_{H_t}(L)\cap\{|\theta_2|<2\delta\}$ at 168 104
\pinlabel $\color{red}C(e^{is})$ at 182 45
\pinlabel $\color{red}C(t,e^{is})$ at 187 17
\pinlabel $\bigcup_{|s|\le 2\delta}\dot{D}_{\mathbf{p}^0}(e^{is})$ at 109 5
\pinlabel $\bigcup_{|s|\le 2\delta}\dot{D}_{\mathbf{p}^0}(e^{is})$ at 265 5
\endlabellist
\includegraphics{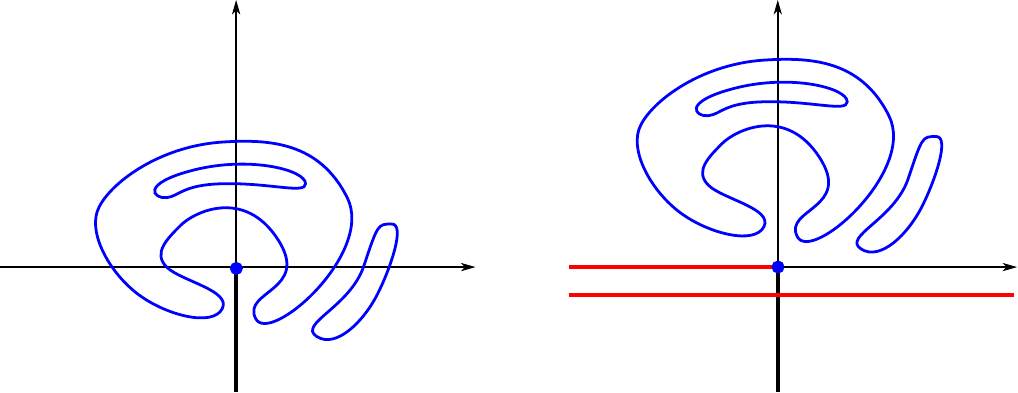}
\caption{The canonical projection of $L \cap (S^1 \times (-\epsilon,\epsilon) \times \R^2)$ to $\R^2$ before and after the deformation provided by Lemma \ref{lma:cleanup}. Here $\mathbf{p}^0=(0,0)$ and $2\delta \coloneqq \epsilon/2.$ In this subset the deformation is simply a translation of $L \setminus S^1 \times (-\epsilon,\epsilon) \times \{\mathbf{p}^0\}$ in the positive $p_2$-direction.} 
\label{fig:cleanup}
\end{center}
\end{figure}

We now replace $L$ with the Lagrangian produced by Lemma \ref{lma:cleanup} and will write
$$ \delta \coloneqq \epsilon/8.$$
Under these assumptions it is possible to establish a refined Lagrangian isotopy result -- this is the only result in this section whose proof requires hard techniques.
\begin{prp}
\label{prp:LagIsoFixed}
For $L$ as above, there exists a \emph{Lagrangian} isotopy $L_t$ from $L_0=L$ to a graphical Lagrangian torus $L_1,$ where the isotopy satisfies the additional property that
$$ L_t \cap \left(S^1 \times e^{i(-2\delta,2\delta)} \times \R \times (-\infty, p_2^0] \right) = S^1 \times \{\theta_2\} \times \{p_1^t,p_2^0\}, $$
and where $p_1^t$ depends smoothly on $t \in [0,1].$
\end{prp}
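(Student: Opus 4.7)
The argument is a refinement of the neck-stretching procedure that proves \cite[Theorem 7.1]{Dimitroglou:Isotopy}, arranged so that the Lagrangian isotopy produced is trivial in the $(\theta_2, p_2)$-directions over the annulus $S^1 \times e^{i(-2\delta,2\delta)}$ inside the half-strip $\R \times (-\infty, p_2^0]$. By Lemma \ref{lma:cleanup}, we may assume that on the region
$$ R := S^1 \times e^{i(-4\delta,4\delta)} \times \R \times (-\infty, p_2^0] $$
the Lagrangian $L$ agrees with the flat section $S^1 \times e^{i(-4\delta,4\delta)} \times \{\mathbf{p}^0\}$. The discs $\dot{D}_{\mathbf{p}^0}(e^{is})$ for $|s| < 4\delta$, together with this annular piece of $L$, equip $R$ with a symplectic product structure with $(\theta_1, p_1)$-coordinates on one factor and $(\theta_2, p_2)$-coordinates on the other.

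First I would carry out the neck-stretching sequence $J_\tau$ around $L$, as in \cite[Section 2]{Dimitroglou:Isotopy}, but with each $J_\tau$ chosen to be of split form on $R$ with respect to the above decomposition. Under such a choice, the symplectic cylinders $(S^1 \times \R) \times \{(e^{is}, p_2)\}$ inside $R$ are $J_\tau$-holomorphic leaves of a foliation, while each Lagrangian disc $\dot{D}_{\mathbf{p}^0}(e^{is})$ is totally real. The SFT limit as $\tau \to \infty$ provides, as in the proof of \cite[Theorem 7.1]{Dimitroglou:Isotopy}, a pseudoholomorphic foliation of a neighbourhood of $L$ by cylinders asymptotic to closed Reeb orbits on $ST^*\T^2$. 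By positivity of intersection against the $J_\tau$-holomorphic leaves in $R$ together with automatic transversality, the foliation cylinders that enter $R$ must coincide there with the explicit leaves $S^1 \times \{e^{is}\} \times \R \times \{p_2\}$ of the product foliation.

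The Lagrangian isotopy produced by interpolating $L$ fibre-wise along these pseudoholomorphic cylinders then acts on $L$ inside the smaller strip $R' := S^1 \times e^{i(-2\delta,2\delta)} \times \R \times (-\infty, p_2^0]$ only by translation in the $(\theta_1, p_1)$-factor, yielding the form $L_t \cap R' = S^1 \times \{\theta_2\} \times \{(p_1^t, p_2^0)\}$ with $p_1^t$ smooth in $t$, while the target $L_1$ is graphical by Part (1) of Theorem \ref{thm:nearby}. The main obstacle is the local rigidity assertion in the second paragraph: even granted the split form of $J_\tau$ on $R$, one must preclude an SFT-limit cylinder from straying off the product leaves inside $R$. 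This is achieved by combining positivity of intersection with the symplectic foliation on $R$, a classification of finite-energy pseudoholomorphic cylinders in the symplectisation of $ST^*\T^2$ (whose Reeb orbits, being lifts of closed flat geodesics on $\T^2$, organise into Bott families indexed by $H_1(\T^2)$), and automatic transversality for cylinders with totally real boundary conditions along the discs $\dot{D}_{\mathbf{p}^0}(e^{is})$. Once this local rigidity is in hand, the remaining conclusions follow by direct transcription of the arguments in \cite[Theorem 7.1]{Dimitroglou:Isotopy}.
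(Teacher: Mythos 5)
Your setup — making $L$ flat over the arc via Lemma \ref{lma:cleanup}, choosing the (neck-stretched) almost complex structure to be of product type on the region $R$ so that the explicit cylinders $S^1\times\{e^{is}\}\times(-\infty,p_1^0)\times\{p_2^0\}$ and $S^1\times\{e^{is}\}\times\R\times\{p_2\}$, $p_2<p_2^0$, are leaves, and then using positivity of intersection and automatic transversality to pin down the foliation inside $R$ — is essentially the route the paper takes (the paper phrases it as choosing $J=J_{\OP{cyl}}$ near these explicit cylinders, compactifying to $S^2\times S^2$, and quoting the foliation result of \cite[Section 6]{Dimitroglou:Isotopy}, with the cylinders $C(p_2,e^{is})$ used to keep the resulting solid torus out of $S^1\times e^{i(-3\delta,3\delta)}\times\R\times\R_{<0}$).

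The genuine gap is in your final step, where you say the isotopy is obtained by ``interpolating $L$ fibre-wise along these pseudoholomorphic cylinders'' and that the rest is a ``direct transcription.'' The tori $L_t$ are cut out as level sets of the characteristic foliation of the hypersurface $\dot{\mathcal T}=\bigcup_\theta \dot D(\theta)$ swept out by the (compactified, smoothed) discs. For these level sets to close up into \emph{closed Lagrangian tori} one needs the symplectic monodromy of the characteristic line field $\ker(d\lambda_{\T^2}|_{T\dot{\mathcal T}})$, i.e.\ the return map on $\dot D(e^{i0})$, to preserve the foliation by the circles $\gamma_p=S^1\times\{e^{i0}\}\times\{p\}\times\{0\}$; this fails in general and is not implied by anything you have established. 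The paper corrects it by inserting a symplectic suspension of a Hamiltonian isotopy of the punctured disc (its Lemma \ref{lma:HamIso}), and — crucially for the statement you are proving — that suspension is supported over $\theta_2\in e^{i[-3\delta,-2\delta]}$, i.e.\ \emph{away} from the arc $e^{i(-2\delta,2\delta)}$, which is exactly what preserves the normal form $L_t\cap R'=S^1\times\{\theta_2\}\times\{(p_1^t,p_2^0)\}$. Without this step your construction does not produce well-defined closed Lagrangians, and a naive correction of the monodromy (e.g.\ one supported over the whole circle) would destroy the very normal form over the arc that the proposition asserts. You need to add this monodromy-correction argument and verify that it can be localised in $\theta_2$ outside the arc.
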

We prove this proposition in Section \ref{sec:ProofLagIsoFixed} below by following the steps of the proof of \cite[Theorem 7.1]{Dimitroglou:Isotopy} while taking additional care.

Given Proposition \ref{prp:LagIsoFixed}, Part (2) of Theorem \ref{thm:nearby} is now a direct consequence of the straight-forward result Lemma \ref{lma:HamFromLag} below.
\end{proof}

\begin{lma}
\label{lma:HamFromLag}
 
Consider the Lagrangian isotopy $L_t$ produced by Proposition \ref{prp:LagIsoFixed}. There exists a smooth path of one-forms
$$\alpha_t=(p_1-p_1^t)d\theta_1+g_t(\theta_2)d\theta_2$$
supported inside $S^1 \times e^{i(-2\delta,2\delta)},$ and where $g_t=0$ moreover holds in $e^{i(-\delta,\delta)},$ for which $\tau_{\alpha_t}(L_t)$ is a Hamiltonian isotopy.

In other words, $\tau_{\alpha_t}(L_t)$ is a Hamiltonian isotopy which starts at $L_0=L,$ satisfies
$$ \tau_{\alpha_t}(L_t) \cap \left(S^1 \times e^{i(-\delta,\delta)} \times \R \times (-\infty,p_2^0]\right) = S^1 \times e^{i(-\delta,\delta)} \times \{\mathbf{p}^0\},$$
for all $t \in [0,1],$ and ends at the graphical Lagrangian torus $\tau_{\alpha_1}(L_1).$
\end{lma}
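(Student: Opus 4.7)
The plan is to use the standard principle that a Lagrangian isotopy is generated by a Hamiltonian precisely when its symplectic flux-path vanishes, together with the fact that fibre-wise translation $\tau_\alpha$ by a closed one-form $\alpha \in \Omega^1(\T^2)$ is a (non-exact) symplectomorphism of $(T^*\T^2,d\lambda_{\T^2})$ which shifts the symplectic action class of every Lagrangian by exactly $[\alpha] \in H^1(\T^2;\R)$. It thus suffices to exhibit the one-form $\alpha_t$ of the prescribed shape so that $[\alpha_t] = -\Phi(t)$ holds in $H^1(\T^2;\R)$, where $\Phi(t)$ denotes the flux-path of the isotopy $L_t$, identified along the isotopy with cohomology classes on $\T^2$ via the canonical isomorphism $(\pi\circ\iota_{L_t})^* \colon H^1(\T^2;\R) \xrightarrow{\cong} H^1(L_t;\R)$.

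First I would compute the $d\theta_1$-component of $\Phi(t)$ by pairing it with the 1-cycle $\gamma_1 \coloneqq S^1\times\{0\}\subset\T^2$. By the rigid product form of $L_t$ inside the slab $\{|\theta_2|<2\delta\}$ supplied by Proposition \ref{prp:LagIsoFixed}, this cycle lifts to $S^1\times\{0\}\times\{(p_1^t,p_2^0)\}\subset L_t$, and pulling back $\lambda_{\T^2}=p_1\,d\theta_1+p_2\,d\theta_2$ immediately yields $\int_{\gamma_1}\iota_{L_t}^*\lambda_{\T^2}=2\pi\, p_1^t$. Consequently $\Phi(t)([\gamma_1])=2\pi(p_1^t-p_1^0)$, which is precisely the negative of the pairing of the first summand $(p_1^0-p_1^t)\,d\theta_1$ of the proposed $\alpha_t$ with $[\gamma_1]$. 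This automatic cancellation of the $d\theta_1$-component is the only nontrivial input and is engineered into the conclusion of the preceding proposition.

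Next I would construct $g_t(\theta_2)$ to absorb the remaining $d\theta_2$-component of the flux: any smooth family of functions on $S^1$, depending smoothly on $t\in[0,1]$, with $g_0\equiv 0$, supported in $e^{i([-2\delta,-\delta]\cup[\delta,2\delta])}$, and whose total integral $\int_{S^1}g_t(\theta_2)\,d\theta_2$ equals the negative of the pairing of $\Phi(t)$ with $[\gamma_2\coloneqq\{0\}\times S^1]$ (up to the fixed normalisation constant). Such a family plainly exists, since we have ample room in the two arcs of the support to realise any prescribed total integral. Both summands of $\alpha_t\coloneqq(p_1^0-p_1^t)\,d\theta_1+g_t(\theta_2)\,d\theta_2$ are closed on $\T^2$ (the first is a harmonic form, the second has derivative $g_t'(\theta_2)\,d\theta_2\wedge d\theta_2=0$), so $\tau_{\alpha_t}$ is a genuine symplectomorphism, and $[\alpha_t]=-\Phi(t)$ in $H^1(\T^2;\R)$ by construction.

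Finally, I would verify the two claimed geometric properties of $\tau_{\alpha_t}(L_t)$. In the slab $\{|\theta_2|<\delta\}$ where $g_t$ vanishes, $\tau_{\alpha_t}$ restricts to the fibre-wise constant shift $(\boldsymbol\theta,p_1,p_2)\mapsto(\boldsymbol\theta,p_1+(p_1^0-p_1^t),p_2)$, which sends the point $(\theta_1,\theta_2,p_1^t,p_2^0)\in L_t$ exactly to $(\theta_1,\theta_2,\mathbf{p}^0)$, giving the required product form $S^1\times e^{i(-\delta,\delta)}\times\{\mathbf{p}^0\}$; and the flux-path of $\tau_{\alpha_t}(L_t)$ is $\Phi(t)+[\alpha_t]=0$, so by the standard flux criterion the isotopy $\tau_{\alpha_t}(L_t)$ is generated by a Hamiltonian. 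There is no substantial obstacle here beyond the observation made in the second paragraph.
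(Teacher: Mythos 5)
Your proposal is correct and follows essentially the same route as the paper: one kills the $\gamma_2$-component of the flux by prescribing $\int_{S^1}g_t\,d\theta_2$ to be the change in action of $\{e^{i0}\}\times S^1$, observes that the $(p_1^0-p_1^t)\,d\theta_1$ term cancels the $\gamma_1$-component automatically thanks to the rigid product form from Proposition \ref{prp:LagIsoFixed}, and invokes the homotopy-equivalence of $\iota_L$ so that these two pairings determine the flux-path. Your write-up merely makes explicit the $\gamma_1$-cancellation and the verification of the product form in the slab, which the paper leaves implicit.
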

\begin{proof}
The functions $g_t(\theta_2)$ are constructed to satisfy
$$\int_{S^1} g_t(\theta_2)d\theta_2=\int_{\{e^{i0}\} \times S^1} \lambda_{\T^2}|_{TL_0}-\int_{\{e^{i0}\} \times S^1} \lambda_{\T^2}|_{TL_t}$$
for all $t \in [0,1].$ Using the fact that $L \subset T^*\T^2$ is a homotopy equivalence, the resulting deformed isotopy is now be seen to have a vanishing symplectic flux-path, as sought. \end{proof}

\begin{proof}[Proof of Part (3):]
Consider the Hamiltonian isotopy $L_t$ constructed in Part (2), where $L_1$ is a Lagrangian that is the graph of the closed one-form $\alpha$ on $\T^2.$ Since $L=L_0 \subset \T^2 \times U$ we can readily find a different closed one-form $\beta,$ coinciding with $\alpha$ near $\{\theta_2=0\},$ and whose graph is contained inside $\T^2 \times U.$ Here we have used the fact that there is a Hamiltonian isotopy from $L_0$ to a constant section inside $\T^2 \times U$ by Part (1). We then construct a Hamiltonian isotopy $\widetilde{L}_t$ from $\widetilde{L}_0=L$ to the graph of $\beta$ which satisfies the requirements of Part (2) as follows: concatenate the original path $L_t$ with a path of graphical Lagrangians from the fibrewise convex interpolation $t\beta+(1-t)\alpha.$

What remains is to deform the Hamiltonian isotopy constructed above relative its endpoints in order to make it confined to the subset $\T^2 \times U \supset L.$ This we do by arguing as in Part (1), i.e.~by considering the deformed Hamiltonian isotopy
$$\tau_{\beta} \circ \sigma_s \circ \tau_{\beta}^{-1}(\widetilde{L}_t), \:\: s \gg 0.$$
Here it is important to use the property that
$$U \setminus \{ p_2 < p_2^0 \} \subset \R^2 $$
is a \emph{star-shaped} set with respect to the point $\mathbf{p}^0,$ in order to ensure that the requirements in Part (2) are satisfied also for this new Hamiltonian isotopy.
\end{proof}

\subsection{Proof of Proposition \ref{prp:LagIsoFixed}}
\label{sec:ProofLagIsoFixed}

Here we show the existence of the Lagrangian isotopy $L_t.$ After a suitable translation by the constant section $-(p_1^0d\theta_1+p_2^0d\theta_2)$ it suffices to consider the case when $p_1^0=p_2^0=0.$

The starting point of the proof is the observation that
$$ C(e^{is}):=S^1 \times \{e^{is}\} \times (-\infty,p_1^0) \times \{p_2^0\} \subset T^*\T^2 \setminus L$$
is a smooth one-dimensional family of embedded two-punctured symplectic spheres, i.e.~cylinders, which moreover are pseudoholomorphic for the standard cylindrical almost complex structure $J_{\OP{cyl}}$ on $T^*\T^2 \setminus 0_L$ (see Section \ref{sec:neckstretch}). Here $C(e^{is})$ is parametrised by $(\theta_1,p_1),$ $p_1 < p_1^0=0,$ while the family is parametrised by $s \in (-4\delta,4\delta).$ Note that the cylinder is asymptotic to Reeb chords of $T^*\T^2 \setminus L$ at its convex (i.e.~near $+\infty$) as well as concave (i.e.~near $L$) ends. In fact this is a `trivial' cylinder inside the symplectisation $T^*\T^2 \setminus 0_{T^2} \xrightarrow{\cong} \R \times ST^*\T^2,$ in the sense that each $C(e^{is})$ is invariant under translation of the symplectisation coordinate.

In addition, for all $s \in (-4\delta,4\delta)$ and $p_2 < p_2^0,$ there are $J_{\OP{cyl}}$-holomorphic cylinders
$$ C(p_2,e^{is}):=S^1 \times \{e^{is}\} \times \R \times \{p_2\} \subset T^*\T^2 \setminus L$$
having both punctures asymptotic to Reeb orbits in the convex end (i.e.~near $+\infty$).

Of course, since $L$ only partially coincides with $0_{\T^2},$ the almost complex structure $J_{\OP{cyl}}$ is typically not defined on all of $T^*\T^2 \setminus L.$ However, we can find a well-defined compatible almost complex structure $J$ on $(T^*\T^2 \setminus L, d\lambda_{\T^2})$ which is cylindrical outside of a compact subset, and which agrees with $J_{\OP{cyl}}$ in a neighbourhood of the union of cylinders
$$ C(e^{is}) \cup C(p_2,e^{is}), \:\: |s| \le 4\delta, \:\: p_2 < p_2^0.$$
Here it is crucial that these cylinders all are disjoint from $L$; see Figure \ref{fig:cleanup}. (This was achieved by the previous application of Lemma \ref{lma:cleanup}.)

We now follow the argument given in \cite[Section 7]{Dimitroglou:Isotopy}. First, we compactify $D_RT^*\T^2$ to $S^2 \times S^2$ for some $R \gg 0$ as described in \cite[Section 7]{Dimitroglou:Isotopy}. The above cylinders $C(e^{is})$ now become one-punctured pseudoholomorphic spheres inside the latter space, i.e.~pseudoholomorphic planes, given that $J$ is carefully chosen. In fact, we may require $J$ to be equal to the standard product complex structure near the divisor
$$ C_\infty \coloneqq S^2 \times \{0,\infty\} \: \: \sqcup \:\: \{0,\infty\} \times S^2 \subset S^2 \times S^2,$$
and thus, in particular, this divisor $J$-holomorphic.

Second, arguing as in in \cite{Dimitroglou:Isotopy} by using automatic transversality, positivity of intersection, and the SFT compactness theorem, we obtain the following result, which is the core of the argument here:
\begin{prp}[Section 6 in \cite{Dimitroglou:Isotopy}]
The above family $\{C(e^{is})\}_{|s|<4\delta}$ of $J$-holomorphic planes inside $S^2 \times S^2 \setminus L$ lives inside a regularly cut out and compact component its moduli space $\mathcal{M}=\{C(\theta)\}_{\theta \in S^1} \cong S^1.$ Furthermore, the planes $C(\theta),$ $\theta \in S^1,$ foliate the hypersurface
$$\bigcup_{\theta \in S^1} C(\theta) \subset T^*\T^2 \setminus L$$
diffeomorphic to $S^1 \times C(e^{i0}),$ and the asymptotic evaluation map $\mathcal{M} \to \Gamma$ is a diffeomorphism, where we have used $\Gamma\cong S^1$ to denote the corresponding $S^1$-family of Reeb orbits on $ST^*L.$
\end{prp}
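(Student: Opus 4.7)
The plan is to follow the scheme of \cite[Section 6]{Dimitroglou:Isotopy}, adapting it to the present setting where $L$ only partially coincides with the zero section. First I would compute the Fredholm index of a plane $C(e^{is})$ in the compactified target $S^2 \times S^2 \setminus L$ with a single negative puncture asymptotic to a primitive Reeb orbit of $ST^*L$, and check that it equals one. This is the standard Riemann--Roch computation for punctured spheres combined with a Conley--Zehnder index for a Morse--Bott asymptotic (the Bott family $\Gamma \cong S^1$ of closed geodesics in $\T^2$). Granted this, embeddedness and simple coverness of the asymptotic allow an application of Wendl's automatic transversality \cite{Wendl:Automatic}: the relevant index inequality is satisfied, so the component of the moduli space containing $\{C(e^{is})\}_{|s|<4\delta}$ is a smooth one-manifold, and evaluation at the asymptotic is submersive.

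For compactness, I would invoke the SFT compactness theorem \cite{Bourgeois:Compactness} and argue that no degeneration can occur inside our chosen component. The key observation is that the almost complex structure $J$ was arranged to make the divisor $C_\infty \subset S^2 \times S^2$, as well as each explicit cylinder $C(p_2,e^{is}) \subset T^*\T^2 \setminus L$ with $p_2<0$, pseudoholomorphic. Positivity of intersection \cite{McDuff:LocalBehaviour} with these curves (together with the computation of $[C(e^{i0})] \bullet C_\infty$ and the foliation of the region $\{p_2<0\}$ by the $C(p_2,e^{is})$) forces every limit building to consist of a single plane in the same relative homotopy class. The main technical obstacle will be ruling out additional components in the symplectisation level: any such extra cylinder or plane asymptotic to a multiple of the relevant Reeb orbit would force, by the asymptotic eigenvalue analysis of Hind \cite{Hind:SymplecticEmbeddings}, either a hidden intersection with a nearby member of the family or an asymptotic winding inconsistent with the embeddedness of $C(e^{i0})$; this is the step that requires the most care.

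Once compactness and transversality are in place, $\mathcal{M}$ is a closed smooth one-manifold, and the asymptotic evaluation map $\ev \colon \mathcal{M} \to \Gamma \cong S^1$ has everywhere non-degenerate differential by the winding computation above. Hence $\ev$ is a covering map. Injectivity follows because two distinct planes in $\mathcal{M}$ asymptotic to the same Reeb orbit would, after a small perturbation supplied by automatic transversality, acquire an interior intersection point (again by Hind's asymptotic linking calculation), contradicting positivity of intersection together with the topological count $[C(e^{i0})] \bullet [C(e^{i0})] = 0$ in $S^2 \times S^2$. Consequently $\ev$ is a diffeomorphism and $\mathcal{M} \cong S^1$.

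Finally, the foliation property follows from the same positivity of intersection: two distinct members of $\mathcal{M}$ must have disjoint interiors, so $\bigcup_{\theta \in S^1} C(\theta) \subset T^*\T^2 \setminus L$ is a smoothly embedded hypersurface. Parametrising by $\theta$ and by a transverse coordinate along $C(e^{i0})$, and using smoothness of the moduli space, identifies this hypersurface diffeomorphically with $S^1 \times C(e^{i0})$ as claimed.
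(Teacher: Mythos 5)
Your outline matches the paper's approach: the proposition is quoted from \cite[Section 6]{Dimitroglou:Isotopy} and the paper justifies it only by pointing to automatic transversality, positivity of intersection (with the auxiliary holomorphic cylinders and the compactifying divisor), and SFT compactness, which is exactly the scheme you reconstruct, including Hind's asymptotic winding analysis for the injectivity of the asymptotic evaluation map. Your proposal is a correct and faithful expansion of that cited argument.
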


By construction, the discs $C(\theta)$ with $\theta \in e^{i(-4\delta,4\delta)},$ may be assumed to coincide with planes of the form
$$B^2 \times \{e^{i\theta}\} \subset S^2 \times S^2,$$
where $B^2 \subset S^2$ denotes the lower open hemisphere.
\begin{lma}[Section 7.1 in \cite{Dimitroglou:Isotopy}]
\label{lma:straighten}
After a deformation of $C(\theta),$ $\theta \in S^1,$ supported near the divisor $C_\infty \subset S^2 \times S^2$ we may further assume that each $C(\theta),$ $\theta \in S^1,$ coincides with the $J_{\OP{cyl}}$-holomorphic cylinder
$$B^2 \times \{e^{i\theta}\} \subset S^2 \times S^2$$
in that neighbourhood.
\end{lma}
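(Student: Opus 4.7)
The key point is that in an arbitrarily small neighborhood of $C_\infty$ the almost complex structure $J$ is the standard integrable product complex structure on $S^2 \times S^2$, so each plane $C(\theta)$ is there a genuine holomorphic curve and the divisor $C_\infty$ is $J$-holomorphic. Positivity of intersection \cite{McDuff:LocalBehaviour} applied to $C(\theta)$ against the components of $C_\infty$, combined with the knowledge of its homology class (which can be read off from the explicit form $C(e^{is}) = B^2 \times \{e^{is}\}$ on the arc $|s| < 4\delta$), implies that each $C(\theta)$ meets $C_\infty$ transversely in a single point $p(\theta)$, lying on one distinguished component $\Sigma \cong S^2$ of $C_\infty$. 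The diffeomorphism property of the asymptotic evaluation map, together with the explicit identification above, shows that $\theta \mapsto p(\theta)$ is a smooth embedded loop in $\Sigma$ which coincides with $\theta \mapsto (0,e^{i\theta})$ on the arc $|s|<4\delta$.

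First I would construct a Hamiltonian isotopy supported in an arbitrarily small neighborhood of $C_\infty$ which preserves $C_\infty$ setwise, equals the identity on the arc where $p(\theta)$ is already in standard position, and moves the remaining portion of $\theta \mapsto p(\theta)$ to $\theta \mapsto (0,e^{i\theta})$. Such an isotopy is produced by first choosing an ambient smooth isotopy between these two embedded loops inside the symplectic sphere $\Sigma$ relative the common arc -- this exists because the complement of an embedded arc in $S^2$ is an open disc in which any two properly embedded curves with prescribed boundary behavior are isotopic relative to the boundary -- and then lifting it to a Hamiltonian isotopy of a tubular neighborhood of $\Sigma$ by a standard parametric application of the symplectic neighborhood theorem for a symplectic two-sphere.

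After this first step, each new plane $C(\theta)$ passes through $(0,e^{i\theta})$ transversely and is holomorphic in a neighborhood of $C_\infty$, so it expresses as a holomorphic graph over the $B^2$-factor through $(0, e^{i\theta})$ in a small polydisc neighborhood of this point, just as the target plane $B^2 \times \{e^{i\theta}\}$ does. A second, fibrewise, straightening deformation is then obtained by taking the convex interpolation between these two holomorphic graphs and upgrading the resulting smooth isotopy of $\theta$-families to a Hamiltonian isotopy of a tubular neighborhood of $C_\infty$, by yet another parametric application of the symplectic neighborhood theorem; here the interpolating surfaces remain symplectic for all interpolation parameters because both endpoints are holomorphic, hence tangent to a symplectic distribution, and the polydisc is taken small enough. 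The main technical obstacle is carrying out both deformations $\theta$-equivariantly while keeping $C_\infty$ fixed setwise; this is exactly where the parametric symplectic neighborhood theorem intervenes, and it is handled once the continuous dependence of $p(\theta)$ and of the local graphs on $\theta$ has been established from the regularity and compactness of the moduli space $\mathcal{M}$. Note that throughout, the support of all isotopies remains automatically disjoint from $L$, since the latter was contained in the interior of $D_R T^*\T^2$ and $C_\infty$ is the divisor added in the compactification.
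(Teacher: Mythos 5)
This lemma is not proved in the paper at all: it is imported wholesale from \cite[Section 7.1]{Dimitroglou:Isotopy}, so there is no in-paper argument to compare yours against. The closest internal analogue is Theorem \ref{thm:normalise}, which performs the same kind of normalisation for the conic fibration near $q_1,q_2$, and your two-step skeleton (first move the intersection points with the divisor into standard position, then flatten the local holomorphic graphs) matches that template. Your preliminary observations are essentially right, except that the injectivity of $\theta\mapsto p(\theta)$ should come from positivity of intersection of distinct leaves at $C_\infty$ (their homological intersection number vanishes), not from the asymptotic evaluation map, which only controls the Lagrangian end.

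Both substantive steps, however, have genuine gaps. In Step 1 you propose an ambient Hamiltonian isotopy preserving $C_\infty$ setwise; its restriction to the sphere $\Sigma$ is then an area-preserving isotopy, which can carry the embedded loop $p(S^1)$ to the equatorial circle $\{0\}\times S^1$ only if the two loops bound discs of equal area in $\Sigma$ --- a condition you neither verify nor have an obvious reason to expect. In Step 2, the convex interpolation $t\,g_\theta+(1-t)e^{i\theta}$ only yields a \emph{path} of graphs over a polydisc; to produce a single surface equal to the constant graph near $C_\infty$ and to the original $C(\theta)$ outside, you must cut off, and the resulting $\bar\partial$-error is of size $|\beta_r'|\cdot|g_\theta(z_1)-g_\theta(0)|=O(|g_\theta'(0)|)$, which is scale-invariant: shrinking the polydisc does not make it small, so the symplectic condition on the interpolating region is not automatic. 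This is precisely the difficulty that the proof of Theorem \ref{thm:normalise} confronts --- there only the coefficients of degree $\ge 2$ can be cut off (gaining a factor $r^{k-1}$), while the linear term is left alone because it is pinned by the tangency condition; here nothing pins $g_\theta'(0)$. (Moreover, the justification that the interpolants are symplectic ``because both endpoints are holomorphic, hence tangent to a symplectic distribution'' is not an argument, and after your Step 1 the planes are in any case no longer holomorphic near $C_\infty$, only symplectic, so even the graph description needs re-justification.) A correct proof has to exploit more structure, e.g.\ that the $C(\theta)$ are mutually disjoint global holomorphic graphs over a fixed-size disc $\{|z_1|<r_0\}$, so that the whole $S^1$-family can be straightened at once by an ambient symplectomorphism of a neighbourhood of the hypersurface $\bigcup_\theta C(\theta)$ constructed from its characteristic foliation, in the spirit of Theorem \ref{thm:normalisepath}, rather than by leaf-by-leaf cutoffs.
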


Third, after deforming the planes $\{C(\theta)\},$ $\theta \notin e^{i(-3\delta,3\delta)},$ as in \cite[Section 5.3]{Dimitroglou:Isotopy}, we may assume that the compactifications $D(\theta) \subset (S^2 \times S^2,L)$ for all $\theta \in S^1$ yield a smoothly embedded solid torus
$$(\widetilde{\mathcal{T}} \coloneqq D^2 \times S^1,S^1 \times S^1) \hookrightarrow (S^2 \times S^2,L).$$
Using the initial identification
$$S^2 \times S^2 \setminus C_\infty \xrightarrow{\cong} D_RT^*\T^2$$
we extend the image of $\widetilde{\mathcal{T}}$ to a proper embedding
$$((D^2 \setminus \{0\}) \times S^1,S^1 \times S^1) \xrightarrow{\cong} (\dot{\mathcal{T}},\partial \dot{\mathcal{T}}) \subset (T^*\T^2,L)$$
foliated by symplectic punctured discs $\dot{D}(\theta).$ Using the existence of our initial punctured pseudoholomorphic spheres $C(e^{is})$ and $C(p_2,e^{is}),$ we can make sure that this `punctured solid torus' is of a standard form.
\begin{lma}
\label{lma:cleanup2}
In addition to the above properties, we may assume that
\begin{enumerate}
\item The punctured disc leaves $\dot{D}(\theta)$ for $\theta \in e^{i(-3\delta,3\delta)}$ coincide with
$$ S^1 \times \{\theta\} \times \R_{\le 0} \times \{0\},$$
while $\dot{D}(\theta) \setminus D_RT^*\T^2$ all are contained inside the latter subset; and
\item The intersection
$$ \dot{\mathcal{T}} \cap \left(S^1 \times e^{i(-3\delta,3\delta)} \times \R \times \R_{<0}\right) = \emptyset$$
is empty.
\end{enumerate}
\end{lma}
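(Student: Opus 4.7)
The proof splits into the two claims, with (1) essentially automatic and (2) containing the main geometric content.

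For (1), I would argue that the claim follows directly from construction. The deformation carried out in the step preceding the lemma (following \cite[Section 5.3]{Dimitroglou:Isotopy}) is applied only to those leaves $C(\theta)$ with $\theta \notin e^{i(-3\delta,3\delta)}$. For $\theta \in e^{i(-3\delta,3\delta)}$, the leaf $C(\theta)$ therefore remains equal to the original explicit $J_{\OP{cyl}}$-holomorphic cylinder $S^1\times\{\theta\}\times\R_{\le 0}\times\{0\}$ introduced at the start of the subsection. Restricting to $T^*\T^2 \setminus L$ gives $\dot{D}(\theta)$ in the stated form. The tail claim concerning $\dot{D}(\theta)\setminus D_R T^*\T^2$ follows from Lemma \ref{lma:straighten}, which arranges these same leaves to coincide with $B^2\times\{e^{i\theta}\}$ near the divisor $C_\infty \subset S^2\times S^2$; translating back via $S^2\times S^2 \setminus C_\infty \cong D_R T^*\T^2$ yields the assertion.

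For (2), the strategy is to apply a further compactly supported symplectic isotopy to the already-embedded solid torus $\dot{\mathcal{T}}$ in order to push it off the open region $\mathcal{R} := S^1 \times e^{i(-3\delta,3\delta)} \times \R \times \R_{<0}$. Three observations are essential. First, $\overline{\mathcal{R}}$ is disjoint from $L$: the choice $\delta = \epsilon/8$ combined with Lemma \ref{lma:cleanup} forces $L \cap (S^1 \times e^{i(-\epsilon/2,\epsilon/2)} \times \R \times \R_{\le 0})$ to lie in $\{p_2 = 0\}$, away from $\mathcal{R}$. Second, $\overline{\mathcal{R}}$ is disjoint from the distinguished leaves of (1), which also satisfy $p_2 = 0$. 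Third, a neighbourhood of $\overline{\mathcal{R}}$ is foliated by the family of $J$-holomorphic cylinders $\{C(p_2, e^{is})\}_{|s|<4\delta,\, p_2<0}$, and positivity of intersection between this foliation and the $J$-holomorphic discs $\dot{D}(\theta)$ forces each intersection $\dot{D}(\theta) \cap \overline{\mathcal{R}}$ to be compact and to meet each cylinder $C(p_2,e^{is})$ transversally in a finite set.

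With these ingredients in hand, I would construct a Hamiltonian $H_t$ on $T^*\T^2$ with compact support contained in a small neighbourhood of $\overline{\mathcal{R}}$, whose time-$1$ flow drags each piece $\dot{D}(\theta)\cap\overline{\mathcal{R}}$ along the foliation $\{C(p_2,e^{is})\}$ past $\theta_2 = \pm 3\delta$ and out of $\mathcal{R}$. Because the support of $H_t$ is disjoint from $L$ and from the distinguished leaves of (1), applying the corresponding Hamiltonian isotopy to $\dot{\mathcal{T}}$ preserves both its Lagrangian boundary condition and the leaves of (1), while modifying only those $\dot{D}(\theta)$ with $\theta \notin e^{i(-3\delta,3\delta)}$.

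The main obstacle I anticipate is the construction of $H_t$: although the compactness of $\dot{D}(\theta)\cap\overline{\mathcal{R}}$ and the explicit foliation of $\overline{\mathcal{R}}$ by the cylinders $C(p_2,e^{is})$ give a natural direction along which to push, one must ensure that the resulting time-$1$ flow is a \emph{single} globally defined compactly supported symplectomorphism that simultaneously removes all the intersections $\dot{D}(\theta)\cap\overline{\mathcal{R}}$ without re-introducing intersections from neighbouring leaves. This requires a Moser-type parametric interpolation adapted to the cylindrical foliation, where the control afforded by positivity of intersection together with the asymptotic standardisation of each $\dot{D}(\theta)$ outside $D_R T^*\T^2$ (via Lemma \ref{lma:straighten}) permits a uniform choice of push-off direction.
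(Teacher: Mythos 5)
Part (1) of your argument is fine and agrees with the paper, which simply invokes Lemma \ref{lma:straighten} together with the fact that the smoothing deformation is only applied to the leaves with $\theta\notin e^{i(-3\delta,3\delta)}$.

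For Part (2), however, you have misread what positivity of intersection delivers, and as a result you set out to \emph{construct} a disjunction that in fact holds \emph{automatically}. The point of keeping the cylinders $C(p_2,e^{is})$, $p_2<0$, $|s|<4\delta$, pseudoholomorphic throughout all of the preceding deformations (which is possible because those deformations are supported away from the region $\{p_2<0\}$) is the following: each leaf $D(\theta)$ has \emph{vanishing} algebraic intersection number with each $C(p_2,e^{is})$ — this is a homotopy invariant quantity, since $L$ and the asymptotics of the cylinders stay away from one another, and it is computed to be zero on the distinguished leaf $D(e^{i0})=S^1\times\{e^{i0}\}\times\R_{\le0}\times\{0\},$ which is manifestly disjoint from every such cylinder. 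Positivity of intersection then forces every geometric intersection point to contribute $\ge +1$, so there are \emph{no} intersection points at all; since the cylinders $C(p_2,e^{is})$ sweep out $S^1\times e^{i(-3\delta,3\delta)}\times\R\times\R_{<0},$ the union $\dot{\mathcal{T}}=\bigcup_\theta\dot D(\theta)$ is disjoint from that region with no further work. Your weaker conclusion ("compact intersection, finite transverse intersection with each cylinder") is not what the lemma asserts and is not the right use of the tool.

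The route you then take — a compactly supported Hamiltonian push-off of $\dot{\mathcal{T}}$ out of $\mathcal{R}$ — is both unnecessary and incomplete where it matters: the "Moser-type parametric interpolation" that would produce a single global symplectomorphism clearing all leaves simultaneously without reintroducing intersections is precisely the hard step, and you do not supply it. Worse, such a push-off is not obviously compatible with the structure you must preserve for the remainder of the proof of Proposition \ref{prp:LagIsoFixed}: the discs $\dot D(\theta)$ must continue to foliate an \emph{embedded} solid torus with the prescribed boundary behaviour on $L$ and standard form at infinity, and an ad hoc isotopy supported near $\overline{\mathcal{R}}$ gives no control over the mutual disjointness of the deformed leaves or over the characteristic-distribution monodromy used in the final step. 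Replace this construction by the intersection-number computation above.
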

\begin{proof}
(1): This property can be achieved by Lemma \ref{lma:straighten}.

(2): This is a consequence of positivity of intersection with the cylinders $C(p_2,s^{is})$ with $p_2<0$ which can be assumed to remain pseudoholomorphic during all of the steps taken above.
\end{proof}

Lastly, we need to correct the monodromy map induced by the characteristic distribution of the solid torus. Recall that this monodromy is a symplectomorphism
$$\phi \colon (S^1 \times \{e^{i0}\} \times \R_{\le0} \times \{0\},d\lambda_{\T^2}) \to (S^1 \times \{e^{i0}\} \times \R_{\le0} \times \{0\},d\lambda_{\T^2}) $$ 
obtained by integrating the line field
$$\ker( d\lambda_{\T^2}|_{T\dot{\mathcal{T}}})\subset T\dot{\mathcal{T}}$$
which clearly is transverse to all symplectic leaves $\dot{D}(\theta).$ The goal of this deformation is to make this symplectomorphism preserve the foliation
$$ \gamma_p \coloneqq S^1 \times \{e^{i0}\} \times \{p\} \times \{0\} \subset S^1 \times \{e^{i0}\} \times \R_{\le0} \times \{0\} =\dot{D}(e^{i0})$$
by simple closed curves. Observe that this is already the case for the curve $\gamma_0$ as well as $\gamma_{-P}$ for all $P \gg 0,$ as follows from the Lagrangian condition of $L$ and Part (1) of Lemma \ref{lma:cleanup2}, respectively.

The deformation of the family of annuli is performed by a so-called symplectic suspension of a suitable Hamiltonian isotopy $\phi^t_{H_t}$ of the symplectic punctured disc
$$ \dot{D}(e^{i0})=S^1 \times \{e^{i0}\} \times \R_{\le 0} \times \{0\} \subset (T^*\T^2,d\lambda_{\T^2}).$$

The argument for the existence of Hamiltonian isotopies of the disc in \cite[Lemma 7.4]{Dimitroglou:Isotopy} carries over to show the following.
\begin{lma}
\label{lma:HamIso}
There exists a Hamiltonian isotopy
$$\phi^t_{H_t} \colon (\dot{D}(e^{i0}) \cong D^2 \setminus \{0\},\omega) \xrightarrow{\cong} (\dot{D}(e^{i0}) \cong D^2 \setminus \{0\},\omega), \:\: t \in [0,\delta],$$
where $H_t=0$ for all $t \notin [\delta/2,\delta)$ which
\begin{itemize}
\item is generated by a non-negative time-dependent Hamiltonian $H_t \ge 0$ that has compact support, and satisfies $H_t|_{\partial D^2}\equiv 0$ for all $t,$ and
\item satisfies the property that $\phi^{\delta}_{H_t} \circ \phi$ preserves the leaves of the foliation $\{\gamma_p\}$ by embedded closed curves.
\end{itemize}
\end{lma}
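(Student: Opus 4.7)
The plan is to obtain $H_t$ in three stages: first reduce the problem to straightening the two foliations $\{\phi(\gamma_p)\}$ and $\{\gamma_p\}$; second, produce a Hamiltonian straightening via Moser's trick; and third, correct the sign of the generating Hamiltonian using the freedom inherent in the leaf-preservation condition.

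For the first stage, I would exploit the fact that $\phi$ is a symplectomorphism fixing $\gamma_0$ and $\gamma_{-P}$ setwise. Since the region in $\dot{D}(e^{i0})$ enclosed between $\gamma_0$ and $\gamma_p$ has symplectic area $2\pi|p|,$ area-preservation forces $\phi(\gamma_p)$ to bound the same area with $\gamma_0.$ Consequently $\{\phi(\gamma_p)\}_{p\in[-P,0]}$ and $\{\gamma_p\}_{p\in[-P,0]}$ are two smooth foliations of the same annular region $R\subset\dot{D}(e^{i0}),$ each parametrised by the enclosed area. Here the hypothesis that $\phi$ already preserves $\gamma_0$ and $\gamma_{-P}$ is essential in order to interpolate through boundary-fixing foliations.

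For the second stage, I would interpolate linearly between the two foliations through a smooth one-parameter family of foliations $\mathcal{F}_\tau,$ $\tau\in[0,1],$ of $R$ whose area profile along $p$ is constant in $\tau;$ the equal-area condition guarantees that this interpolation can be performed. Applying Moser's trick in this two-dimensional setting (integrating the uniquely determined time-dependent vector field $X_\tau$ on $R$ obtained by solving $\iota_{X_\tau}\omega = \beta_\tau$ for a compactly supported primitive $\beta_\tau$ of the $\tau$-derivative of the corresponding area forms) yields a compactly supported symplectic isotopy $\Psi_\tau$ of $\dot{D}(e^{i0})$ with $\Psi_1(\phi(\gamma_p)) = \gamma_p$ for all $p.$ Being a compactly supported symplectic isotopy of a surface, this $\Psi_\tau$ is automatically Hamiltonian, say $\Psi_\tau = \phi^\tau_{K_s}$ for some time-dependent $K_s$ compactly supported in the interior of $R.$

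The main obstacle is the third stage, enforcing $H_t\ge 0.$ Here I would use the crucial observation that the conclusion of the lemma requires only that $\phi^\delta_{H_t}\circ\phi$ preserve the foliation $\{\gamma_p\}$ \emph{setwise}, not that it fix each leaf. Therefore I am free to post-compose $\Psi_1$ with any Hamiltonian diffeomorphism that preserves each $\gamma_p,$ i.e.\ with the time-$1$ flow of a Hamiltonian of the form $g(p_1)$ (which generates the leaf-preserving $\theta_1$-translation $\theta_1\mapsto \theta_1 - g'(p_1)t$). By taking $g(p_1)\ge 0$ supported in the interior of $R$ and sufficiently large so that $g(p_1)$ dominates $|K_s|$ throughout the support of $K_s,$ the composed Hamiltonian diffeomorphism $\phi^1_g\circ\Psi_1$ can be realised as the time-$1$ flow of a non-negative compactly supported Hamiltonian $\widetilde{K}_s$ (concatenate the two Hamiltonian paths and rescale time; the non-negativity is preserved since both pieces are non-negative). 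Finally, reparametrising time to compress the entire isotopy into $[\delta/2,\delta)$ and tapering $\widetilde{K}_s$ to vanish at the temporal endpoints produces the required $H_t.$ The delicate point throughout is that in two dimensions the leaf-preserving Hamiltonians form a one-parameter family rich enough to absorb the sign ambiguity in Moser's straightening Hamiltonian — a feature which would fail in higher dimensions.
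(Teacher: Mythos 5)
Your overall strategy---straighten the image foliation $\{\phi(\gamma_p)\}$ back to $\{\gamma_p\}$ by a Hamiltonian isotopy, then exploit the leaf-preserving rotations generated by Hamiltonians of the form $g(p_1)$ to repair the sign---coincides with the paper's in its essential last step; the paper handles the straightening differently, by quoting the standard fact that a boundary-preserving symplectomorphism of an annulus is Hamiltonian isotopic rel boundary to one preserving the meridian foliation (via \cite[Proposition A.4]{Cieliebak:Hamiltonian} and powers of the Dehn twist) rather than by running a Moser argument on interpolating foliations. Your Moser route is viable, but two of its claims need repair as written: ``linear interpolation of foliations'' is not a defined operation (one should instead transport $\{\gamma_p\}$ by a family of leafwise-area-preserving diffeomorphisms and correct by a leafwise Moser argument), and a compactly supported symplectic isotopy of $D^2\setminus\{0\}$ is \emph{not} automatically Hamiltonian, since $H^1(D^2\setminus\{0\};\R)\neq 0$; here the flux vanishes only because each $\Psi_\tau$ preserves $\gamma_0$ setwise, so the area swept by $\gamma_0$ is zero. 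You also never address $H_t|_{\partial D^2}\equiv 0$: since $\Psi_\tau$ preserves $\gamma_0$ only setwise, its compactly supported normalisation restricts to $\gamma_0$ as a possibly nonzero constant $c(\tau)$, which must itself be absorbed by a leaf-constant correction term.

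The genuine error is in the final step. Concatenating the path generated by $K_s$ with the path generated by $g$ and rescaling time yields a Hamiltonian equal to a positive multiple of $K_s$ on the first half of the time interval, hence still negative wherever $K_s$ is; concatenation cannot repair the sign of a Hamiltonian, and your parenthetical ``both pieces are non-negative'' contradicts your own earlier acknowledgement that $g$ must dominate $|K_s|.$ What is needed is the simultaneous composition $\phi^t_g\circ\Psi_t,$ which is generated by $g+K_t\circ(\phi^t_g)^{-1}.$ Since $g=g(p_1)$ is constant on leaves and $\phi^t_g$ preserves leaves, choosing $g\ge 0$ with $g\ge\max_t\sup|K_t|$ on every leaf meeting $\bigcup_t\operatorname{supp}K_t$ (and $g$ vanishing on $\gamma_0$ and near the puncture) makes this generator non-negative and compactly supported, while the time-one map $\phi^1_g\circ\Psi_1$ still carries $\{\phi(\gamma_p)\}$ to $\{\gamma_p\}$ leafwise. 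The subsequent time reparametrisation into $[\delta/2,\delta)$ multiplies the Hamiltonian by a non-negative factor and is harmless.
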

\begin{proof}
 
We begin by recalling the standard fact that any symplectomorphism of an annulus that preserves the boundary set-wise is Hamiltonian isotopic, while fixing the boundary set-wise, to one which preserves the standard foliation by meridian curves. Whenever a boundary component is fixed point-wise by the symplectomorphism, this Hamiltonian isotopy may moreover be assumed to fix the same component pointwise. To see this claim, first we point out that a Hamiltonian isotopy can be explicitly constructed in order to deform the symplectomorphism to one which is the identity on both boundary components. We can then use \cite[Proposition A.4]{Cieliebak:Hamiltonian}, together with the facts that the standard Dehn twist (and hence its powers) are compactly supported symplectomorphisms that preserve the standard foliation by meridians.

After adding a time-dependent constant we may assume that the Hamiltonian constructed above satisfies $H_t|_{\partial D^2} \equiv 0.$ In order to make it non-negative everywhere, we can simply add a suitable autonomous Hamiltonian which is constant along any meridian in the foliation. (The generated isotopy now performs an additional rotation along the meridian curves.) Note that the same technique also can be used to make $H_t$ vanish near the origin $0 \in D^2,$ as opposed to being merely constant there.
\end{proof}

The symplectic suspension that we use is the locally defined symplectomorphism
\begin{gather*}
 (z,(e^{is},p_2)) \mapsto \left(\phi^{s+3\delta}_{H_t}(z),e^{is},p_2-H_{s+3\delta}\left(\phi^{s+3\delta}_{H_t}(z)\right)\right),\\
z \in D(e^{i0}), \:\: s \in [-3\delta,-2\delta].
\end{gather*}
When applying this symplectomorphism to the family $S^1 \times e^{i(-3\delta,-2\delta)} \times \R_{\le 0} \times \{0\}$ of punctured discs, the properties in Lemma \ref{lma:cleanup2} satisfied by $\dot{\mathcal{T}}$ implies the following: the deformation of $\dot{\mathcal{T}}$ obtained by excising the above family and replacing it by its suspension $\dot{\mathcal{T}}'$ is embedded, and has a symplectic monodromy that preserves all leaves $\{\gamma_p\},$ $p \le 0,$ of the foliation of $\dot{D}(e^{i0}).$

Given the above, the Lagrangian isotopy is finally constructed to be contained completely inside the hypersurface $\dot{\mathcal{T}}'$ foliated by punctured symplectic discs. More precisely, we consider the Lagrangian tori $L_t$ in $\dot{\mathcal{T}}'$ uniquely determined by the property that they intersect the disc $\dot{D}(0)$ precisely in the curve $\gamma_{-Ct}$ for some $C \gg 0$ sufficiently large. Indeed, by the assumption that all annuli are standard outside of a compact subset of $T^*\T^2,$ it necessarily follows that these tori all are graphical for $t=1.$
\qed

\bibliographystyle{gtart}
\bibliography{references}

\end{document}